 \renewcommand{\AA}{\mathbf{A}}
 \newcommand{\RR}  {\mathbf{R}}
 \newcommand{\CC}  {\mathbf{C}}
 \newcommand{\GG}  {\mathbf{G}}
 \newcommand{\QQ}  {\mathbf{Q}}
 \renewcommand{\SS}{\mathbf{S}}
 \newcommand{\ZZ}  {\mathbf{Z}}
 \newcommand{\Ql}  {\QQ_\ell}
 \newcommand{\Qlt} {\Ql^\times}
 \newcommand{\Qp}  {\QQ_p}
 \newcommand{\Zl}  {\ZZ_\ell}
 \newcommand{\Zlt} {\Zl^\times}
 \newcommand{\Zp}  {\ZZ_p}
 \newcommand{\Af}  {\AA_\mathrm{f}}
 \newcommand{\pif} {\pi_\mathrm{f}}
 \newenvironment{smatrix}{\left( \begin{smallmatrix} } {\end{smallmatrix} \right) }
 \newcommand{\stbt}[4]{\begin{smatrix}#1 & #2 \\ #3 & #4\end{smatrix}}
 \newcommand{\sdth}[3]{\begin{smatrix}#1\\ &#2 \\ &&#3\end{smatrix}}
 \newcommand{\fm}{\mathfrak{m}}
 \newcommand{\fn}{\mathfrak{n}}
 \newcommand{\fp}{\mathfrak{p}}
 \newcommand{\fP}{\mathfrak{P}}
 \newcommand{\cO}{\mathcal{O}}
 \newcommand{\cH}{\mathcal{H}}
 \newcommand{\cI}{\mathcal{I}}
 \newcommand{\cP}{\mathcal{P}}
 \newcommand{\cR}{\mathcal{R}}
 \newcommand{\cS}{\mathcal{S}}
 \newcommand{\cW}{\mathcal{W}}
 \newcommand{\cX}{\mathcal{X}}
 \newcommand{\cY}{\mathcal{Y}}
 \newcommand{\cZ}{\mathcal{Z}}
 \newcommand{\sD}{\mathscr{D}}
 \newcommand{\sG}{\mathscr{G}}
 \newcommand{\sH}{\mathscr{H}}
 \newcommand{\sV}{\mathscr{V}}
 \newcommand{\sX}{\mathscr{X}}
 \newcommand{\into}{\hookrightarrow}
 \newcommand{\isoarrow}{\stackrel{\sim}{\rightarrow}}
 \newcommand{\dd}{\, \mathrm{d}}
 \newcommand{\ddt}{\dd^\times \!}
 \newcommand{\bs}{\backslash}
 \renewcommand{\t}{{}^t}
 \newcommand{\UE}{{\mathcal{UE}}}
 \newcommand{\et}{\text{\textup{\'et}}}
 \newcommand{\mot}{\mathrm{mot}}
 \newcommand{\ord}{\mathrm{ord}}
 \DeclareMathOperator{\Anc}{Anc}
 \DeclareMathOperator{\Art}{Art}
 \DeclareMathOperator{\Aut}{Aut}
 \DeclareMathOperator{\Eis}{Eis}
 \DeclareMathOperator{\Gal}{Gal}
 \DeclareMathOperator{\GL}{GL}
 \DeclareMathOperator{\Hom}{Hom}
 \DeclareMathOperator{\Ih}{Ih}
 \DeclareMathOperator{\Iw}{Iw}
 \DeclareMathOperator{\Nm}{Nm}
 \DeclareMathOperator{\Res}{Res}
 \DeclareMathOperator{\Spec}{Spec}
 \DeclareMathOperator{\Sym}{Sym}
 \DeclareMathOperator{\br}{br}
 \DeclareMathOperator{\ch}{ch}
 \DeclareMathOperator{\diag}{diag}
 \DeclareMathOperator{\loc}{loc}
 \DeclareMathOperator{\mom}{mom}
 \DeclareMathOperator{\bc}{bc}
 \DeclareMathOperator{\pr}{pr}
 \DeclareMathOperator{\vol}{vol}
 \renewcommand{\le}{\leqslant}
 \renewcommand{\leq}{\leqslant}
 \renewcommand{\ge}{\geqslant}
 \renewcommand{\geq}{\geqslant}
 \newtheorem{theorem}{Theorem}[subsection]
 \newtheorem{definition} [theorem]{Definition}
 \newtheorem{corollary}  [theorem]{Corollary}
 \newtheorem{lemma}      [theorem]{Lemma}
 \newtheorem{proposition}[theorem]{Proposition}
 \newtheorem{ltheorem}{Theorem}
 \declaretheorem[name=Proposition,sibling=definition,qed=\qedsymbol]{propqed}
 \theoremstyle{remark}
 \declaretheorem[name=Remark,sibling=theorem,qed={\lower-0.3ex\hbox{$\diamond$}}]{remark}
 \declaretheorem[name=Note,sibling=theorem,qed={\lower-0.3ex\hbox{$\diamond$}}]{note}
 \declaretheorem[name=Notation,unnumbered,qed={\lower-0.3ex\hbox{$\diamond$}}]{notation}
 \author{David Loeffler}
 \address[Loeffler]{
  Mathematics Institute \\
  University of Warwick \\
  Coventry CV4 7AL, UK}
 \email{d.a.loeffler@warwick.ac.uk}
 \author{Christopher Skinner}
 \address[Skinner]{
  Department of Mathematics\\
  Princeton University\\
  Princeton, NJ\\
  USA}
 \email{cmcls@princeton.edu}
 \author{Sarah Livia Zerbes}
 \address[Zerbes]{
  Department of Mathematics \\
  University College London \\
  London, UK}
 \email{s.zerbes@ucl.ac.uk}
\thanks{Supported by: Royal Society University Research Fellowship ``$L$-functions and Iwasawa theory'' (Loeffler); Simons Investigator Grant \#376203 from the  Simons Foundation and and NSF grant DMS-1501064 (Skinner); ERC Consolidator Grant ``Euler systems and the Birch--Swinnerton-Dyer conjecture'' (Zerbes).}
 \title[An Euler system for GU(2,\,1)]
 {An Euler system for GU(2,\,1)}
\begin{document}

\begin{abstract}
 We construct an Euler system associated to regular algebraic, essentially conjugate self-dual cuspidal automorphic representations of $\GL_3$ over imaginary quadratic fields, using the cohomology of Shimura varieties for $\operatorname{GU}(2, 1)$.
\end{abstract}
\maketitle




\section{Introduction}

 \subsection{Overview of the results}

  Euler systems -- families of global cohomology classes satisfying norm-compatibility relations -- are among the most powerful tools available for studying the arithmetic of global Galois representations. In particular, most of the known cases of the Bloch--Kato conjecture, and of the Iwasawa main conjecture, use Euler systems as a fundamental ingredient in their proofs. However, Euler systems are correspondingly difficult to construct; in almost all known cases, the construction uses automorphic tools, relying on the motivic cohomology of Shimura varieties.

  Euler systems come in two flavours: \emph{full Euler systems}, in which we have classes over almost all of the ray class fields $E[\fm]$, where $E$ is some fixed number field; or \emph{anticyclotomic Euler systems}, where $E$ is a CM field, and we restrict to ring class fields (the anticyclotomic parts of ray class fields). Full Euler systems are the most powerful for applications, but correspondingly hardest to construct.

  In this paper, we'll construct a new example of a full Euler system, associated to Shimura varieties for the group $G = \operatorname{GU}(2, 1)$ (Picard modular surfaces). This construction has some novel features compared with previous constructions, such as the $\operatorname{GSp}_4$ case treated in \cite{LSZ17}. Firstly, the field $E$ (which is the reflex field of the Shimura datum for $G$) is not $\QQ$, but an imaginary quadratic field, and so an Euler system in this setting consists of classes over all of the abelian extensions of $E$ (most of which are not abelian over $\QQ$). Secondly, we introduce here a new strategy for proving norm-compatibility relations, based on cyclicity results for local Hecke algebras; this allows us to show that our classes are norm-compatible in the strongest possible sense, i.e.~as classes in motivic cohomology (whereas in \cite{LSZ17} we only proved norm relations for the images of Euler system classes in the \'etale realisation, after projecting to an appropriate Hecke eigenspace). Such cyclicity results for Hecke algebras are closely bound up with the theory of \emph{spherical varieties}, and we believe that this connection with spherical varieties should be a fruitful tool for studying Euler systems in many other contexts.

  \begin{ltheorem}
   Let $G = \operatorname{GU}(2, 1)$, $K_G$ an open compact subgroup of $G(\Af)$, and $\Sigma(K_G)$ the set of primes which ramify in $E$ or divide the level of $K_G$. Let $c > 1$ be an integer coprime to $6\Sigma(K_G)$; and let $\cR$ be the set of squarefree products $\fm$ of primes $w$ of $E$ coprime to $c \Sigma(K_G)$ with the following property: if $\ell = w \bar{w}$ is a split prime, then at most one of $w$ and $\bar{w}$ divides $\fm$. Let $0 \le r \le a, 0 \le s \le b$ be integers.

   Then there exists a family of motivic cohomology classes
   \[ {}_c \Xi_{\mot, \fm}^{[a,b,r,s]} \in H^3_{\mot}\left(Y_G(K_G) \times_{E} E[\fm], \sD^{a,b}\{r, s\}(2)\right) \]
   for all $\fm \in \cR$, where $E[\fm]$ is the ray class field modulo $\fm$, with the following properties:
   \begin{enumerate}
   \item If $\fm, \fn \in \cR$ with $\fm \mid \fn$, then
   \[ \operatorname{norm}_{E[\fm]}^{E[\fn]}\left({}_c \Xi_{\mot, \fm}^{[a,b,r,s]}\right) = \Big( \prod_{w \mid \tfrac{\fn}{\fm}} \cP_w'(\sigma_w^{-1}) \Big)\, {}_c\Xi_{\mot, \fm}^{[a,b,r,s]},
    \]
   where $\cP_w'(X)$ is a polynomial over the spherical Hecke algebra (which acts on each eigenspace as an Euler factor at $w$), and $\sigma_w \in \Gal(E[\fm] / E)$ is the arithmetic Frobenius at $w$.

   \item For any prime $\fp$ of $E$ not dividing $\Sigma(K_G) \Nm(\fm)$, the image of the class ${}_c \Xi_{\mot, \fm}^{[a,b,r,s]}$ under the $\fp$-adic \'etale realisation map is integral (i.e.~lies in the \'etale cohomology with $\cO_{E, \fp}$-coefficients).
   \end{enumerate}
  \end{ltheorem}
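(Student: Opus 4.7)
The plan is to build ${}_c\Xi_{\mot,\fm}^{[a,b,r,s]}$ as a pushforward from a codimension-one subvariety of the Picard surface $Y_G(K_G)$. The natural auxiliary group is the subgroup $H \subset G$ stabilising a non-isotropic line in the three-dimensional Hermitian space defining $G$; up to centre, $H$ is essentially $\GL_2/\QQ$, and its associated Shimura variety $Y_H$ is a modular curve, embedded in $Y_G$ via a proper map $\iota$ of codimension one. I would start from the $c$-smoothed Siegel-unit version ${}_c\Eis$ of Beilinson's Eisenstein classes in $H^1_{\mot}(Y_H, \cdot)$, which is known to have integral \'etale realisation. Applying the Gysin pushforward $\iota_*$, together with a branching of the $G$-representation of highest weight parameters $(a,b)$ down to the $H$-representation indexed by $(r,s)$, yields classes in $H^3_{\mot}(Y_G(K_G), \sD^{a,b}\{r,s\}(2))$ defined over $E$. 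To spread these over the ray class fields $E[\fm]$, I would introduce translates by elements $g_\fm \in G(\Af)$ coming from the image of the diagonal torus of $H$ under the CM reciprocity map, and set ${}_c\Xi_{\mot,\fm}^{[a,b,r,s]} \coloneqq \iota_*(g_\fm \cdot {}_c\Eis)$ with the appropriate Galois descent data.

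\textbf{Norm relations.} The main difficulty is part~(1). For $\fn = \fm \cdot w$ with $w$ a prime of $E$ coprime to $\fm c\Sigma(K_G)$, both sides of the asserted identity arise from the same underlying Eisenstein pushforward, twisted by two \emph{a priori} different elements of the local Hecke algebra of $G$ at $w$. I would reduce the identity to an equality in the local Hecke module $\cH(G(E_w)/K_{G,w})^{H(E_w)}$ of $H(E_w)$-invariant functions on $G(E_w)/K_{G,w}$. The key new input is that $(G,H)$ is a \emph{spherical} pair and that this module is \emph{cyclic} over the spherical Hecke algebra $\cH(G(E_w),K_{G,w})$: a careful analysis of $H(E_w)$-orbits on the flag variety of $G(E_w)$ produces a canonical cyclic generator. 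Cyclicity then reduces the full norm identity to a single scalar equality, which can be checked by pairing both sides against that generator; the polynomial that emerges is precisely the prescribed $\cP_w'(X)$. The cases of $w$ split and $w$ inert in $E/\QQ$ are handled in parallel, with a mild modification for the tame-level modification encoded by $c$.

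\textbf{Integrality.} Part~(2) is essentially formal once the construction is in place: ${}_c\Eis$ has integral \'etale realisation by the work of Kato and Scholl, and this integrality survives both the Gysin pushforward $\iota_*$ and the Hecke translates used to define ${}_c\Xi_{\mot,\fm}^{[a,b,r,s]}$. The principal obstacle throughout is the Hecke-module cyclicity in the norm-relation step; establishing this cyclicity (and pinning down the resulting scalar) is what upgrades the norm compatibility from a projected Hecke-eigenspace statement, as in the $\GL_2 \times \operatorname{GSp}_4$-type arguments, to a genuine identity of motivic cohomology classes.
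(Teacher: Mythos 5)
Your proposal captures the overall architecture of the paper's argument correctly: pushforward of $c$-smoothed Beilinson--Siegel Eisenstein classes from the Shimura variety of $H \subset G$ (which is essentially $\GL_2 \times \mathrm{U}(1)$ up to isogeny, mapping to a modular curve rather than being one itself), with the sphericity of $(G,H)$ and the resulting cyclicity of the Hecke bimodule $\cS(H^0_\ell \bs G_\ell / G^0_\ell)$ over $\cH^0_{H,\ell} \otimes \cH^0_{G,\ell}$ driving the norm relations, and the $c$-smoothing controlling integrality. This is the paper's strategy. But there are two genuine gaps in the details that are not cosmetic.

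\textbf{The mechanism for producing classes over $E[\fm]$.} You propose translating by elements $g_\fm$ of a torus coming from the CM reciprocity map. This is not what the paper does, and a toric twist of that sort would be closer to an \emph{anticyclotomic} construction than to the full Euler system required here. In the paper, the passage to $E[\fm]$ comes from working at level $K_G[\fm] = \{g \in K_G : \mu(g) \equiv 1 \bmod \fm\}$; the open-and-closed embedding $Y_G(K_G[\fm]) \into Y_G(K_G) \times_E E[\fm]$ (decoupling connected components) then turns a class at level $K_G[\fm]$ into a class over $E[\fm]$. The crucial local data at each prime $w \mid \fm$ over $\ell$ is the \emph{unipotent} twist $\xi_w = \ch(G^0_\ell[w]) - \ch(n(a,0) G^0_\ell[w])$ paired with the Schwartz function $\phi_{1,2} = \ch(\ell^2\Zl \times (1 + \ell^2\Zl))$ and a precisely chosen integer multiple $n_w$; this choice is what simultaneously produces the correct Euler factor and the required integrality. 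Without specifying this test data, the norm relation cannot be made to come out to the prescribed $\cP_w'(\sigma_w^{-1})$.

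\textbf{Identifying the polynomial.} Cyclicity only yields the \emph{existence} of some Hecke operator $\cP_\delta$ with $\operatorname{norm}(\mathfrak{Z}(\delta)) = \cP_\delta \star \mathfrak{Z}(\delta_0)$; it does not identify $\cP_\delta$ with $\cP_w'(1)$. The paper closes this gap with a separate, explicit computation: for generic unramified $\pi_\ell$ one constructs an $H_\ell$-invariant bilinear form on $\cS(\Ql^2)\otimes\pi_\ell$ from a local Rankin--Selberg zeta integral in the Whittaker model, evaluates both sides on the explicit test data $(\phi_{1,2},\,(1-n(a,0))W_0)$, and reads off $\Theta_{\pi_\ell}(\cP_\delta') = L_w(\pi_\ell,0)^{-1}$ for a Zariski-dense set of $\pi_\ell$. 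Your phrase ``checked by pairing against that generator'' gestures at the right kind of argument but supplies none of the zeta-integral machinery that actually makes the computation go through; in particular, the formula $Z((1-\eta_w^{(a)})W_0,s) = \tfrac{q}{q-1}L_{\bar w}(\pi_\ell,s)$ (which is where the Euler factor at $w$ is peeled off) is the heart of the matter. Finally, a smaller point: in the inert case $\cS(\Ql^2)$ is \emph{not} cyclic over $\cH_{H,\ell}$ (since the image of $H_\ell$ in $\GL_2(\Ql)$ has index two), and the paper has to localise at $\Delta_G(z_A(\ell)+\ell)$; this is the actual content of the ``mild modification'' you allude to, and it is not automatic.
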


  We refer the reader to \S \ref{sect:shimura} for the definition of the Shimura variety $Y_G(K_G)$, and the relative Chow motive $\sD^{a,b}\{r, s\}$ over it. In the case $(a,b,r,s) = (0,0,0,0)$, this motive is simply the trivial motive $E(0)$, and our classes coincide with those considered in \cite{pollackshah18}; in particular, the main result of \emph{op.cit.}~shows that the images of these classes under the Deligne--Beilinson regulator map, paired with suitable real-analytic differential forms on $Y_G(K_G)(\CC)$, are related to the values $L'(\pi, 0)$ for cuspidal automorphic representations $\pi$ of $G(\AA)$. This shows that our motivic cohomology classes are non-zero in this trivial-coefficient case. (We expect that a complex regulator formula similar to \cite{pollackshah18} should also hold for more general coefficient systems, but we shall not treat this problem here.)

  After passing to a Shimura variety with Iwahori level structure at $p$, we can also obtain families of classes over all the fields $E[\fm p^t]$ for $t \ge 1$, satisfying a norm-compatibility in both $\fm$ and $t$; see \cref{thm:wildnorm} for the precise statement. Applying the \'etale regulator map and projecting to a cuspidal Hecke eigenspace, we obtain Euler systems in the conventional sense -- as families of elements in Galois cohomology -- associated to cohomological automorphic representations of $G(\AA)$. Combining this with known theorems relating automorphic representations of $G$ and of $\GL_3 / E$, we obtain the following:

  \begin{ltheorem}
   Let $\Pi$ be a RAECSDC\footnote{See \cref{def:RAECSDC}} automorphic representation of $\GL_3 / E$ which is unramified and ordinary at the primes $\fp \mid p$. Let $V_{\fP}(\Pi)$ be its associated Galois representation, and suppose this representation is irreducible. Then there exists a lattice $T_{\fP}(\Pi)^* \subset V_\fP(\Pi)^*$, and a collection of classes
   \[ \mathbf{c}_{\fm}^{\Pi} \in H^1_{\Iw}\left(E[\fm p^\infty], T_{\fP}(\Pi)^*\right) \]
   for all $\fm \in \cR$ coprime to $pc$, such that for all $\fm \mid \fn$ we have
   \[
    \operatorname{norm}_{\fm}^{\fn}\left(\mathbf{c}_{\fn}^{\Pi}\right) = \Big(\prod_{w \mid \frac{\fn}{\fm}} P_w(\Pi, \sigma_w^{-1})\Big) \mathbf{c}_{\fm}^{\Pi},
   \]
   where $P_w(\Pi, X) = \det(1 - X \operatorname{Frob}_w^{-1} : V_\fP(\Pi)(1))$.
  \end{ltheorem}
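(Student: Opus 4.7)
The plan is to deduce Theorem B from Theorem A (in its refined Iwahori-level form \cref{thm:wildnorm}) by applying the étale realisation, projecting onto the $\Pi$-isotypic component, and matching the abstract Hecke-algebra Euler factor $\cP_w'(X)$ with the Galois-theoretic one $P_w(\Pi,X)$. First I would use the known descent/base-change results between RAECSDC representations of $\GL_3/E$ and cohomological cuspidal representations of $G(\AA) = \GU(2,1)(\AA)$: the RAECSDC hypothesis is precisely what is needed to produce, by Rogawski-type base change, a cuspidal automorphic representation $\pi$ of $G$ whose stable base change to $\GL_3/E$ is $\Pi$, and with compatible local Langlands/Satake parameters at all finite places outside a bad set (including an unramified vector fixed by a suitable $K_G$ at the good primes and an Iwahori-fixed ordinary vector at $\fp\mid p$). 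Since the base-change lift is type-preserving on unramified Satake parameters, the Galois representation cut out from $H^3_\et$ of the Picard Shimura variety with $\sD^{a,b}\{r,s\}$-coefficients at the $\pi$-isotypic part is $V_\fP(\Pi)^*$ (up to a Tate twist), using the known realisation of $V_\fP(\Pi)$ in the cohomology of $Y_G(K_G)$ due to Blasius--Rogawski and its refinements.

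Next I would take the classes $\,{}_c\Xi_{\mot,\fm}^{[a,b,r,s]}\,$ from Theorem A at the corresponding weights $(a,b,r,s)$, apply the $\fP$-adic étale regulator, and pair/project against a choice of test vector in the $\pi$-component of the cohomology of $Y_G(K_G)$ with $\cO_{E,\fP}$-coefficients. The integrality assertion in Theorem A guarantees that the resulting classes live in étale cohomology with integral coefficients, which (combined with the freeness of the $\pi$-isotypic lattice, obtained from the irreducibility hypothesis on $V_\fP(\Pi)$) produces classes in $H^1(E[\fm], T_\fP(\Pi)^*)$ for a well-defined stable lattice $T_\fP(\Pi)^*$. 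To promote these to Iwasawa-cohomology classes over $E[\fm p^\infty]$, I would run the same projection at Iwahori level at the primes above $p$, using \cref{thm:wildnorm}: ordinariness at $\fp\mid p$ lets us pick the unit-root eigenspace for the $U_\fp$-operators, so that the $p$-part of the norm relation simplifies to the trivial factor (the usual Ohta/Kato-style trick), yielding norm-compatibility along the cyclotomic tower and hence classes in $H^1_\Iw(E[\fm p^\infty], T_\fP(\Pi)^*)$.

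The main step, and the expected principal obstacle, is to verify that after projecting onto the $\Pi$-eigenspace, the abstract Hecke polynomial $\cP_w'(X)$ of Theorem A specialises to $P_w(\Pi, \sigma_w^{-1})$. By construction $\cP_w'$ lies in the spherical Hecke algebra of $G$ at $w$, and the norm relation holds in motivic cohomology before any projection. One then needs: (i) at split primes $w$ with $w\bar w = \ell$, the local component $\pi_\ell$ is an unramified principal series whose Satake parameters transfer under base change to those of $\Pi_w$, so Satake-identifies $\cP_w'$ with $\det(1 - X \operatorname{Frob}_w^{-1} \mid V_\fP(\Pi)(1))$; (ii) at inert primes $w$ of residue degree two, the local Satake parameter of $\pi$ at $\ell$ corresponds to $\Pi_w$ via unramified base change from $U(2,1)$ to $\GL_3$, and the same identification holds. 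This reduces to a direct computation of the local Langlands/Satake dictionary for $\GU(2,1)$, which is standard but must be done carefully in both the split and inert cases; this calculation, together with handling the dependence on the smoothing parameter $c$ (which enters only through the overall scalar constructed in the motivic step, and can be absorbed into the normalisation of the lattice or removed by the usual argument of varying $c$ and using coprimality), is where the technical work lies.
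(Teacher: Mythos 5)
Your outline captures the main architecture — automorphic descent from $\Pi$ to $\pi$ on $G$, the modular parametrisation $\pr_{\Pi,v}$ extracting $V_\fP(\Pi)^*$ from the cohomology of the Picard surface, passage to the ordinary eigenspace at Iwahori level to get Iwasawa cohomology classes, and the fact that $\cP_w'$ projects to $P_w(\Pi,\cdot)$ on the $\Pi$-component — and these all match what the paper does. Two remarks, one of which is a genuine gap.

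First, you describe the identification $\cP_w' \leadsto P_w(\Pi,\sigma_w^{-1})$ as a "principal obstacle" requiring a careful local Satake/Langlands computation at split and inert primes. In fact the paper has engineered this away: $\cP_w$ is \emph{defined} (\S 5.1) so that $\Theta_{\pi_\ell}(\cP_w)(q^{-s}) = L_w(\pi_\ell,s)^{-1}$ where $L_w$ is the $L$-factor of the local base change, and the global compatibility of $\pi$'s Satake parameters with $V_\fP(\Pi)$ at unramified $w$ is already part of the cited base-change and Galois-realisation results. So once $v$ is chosen unramified away from $pS$, the identity is immediate. No further computation is needed.

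The real gap is the passage from the rational to the integral norm relation. The tame norm relation of Theorem A holds in motivic cohomology, hence in \'etale cohomology with $E_\fp$-coefficients; but the operator $\cP_w'$ has denominators (its leading term involves $1/q$), so it does not a priori preserve the integral lattice in \'etale cohomology, and the identity $\pr_{\Pi,v}\circ\cP_w' = P_w(\Pi,\cdot)\circ\pr_{\Pi,v}$ is only known rationally. Thus your construction yields, directly, only that the defect class
\[
h = \operatorname{norm}_\fm^\fn(\mathbf{c}_\fn^\Pi) - \Big(\prod_{w\mid \fn/\fm} P_w(\Pi,\sigma_w^{-1})\Big)\mathbf{c}_\fm^\Pi
\]
vanishes after base-changing to $V_\fP(\Pi)^*$ — i.e.\ $h$ is torsion in $H^1_\Iw(E[\fm p^\infty], T_\fP(\Pi)^*)$. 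Your appeal to "freeness of the $\pi$-isotypic lattice" does not close this: the possible torsion is in the Galois cohomology groups $H^1(E[\fm p^t], T_\fP(\Pi)^*)$, not in the lattice $T_\fP(\Pi)^*$ itself, and lattices in an irreducible representation are free automatically. The paper's argument is genuinely different and necessary: irreducibility forces $H^0(E[\fm p^\infty], V_\fP(\Pi)^*) = 0$, hence $H^0(E[\fm p^\infty], T_\fP(\Pi)^*\otimes\QQ_p/\ZZ_p)$ is finite, so its exponent annihilates the torsion of $H^1(E[\fm p^t], T_\fP(\Pi)^*)$ uniformly in $t$; combined with the $p$-torsion-freeness of Iwasawa cohomology over the infinite $p$-adic Lie tower, this forces $h=0$. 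You should incorporate this argument (or an equivalent one) explicitly; without it the norm relation only holds up to bounded $p$-torsion.
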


  See \cref{thm:ESpi} for a precise statement, and for some additional properties of the classes $\mathbf{c}_{\fm}^{\Pi}$. As well as constructing these Euler systems, we also prove interpolation results showing that their $p$-adic \'etale realisations are compatible with twisting by $p$-adic families of algebraic Gr\"ossencharacters, and with variation in Hida families of automorphic representations.

  In future work, we will prove an explicit reciprocity law for this Euler system, relating it to values of an appropriate $p$-adic $L$-function, and thus prove the Bloch--Kato conjecture in analytic rank 0 for automorphic Galois representations arising from $G$. However, in the present paper we shall focus solely on the construction of the Euler system classes.

 \subsection{Outline of the paper}

  After some preliminary material presented in \cref{sect:prelims}, Sections \ref{sect:formalism}--\ref{sect:zeta} of this paper are devoted to proving a certain purely local, representation-theoretic statement which we call an ``abstract norm relation'' (\cref{thm:absnorm2}). This states that, if $\mathfrak{Z}$ is \emph{any} map from a certain space of local test data to a representation of $G(\QQ_\ell)$, satisfying an appropriate equivariance property, then the values of $\mathfrak{Z}$ on two particular choices of the test data are related by a certain specific Hecke operator $\cP$. We prove this in two stages. Firstly, in \S\ref{sect:spherical}, we prove that such a Hecke operator $\cP$ must exist (without identifying the operator), using a cyclicity result for Hecke modules inspired by work of Sakellaridis. Secondly, in \S\ref{sect:localdata1} and \S\ref{sect:zeta} we use local zeta integrals to define a directly computable, purely local example of a morphism $\mathfrak{z}$ with the correct equivariance property, which allows us to identify the relevant Hecke operator $\cP$ explicitly. We have developed this theory in some detail, since we expect that the strategy developed here will be applicable to many other Euler system constructions, and it might also serve to clarify some possibly confusing details in earlier works of ours such as \cite{LSZ17}.

  In the second part of the paper, Sections \ref{sect:lietheory}--\ref{sect:UEconstruct}, we construct a second, much more sophisticated example of a morphism to which the above theory applies: the ``unitary Eisenstein map'' $\UE^{[a,b,r,s]}$ of \cref{def:uEis}, taking values in the motivic cohomology of the $\operatorname{GU}(2, 1)$ Shimura variety. Applying the ``abstract norm relation'' to this specific choice of morphism, we obtain a family of motivic classes satisfying norm-compatibility relations, whose denominators are uniformly bounded in the \'etale realisation. This is our Euler system.

  In the final sections of the paper, we prove that these classes satisfy norm-compatibility relations in a suitable tower of levels at $p$, and that their \'etale realisations are compatible with certain $p$-adic moment maps arising from this tower. This can be interpreted as stating that the \'etale  Euler-system classes vary analytically in Hida families for $G$; this is an important input for studying explicit reciprocity laws for the Euler system, which will be the subject of a forthcoming paper. Finally, we briefly discuss the Euler system for an individual automorphic Galois representation obtained by projecting our classes to a cuspidal Hecke eigenspace.

 \subsection*{Acknowledgements}

  We are grateful to Yiannis Sakellaridis for his explanations regarding the cyclicity result of \cite{sakellaridis13} and its generalisations. This work was begun while the first and third authors were visiting the Institute for Advanced Study in the spring of 2016, and we are very grateful to the IAS for their hospitality. We also thank the anonymous referee for their careful reading of the manuscript.

\section{The groups $G$ and $H$}
 \label{sect:prelims}

 \subsection{Fields}

  Let $E$ be an imaginary quadratic field, of discriminant $-D$, and let $x \mapsto \bar{x}$ be the nontrivial automorphism. Let $\cO$ be the ring of integers of $E$. We fix an identification of $E\otimes \RR$ with $\CC$ such that $\delta = \sqrt{-D}$ has positive imaginary part.

 \subsection{The group $G$}

  Let $J \in \GL_3(E)$ be the Hermitian matrix
  \[
   J = \begin{smatrix} & & \delta^{-1} \\ & 1 & \\ -\delta^{-1} & & \end{smatrix} \in \GL_3(E),\qquad \delta = \sqrt{-D}.
  \]

  \begin{definition}
   Let $G$ be the group scheme over $\ZZ$ such for that a $\ZZ$-algebra $R$
   \[
    G(R) = \left\{ (g,\nu) \in \GL_3(\cO\otimes R)\times R^\times:
    \t\bar{g} \cdot J \cdot g = \nu J\right\}.
   \]
   We identify $Z_G$ with $\Res_{\cO/\ZZ}(\GG_m)$, via $z \mapsto (\sdth{z}{z}{z}, z\bar{z})$. We write $\mu: G \to \Res_{\cO/\ZZ}(\GG_m)$ for the character $(g, \nu) \mapsto \tfrac{\det \bar{g}}{\nu}$, so $\mu \bar{\mu} = \nu$.
  \end{definition}

  The real group $G(\RR)$ is the unitary similitude group $\operatorname{GU}(2,1)$; see e.g.~\cite[\S 2.2]{pollackshah18}. Note that $G$ is reductive over $\ZZ_\ell$ for all $\ell \nmid D$ (even if $\ell = 2$).

  \begin{lemma}
   \label{lem:boreldecomp}
   Let $B_G\subset G$ be the upper-triangular subgroup. Then $B_G=T_G\ltimes N_G$, with
   \[
    T_G(R) =\left\{ \left(\sdth{x}{x}{x} \sdth{z\bar{z}}{\bar{z}}{1}, x\bar{x}z\bar{z}\right) : x,z \in (\cO\otimes R)^\times\right\}
   \]
   the diagonal torus and
   \[
    N_G(R) = \left\{\left(\begin{pmatrix} 1 & \delta s & t + \epsilon s\bar{s} \\ & 1 & \bar s \\ & & 1 \end{pmatrix},1\right) \ : \ s\in \cO\otimes R, t\in R\right\}.
   \]
   Here $\epsilon = \tfrac{1 + \delta}{2}$ if $D$ is odd, and $\epsilon = \tfrac{\delta}{2}$ otherwise. Given $s,t$ as above, we will write $t(x, z) \in T_G(R)$ and $n(s,t)\in N_G(R)$ for the corresponding elements. We abbreviate $t(1, z)$ as $t(z)$. Note that
   \[ t(z) \cdot n(s, t) \cdot t(z)^{-1} = n(zs, z\bar{z} t).\]

   We write $\bar{B}_G$ and $\bar{N}_G$ for the lower-triangular Borel and its unipotent radical.
  \end{lemma}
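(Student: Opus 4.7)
My plan is to solve the similitude condition $\t\bar{g}Jg = \nu J$ directly on a generic upper-triangular $g$, with diagonal entries $a, d, f$ and above-diagonal entries $b, c, e$ in positions $(1,2), (1,3), (2,3)$ respectively. Computing $\t\bar{g}Jg$ explicitly, the $(1,3)$- and $(3,1)$-entries both yield $\nu = \bar{a}f$, the $(2,2)$-entry yields $\nu = d\bar{d}$, while the $(2,3)$-, $(3,2)$- and $(3,3)$-entries produce relations tying $b, c, e$ to the diagonal. This reduces the lemma to two independent parametrisation problems.

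For the torus (setting $b = c = e = 0$), the diagonal constraints force $\nu \in R^\times$ to be a norm from $(\cO \otimes R)^\times$: parametrising $f = x$ and $d = x\bar{z}$ with $x, z \in (\cO\otimes R)^\times$ gives $\nu = x\bar{x}z\bar{z}$ and then $a = xz\bar{z}$, and this diagonal matrix factors as $\sdth{x}{x}{x}\sdth{z\bar{z}}{\bar{z}}{1}$, exhibiting the claimed form of $T_G$. For the unipotent radical (setting $a = d = f = 1$, hence $\nu = 1$), the $(2,3)$- and $(3,2)$-equations both collapse to $\bar{e} = \delta^{-1} b$: substituting $b = \delta s$ gives $e = \bar{s}$, and the $(3,3)$-equation becomes $c - \bar{c} = \delta s\bar{s}$. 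The one technical point is to exhibit a fixed element $\epsilon \in \cO$ with $\epsilon - \bar{\epsilon} = \delta$; when $D$ is odd one has $-D \equiv 1 \pmod 4$ and can take $\epsilon = (1+\delta)/2 \in \cO$, while when $D$ is even one has $-D \equiv 0 \pmod 4$ and $\delta/2 \in \cO$ works. Given such an $\epsilon$, the map $(s, t) \mapsto t + \epsilon s\bar{s}$ is a bijection from $(\cO \otimes R) \times R$ onto the set of solutions $c$, yielding the claimed parametrisation of $N_G$.

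These two parametrisations combine to give a unique factorisation of every upper-triangular element as $t(x, z) n(s, t)$, whence $B_G = T_G \ltimes N_G$; normality of $N_G$ is automatic, since conjugation by a diagonal element preserves strict upper-triangularity. The conjugation formula $t(z) n(s, t) t(z)^{-1} = n(zs, z\bar{z}t)$ is then a direct $3 \times 3$ matrix calculation using $t(z) = \sdth{z\bar{z}}{\bar{z}}{1}$ and its diagonal inverse: the $(1,2)$-entry gets rescaled by $z\bar{z}/\bar{z} = z$ and the $(1,3)$-entry by $z\bar{z}$, and one checks that $z\bar{z}(t + \epsilon s\bar{s}) = z\bar{z}t + \epsilon(zs)(\overline{zs})$, which is precisely the $(1,3)$-entry of $n(zs, z\bar{z}t)$. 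The only real ``obstacle'' in the lemma is the parity case analysis needed to produce $\epsilon$; everything else is mechanical bookkeeping.
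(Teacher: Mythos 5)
The paper states this lemma without proof, treating it as a routine verification, and your argument is exactly the direct computation one would carry out: expand the similitude condition $\t\bar{g}Jg=\nu J$ entrywise for upper-triangular $g$, solve the resulting constraints separately for the diagonal and strictly-upper-triangular parts, and check the conjugation formula by matrix multiplication. Your bookkeeping is correct — the $(1,3)$, $(3,1)$, $(2,2)$ entries give $\bar a f = a\bar f = d\bar d = \nu$, the $(2,3)$ and $(3,2)$ entries are conjugate forms of $\bar e = \delta^{-1}b$, the $(3,3)$ entry gives $c-\bar c = \delta s\bar s$, and the parity analysis correctly produces $\epsilon\in\cO$ with $\epsilon-\bar\epsilon=\delta$ in both cases of the discriminant — so this is a complete proof in the intended spirit.
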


  \begin{lemma}
   \label{lem:isooverE}
   If $R$ is an $\cO[1/D]$-algebra, the map $i: \cO \otimes_{\ZZ} R \rightarrow R$ given by $x\otimes y\mapsto xy$ gives an isomorphism of group schemes
   \[ G \times_{\ZZ} R \cong (\GL_3\times \GG_m)/R, \qquad (g, \nu) \mapsto (i(g), \nu).\]
  \end{lemma}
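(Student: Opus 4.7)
The plan is to use the fact that $D$ is invertible in $R$ to split $\cO \otimes_\ZZ R$ as a direct product of two copies of $R$, after which the unitary similitude condition becomes explicitly solvable. Concretely, since $\delta^2 = -D \in R^\times$, the quadratic $R$-algebra $\cO \otimes_\ZZ R$ is \'etale, and the Chinese remainder theorem yields an isomorphism
\[ \cO \otimes_\ZZ R \;\stackrel{\sim}{\longrightarrow}\; R \times R \]
whose two components are the map $i$ of the statement and its conjugate $\bar{i} : x \otimes y \mapsto \bar x\cdot y$.

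Applying this entrywise, $\GL_3(\cO \otimes R) \cong \GL_3(R) \times \GL_3(R)$ via $g \leftrightarrow (g_1, g_2) := (i(g), \bar{i}(g))$; the conjugation $g \mapsto \bar g$ then swaps the two factors, and the Hermitian matrix $J$ splits as $(J_1, \t J_1)$ since $J^{*} = J$ forces $J_2 = \t J_1$. The similitude equation $\t\bar g \, J \, g = \nu J$ becomes the pair
\[ \t g_2 \, J_1\, g_1 = \nu J_1, \qquad \t g_1 \, \t J_1\, g_2 = \nu\, \t J_1, \]
each of which is the transpose of the other, so the two are equivalent. Solving the first explicitly yields $g_2 = \nu\, \t J_1^{-1}\, \t g_1^{-1}\, \t J_1 \in \GL_3(R)$, a regular function of $(g_1, \nu) \in \GL_3(R) \times R^\times$.

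This formula provides a functorial inverse to $(g, \nu) \mapsto (i(g), \nu)$, establishing the claimed isomorphism of group schemes. There is no substantial obstacle; the computation is a direct verification, and the only genuine observation is that the two halves of the split similitude condition collapse to a single one, which reflects the hermiticity of $J$.
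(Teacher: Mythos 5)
Your proof is correct, and it is a complete argument where the paper gives none: Lemma 2.3 is stated without proof in the paper, presumably because the authors regard it as a standard consequence of the splitting of a unitary group over a ring in which the defining quadratic extension becomes trivial. The decomposition $\cO \otimes_\ZZ R \cong R \times R$ via $(i, \bar{i})$, the resulting identification $\GL_3(\cO\otimes R) \cong \GL_3(R)^2$ with conjugation swapping the factors, and the observation that the two halves of the similitude condition are transposes of one another (by hermiticity of $J$), so that one factor is determined by the other and $\nu$ -- this is exactly the standard computation.

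One small imprecision worth tightening: you justify the \'etaleness of $\cO\otimes_\ZZ R$ over $R$ by ``$\delta^2 = -D \in R^\times$,'' which implicitly treats $\cO$ as $\ZZ[\delta]$. When $D$ is odd one has $\cO = \ZZ[\epsilon]$ with $\epsilon = \tfrac{1+\delta}{2}$, so $\ZZ[\delta]\subsetneq\cO$ with index $2$, and $\ZZ[\delta]\otimes R\to\cO\otimes R$ need not be an isomorphism if $2\notin R^\times$. The correct statement is that the discriminant of $\cO/\ZZ$ is $-D$, which becomes a unit in $R$; equivalently, $\epsilon - \bar\epsilon = \delta$ is a unit, so $(i,\bar i)$ is surjective between free rank-$2$ $R$-modules and hence an isomorphism. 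This does not affect the rest of the argument, which goes through verbatim once the splitting $\cO\otimes R\cong R\times R$ is in hand.
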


 \subsection{The group $G_0$}

  We define $G_0 = \ker(\nu) \subset G$, so $G_0$ is the group of unitary isometries (as opposed to unitary similitudes) of $J$. Since $\frac{g}{\mu(g)} \in G_0$ for all $g \in G$, we have
  \begin{equation}
   \label{eq:G0}
   G_0(R) Z_G(R) = G(R)
  \end{equation}
  for all $\ZZ$-algebras $R$.

 \subsection{The group $H$}
  Let $H$ be the group scheme over $\ZZ$ such that for a $\ZZ$-algebra $R$
  \[
   H(R) = \{(g,z)\in\GL_2(R)\times (\cO\otimes R)^\times \ : \ \det(g) = z\bar z \}.
  \]
  This can be identified with a subgroup of $G$:
  \[
   \iota:H\into G, \ \ (\left(\smallmatrix a & b \\ c & d\endsmallmatrix\right),z) \mapsto
   (\left(\smallmatrix a &  & b \\  & z &  \\ c &  & d\endsmallmatrix\right),z\bar z).
  \]
  In particular we can regard $\mu$ as a character of $H$, by composition with $\iota$, and we have simply $\mu(\, (g, z)\, ) = \bar{z}$.

  \begin{note}\label{note:splitlH}
   If $\ell$ is a prime split in $E$, and we fix a prime $w \mid \ell$ of $E$ as above, then $w$ gives an embedding $\cO[1/D] \into \Zl$. So \cref{lem:isooverE} gives an identification $G(\Ql) \cong \GL_3(\Ql) \times \Ql^\times$. We also have an isomorphism $H(\Ql) \cong \GL_2(\Ql) \times \Ql^\times$, given by $(\gamma, z) \mapsto (\gamma, i(z))$. Via these identifications, $\iota: H \into G$ corresponds to the map $\GL_2 \times \GG_m \to \GL_3 \times \GG_m$ given by
   \[
    \left[\stbt a b c d,\, x\right]\mapsto
    \left[\begin{smatrix} a && b\\ &x&\\c && d\end{smatrix},\,ad-bc\right].\qedhere
   \]
  \end{note}

 \subsection{Open orbits}

  The following relationship between $G$ and $H$ is crucial for our arguments:

  \begin{lemma}
   \label{lem:openorbit}
   Let $R$ be a $\ZZ[1/D]$-algebra, and let $Q_H^0$ be the subgroup $\{ (g, z) \in H: g = \stbt{\star}{\star}{0}{1}\}$. Then there exists an element $u \in N_G(R)$ such that the map
   \[ Q_H^0 \times \bar{B}_G \to G, \qquad (h, \bar{b}) \mapsto h u \bar{b}\]
   is an open immersion of $R$-schemes.
  \end{lemma}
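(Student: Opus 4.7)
The plan is to reduce to the split case by faithfully flat descent and then perform an explicit matrix calculation. Since ``open immersion'' is local for the fpqc topology on the target and $\Spec \cO[1/D] \to \Spec \ZZ[1/D]$ is a finite étale cover, it suffices to prove the statement when $R$ is an $\cO[1/D]$-algebra. In that case \cref{lem:isooverE} gives $G \cong \GL_3 \times \GG_m$, with $\bar B_G$ corresponding to the lower-triangular Borel of $\GL_3$ times $\GG_m$, and by \cref{note:splitlH}, $\iota(Q_H^0)$ is identified with the subgroup of pairs $\left(\begin{smallmatrix} a & & b \\ & z & \\ 0 & & 1 \end{smallmatrix}, a\right) \in \GL_3 \times \GG_m$ with $a, z \in R^\times$ and $b \in R$. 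I take $u := n(1, 0) \in N_G(R)$, whose image in the split $\GL_3$-factor is $u_1 = \begin{smatrix} 1 & \delta & \epsilon \\ & 1 & 1 \\ & & 1 \end{smatrix}$ (here $\delta \in R^\times$ because $\delta^2 = -D$ is invertible). Since $\nu(u) = 1$, the $\GG_m$-component of $\phi(h, \bar b) = hu\bar b$ is $(\nu_h, \nu_{\bar b}) \mapsto \nu_h \nu_{\bar b}$, with $\nu_h$ determined by the $\GL_3$-part of $h$ and $\nu_{\bar b}$ free; so $\phi$ is an open immersion as soon as the $\GL_3$-component
\[
\phi_1 : Q' \times \bar B' \longrightarrow \GL_3,\qquad (h_1, \bar b_1) \longmapsto h_1 u_1 \bar b_1,
\]
is, where $Q'$ and $\bar B'$ are the images of $\iota(Q_H^0)$ and $\bar B_G$ in $\GL_3$.

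For \emph{étaleness}, the equivariance $\phi_1(h_0 h,\, \bar b \bar b_0) = h_0\, \phi_1(h, \bar b)\, \bar b_0$ reduces étaleness at any point to étaleness at $(I, I)$, i.e.\ to showing that the differential $(X, Y) \mapsto \operatorname{Ad}(u_1^{-1}) X + Y$ is an isomorphism $\operatorname{Lie}(Q') \oplus \operatorname{Lie}(\bar B') \to \mathfrak{gl}_3$. Since $\operatorname{Lie}(\bar B') \subset \mathfrak{gl}_3$ is the lower-triangular subspace, this amounts to the strict-upper-triangular projection of $\operatorname{Ad}(u_1^{-1})\operatorname{Lie}(Q')$ being an isomorphism onto $R E_{12} \oplus R E_{13} \oplus R E_{23}$. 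A direct $3 \times 3$ calculation for $X = \begin{smatrix} \alpha & & \beta \\ & \zeta & \\ & & 0 \end{smatrix}$ yields
\[
\operatorname{Ad}(u_1^{-1})X \;\equiv\; \delta(\alpha - \zeta)\, E_{12} + (\epsilon \alpha + \beta - \delta\zeta)\, E_{13} + \zeta\, E_{23} \pmod{\operatorname{Lie}(\bar B')},
\]
and the resulting coefficient matrix in $(\alpha, \beta, \zeta)$ has determinant $\delta \in R^\times$.

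For \emph{injectivity} as a monomorphism, suppose $hu_1\bar b = h'u_1\bar b'$ as $T$-points for some scheme $T$; setting $h_0 := h'^{-1}h \in Q'(T)$ gives $u_1^{-1} h_0 u_1 \in \bar B'(T)$. The same conjugation computation, applied to the group element $h_0 = \begin{smatrix} a & & b \\ & z & \\ 0 & & 1 \end{smatrix}$, shows that the strict-upper entries of $u_1^{-1} h_0 u_1$ are $\delta(a - z)$, $\epsilon(a - 1) + b + \delta(1 - z)$, and $z - 1$; since $\delta \in R^\times$, requiring these to vanish forces $z = a = 1$ and then $b = 0$, so $h_0 = I$. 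Thus $\phi_1$ is an étale monomorphism of schemes, hence an open immersion, as desired. The only real computational obstacle is choosing $u$ so that both the $(1,2)$ and $(2,3)$ entries of $u_1$ are units---which $u = n(1, 0)$ achieves via the entries $\delta$ and $1$---which is precisely what makes the determinant of the differential equal to $\delta \in R^\times$.
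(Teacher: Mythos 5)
Your proof is correct, and it takes a genuinely different route from the paper's. The paper observes that since $Q_H^0 \subset H \cap B_G$ and $u \in N_G$, the translated map $\psi(h,\bar b) = (u^{-1}hu)\bar b$ lands in the big Bruhat cell $N_G T_G \bar N_G$, and the open-immersion claim reduces to showing that the composite $Q_H^0 \to B_G \twoheadrightarrow N_G$, $h \mapsto u^{-1}hu \bmod T_G$, is an open immersion; this is done by a single matrix computation that exhibits the image as the open subscheme $\{n(s,t): s \neq -1\}$ of $N_G$. You instead pass to $\cO[1/D]$ by fpqc descent, split $G$ as $\GL_3 \times \GG_m$, and prove directly that the $\GL_3$-component is an \'etale monomorphism: \'etaleness via the Lie-algebra differential at the identity plus bi-equivariance, and the monomorphism property via the same conjugation computation applied to group elements. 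Both arguments are valid. The paper's version is more self-contained and has the advantage of identifying the open image concretely, which is where the ``spherical variety'' interpretation comes from and feeds into the discussion after the lemma; yours is more mechanical and relies on two standard but nontrivial facts (open immersion is fpqc-local on the target, and \'etale monomorphism $\Rightarrow$ open immersion), in exchange for avoiding any reference to the Bruhat decomposition. One small point of care in your descent step: you should phrase it as checking the open-immersion property of the fixed $\ZZ[1/D]$-morphism after base change along the faithfully flat cover $\Spec \cO[1/D] \to \Spec \ZZ[1/D]$ (the Noetherian universal case), from which the statement for arbitrary $R$ follows by a further base change; as written, ``it suffices to prove the statement when $R$ is an $\cO[1/D]$-algebra'' is correct but slightly elides why the general $R$ then follows.
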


  \begin{proof}
   We shall show that $u = n(1, 0)$ has this property.

   Clearly $(h, \bar{b}) \mapsto h u \bar{b}$ is an open immersion if and only if the translated map $\psi: (h, \bar{b}) \mapsto u^{-1} h u \bar{b}$ is an open immersion. Since $Q_H^0$ is contained in $H \cap B_G$, this map $\psi$ factors through the ``big Bruhat cell'' $N_G \times T_G \times \bar{N}_G$, which is well-known to be open in $G$. So it suffices to show that $\psi$ is an open immersion into the big Bruhat cell, or, equivalently, that the composite
   \[ Q_H^0 \xrightarrow{h \mapsto u^{-1}hu} B_G \twoheadrightarrow B_G / T_G = N_G\]
   is an open immersion. After a mildly tedious matrix manipulation one sees that this map is given by
   \[
    (\stbt{z\bar{z}}{y}{0}{1}, z) \mapsto n\Big(z - 1, y + (\bar{z}-1)\varepsilon + (z-1)\bar{\varepsilon}\Big).
   \]
   This clearly identifies $Q_H^0$ with the open subscheme of $N_G$ consisting of the $n(s, t)$ with $s \ne -1$.
  \end{proof}

  \begin{remark}
   The openness of the image amounts to the claim that $\bar{B}_G \times Q^0_H$, or equivalently $B_G \times B_H$, has an open orbit on the homogenous $(G \times H)$-variety $\sX = H \bs (G \times H)$ (where $H$ is embedded diagonally in $G \times H$). In other words, $\sX$ is a \emph{spherical variety}. This fact will play a crucial role in the norm-compatibility relations for our Euler system, both in the ``tame direction'' (see \cref{cyc-thm}) and the ``$p$-direction'' (\cref{thm:wildnormint}).
  \end{remark}

 \subsection{Base change and $L$-factors}
  \label{sect:basechange}

  We now relate representations of $G$ with representations of the group $\operatorname{Res}_{E/\QQ}(\GL_3 \times \GL_1)$.

  \subsubsection*{Local case} For each prime $\ell$ split in $E /\QQ$, and each prime $w \mid \ell$ of $E$, the prime $w$ determines an isomorphism of $G(\Ql)$ with $\GL_3(\Ql) \times \Ql^\times$, as above.

  \begin{definition}
   If $\pi_\ell$ is an irreducible smooth representation of $G(\Ql)$, we let $\bc_w(\pi_\ell)$ denote the representation of $\GL_3(\Ql) \times \Ql^\times$ obtained from $\pi_\ell$ via this isomorphism.

   If $\tau_w \boxtimes \psi_w = \bc_w(\pi_\ell)$, then we write $\operatorname{BC}_w(\pi_\ell)$ for the representation $\tau_w \otimes (\psi_w \circ \det)$ of $\GL_3(\Ql)$, and $L_w(\pi_\ell, s)$ for the $L$-factor $L(\operatorname{BC}_w(\pi_\ell), s)$.
  \end{definition}

  If $v$ is a place which does not split (including the infinite place), and $w$ the place above $v$ in $E$, then there is also a base-change map $\bc_w$ taking \emph{tempered} representations of $G(\QQ_v)$ to tempered representations of $(\GL_3 \times \GL_1)(E_w)$; this is a consequence of the local Langlands correspondence for unitary groups due to Mok \cite[Theorem 2.5.1]{mok15}. (See \cite[Definition 3.5]{pollackshah18} for explicit formulae when $\ell \nmid D$ and $\pi_\ell$ is spherical.) As in the split case, if $\bc_w(\pi_v) = \tau_w \boxtimes \psi_w$, we use the notation $L_w(\pi_v, s)$ for $L(\tau_w \otimes \psi_w, s)$.

  In either case we write $L(\pi_{v}, s) = \prod_{w \mid v} L_w(\pi_v, s)$, which is the $L$-factor associated to $\pi_v$ and the natural 6-dimensional representation of the $L$-group of $G$.

 \subsubsection*{Global case} (The definitions in this section will not be used until \S\ref{sect:galcoh}.) We recall the following definition (see e.g.~\cite[\S 1]{BLGHT11}):

  \begin{definition}\label{def:RAECSDC}
   A ``RAECSDC'' (regular algebraic, essentially conjugate self-dual, cuspidal) automorphic representation of $\GL_3 / E$ is a pair $(\Pi, \omega)$, where $\Pi$ is a cuspidal automorphic representation of $\GL_3 / E$ and $\omega$ is a character of $\AA^\times / \QQ^\times$, such that:
   \begin{itemize}
    \item $\Pi_\infty$ is regular algebraic (or, equivalently, cohomological)
    \item $\Pi^c \cong \Pi^\vee \otimes (\omega \circ \operatorname{N}_{E/\QQ})$, where $\operatorname{N}_{E/\QQ}$ is the norm map, and $\Pi^c$ the composite of $\Pi$ and the involution $x \mapsto \bar{x}$ on $\GL_3(\AA_E)$.
   \end{itemize}
   We say $\Pi$ is RAECSDC if there exists some $\omega$ such that $(\Pi, \omega)$ is RAECSDC.
  \end{definition}

  \begin{theorem}[Mok]
   \label{thm:autdescent}
   Let $(\Pi, \omega)$ be a RAECSDC automorphic representation of $\GL_3 / E$. Then there exists a unique globally generic, cuspidal automorphic representation $\pi$ of $G$ such that $BC_w(\pi_v) = \Pi_w$ for every prime $w$ of $E$, where $v$ is the place of $\QQ$ below $w$, and $\pi$ has central character $\chi_{\pi}^c / (\omega \circ \operatorname{N}_{E/\QQ})$. Moreover, $\pi$ is essentially tempered for all places $v$, and $\pi_\infty$ is cohomological for $G(\RR)$; and $\pi$ has multiplicity one in the discrete spectrum of $G$.
  \end{theorem}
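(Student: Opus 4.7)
The plan is to deduce the theorem from Mok's endoscopic classification of automorphic representations of quasi-split unitary groups, combined with a standard passage from the isometry group $G_0 = U(2,1)$ to the similitude group $G = \operatorname{GU}(2,1)$. The essential ingredients are Mok's descent theorem, the local Langlands correspondence for unitary groups contained in the same work, the Ramanujan conjecture for regular algebraic cuspidal representations of $\GL_n$ over CM fields, and the well-understood behaviour of $L$-packets and centres under the inclusion $G_0 \hookrightarrow G$.

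First I would invoke the fact that, since $(\Pi,\omega)$ is RAECSDC with $\Pi$ cuspidal, the global Arthur parameter on $U(2,1)$ corresponding to $\Pi$ (with polarisation type determined by $\omega$) is a simple generic tempered parameter. Mok's multiplicity formula then produces a unique discrete automorphic representation $\pi_0$ of $G_0(\AA)$ whose stable base-change to $\GL_3(\AA_E)$ is $\Pi$. At every place $v$ the local component $(\pi_0)_v$ is prescribed by $\Pi_w$ via the local Langlands correspondence for unitary groups; at split finite places this reduces to the isomorphism $U(2,1)(\QQ_\ell) \cong \GL_3(\QQ_\ell)$ induced by the choice of $w \mid \ell$, so that the identity $\operatorname{BC}_w((\pi_0)_v) = \Pi_w$ holds at every place.

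Next, using the decomposition $G(R) = G_0(R) Z_G(R)$ from \eqref{eq:G0} together with the identification $Z_G \cong \Res_{\cO/\ZZ}\GG_m$, I would extend $\pi_0$ to a representation $\pi$ of $G(\AA)$ by prescribing its restriction to $Z_G(\AA)$ in terms of the central character of $\Pi$ and $\omega \circ \operatorname{N}_{E/\QQ}$; the relation $G_0 \cap Z_G = \ker(\operatorname{N}_{E/\QQ})$ makes this extension well-defined and unique. Global genericity is then arranged by selecting, inside the resulting tempered $L$-packet on $G$, its unique generic constituent (Shahidi's conjecture, proved in the unitary setting as part of Mok's work).

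Finally, essential temperedness at all finite places is the Ramanujan conjecture of Caraiani applied to the regular algebraic conjugate self-dual cuspidal $\Pi$ and transported to $G$ via local base-change. Cohomology of $\pi_\infty$ follows from the regular algebraicity of $\Pi_\infty$ via the archimedean local Langlands correspondence, since a regular algebraic parameter on $\GL_3(E\otimes\RR)$ descends to a cohomological parameter for $\operatorname{GU}(2,1)(\RR)$. Multiplicity one on $G_0$ is immediate from the simplicity of the parameter in Mok's formula, and it passes to $G$ through the extension step. The main obstacle in writing out such a proof carefully is the book-keeping involved in the passage from the isometry to the similitude group while simultaneously controlling the central character, the choice of generic constituent in each local packet, and multiplicity one; the argument is essentially an assembly of known results rather than a fundamentally new construction.
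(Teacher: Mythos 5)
Your outline follows the same high-level route as the paper --- descend to the isometry group $G_0$ via Mok's endoscopic classification and then extend to the similitude group $G$ using the decomposition $G(R) = G_0(R)Z_G(R)$ --- but it glosses over the twist that makes the descent possible, and this is precisely the technical point the paper is careful about. Since $(\Pi,\omega)$ is only \emph{essentially} conjugate self-dual, $\Pi$ itself does not correspond to a parameter on $U(2,1)$: taking determinants in $\Pi^c \cong \Pi^\vee \otimes (\omega\circ\operatorname{N}_{E/\QQ})$ gives $\chi_\Pi\chi_\Pi^c = (\omega\circ\operatorname{N}_{E/\QQ})^3$, and one then checks that $\tau \coloneqq \Pi\otimes\psi^{-1}$ with $\psi = \chi_\Pi/(\omega\circ\operatorname{N}_{E/\QQ})$ is genuinely conjugate self-dual, whereas $\Pi$ is not (unless $\omega$ happens to be trivial). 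It is $\tau$, not $\Pi$, that is the (stable) base change of $\pi_0$ under Mok's Example 2.5.8. Your assertion that ``Mok's multiplicity formula then produces a unique discrete automorphic representation $\pi_0$ of $G_0(\AA)$ whose stable base-change to $\GL_3(\AA_E)$ is $\Pi$'' is therefore not correct as stated, and this is not merely cosmetic: the subsequent central-character bookkeeping --- identifying the central character of $\pi_0$ as $\psi^c|_{Z_{G_0}}$, and then extending to a representation $\pi$ of $G$ with central character $\psi^c$ and base change $\tau\boxtimes\psi$ so that $BC_w(\pi_v) = \tau_w\otimes(\psi_w\circ\det) = \Pi_w$ --- depends on having the twist in place.

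A secondary gap: you assert that multiplicity one ``passes to $G$ through the extension step'' as if this were automatic. It is not. Passing from $G_0$ to $G$ involves controlling the automorphic extensions of $\pi_0$ along $Z_G(\AA)/(Z_G\cap G_0)(\AA)$ and showing that the extension you want has multiplicity one in the discrete spectrum of $G$; this requires the argument of Clozel--Harris--Labesse (\S 1.1 of their paper on construction of automorphic Galois representations), which the paper cites explicitly. A complete write-up should invoke that argument rather than treating the step as immediate.
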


  \begin{proof}
   We briefly indicate how to deduce this from the results of \cite{mok15} (which are formulated for $G_0$ rather than $G$). Let $\psi$ be the character $\chi_{\Pi} / (\omega \circ \operatorname{N}_{E/\QQ})$. Then the representation $\tau = \Pi \otimes \psi^{-1}$ is regular algebraic and conjugate self-dual; so by Example 2.5.8 of \emph{op.cit.} it descends to a generic $L$-packet for $G_0$, all of whose members have multiplicity one in the discrete spectrum of $G_0$. In particular, this $L$-packet has a unique generic member $\pi_0$. From the compatibility with local base-change, one computes that the central character of $\pi_0$ has to be the restriction of $\psi^c$ to $Z_{G_0}$. Hence, by \eqref{eq:G0}, the representation $\pi_0$ extends uniquely to a representation $\pi$ of $G$ with central character $\psi^c$, whose base-change is $\tau \boxtimes \psi$; and $\pi$ has multiplicity one in the discrete spectrum of $G$ by the argument of \cite[\S 1.1]{clozelharrislabesse}.
  \end{proof}

  \begin{remark}
   Our definitions are chosen in such a way that twisting $\pi$ by $\alpha \circ \mu$, for $\alpha$ a character of $\AA_E^\times / E^\times$, corresponds to twisting $\Pi$ by $\alpha \circ \det$ (and replacing $\omega$ with $\omega \cdot \alpha|_{\AA_{\QQ}^\times}$). This is the motivation for the apparently rather arbitrary definition of the character $\mu$.
  \end{remark}

  \begin{definition}
   We say that a cohomological automorphic representation $\pi$ of $G(\AA)$ is \emph{non-endoscopic} if it arises from the above construction for some RAECSDC representation $(\Pi, \omega)$ (or, equivalently, if $\pi$ is globally generic and $\operatorname{BC}(\pi)$ is cuspidal).
  \end{definition}

  \begin{remark}
   Note that not all regular algebraic cuspidal representations of $G$ arise from this construction: there are other ``endoscopic'' representations, arising by functoriality from $U(1, 1) \times U(1)$ or $U(1)^3$, which are cuspidal but have non-cuspidal base-change to $\GL_3$. However, these representations are not interesting from the perspective of constructing Euler systems, since they correspond to globally reducible Galois representations.
  \end{remark}

\section{Formalism of equivariant maps}
\label{sect:formalism}

 \subsection{Definitions}

  Let $S$ be a nonempty set of (rational) primes and let $\QQ_S$ denote the restricted direct product of the $\QQ_\ell$ for $\ell \in S$. We let $G_S = G(\QQ_S)$ and similarly $H_S$.

  Let $L$ be any field of characteristic 0, and write $\cS(G_S, L)$ for the space\footnote{This is the ``Hecke algebra'' of $G_S$, but the algebra structure depends on a choice of Haar measure on $G$, and we shall avoid making a choice for the moment and thus not use the algebra structure yet.} of compactly-supported, locally-constant $L$-valued functions on $G_S$.  We write $\cS(\QQ_S^2, L)$ for the space of Schwartz functions on $\QQ_S^2$.

  \begin{definition}
   \label{def:equivariant}
   Let $\mathcal{V}$ be a smooth $L$-linear (left) representation of $G_S$. We shall say an $L$-linear map
   \[ \mathfrak{Z}: \cS_{(0)}\left(\QQ_S^2, L\right) \otimes_L \cS\left(G_S, L\right) \to \mathcal{V} \]
   is \emph{$G_S \times H_S$-equivariant} if it is equivariant for the following (left) actions of $G_S \times H_S$:
   \begin{itemize}
    \item $G_S$ acts on the left-hand side by $g \cdot ( \phi \otimes \xi ) = \phi \otimes \xi( (-)g)$, and on the right-hand side by its given action on $\mathcal{V}$;
    \item $H_S$ acts on the left-hand side by $h \cdot (\phi \otimes \xi) = \phi( (-)h) \otimes \xi(h^{-1}(-))$, and trivially on the right-hand side.
   \end{itemize}
   Equivalently, these are the $G_S$-equivariant maps $\cI(G_S, L) \to \mathcal{V}$, where $\cI(G_S, L)$ is the $H_S$-coinvariants of $\cS\left(\QQ_S^2, L\right) \otimes_L \cS\left(G_S, L\right)$.
  \end{definition}

  We can make similar definitions with $\cS$ replaced with the space $\cS_0(\QQ_S^2, L)$ of Schwartz functions vanishing at $(0, 0)$; we write $\cI_0(G_S, L)$ for the $H_S$-coinvariants of $\cS_0\left(\QQ_S^2, L\right) \otimes_L \cS\left(G_S, L\right)$. In order to avoid unnecessary repetition, we adopt the following notational shortcut:

  \begin{notation}
   We write $\cS_{(0)}\left(\QQ_S^2, L\right)$ to denote a statement which is valid for either $\cS$ or $\cS_0$, and correspondingly $\cI_{(0)}$.
  \end{notation}

  As in \cite[\S 3.9]{LSZ17}, once a Haar measure on $G_S$ is chosen, one can identify $\cI_{(0)}(G_S, L)$ with the compact induction $\operatorname{cInd}_{H_S}^{G_S}(\cS_{(0)}(\QQ_S^2, L))$. It then follows from Frobenius reciprocity that $G_S$-equivariant maps $\cI_{(0)}(G_S, L) \to \mathcal{V}$ biject with $H$-invariant bilinear forms $\cS_{(0)}\left(\QQ_S^2, L\right) \otimes \mathcal{V}^\vee \to L$, where $\mathcal{V}^\vee$ is the smooth dual of $\mathcal{V}$ as a $G_S$-representation. (However, this bijection is not entirely canonical, since it depends on a choice of Haar measure on $G_S$.)

  \begin{definition}
   Let $U$ be an open compact subgroup of $G_S$. We shall write $\cI_{(0)}(G_S / U, \QQ)$ for the image in $\cI_{(0)}(G_S, \QQ)$ of the $U$-invariants $\cS_{(0)}\left(\QQ_S^2, L\right) \otimes \cS(G_S / U, L)$.
  \end{definition}

 \subsection{Integrality}

  Let us fix a Haar measure $\vol_{H, S}$ on $H_S$, which we suppose to be $\QQ$-valued.

  \begin{definition}
   \label{def:integral}
   We shall say an element of $\cI_{(0)}(G_S / U, \QQ)$ is \emph{primitive integral at level $U$} if it can be written in the form $\phi \otimes \ch(g  U)$ for some $\phi \in \cS_{(0)}$ and $g \in G_S$, and the function $\phi$ takes values in the fractional ideal $C \ZZ$, where we define
   \[ C = \frac{1}{\vol_{H, S}\left(g U g^{-1} \cap \operatorname{stab}_{H_S}(\phi)\right)}.\]
   An element of $\cI_{(0)}(G_S / U, \QQ)$ is said to be \emph{integral at level $U$} if it is a sum of primitive integral elements at level $U$; and we write the set of such elements as $\cI_{(0)}(G_S / U, \ZZ)$.
  \end{definition}

  Clearly, any element of $\cI_{(0)}(G_S / U, \QQ)$ can be scaled into $\cI_{(0)}(G_S /U, \ZZ)$. More generally, we can replace $\QQ$ with a number field $L$, and $\ZZ$ with $\cO_L[1 /\Sigma]$ for any set of primes $\Sigma$ of $L$.

  \begin{remark}
   This definition may seem bizarre at first sight; its motivation is the following. Later in this paper, we shall construct $G_S \times H_S$-equivariant maps into the motivic and \'etale cohomology of Shimura varieties for $G$, analogous to the ``Lemma--Eisenstein map'' considered in \cite{LSZ17} for the $\operatorname{GSp}_4$ case. However, the definition of these maps involves various volume factors, so it is far from obvious \emph{a priori} which input data give rise to classes in the integral \'etale cohomology. The above notion of ``integral elements'' is designed for exactly this purpose.
  \end{remark}

  Note that the definition of integrality depends on the level $U$, but we have the following compatibilities. For any $U' \subseteq U$ open compacts, we have an inclusion $\cS(G / U, \QQ) \into \cS(G / U', \QQ)$, and a trace map $\cS(G / U', \QQ) \to \cS(G / U, \QQ)$ mapping $\xi$ to $\sum_{\gamma \in U / U'} \xi((-)\gamma)$. Tensoring with the identity of $\cS_{(0)}(\QQ_S^2)$ gives maps $\cI_{(0)}(G_S / U, \QQ) \into \cI_{(0)}(G_S / U', \QQ)$ (``pullback'') and $\cI_{(0)}(G /  U', \QQ) \to \cI_{(0)}(G /  U, \QQ)$ (``pushforward''), whose composite is multiplication by $[U : U']$ on $\cI_{(0)}(G_S / U, \QQ)$.

  \begin{proposition}
   The above maps restrict to maps $\cI_{(0)}(G_S / U, \ZZ) \into \cI_{(0)}(G_S / U', \ZZ)$ and $\cI_{(0)}(G /  U', \ZZ) \to \cI_{(0)}(G /  U, \ZZ)$ respectively.
  \end{proposition}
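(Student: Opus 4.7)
The plan is to handle the pushforward and pullback separately: the pushforward should follow immediately from a monotonicity argument for the defining volumes, while the pullback requires an orbit-sum decomposition inside the quotient $\cI_{(0)}$.

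For the pushforward, I first note that the trace of $\phi \otimes \ch(gU')$ equals $\phi \otimes \ch(gU)$, since $\sum_{\gamma} \ch(gU'\gamma^{-1}) = \ch(gU)$ as $\{\gamma^{-1}\}$ runs through coset representatives of $U' \backslash U$. The inclusion $gU'g^{-1} \subseteq gUg^{-1}$ gives $gU'g^{-1}\cap \operatorname{stab}_{H_S}(\phi) \subseteq gUg^{-1}\cap \operatorname{stab}_{H_S}(\phi)$, hence the level-$U$ volume is at least the level-$U'$ volume, so the corresponding constants satisfy $C_U \leq C_{U'}$. Thus $C_{U'}\ZZ \subseteq C_U\ZZ$, and primitivity at level $U'$ automatically upgrades to primitivity at level $U$.

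For the pullback, let $\phi \otimes \ch(gU)$ be primitive integral at $U$, set $A := gUg^{-1}\cap \operatorname{stab}_{H_S}(\phi)$ and $C = 1/\vol_{H,S}(A)$, and decompose $\phi \otimes \ch(gU) = \sum_{\gamma \in U/U'} \phi \otimes \ch(g\gamma U')$ in $\cI_{(0)}(G_S/U', \QQ)$. The subgroup $A$ acts on the finite set $gU/U'$ by left multiplication (since $A \subseteq gUg^{-1}$), and I would group the summands by $A$-orbits. For each orbit $o$ of size $n_o$, pick a representative $g\gamma_o U'$; then every other member has the form $a g\gamma_o U'$ for some $a \in A$, and since $a \in \operatorname{stab}_{H_S}(\phi)$ the $H_S$-equivalence in $\cI_{(0)}$ gives
\[
 \phi \otimes \ch(a g\gamma_o U') = a \cdot \bigl(\phi \otimes \ch(g\gamma_o U')\bigr) \sim \phi \otimes \ch(g\gamma_o U').
\]
So the partial sum over $o$ collapses to $n_o\, \phi \otimes \ch(g\gamma_o U')$.

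It remains to verify that each such collapsed term is primitive integral at level $U'$. Setting $A_o := A \cap g\gamma_o U' \gamma_o^{-1}g^{-1}$ and using $g\gamma_o U'\gamma_o^{-1}g^{-1} \subseteq gUg^{-1}$ (from $\gamma_o \in U$), the stabilizer intersection appearing in the primitivity condition for $n_o \phi$ equals $A_o$; hence the constant is $C_o = 1/\vol_{H,S}(A_o) = n_o C$, and the requirement $n_o\phi \in C_o\ZZ = n_oC\ZZ$ reduces to $\phi \in C\ZZ$, which is exactly the hypothesis. The main obstacle I anticipate is spotting the correct decomposition: grouping the summands by the full $H_S$-orbits on $gU/U'$ is the most natural first attempt, but then the orbit sums do not collapse to scalar multiples and the stabilizer analysis of the resulting Schwartz function is delicate. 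Using instead the smaller group $A$, which is by design contained in $\operatorname{stab}_{H_S}(\phi)$, is what makes the orbit sums simplify and forces the volume bookkeeping to balance exactly.
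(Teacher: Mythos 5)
Your proof follows the same route as the paper's: for the trace map you compare normalising constants directly, and for the inclusion map you consider the orbits of $V = gUg^{-1}\cap\operatorname{stab}_{H_S}(\phi)$ on the $U'$-cosets inside $gU$, which is exactly the decomposition the paper points to. The pullback half is carried out correctly and in more detail than the paper bothers to give: identifying the stabiliser of $g\gamma_o U'$ inside $A$ with $A_o = A\cap g\gamma_o U'\gamma_o^{-1}g^{-1}$, using orbit--stabiliser to get $\vol(A_o)=\vol(A)/n_o$, and observing that the factor $n_o$ multiplying $\phi$ exactly cancels the new normalising constant $C_o = n_o C$.

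One small slip in the pushforward part: from $C_U \leq C_{U'}$ alone you cannot conclude $C_{U'}\ZZ \subseteq C_U\ZZ$ (take $C_U=2$, $C_{U'}=3$). What you actually need, and what is true, is divisibility: since $gU'g^{-1}\cap\operatorname{stab}_{H_S}(\phi)$ is a finite-index subgroup of $gUg^{-1}\cap\operatorname{stab}_{H_S}(\phi)$, the ratio $C_{U'}/C_U$ equals that index, hence is a positive integer, so $C_{U'}\in C_U\ZZ$ and $C_{U'}\ZZ \subseteq C_U\ZZ$. Insert that one observation and the argument is complete.
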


  \begin{proof}
   Evidently, it suffices to check either statement on primitive integral elements. For the trace map this is selfevident, as the trace sends a coset $\ch(gU')$ to $\ch(gU)$, and the corresponding normalising factors $C'$ and $C$ satisfy $C' \mid C$, so primitive integral elements map to primitive integral elements. The reverse-direction map is a little more intricate, and follows by considering the orbits of the group $V = g U g^{-1} \cap \operatorname{stab}_{H_S}(\phi)$ on the $U'$-cosets contained in a given $U$-coset.
  \end{proof}

  \begin{remark}
   One can interpret the system of abelian groups $\cI_{(0)}(G_S / U, \ZZ)$, for varying $U$, as a ``Cartesian cohomology functor'' in the sense of \cite{loeffler-spherical}.
  \end{remark}

\section{Spherical Hecke algebras and cyclicity}
\label{sect:spherical}
 \subsection{Where we are going}

  Let $\ell$ be an odd prime unramified in $E$, and set $G_\ell = G(\Ql)$ and $H_\ell$ similarly. We normalise the Haar measures by $\vol_{H_\ell}(H^0_\ell) = 1$, where $H^0_\ell = H(\Zl)$, and similarly for $G$. For $w \mid \ell$ a prime of $E$, we define
  \[ G^0_\ell[w] = \{ g \in G^0_\ell: \mu(g) = 1 \bmod w\}.\]

  We would \emph{like} to prove the following statement (an ``abstract norm relation''): if $\delta_0 = \ch(\Zl^2) \otimes \ch(G^0_\ell)$ is the natural spherical vector of $\cI(G_\ell / G^0_\ell, \ZZ)$, then there exists an element
  \[
   \delta_w \in \cI\left(G_\ell / G^0_\ell[w], \ZZ\right)\quad\text{such that}\quad
    \operatorname{norm}_{G^0_\ell}^{G^0_\ell[w]}\left(\delta_w\right) = \cP'_w(1) \cdot \delta_0,
  \]
  where $\cP'_w$ (to be defined below) is a certain polynomial over the spherical Hecke algebra, related to local Euler factors. What we shall actually prove, as \cref{thm:absnorm2} below, is something a little weaker than this, but still sufficient for applications: $\delta_w$ is only integral up to powers of $\ell$, and if $\ell$ is inert, the equality $\operatorname{norm}_{G^0_\ell}^{G^0_\ell[w]}\left(\delta_w\right) = \cP'_w(1) \cdot \delta_0$ only holds up to inverting a certain element in the centre of the Hecke algebra.

  We shall prove this statement in two stages. Firstly, we shall show that for any open $U \subseteq G^0_\ell$ and any $\delta \in \cI\left(G_\ell / U, \ZZ\right)$, there exists an element $\cP_\delta$ lying in (a localisation of) the spherical Hecke algebra of $G_\ell$ such that $\operatorname{norm}_{G^0_\ell}^{U}\left(\delta\right) = \cP_\delta \cdot \delta_0$. This relies crucially on a cyclicity result for Hecke algebras due to Sakellaridis (\cref{cyc-thm}).

  Secondly, we shall write down a candidate for $\delta_w$ and verify that it is integral at level $G^0_\ell[w]$ up to powers of $\ell$. The aforementioned results then show that $\operatorname{norm}_{G^0_\ell}^{G^0_\ell[w]}\left(\delta_w\right)$ is the image of $\delta_0$ under some Hecke operator $\cP_{\delta_w}$. Via a lengthy but routine computation with local zeta integrals, we show that this Hecke operator must be equal to $\cP_w'(1)$. This completes the proof.

 \subsection{Preliminaries}

  As in the previous section, let $\ell \nmid D$ be a prime. From here until the end of \cref{sect:spherical}, all Schwartz spaces and Hecke algebras are over $\CC$ and we omit this from the notation.

  \subsubsection{Hecke algebras}

   Let $\cH_{G,\ell}$ denote the Hecke algebra, whose underlying vector space is $\cS(G_\ell)$ and whose algebra structure is given by convolution with respect to some choice of Haar measure $\mathrm{d}x$:
   \[ (\xi_1 \star \xi_2)(x) = \int_{g \in G_\ell}\xi_1(g) \xi_2(g^{-1}x)\dd g
   = \int_{g \in G_\ell}\xi_1(x g^{-1}) \xi_2(g)\dd g.\]

   Any smooth left representation of $G_\ell$ can be regarded as a left $\cH_{G,\ell}$-module, via the action
   \[ \xi\star  v=\int_{G_\ell} \xi(g)\, (g\cdot v)\dd g. \]
   In particular, if $\xi = \ch(gK)$ for some subgroup $K$, and $g$ is $K$-invariant, then $\xi \star v = \vol(K) g\cdot v$. Similar constructions apply to right modules; and these constructions are compatible with the $(\cH_{G,\ell}, \cH_{G,\ell})$-bimodule structure of $\cH_{G,\ell}$ itself, if we define
   \[ g_1 \cdot \xi \cdot g_2 = \xi\left(g_1^{-1}(-) g_2^{-1}\right).\]

   The same constructions apply likewise with $H_\ell$ in place of $G_\ell$. Since a smooth $G_\ell$-representation is in particular a smooth $H_\ell$-representation by restriction, we can regard such representations as modules over either $\cH_{G,\ell}$ or $\cH_{H,\ell}$, and if necessary we write $\star_G$ or $\star_H$ to distinguish between the two convolution operations.

   If $\xi \in \cH_{G, \ell}$, we write $\xi'$ for its pullback via the involution $g \mapsto g^{-1}$ of $G_\ell$, and similarly for $\cH_{H, \ell}$.

  \subsubsection{Spherical Hecke algebras}

   Let $G^0_\ell=G(\Zl)$ and $H^0_\ell=H(\Zl)$. These are hyperspecial maximal compacts of $G_\ell$ and $H_\ell$, respectively. We suppose that the Haar measures on $G_\ell, H_\ell$ are chosen such that $G^0_\ell$ and $H^0_\ell$ have volume 1. The associated spherical Hecke algebras
   \[
   \cH_{G, \ell}^0 = C_c(G^0_\ell \bs G_\ell /G^0_\ell),\qquad \cH_{H, \ell}^0 = C_c\left(H^0_\ell \bs H_\ell/H^0_\ell\right).
   \]
   are commutative rings, and can be described (via the Satake isomorphism) as Weyl-group invariant polynomials in the Satake parameters.

  \subsubsection{Equivariant maps}

   We write $[-]$ for the quotient map from $\cS(\Ql^2) \otimes \cH_{G, \ell}$ to its $H_\ell$-coinvariants $\cI(G_\ell)$, with the actions as given in \cref{def:equivariant}. An easy unravelling of definitions shows that
   \[
    \left[\phi \otimes (\xi_1 \star_G \xi_2) \right] = \xi_2' \star_G [\phi \otimes \xi_1]
   \]
   for all $\phi \in \cS(\Ql^2)$, $\xi_1, \xi_2 \in \cH_{G, \ell}$, and
   \[
    \left[(\chi \star_H \phi) \otimes \xi \right] = \left[\phi \otimes (\chi' \star_H \xi )\right]
   \]
   for all $\phi \in \cS(\Ql^2)$, $\xi \in \cH_{G, \ell}$, $\chi \in \cH_{H, \ell}$.

  \subsubsection{Cyclicity}

   We can consider the space
   \[
    \cH = \cS(H^0_\ell \bs G_\ell /G^0_\ell),
   \]
   of smooth, compactly supported functions $G_\ell \rightarrow \CC$ that are left $H^0_\ell$-invariant and right $G^0_\ell$-invariant. This is evidently a $(\cH^0_{\cH, \ell}, \cH^0_{G, \ell})$-bimodule, via the convolution operations $\star_H$ and $\star_G$.

   \begin{theorem}
    \label{cyc-thm}
    $\cH$ is cyclic as an $(\cH^0_{\cH, \ell}, \cH^0_{G, \ell})$-bimodule, generated by the characteristic function $\xi_0 = \ch(G^0_\ell)$ of $G^0_\ell$. That is, every $\xi \in \cH$ can be written as a finite sum $\sum_i \alpha_i \star_H\beta_i$, for $\alpha_i \in \cH^0_{\cH, \ell}$ and $\beta_i \in \cH^0_{G, \ell}$.
   \end{theorem}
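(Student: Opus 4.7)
The plan is to realize $\cH$ as the spherical Hecke module associated with the $(G\times H)$-variety $\sX = H\bs(G\times H)$ (where $H$ sits diagonally via $h\mapsto(\iota(h),h)$) and then to apply the cyclicity theorem for unramified spherical varieties proved in \cite{sakellaridis13}.

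First, I would identify $\sX(\Ql)$ with $G_\ell$ via the map $[g,h]\mapsto \iota(h)^{-1}g$, which intertwines the residual right action of $G_\ell\times H_\ell$ on $\sX(\Ql)$ with the action $x\cdot(g',h') = \iota(h')^{-1}xg'$ on $G_\ell$. Under this identification, the subspace of $\cS(\sX(\Ql))$ fixed by the hyperspecial maximal compact subgroup $K = G^0_\ell\times H^0_\ell$ is precisely $\cH = \cS(H^0_\ell\bs G_\ell/G^0_\ell)$, and the natural action of the spherical Hecke algebra $\cH^0_{G,\ell}\otimes \cH^0_{H,\ell}$ on $\cS(\sX(\Ql))^K$ matches the $(\cH^0_{H,\ell},\cH^0_{G,\ell})$-bimodule structure on $\cH$ given by the convolutions $\star_H$ and $\star_G$. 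Under this translation, $\xi_0 = \ch(G^0_\ell)$ corresponds to the characteristic function of the $K$-orbit through the image of the identity, which is the distinguished ``basepoint'' orbit in $\sX(\Ql)$.

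Second, I would verify that $\sX$ is spherical and of the type required by the theorem of \cite{sakellaridis13}. This is the content of \cref{lem:openorbit} together with the subsequent remark: the open immersion $Q_H^0\times \bar B_G\hookrightarrow G$ produces an open $(\bar B_G\times B_H)$-orbit on $\sX$, and the explicit form of the parametrisation exhibits the stabiliser of a point of this open orbit as trivial. Combined with $\ell\nmid D$ (so that $G$ and $H$ are reductive over $\ZZ_\ell$ and the integral structure on $\sX$ is smooth at $\ell$), this supplies the geometric input for the cyclicity theorem. Sakellaridis's theorem then asserts that $\cS(\sX(\Ql))^K$ is cyclic as an $\cH^0_{G,\ell}\otimes \cH^0_{H,\ell}$-module, generated by the characteristic function of the basepoint $K$-orbit; transporting back yields that $\cH$ is generated by $\xi_0$ as an $(\cH^0_{H,\ell},\cH^0_{G,\ell})$-bimodule, which is the assertion of the theorem.

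The main obstacle is matching our situation precisely to the hypotheses of Sakellaridis's theorem, which involve combinatorial invariants of the spherical variety (such as the structure of the associated ``little Weyl group'' and the agreement of $\sX$ with the affine closure of its open $B$-orbit) together with unramifiedness of the local field. All of these should reduce in the end to the open-orbit data recorded in \cref{lem:openorbit}, but the statement needed here is not quite verbatim that of the published paper and depends on the generalisation credited to Sakellaridis in the acknowledgements.
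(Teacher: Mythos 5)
Your proposal reproduces the remark that the paper itself makes immediately after the theorem statement, but it does not give a proof of the theorem; there is a genuine gap, and it is the same gap the authors flag before turning to the appendix.

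Concretely: Sakellaridis's Corollary~8.0.4 in \cite{sakellaridis13} is formulated for \emph{split} reductive groups over $\Ql$ (and spherical varieties satisfying a nontrivial list of combinatorial hypotheses — e.g.\ on the little Weyl group and on $\sX$ coinciding with the affine closure of its open $B$-orbit). When $\ell$ is inert in $E$, the group $G$ is a quasi-split but non-split unitary group over $\Ql$, so $\sG = G\times H$ is not split, and the published cyclicity theorem simply does not apply. You acknowledge that the ``statement needed here is not quite verbatim that of the published paper and depends on the generalisation credited to Sakellaridis in the acknowledgements,'' but appealing to an unwritten generalisation is not a proof. Even in the split case, you note that one must verify the combinatorial hypotheses and assert that ``all of these should reduce in the end to the open-orbit data,'' without carrying out that verification; the authors make the same comment (``the hypotheses of Sakellaridis' general result are not entirely straightforward to verify in our setting'') precisely as a justification for \emph{not} relying on this route.

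What the paper actually does is give a self-contained, elementary proof in the appendix (\cref{cyc-thm-2}), inspired by the uniqueness proofs of Whittaker/Shintani functions in \cite{MS-GL,katomurasesugano03}. The argument proceeds by first proving a double-coset decomposition $G_0 = \bigcup_{(\mu,\lambda)\in\Lambda} U t(\mu) n_0 t(\lambda) K_0$ (\cref{decomp-lem} in the split case, \cref{decomp-lem2} in the inert case), then a support lemma (\cref{supp-lem} and its inert analogue) controlling which double cosets can occur in the support of $\ch(Ut(\mu)U)\star\ch(K_0 t(\lambda)^{-1}K_0)$, and finally an induction on a well-ordered parameter set. This handles both the split and inert cases uniformly and avoids invoking any unpublished extension of the spherical-variety theory. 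To turn your proposal into a proof, you would need either to supply this direct argument, or to verify Sakellaridis's hypotheses in the split case and additionally supply a separate argument (or a precise citation for a published generalisation) covering the non-split case.
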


   If $\ell$ is split, this can be deduced from Corollary 8.0.4 of \cite{sakellaridis13}, applied to the group $\sG= G \times H$, acting by right-translation on the quotient $\sX = H \bs (G \times H)$, where $H$ embeds into $G \times H$ via $(\iota, \mathrm{id})$. It follows easily from \cref{lem:openorbit} that $\sX$ is \emph{spherical} as a $\sG$-variety, i.e.~the Borel subgroup $B_{\sG} = B_G \times B_H$ has an open orbit on $\sX$. Sakellaridis' result shows that for any split reductive group $\sG$ over $\Zl$ and spherical $\sG$-variety $\sX$ satisfying a certain list of conditions, the space of $\sG(\Zl)$-invariant Schwartz functions on $\sX(\Ql)$ is cyclic as a module over the unramified Hecke algebra of $\sG$, generated by the characteristic function of $\sX(\Zl)$; applying this to our $\sG$ and $\sX$ gives the theorem.

   However, since the hypotheses of Sakellaridis' general result are not entirely straightforward to verify in our setting, and Sakellaridis' argument does not cover the non-split case, we shall give a direct proof in an appendix; see \cref{cyc-thm-2}.

   \begin{remark}
    This theorem implies, in particular, that if $\pi_\ell$ and $\sigma_\ell$ are irreducible unramified representations of $G_\ell$ and $H_\ell$ respectively, then any element of $\Hom_{H_\ell}(\pi_\ell \otimes \sigma_\ell, \CC)$ is uniquely determined by its value on the spherical vectors, so the Hom-space has dimension $\le 1$. This relates our present approach to that of \cite{LSZ17}, where a ``multiplicity $\le 1$'' statement of this kind was taken as a starting-point for proving norm relations.
   \end{remark}

 \subsection{Hecke action on Schwartz functions}

  \begin{definition}
   Let us write $A$ for the torus $H \cap \iota^{-1}(Z_G)$, and $z_A: \GG_m \xrightarrow{\cong} A$ the map sending $x \mapsto (\stbt x {}{} x, x)$.
  \end{definition}

  The spherical Hecke algebra $\cH^0_{A, \ell}$, with respect to the (unique) maximal compact $A^0_\ell = A(\Zl) \cong \Zl^\times$, is isomorphic to $\CC[X, X^{-1}]$, where $X = \ch( z_A(\ell) A^0_\ell)$.

  \begin{definition}
   We let $\Delta_G$ and $\Delta_H$ be the maps $\cH^0_{A, \ell} \to \cH^0_{G, \ell}$ and $\cH^0_{A, \ell} \to \cH^0_{H, \ell}$ mapping $z_A(\ell^t) A(\Zl)$ to $z_A(\ell^t)G(\Zl)$ and $z_A(\ell^t)H(\Zl)$ respectively.
  \end{definition}

  These maps are both injective, and their images are central subalgebras of $\cH^0_{G, \ell}$ and $\cH^0_{H, \ell}$ respectively.

  \begin{lemma}
   Let $\phi_0 = \ch(\Zl^2)$. There exists a unique homomorphism
   \[ \zeta_H: \cH^0_{H, \ell} \to \cH^0_{A, \ell} \]
   such that
   \[ \xi \cdot \phi_0 = (\Delta_H \circ \zeta_H)(\xi) \cdot \phi_0 \]
   for all $\xi \in \cH_{H, \ell}^0$, where we let $H_\ell$ act on the space $\cS(\Ql^2)$ via the natural projection $H_\ell \to \GL_2(\Ql)$.
  \end{lemma}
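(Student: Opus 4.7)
The plan is to identify the space $V := \cS(\Ql^2)^{\GL_2(\Zl)}$ as a free rank-one module over $\cH^0_{A,\ell}$ generated by $\phi_0$, with $\cH^0_{A,\ell}$ acting via $\Delta_H$. Once this is established, $\zeta_H(\xi)$ exists and is uniquely determined for each $\xi \in \cH^0_{H,\ell}$, and multiplicativity follows formally.

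For the module-structure claim, I would first classify the $\GL_2(\Zl)$-orbits on $\Ql^2$: they are $\{0\}$ together with the shells $\ell^n\Zl^2 \setminus \ell^{n+1}\Zl^2$ for $n \in \ZZ$, using that $\GL_2(\Zl)$ acts transitively on primitive vectors of $\Zl^2$. A $\GL_2(\Zl)$-invariant function $f$ on $\Ql^2$ is thus determined by the sequence $a_n := f(\ell^n e_1)$ together with $a_\infty := f(0)$. The Schwartz condition (locally constant plus compactly supported) forces $a_n = 0$ for $n \ll 0$ and $a_n = a_\infty$ for $n \gg 0$, which are exactly what is needed to write $f$ as a \emph{finite} sum $\sum_n (a_n - a_{n-1})\ch(\ell^n\Zl^2)$. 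Hence $\{\ch(\ell^n\Zl^2)\}_{n\in\ZZ}$ is a basis of $V$. Since $z_A(\ell)$ acts on $\cS(\Ql^2)$ by a translation shifting $\ch(\ell^n\Zl^2)$ to $\ch(\ell^{n\pm 1}\Zl^2)$ (the sign depending on the action convention), this exhibits $V$ as a free rank-one $\cH^0_{A,\ell}$-module generated by $\phi_0$.

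Next, for $\xi \in \cH^0_{H,\ell}$, the convolution $\xi\star\phi_0$ is automatically Schwartz, and it is left $H(\Zl)$-invariant by the left-invariance of $\xi$. To conclude $\xi\star\phi_0 \in V$, I need surjectivity of the projection $H(\Zl) \to \GL_2(\Zl)$; this reduces to surjectivity of the norm map $(\cO\otimes\Zl)^\times \to \Zl^\times$, which is immediate in the split case (where the norm is the product map) and, in the inert case, follows from surjectivity of the norm $\mathbf{F}_{\ell^2}^\times \to \mathbf{F}_\ell^\times$ on residue fields combined with Hensel's lemma. Then I define $\zeta_H(\xi)$ as the unique element of $\cH^0_{A,\ell}$ with $\Delta_H(\zeta_H(\xi))\star\phi_0 = \xi\star\phi_0$; uniqueness is built into the free-module structure.

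Multiplicativity is then formal, using that the image of $\Delta_H$ lies in the centre of $\cH^0_{H,\ell}$ (because $z_A(\ell)$ is central in $H(\Ql)$) and that $\cH^0_{A,\ell}$ is commutative: from the chain
\[
(\xi_1\star\xi_2)\star\phi_0 = \xi_1\star\Delta_H(\zeta_H(\xi_2))\star\phi_0 = \Delta_H(\zeta_H(\xi_2))\star(\xi_1\star\phi_0) = \Delta_H\bigl(\zeta_H(\xi_1)\,\zeta_H(\xi_2)\bigr)\star\phi_0
\]
together with uniqueness, one extracts $\zeta_H(\xi_1\star\xi_2) = \zeta_H(\xi_1)\,\zeta_H(\xi_2)$. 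The technical heart of the argument is the orbit-and-Schwartz analysis identifying $V$ as a free cyclic $\cH^0_{A,\ell}$-module; once this is in place, everything else is routine bookkeeping.
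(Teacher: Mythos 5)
Your proof is correct, but it takes a genuinely different route from the paper's. The paper proceeds by explicit computation: it invokes the presentation $\cH^0_{\GL_2,\ell} \cong \CC[T_\ell, S_\ell^{\pm1}]$ and defines $\zeta$ directly on generators by $\zeta(T_\ell) = X + \ell$, $\zeta(S_\ell) = X$, then extends to $\cH^0_{H,\ell}$ by composing with the pushforward map $\cH^0_{H,\ell} \to \cH^0_{\GL_2,\ell}$; the verification of the defining identity (a standard Hecke-operator computation on $\phi_0$) and the uniqueness assertion are left implicit. Your argument instead establishes the structural fact that $\cS(\Ql^2)^{\GL_2(\Zl)}$ is a free rank-one $\cH^0_{A,\ell}$-module on $\phi_0$, by exhibiting $\{\ch(\ell^n\Zl^2)\}_{n \in \ZZ}$ as a basis via the $\GL_2(\Zl)$-orbit decomposition of $\Ql^2$; existence, uniqueness, and multiplicativity of $\zeta_H$ then all follow formally. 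Your route has the advantage of making uniqueness transparent (it is exactly faithfulness of the $\cH^0_{A,\ell}$-action on $\phi_0$, which the free-module structure supplies directly) and of not relying on a generators-and-relations description of $\cH^0_{\GL_2,\ell}$; the paper's approach is terser and records the explicit formula for $\zeta_H$ on $T_\ell$, $S_\ell$. Your observation that $H(\Zl)$ surjects onto $\GL_2(\Zl)$, via surjectivity of the local norm, is needed in both proofs --- the paper relies on it implicitly in order for its ``natural map'' $\cH^0_{H,\ell}\to\cH^0_{\GL_2,\ell}$ to be well defined and for $\xi\star\phi_0$ to land in the $\GL_2(\Zl)$-invariants.
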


  \begin{proof}
   We first define a map $\zeta: \cH^0_{\GL_2, \ell} \to \cH^0_{A, \ell}$. It is well known that $\cH^0_{\GL_2, \ell} \cong \CC[ T_\ell, S_\ell^{\pm 1}]$ where $T_\ell$ and $S_\ell$ are the double cosets of $\stbt{\ell}{0}{0}{1}$ and $\stbt \ell 0 0 \ell$. We define $\zeta$ by
   \begin{align*}
    \zeta(T_\ell) &= X + \ell, &
    \zeta(S_\ell) &= X,
   \end{align*} where $X = \ch( z_A(\ell) A^0_\ell)$ as above. Now we extend this map to $H_\ell$, by composing with the natural map $\cH^0_{H, \ell} \to \cH^0_{\GL_2, \ell}$ which sends a coset $\ch( H_\ell^0 (\gamma, z) H_\ell^0)$ to $\ch(\GL_2(\Zl) \gamma \GL_2(\Zl))$.
  \end{proof}

  \begin{proposition}
   \label{prop:referee1}
   Let $\mathfrak{s}(\Ql^2)$ denote the $H_\ell$-submodule of $\cS(\Ql^2)$ generated by the spherical vector $\phi_0$. If $\ell$ is split in $E$, then we have $\mathfrak{s}(\Ql^2) = \cS(\Ql^2)^{A^0_\ell}$. If $\ell$ is inert, then the quotient $\cS(\Ql^2)^{A^0_\ell} / \mathfrak{s}(\Ql^2)$ is annihilated by $\Delta_H(z_A(\ell) + \ell)$.
  \end{proposition}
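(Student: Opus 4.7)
The plan is to handle the split and inert cases by different arguments, both ultimately resting on the $H^0_\ell$-averaging of Schwartz functions on $\Ql^2$.

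For the split case, I aim to prove the stronger equality $\mathfrak{s}(\Ql^2) = \cS(\Ql^2)^{A^0_\ell}$. The inclusion $\subseteq$ is automatic since $A^0_\ell$ is central in $H$, so only the reverse requires argument. Since $\ell$ splits, the first projection $H_\ell \to \GL_2(\Ql)$ is surjective, so $\mathfrak{s}(\Ql^2)$ equals the $\GL_2(\Ql)$-span of $\phi_0 = \ch(\Zl^2)$. Any $\phi \in \cS(\Ql^2)^{A^0_\ell}$ decomposes as a central piece $c \cdot \ch(\ell^N\Zl^2) = c \cdot z_A(\ell)^N \phi_0$ (for $N$ sufficiently large) plus finitely many shell pieces, each a linear combination of characteristic functions of $\Zl^\times$-orbits in some shell $\ell^n\Zl^2 \setminus \ell^{n+1}\Zl^2$. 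Applying $z_A(\ell)^{-n}$ reduces the problem to $\Zl^\times$-orbits in $\Zl^2 \setminus \ell\Zl^2$, which at depth $k$ are parameterized by $\mathbf{P}^1(\Zl/\ell^k)$; since $\GL_2(\Zl)$ acts transitively on the latter, it suffices to handle one, namely the orbit of $(1,0)$, for which
\[
 \ch(\Zl^\times \times \ell^k\Zl) = \ch\bigl(\stbt{1}{0}{0}{\ell^k}\Zl^2\bigr) - \ch\bigl(\stbt{\ell}{0}{0}{\ell^k}\Zl^2\bigr)
\]
realises the required characteristic function as a difference of $\GL_2(\Ql)$-translates of $\phi_0$.

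In the inert case this construction fails for $k$ odd, since the image of $H_\ell$ in $\GL_2(\Ql)$ is the index-two subgroup $\{g : v_\ell(\det g) \in 2\ZZ\}$ and the two lattices above have determinants of opposite parity. The key idea instead is the convolution identity
\[
 \Delta_H(z_A(\ell) + \ell)\star_H \phi = \bigl(z_A(\ell) + \ell\bigr)\cdot \pi_{H^0_\ell}(\phi) \qquad\text{for all }\phi\in\cS(\Ql^2),
\]
where $\pi_{H^0_\ell}$ denotes $H^0_\ell$-averaging with respect to the normalised Haar measure. This follows by unpacking $\Delta_H(z_A(\ell)) = \ch(z_A(\ell) H^0_\ell)$ and $\Delta_H(1) = \ch(H^0_\ell)$ and using the centrality of $z_A(\ell)$ in $H_\ell$ to pull it outside the $H^0_\ell$-integral; verifying this identity cleanly is the main technical step.

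To finish, I check (uniformly in the split/inert dichotomy) that $\pi_{H^0_\ell}(\phi) \in \mathfrak{s}(\Ql^2)$ for every Schwartz $\phi$. The group $H^0_\ell$ surjects onto $\GL_2(\Zl)$ in both cases, and the $\GL_2(\Zl)$-orbits on $\Ql^2$ are $\{0\}$ together with the shells $\ell^n\Zl^2 \setminus \ell^{n+1}\Zl^2$; hence $\cS(\Ql^2)^{H^0_\ell}$ is spanned by $\ch(\ell^n\Zl^2) = z_A(\ell)^n \phi_0$ for $n \in \ZZ$, each of which lies in $\mathfrak{s}(\Ql^2)$ because $z_A(\ell) \in H(\Ql)$. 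Combining with the $H_\ell$-stability of $\mathfrak{s}(\Ql^2)$ gives $\Delta_H(z_A(\ell) + \ell) \star_H \phi \in (z_A(\ell) + \ell)\cdot \mathfrak{s}(\Ql^2) \subseteq \mathfrak{s}(\Ql^2)$, as required.
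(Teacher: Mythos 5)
Your split-case argument is essentially the same as the paper's: both decompose $\Ql^2\setminus\{0\}$ into $\GL_2(\Zl)$-stable shells, use a translation to reduce to $\cS(\mathbf{P}^1(\Zl))$, and invoke transitivity of $\GL_2(\Zl)$ together with an explicit expression of one basic-open characteristic function as a difference of lattice characteristic functions (the paper takes $\ch(\ell^t\Zl\times\Zl^\times)$ near $(0:1)$ where you take $\ch(\Zl^\times\times\ell^k\Zl)$ near $(1:0)$; same idea). Your inert-case argument, however, is genuinely different. The paper writes $\phi = (\xi^+ + \xi^-)\star\phi_0$ with $\xi^\pm$ supported on the two determinant-parity components $\GL_2(\Ql)^\pm$, and uses the Hecke relation $(\Delta_H(z_A(\ell)+\ell) - T_\ell)\,\phi_0 = 0$ to rewrite $\Delta_H(z_A(\ell)+\ell)\star\phi$ as something supported entirely on $\GL_2(\Ql)^+$, which is the image of $H_\ell$, hence in $\mathfrak{s}(\Ql^2)$. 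You instead observe the identity $\Delta_H(z_A(\ell)+\ell)\star_H\phi = (z_A(\ell)+\ell)\cdot\pi_{H^0_\ell}(\phi)$, valid for every $\phi\in\cS(\Ql^2)$, which follows from unpacking $\Delta_H$ using $\vol(H^0_\ell)=1$ and centrality of $z_A(\ell)$; you then note that $\pi_{H^0_\ell}(\phi)$ is $\GL_2(\Zl)$-invariant — because $H^0_\ell\twoheadrightarrow\GL_2(\Zl)$ in both the split and inert cases — and that $\cS(\Ql^2)^{\GL_2(\Zl)}$ is spanned by $\ch(\ell^n\Zl^2)=z_A(\ell)^{\pm n}\phi_0\in\mathfrak{s}(\Ql^2)$ for $n\in\ZZ$. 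Your route avoids both the $\xi^\pm$ decomposition and the $T_\ell$-computation, and does not presuppose the $\GL_2(\Ql)$-cyclicity of $\cS(\Ql^2)^{A^0_\ell}$; it is shorter and more self-contained. What the paper's argument buys is that it makes the determinant-parity obstruction visible, which explains directly why the inert case cannot be upgraded to an equality (as discussed in the remark following the proposition). Both proofs are correct.
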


  \begin{proof}
   We show first that $\cS(\Ql^2)^{A^0_\ell}$ is cyclic as a $\CC[\GL_2(\Ql)]$-module. This is surely well-known, but we give a sketch proof for completeness. It suffices to show that the $\CC[\GL_2(\Ql)]$-span of $\phi_0$ contains $\cS_0(\Ql^2)$. We can decompose $\Ql^2 - \{0, 0\}$ as a disjoint union of countably many $\GL_2(\Zl)$-invariant compact subsets $X_n$, where $X_n = \{ (x, y): \min(v_p(x), v_p(y)) = n\}$. Since $\stbt{1}{0}{0}{\ell}$ gives a (continuous) bijection between $X_n$ and $X_{n+1}$, we are reduced to showing that $\cS(X_0)^{A^0_\ell} = \cS(\mathbf{P}^1(\Zl))$ is contained in the $\GL_2(\Ql)$-span of $\phi_0$. However, for any $t \ge 1$ this span contains the vector
   \begin{equation}
    \label{def:phit}
    \phi_t \coloneqq \ch( p^t\Zp \times \Zp^\times) = \left(\stbt{p^{-t}}{0}{0}{1} - \stbt{p^{-t}}{0}{0}{p^{-1}}\right) \phi_0
   \end{equation}
   and these are the characteristic functions of a basis of neighbourhoods of $(0:1)$ in $\mathbf{P}^1(\Zl)$. As $\GL_2(\Zl)$ acts transitively on $\mathbf{P}^1(\Zl)$, the translates of the $\phi_t$ span $\cS(\mathbf{P}^1(\Zl))$.

   Since $H_\ell$ surjects onto $\GL_2(\Ql)$ for $\ell$ split, this shows that $\mathfrak{s}(\Ql^2) = \cS(\Ql^2)^{A^0_\ell}$ in this case. In the inert case, if we write $\GL_2(\Ql) = \GL_2(\Ql)^+ \bigsqcup \GL_2(\Ql)^-$ according to the parity of the valuation of $\det g$, then the image of $H_\ell$ is $\GL_2(\Ql)^+$. By the preceding paragraph, we can write any $\phi \in \cS(\Ql^2)^{A^0_\ell}$ in the form $\left(\xi^{+} + \xi^{-}\right) \star \phi_0$, where $\xi^{?}$ is supported on $\GL_2(\Ql)^?$; and since $\Delta_H(z_A(\ell)+ \ell) - T_\ell$ annihilates $\phi_0$, we have
   \[ \Delta_H(z_A(\ell) + \ell) \star \phi = \left(\xi^{+} \star \Delta_H(z_A(\ell) + \ell) + \xi^{-} \star T_\ell\right) \star \phi_0,\]
   and both $\xi^{+} \star \Delta_H(z_A(\ell) + \ell)$ and $\xi^{-} \star T_\ell$ are supported on $\GL_2(\Ql)^+$ and hence in the image of $\cH_{H, \ell}$.
  \end{proof}

  \begin{remark}
   This result is essentially best possible, since the quotient $\cS(\Ql^2)^{A^0_\ell} / (z_A(\ell) + \ell)$ is isomorphic to the induced representation $I(|\cdot|^{-1/2}, |\cdot|^{-1/2})$. This is irreducible as a $\GL_2(\Ql)$-representation, but splits into two direct summands as a representation of $\GL_2(\Ql)^+$, and the spherical vector is contained in one of the summands. So $\mathfrak{s}(\Ql^2)$ consists precisely of the vectors whose projection to the non-spherical summand of $I(|\cdot|^{-1/2}, |\cdot|^{-1/2})$ is 0.
  \end{remark}

  \begin{theorem}
   Let $[\delta_0] = [\phi_0 \otimes \xi_0] \in \cI(G_\ell / G^0_\ell)$. If $\ell$ is split, then we have $\cI(G_\ell / G_\ell^0) = \cH^0_{G, \ell} \star [\delta_0]$. If $\ell$ is inert, the quotient $\cI(G_\ell / G_\ell^0) / \left( \cH^0_{G, \ell} \star [\delta_0]\right)$ is annihilated by $\Delta_G(z_A(\ell) + \ell)$.
  \end{theorem}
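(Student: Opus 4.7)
The plan is to reduce every $\delta \in \cI(G_\ell / G^0_\ell)$ to a sum of elements of the form $[\phi_0 \otimes \eta]$ with $\eta \in \cH$, and then apply \cref{cyc-thm} to rewrite such an element as a Hecke translate of $[\delta_0]$. Writing $\delta = \sum_i [\phi_i \otimes \ch(g_i G^0_\ell)]$, the centrality $\iota(A) \subset Z_G$ implies that $A^0_\ell$ fixes every coset $g_i G^0_\ell$; hence diagonal $H$-coinvariance lets me average each $\phi_i$ over $A^0_\ell$, so I may assume $\phi_i \in \cS(\Ql^2)^{A^0_\ell}$.

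In the split case, \cref{prop:referee1} identifies $\cS(\Ql^2)^{A^0_\ell}$ with $\mathfrak{s}(\Ql^2) = \CC[H_\ell]\cdot\phi_0$. I then expand $\phi_i = \sum_j c_{ij}(h_{ij}\cdot\phi_0)$ and apply the elementary coinvariance identity $[h\phi_0 \otimes \ch(g G^0_\ell)] = [\phi_0 \otimes \ch(h^{-1} g G^0_\ell)]$ to rewrite $\delta$ as $[\phi_0 \otimes \eta']$ with $\eta' \in \cS(G_\ell/G^0_\ell)$; a further left $H^0_\ell$-averaging of $\eta'$ is harmless (as $\phi_0$ is $H^0_\ell$-invariant), so I may take $\eta' \in \cH$. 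By \cref{cyc-thm}, $\eta' = \sum_i \alpha_i \star_H \beta_i$ with $\alpha_i \in \cH^0_{H, \ell}$ and $\beta_i \in \cH^0_{G, \ell}$, and the equivariance formulas of the preceding ``Equivariant maps'' paragraph, together with the lemma defining $\zeta_H$, recast each summand as $\beta_i' \star_G [\Delta_H(\zeta_H(\alpha_i')) \star_H \phi_0 \otimes \xi_0]$. Because $\iota \circ z_A$ lands in $Z_G$, a short computation with the $G$-action formula then identifies $[\Delta_H(\gamma)\star_H \phi_0 \otimes \xi_0]$ with $\Delta_G(\gamma')\star[\delta_0]$ for $\gamma \in \cH^0_{A, \ell}$, and so $\delta \in \cH^0_{G, \ell}\star[\delta_0]$.

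For the inert case, $\cS(\Ql^2)^{A^0_\ell} = \mathfrak{s}(\Ql^2) \oplus \CC[\GL_2(\Ql)^-]\cdot\phi_0$ as $\GL_2(\Ql)^+$-modules, reflecting the decomposition $\GL_2(\Ql) = \GL_2(\Ql)^+ \sqcup \GL_2(\Ql)^-$ by parity of $v(\det)$. I decompose $\phi_i = \phi_i^+ + \phi_i^-$: the contribution of $\phi_i^+ \in \mathfrak{s}(\Ql^2)$ already lies in $\cH^0_{G, \ell}\star[\delta_0]$ by the split argument. For $\phi_i^-$, the centrality of $z_A(\ell)$ in $G$ together with diagonal coinvariance yields the key identity $\Delta_G(z_A(\ell)+\ell)\star[\phi\otimes\xi] = [(z_A(\ell)+\ell)\cdot\phi\otimes\xi]$ for any right-$G^0_\ell$-invariant $\xi$; then writing $\phi_i^- = \xi^-\cdot\phi_0$ with $\xi^- \in \CC[\GL_2(\Ql)^-]$ and combining the centrality of $z_A(\ell)$ in $\GL_2(\Ql)$ with the relation $(z_A(\ell)+\ell)\phi_0 = T_\ell\phi_0$ from the proof of \cref{prop:referee1} gives $(z_A(\ell)+\ell)\phi_i^- = \xi^- T_\ell\phi_0 \in \CC[\GL_2(\Ql)^+]\cdot\phi_0 = \mathfrak{s}(\Ql^2)$, since $\GL_2(\Ql)^-\cdot\GL_2(\Ql)^- \subseteq \GL_2(\Ql)^+$. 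Applying the split argument to this element places $\Delta_G(z_A(\ell)+\ell)\star[\phi_i^-\otimes\ch(g_i G^0_\ell)]$ inside $\cH^0_{G, \ell}\star[\delta_0]$, proving the annihilation claim. The main obstacle I anticipate is the bookkeeping for transporting central Hecke operators between the $H$- and $G$-sides through the $(-)'$ involution and the equivariance formulas; all the needed identities ultimately reduce to the centrality of $\iota(A)$ in $G$ and the commutativity of $z_A$ with the $\GL_2$-action on $\phi_0$.
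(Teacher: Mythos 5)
Your proof is correct and takes essentially the same approach as the paper: reduce modulo $A^0_\ell$-invariance, use \cref{prop:referee1} to land in $\mathfrak{s}(\Ql^2)$ (after multiplying by $z_A(\ell)+\ell$ in the inert case), move the $H$-side onto the $\xi$-slot via coinvariance, apply the cyclicity theorem \cref{cyc-thm}, and convert the resulting $A$-Hecke factors via $\zeta_H$ and the compatibility of $\Delta_G$ with $\Delta_H$. The paper's bookkeeping is slightly lighter---it finds a single $\theta$ with $\phi=\theta\star_H\phi_0$ instead of expanding $\phi$ into explicit group translates, and handles the inert case by multiplying all of $\phi$ by $\Delta_H(z_A(\ell)+\ell)$ rather than splitting $\phi=\phi^++\phi^-$ (and your ``$\oplus$'' there should just be an ordinary sum)---but the substance is identical, and your explicit $A^0_\ell$-averaging helpfully spells out a step the paper treats implicitly.
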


  \begin{proof}
   Let $\delta = \phi \otimes \xi$ be a general element of $\cI(G_\ell / G^0_\ell)$. If $\ell$ is split, then \cref{prop:referee1} shows that we can find some $\theta \in \cH(H_\ell / H^0_\ell)$ such that $\phi = \theta \star_H \phi_0$. Hence in $\cI(G_\ell / G_\ell^0)$ we have
   \[ \left[ \phi \otimes \xi\right]=[ ( \theta \star_H\phi_0 ) \otimes \xi] =
   \left[ \phi_0 \otimes ( \theta' \star_H \xi)\right].\]

   Let $\sigma = \theta' \star_H \xi$. Since $\theta$ is invariant under right-translation by $H^0_\ell$, and $\xi$ under right-translation by $G^0_\ell$, we conclude that $\sigma \in \cH$. By \cref{cyc-thm}, we can express $\sigma$ (possibly non-uniquely) as a finite sum $\sum_i \alpha_i \star_{H} \beta_i$ for $\alpha_i \in \cH^0_{H, \ell}$ and $\beta_i \in \cH^0_{G, \ell}$.

   We can then write
   \begin{align*}
    \left[ \phi_0 \otimes ( \theta' \star_H \xi)\right] &= \sum_i \left[ \phi_0 \otimes(\alpha_i \star_H \beta_i)\right]\\
    &=  \sum_i \left[ (\alpha'_i \star_H \phi_0) \otimes \beta_i]\right]\\
    &= \sum_i \left[ (\Delta_H(\zeta_i) \star_H \phi_0) \otimes \beta_i\right] \\
    &= \sum_i \left[ \phi_0 \otimes (\Delta_H(\zeta_i)' \star_H \beta_i)\right]\\
    &= \sum_i \left[ \phi_0 \otimes (\Delta_G(\zeta_i)' \star_G \beta_i)\right],
   \end{align*}
   where we write $\zeta_i = \zeta_H(\alpha'_i) \in \cH^0_{A, \ell}$. (The last equality follows since the actions of $\cH^0_{A, \ell}$ on $\cH^0_{G, \ell}$ via $\Delta_G$ and $\Delta_H$ are the same: both are just the natural translation action of $A_\ell$ on $G_\ell$.)

   So, if we set $\Lambda = \sum_i \Delta_G(\zeta_i)' \star_G \beta_i \in \cH^0_{G, \ell}$, then we have
   \[ [\phi \otimes \xi] = [ \phi_0 \otimes \Lambda] = \Lambda' \star_G [ \phi_0 \otimes \xi_0].\]
   If $\ell$ is inert, then we can still find $\theta$ such that $\theta \star_H \phi_0 = \Delta_H(z_A(\ell) + \ell) \phi_0$, and the same argument as above produces a $\Lambda$ such that
   \[ \Delta_G(z_A(\ell) + \ell) \star_G [\phi \otimes \xi] = \Lambda' \star_G [\phi_0 \otimes \xi_0],\]
   showing that $\Delta_G(z_A(\ell) + \ell)$ annihilates the class of $\phi \otimes \xi$ in $\cI(G_\ell / G_\ell^0) / \left( \cH^0_{G, \ell} \star [\delta_0]\right)$.
  \end{proof}

  \begin{corollary}[Abstract norm relation, version 1]
   \label{cor:absnorm1}
   Let $U \subseteq G^0_\ell$ be an open subgroup, and $\delta \in \cI(G / U)$. If $\ell$ is split, there exists an element $\cP_\delta \in \cH^0_{G, \ell}$ with the following property:
   \begin{quotation}
    For any smooth $G_\ell$-representation $\mathcal{V}$ and $G_\ell \times H_\ell$-equivariant map $\mathfrak{Z}: \cS\left(\Ql^2\right) \otimes \cH\left(G_\ell\right) \to \mathcal{V}$, we have
    \[
     \cP_\delta \star_G \mathfrak{Z}(\delta_0) =
     \operatorname{norm}^U_{G^0_\ell}\left(\mathfrak{Z}(\delta)\right).
    \]
   \end{quotation}
   If $\ell$ is inert, then we can find an element $\cP_\delta \in \cH^0_{G, \ell}\left[\frac{1}{\Delta_G(z_A(\ell) + \ell)}\right]$ having the same property for every $\mathcal{V}$ such that $\Delta_G(z_A(\ell) + \ell)$ is invertible on $\mathcal{V}^{G^0_\ell}$.
  \end{corollary}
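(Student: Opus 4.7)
The plan is to reduce the statement to the preceding theorem, which handles the case $U = G^0_\ell$. Given $\delta \in \cI(G_\ell/U, \QQ)$, let $\tilde\delta \in \cI(G_\ell/G^0_\ell, \QQ)$ denote its image under the push-forward map induced by the trace $\cS(G_\ell/U) \to \cS(G_\ell/G^0_\ell)$. The first step is to verify
\[ \mathfrak{Z}(\tilde\delta) = \operatorname{norm}^U_{G^0_\ell}\left(\mathfrak{Z}(\delta)\right). \]
It suffices to check this on a generator $\delta = \phi \otimes \ch(gU)$. Under the $G_\ell$-action of \cref{def:equivariant}, the element $\gamma \cdot (\phi\otimes\ch(gU))$ equals $\phi\otimes\ch(gU\gamma^{-1})$, so summing over $\gamma \in G^0_\ell/U$ exactly reproduces $\tilde\delta$. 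Applying $\mathfrak{Z}$ and using its $G_\ell$-equivariance---together with the observation that $\mathfrak{Z}(\delta)\in\mathcal{V}^U$, because $\delta$ is right $U$-invariant---delivers the displayed identity.

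Next I would invoke the preceding theorem. In the split case it asserts $\tilde\delta \in \cH^0_{G,\ell} \star_G [\delta_0]$, so one may choose $\cP_\delta \in \cH^0_{G,\ell}$ with $\tilde\delta = \cP_\delta \star_G [\delta_0]$. In the inert case the theorem instead gives $\Delta_G(z_A(\ell)+\ell) \star_G \tilde\delta \in \cH^0_{G,\ell} \star_G [\delta_0]$; choosing $\Lambda \in \cH^0_{G,\ell}$ realising this and setting
\[ \cP_\delta = \Delta_G(z_A(\ell)+\ell)^{-1}\cdot\Lambda \in \cH^0_{G,\ell}\left[\tfrac{1}{\Delta_G(z_A(\ell)+\ell)}\right] \]
produces an element that acts unambiguously on $\mathcal{V}^{G^0_\ell}$ whenever $\Delta_G(z_A(\ell)+\ell)$ is invertible there, since the spherical Hecke algebra is commutative and preserves $G^0_\ell$-invariants.

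Finally, I would exploit the fact that the $G_\ell$-equivariance of $\mathfrak{Z}$ automatically upgrades to $\cH_{G,\ell}$-linearity: for $\xi \in \cH_{G,\ell}$ and any smooth element $x$,
\[ \mathfrak{Z}(\xi \star_G x) = \int \xi(g)\,\mathfrak{Z}(g\cdot x)\,\dd g = \int \xi(g)\, g\cdot\mathfrak{Z}(x)\,\dd g = \xi \star_G \mathfrak{Z}(x). \]
Combining this with the two preceding steps yields, in the split case,
\[ \operatorname{norm}^U_{G^0_\ell}\left(\mathfrak{Z}(\delta)\right) = \mathfrak{Z}(\tilde\delta) = \cP_\delta \star_G \mathfrak{Z}(\delta_0), \]
and in the inert case the same identity after multiplication by $\Delta_G(z_A(\ell)+\ell)$; dividing by this central element using the invertibility hypothesis completes the argument.

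There is no serious obstacle remaining: the substantive input---cyclicity of $\cI(G_\ell/G^0_\ell)$ as a module over the spherical Hecke algebra, which ultimately rests on \cref{cyc-thm}---has already been packaged in the preceding theorem. The corollary is thus mostly definitional bookkeeping, the only minor subtlety being the involution $\xi\mapsto\xi'$ that appears when transferring Hecke elements between the two sides of $[\,\cdot\,]$; since $\cH^0_{G,\ell}$ is commutative and stable under this involution, it causes no trouble.
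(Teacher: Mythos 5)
Your proof is correct and takes essentially the same approach as the paper's one-line proof (``Replacing $\delta$ with the sum of its translates by $U / G^0_\ell$, we may assume $U = G^0_\ell$, and the result is now obvious from the preceding theorem''); you have simply spelled out the bookkeeping the authors left implicit, including the reduction step, the $\cH_{G,\ell}$-linearity of $\mathfrak{Z}$, and the localisation argument in the inert case.
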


  \begin{proof}
   Replacing $\delta$ with the sum of its translates by $U / G^0_\ell$, we may assume $U = G^0_\ell$, and the result is now obvious from the preceding theorem.
  \end{proof}

 \subsection{Characterising $\cP_\delta$}
  \label{sect:cPdelta}

  Let $\pi_\ell$ be an irreducible spherical representation of $G_\ell$. Then the Hecke algebra acts on the 1-dimensional space $(\pi_\ell)^{G_\ell^0}$ via a ring homomorphism $\Theta_{\pi_\ell}: \cH^0_{G, \ell} \to \CC$

  If $\ell$ is inert in $E$, we suppose that the central character $\chi_{\pi_\ell}$ satisfies $\chi_{\pi_\ell}(z_A(\ell)) \ne -\ell^{-1}$, so that $\Delta_G(z_A(\ell) +\ell)'$ acts invertibly on $\pi_\ell$; hence $\Theta_{\pi_\ell}$ extends to $\cH^0_{G, \ell}\left[\frac{1}{\Delta_G(z_A(\ell) + \ell)}\right]$.

  \begin{proposition}
   \label{prop:frobrecip}
   Let $\mathfrak{z} \in \Hom_{H_\ell}(\cS(\Ql^2) \otimes \pi_\ell, \CC)$; and let $U$, $\delta$, and $\cP_\delta$ be as in \cref{cor:absnorm1}. Write $\delta = \sum_i \phi_i \otimes \ch(g_i U)$; and let $\varphi_0$ be a spherical vector of $\pi_\ell$. Then we have
   \[ \sum_i \mathfrak{z}(\phi_i \otimes g_i \varphi_0) = \Theta_{\pi_\ell}(\cP_\delta') \cdot \mathfrak{z}(\phi_0 \otimes \varphi_0).
   \]
  \end{proposition}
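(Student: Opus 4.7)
The plan is to reduce to \cref{cor:absnorm1} via Frobenius reciprocity applied to $\mathcal{V} = \pi_\ell^\vee$. From the discussion after \cref{def:equivariant}, the form $\mathfrak{z}$ corresponds to a $G_\ell$-equivariant map $\mathfrak{Z}\colon \cI(G_\ell) \to \pi_\ell^\vee$; with the paper's convention $(g \cdot \xi)(x) = \xi(xg)$, unwinding this correspondence gives (and a short direct check confirms) the formula
\[
\langle \mathfrak{Z}[\phi \otimes \xi], w\rangle = \int_{G_\ell} \xi(g)\, \mathfrak{z}(\phi \otimes gw)\, dg
\]
for $w \in \pi_\ell$. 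In particular, when $\xi = \ch(g'U')$ for an open compact $U' \subseteq G^0_\ell$ and $w = \varphi_0$, the $U'$-invariance of $\varphi_0$ collapses this to $\vol(U') \cdot \mathfrak{z}(\phi \otimes g' \varphi_0)$.

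Applying \cref{cor:absnorm1} (the hypothesis $\chi_{\pi_\ell}(z_A(\ell)) \ne -\ell^{-1}$ in the inert case ensures that $\cP_\delta$ acts on $\pi_\ell^\vee$), we obtain
\[
\cP_\delta \star_G \mathfrak{Z}(\delta_0) = \operatorname{norm}^{U}_{G^0_\ell}\!\bigl(\mathfrak{Z}(\delta)\bigr)
\]
in $\pi_\ell^\vee$. I now pair both sides with $\varphi_0$. The left-hand side becomes $\langle \mathfrak{Z}(\delta_0), \cP_\delta' \star_G \varphi_0\rangle = \Theta_{\pi_\ell}(\cP_\delta') \cdot \mathfrak{z}(\phi_0 \otimes \varphi_0)$, using the convolution-adjointness $\langle \xi \star_G T, w\rangle = \langle T, \xi' \star_G w\rangle$, the defining property of $\Theta_{\pi_\ell}$, and the formula above applied to $\delta_0 = \phi_0 \otimes \ch(G^0_\ell)$. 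For the right-hand side, $G^0_\ell$-invariance of $\varphi_0$ turns the sum over $G^0_\ell/U$-translates inside $\operatorname{norm}^{U}_{G^0_\ell}$ into a scalar factor $[G^0_\ell : U]$, and expanding $\delta = \sum_i \phi_i \otimes \ch(g_i U)$ via the same formula yields
\[
[G^0_\ell : U]\, \vol(U) \sum_i \mathfrak{z}(\phi_i \otimes g_i \varphi_0) = \sum_i \mathfrak{z}(\phi_i \otimes g_i \varphi_0),
\]
since $\vol(G^0_\ell) = 1$. Equating the two sides gives the stated identity.

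The argument is essentially bookkeeping; the only subtle point is confirming that the Frobenius reciprocity formula involves $gw$ rather than $g^{-1}w$, so that the final statement has $g_i \varphi_0$ rather than $g_i^{-1} \varphi_0$. This reduces to checking the interplay between the paper's left-action convention on $\cS(G_\ell)$ and the contragredient action on $\pi_\ell^\vee$, which a short direct computation verifies.
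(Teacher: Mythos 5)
Your proposal is correct and takes essentially the same route as the paper's proof: the paper's scalar-valued map $\cZ(\phi,\xi) = \mathfrak{z}(\phi, \xi \star_G \varphi_0)$ is precisely your pairing $\langle \mathfrak{Z}[\phi\otimes\xi], \varphi_0\rangle$, so your argument just unfolds the same Frobenius-reciprocity packaging one step more explicitly (and handles general $U$ directly rather than reducing to $U = G^0_\ell$ up front, as the paper does). The bookkeeping of volumes, the adjunction $\langle \xi\star T, w\rangle = \langle T, \xi'\star w\rangle$, and the invertibility hypothesis in the inert case are all correctly handled.
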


  \begin{proof}
   As usual, we may assume $U = G^0_\ell$. The homomorphism $\mathfrak{z}$ determines a linear map $\cZ: \cS(\Ql^2) \otimes \cH(G_\ell) \to \CC$ sending $\phi \otimes \xi$ to $\mathfrak{z}(\phi, \xi \star_G \varphi_0)$. This map clearly factors through $\cI(G_\ell / G^0_\ell)$, and it is $\cH^0_{G, \ell}$-equivariant if we let $\xi \in \cH^0_{G, \ell}$ act on $\CC$ by $\Theta_{\pi_\ell}(\xi')$.

   If $\ell$ is split, then we have $[\delta] = P_\delta \star_G [\phi_0 \otimes \xi_0]$ as elements of $\cI(G_\ell / G^0_\ell)$; so we must have $\cZ(\delta) = \Theta_{\pi_\ell}(\cP_\delta') \cZ(\delta_0)$, which is exactly the formula claimed in the proposition. If $\ell$ is inert, then we replace $\cI(G_\ell / G^0_\ell)$ with its localisation $\cI(G_\ell / G^0_\ell)\left[ 1/(z_A(\ell) + \ell)\right]$.
  \end{proof}

\section{Choice of the data}
 \label{sect:localdata1}
 Let $\ell \nmid D$ be prime, and $w$ a prime of $E$ above $\ell$. Let $q \coloneqq \Nm(w) = \ell$ or $\ell^2$.

 \subsection{The operator $\cP_w$}

  If $\pi_\ell$ is an irreducible unramified representation of $G_\ell$, we write $\Theta_{\pi_\ell}$ for the associated character of the Hecke algebra $\cH^0_{G, \ell}$, as in \S \ref{sect:cPdelta} above.

  \begin{lemma}
   There is a cubic polynomial $\cP_w \in \cH^0_{G, \ell}[X]$ such that for any irreducible unramified representation $\pi_\ell$ of $G_\ell$, we have $\Theta_{\pi_\ell}(\cP_{w})(q^{-s}) = L_w(\pi_\ell, s)^{-1}$.
  \end{lemma}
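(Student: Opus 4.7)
The plan is to leverage the Satake isomorphism for $G$ over $\Ql$ (valid since $\ell \nmid D$ makes $G$ reductive over $\Zl$). This identifies $\cH^0_{G,\ell} \otimes \CC$ with the algebra of regular functions on the scheme parametrising unramified characters of $G_\ell$; equivalently, each $\xi \in \cH^0_{G,\ell}$ is determined by a Weyl-invariant polynomial in the Satake parameters of $\pi_\ell$, whose value at the Satake parameters of any specific $\pi_\ell$ is exactly $\Theta_{\pi_\ell}(\xi)$. So defining $\cP_w$ amounts to exhibiting the coefficients of $L_w(\pi_\ell,s)^{-1} \in \CC[q^{-s}]$ as such Weyl-invariant polynomials, varying regularly with $\pi_\ell$.

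Concretely, I would first write
\[
 L_w(\pi_\ell,s)^{-1} = \prod_{i=1}^{3}(1-\alpha_i q^{-s}),
\]
where $\{\alpha_1,\alpha_2,\alpha_3\}$ are the Satake parameters of the three-dimensional representation $\operatorname{BC}_w(\pi_\ell) = \tau_w \otimes (\psi_w \circ \det)$ of $\GL_3(E_w)$. Expanding in $X = q^{-s}$ this is a cubic with coefficients $(-1)^i e_i(\alpha_1,\alpha_2,\alpha_3)$, so the lemma reduces to the claim that each elementary symmetric function $e_i$ is a regular, Weyl-invariant function of the Satake parameters of $\pi_\ell$.

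In the split case, Note~\ref{note:splitlH} identifies $G_\ell$ with $\GL_3(\Ql) \times \Ql^\times$, and under this identification $\operatorname{BC}_w$ is just the projection to the $\GL_3$-factor twisted by the $\GG_m$-factor via $\det$. The Satake parameters of $\operatorname{BC}_w(\pi_\ell)$ are then an explicit rational expression in those of $\pi_\ell$, manifestly invariant under the Weyl group $S_3 \times \{1\}$ of $G_\ell$, so the $e_i$ descend to elements of $\cH^0_{G,\ell}$. In the inert case, the base-change map is supplied by Mok's local Langlands correspondence and, for unramified $\pi_\ell$, is made explicit in \cite[Definition~3.5]{pollackshah18}: it is the map on semisimple conjugacy classes induced by the standard $L$-homomorphism ${}^L G \to {}^L(\Res_{E/\QQ}(\GL_3 \times \GL_1))$. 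Unwinding this formula writes the $\alpha_i$ in terms of the Satake parameters of $\pi_\ell$, and one checks directly that the elementary symmetric polynomials of the $\alpha_i$ are invariant under the (order-$8$) relative Weyl group of $G_\ell$.

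The main obstacle is the bookkeeping in the inert case: one must verify that the three Satake parameters of the base change, taken together, form a Weyl-orbit worth of data for $G$. This is straightforward once one observes that the $L$-group map sending Satake parameters of $\pi_\ell$ to those of $\operatorname{BC}_w(\pi_\ell)$ is, by construction, equivariant for the Weyl groups; hence any symmetric function of the image is automatically invariant under the source Weyl group, and therefore lifts (uniquely) to $\cH^0_{G,\ell}$ by Satake. Collecting these Weyl-invariant expressions as coefficients of $X^0,X^1,X^2,X^3$ produces the required cubic polynomial $\cP_w \in \cH^0_{G,\ell}[X]$.
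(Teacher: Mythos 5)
Your approach is essentially the same as the paper's, which disposes of the lemma in one line by citing the Satake isomorphism and the fact that the coefficients of the $L$-factor are Weyl-invariant polynomials in the Satake parameters; you simply unpack this via the split/inert dichotomy and the observation that Weyl-equivariance of the $L$-group homomorphism makes the invariance automatic. One minor slip: at an inert prime the relative Weyl group of $\operatorname{GU}(2,1)$ over $\QQ_\ell$ has order $2$, not $8$ (the split semisimple rank is $1$); this does not affect the argument, whose real content is the Weyl-equivariance you invoke rather than the size of the group.
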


  \begin{proof}
   This is immediate from the Satake isomorphism, since the coefficients of the $L$-factor are Weyl-group-invariant polynomials in the Satake parameters.
  \end{proof}

  \begin{remark}
   One can check that $\cP_w(X)$ has the form $1 - \frac{1}{q} \ch\left(G^0_\ell\, t(\varpi_w)\, G^0_\ell\right) X \mathop{+} $ higher order terms, where $\varpi_w$ is a uniformizer at $w$; however, for our arguments it is actually not necessary to write down $\cP_w$ explicitly.
  \end{remark}

 \subsection{The element $\delta_w$}

  \begin{definition}
   \label{def:phi1t}
   For $t\geq 1$, define $\phi_{1,t}\in\cS(\Ql^2, \ZZ)$ as the function
   \[ \phi_{1,t}=\ch\left(\ell^t\Zl \times ( 1+\ell^t\Zl)\right). \]
  \end{definition}

  Note that $\phi_{1, t}$ is fixed by the action of the group
  \[
   K_{H_\ell, 1}(\ell^t) \coloneqq \{ (\gamma, z) \in H(\Zl): \gamma \equiv \stbt{z\bar{z}}{\star}{0}{1} \bmod \ell^t \Zl\}.
  \]

  \begin{definition}
   \label{def:tamedata}
   We define an element $\xi_w \in \cH(G_\ell / G^0_\ell[w])$, and an integer $n_w$, as follows:
   \begin{enumerate}[(i)]
    \item Suppose $\ell = w \bar{w}$ is split in $E$. Then we take $\xi_w = \ch(G^0_\ell[w]) - \ch(n(a, 0) G^0_\ell[w])$, where $a \in E\otimes \Ql$ has valuation $-1$ at $w$ and $\ge 1$ at $\bar{w}$; and we set
    \[ n_w = \ell(\ell + 1)(\ell - 1)^2.\]

    \item For $\ell$ inert in $E$, we take $\xi = \ch(G^0_\ell[w]) - \ch(n(a, 0) G^0_\ell[w])$ where $a \in E \otimes \Ql$ has valuation $-1$; and we take
    \[ n_w = (\ell^2 - 1)^2.\]

   \end{enumerate}
   With these notations, in both cases we define
   \[ \delta_w \coloneqq n_w \cdot \phi_{1, 2} \otimes \xi_w \in \cI(G_\ell / G^0_\ell[w], \QQ).\]
  \end{definition}

  \begin{proposition}
   We have $\delta_w \in \cI\left(G_\ell / G^0_\ell[w], \ZZ[1/\ell]\right)$.
  \end{proposition}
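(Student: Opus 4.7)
The plan is to exhibit $\delta_w$ as a $\ZZ[1/\ell]$-linear combination of primitive integral elements in the sense of \cref{def:integral}. Since $\delta_w$ is naturally the difference of two terms of the form $n_w\phi_{1,2} \otimes \ch(gU)$ with $U = G^0_\ell[w]$ and $g \in \{1,\, n(a,0)\}$, I would treat each separately. The function $n_w\phi_{1,2}$ is $\{0, n_w\}$-valued, so primitive integrality of such a term reduces to the divisibility $C_g \mid n_w$ in $\ZZ[1/\ell]$, where
\[ C_g \coloneqq [H^0_\ell : gUg^{-1} \cap \operatorname{stab}_{H_\ell}(\phi_{1,2})]; \]
crucially, the stabilizer is automatically contained in $H^0_\ell$, so $C_g$ is literally a positive integer index rather than a more general volume ratio.

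First I identify the stabilizer: $H_\ell$ acts on $\cS(\Ql^2)$ via its projection to $\GL_2(\Ql)$ by right-multiplication on row vectors, so $\phi_{1,2} = \ch(\ell^2\Zl \times (1+\ell^2\Zl))$ is stabilized precisely by those $(\gamma, z) \in H^0_\ell$ with $\gamma \equiv \stbt{\ast}{\ast}{0}{1} \bmod \ell^2$; this is the group $K_{H_\ell, 1}(\ell^2)$ introduced before \cref{def:tamedata}. For $g = 1$, intersecting with $G^0_\ell[w]$ imposes the further condition $\mu(h) = \bar z \equiv 1 \bmod w$. Using the short exact sequence $1 \to U(1)(\Zl) \to H^0_\ell \to \GL_2(\Zl) \to 1$ together with surjectivity of the local norm map, one computes $C_1 = \ell^2(\ell^2-1)(q-1)$, where $q = \Nm(w)$. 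A direct check then gives $n_w/C_1 = 1/\ell$ in the split case and $1/\ell^2$ in the inert case, both lying in $\ZZ[1/\ell]$.

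For the ``translated'' term with $g = n(a, 0)$, I would carry out the explicit matrix product $n(-a, -a\bar a)\, \iota(\gamma, z)\, n(a, 0)$ (using $n(a, 0)^{-1} = n(-a, -a\bar a)$) and impose integrality at $w$ of all nine resulting entries. Since $\mu$ is a character of $G$, the condition $\mu(g^{-1}hg) \equiv 1 \bmod w$ is again just $\bar z \equiv 1 \bmod w$. The remaining off-diagonal entries then contribute further congruences on $a_\gamma, b_\gamma, z$ modulo powers of $w$, with the valuations controlled by $v_w(\bar a) \ge 1$ in the split case and $v_w(\bar a) = -1$ in the inert case. These cut the $g = 1$ subgroup by an additional index whose value, by a routine numerical check, still satisfies $C_{n(a,0)} \mid n_w$ in $\ZZ[1/\ell]$.

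The main obstacle is the bookkeeping in this second case, especially in the inert setting: because $v_w(a\bar a) = -2$ there, the $(1,2)$, $(1,3)$, and $(2,3)$ entries of $n(-a, -a\bar a)\, \iota(\gamma, z)\, n(a, 0)$ each produce genuinely new congruence conditions beyond $\gamma \in K_{H_\ell,1}(\ell^2)$, and their joint effect has to be tracked carefully. The specific choices of $n_w$ in \cref{def:tamedata} have been calibrated precisely so as to absorb these index factors (modulo powers of $\ell$).
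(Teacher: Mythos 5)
Your proposal is correct and is essentially the paper's own argument: decompose $\delta_w$ into its two constituent cosets and bound each index $C_g = [H^0_\ell : gUg^{-1}\cap\operatorname{stab}_{H_\ell}(\phi_{1,2})]$ against $n_w$ up to powers of $\ell$, the only cosmetic difference being that the paper computes a single joint stabiliser $V=\operatorname{stab}_{H_\ell}(\phi_{1,2})\cap\operatorname{stab}_{H_\ell}(\xi_w)=V_1\cap V_{n(a,0)}$, whose index simultaneously bounds both $C_g$, whereas you treat the two cosets one at a time. Your value $C_1 = \ell^2(\ell^2-1)(q-1)$ is correct (equal to $\ell n_w$ when $\ell$ is split and $\ell^2 n_w$ when $\ell$ is inert), and the second, ``tedious'' matrix computation that you leave as a sketch is no more elaborated in the paper's own proof.
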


  \begin{proof}
   A tedious explicit computation shows that the subgroup $V = \operatorname{stab}_{H_\ell}(\phi_{1, 2}) \cap \operatorname{stab}_{G_\ell}(\xi_w)$ is given by $\{ h \in K_{H_\ell, 1}(\ell^2) : \mu(h) = 1 \bmod w\}$ if $\ell$ is split, and $ \{ h \in K_{H_\ell, 1}(\ell^2) : \mu(h) = 1 \bmod \ell \Zl + \ell^2 \cO_{E, \ell}\}$ if $\ell$ is inert. So $[H(\Zl) : V] = \ell^2 (\ell - 1)^2(\ell+1) = \ell n_w$ in the former case, and $\ell^3(\ell^2-1)^2 = \ell^3 n_w$ in the latter case. Thus $n_w \in \frac{C}{\ell} \ZZ$, resp.~$\frac{C}{\ell^3} \ZZ$, where $C = \tfrac{1}{\vol(V)} = [H(\Zl): V]$ is as in \cref{def:integral}.
  \end{proof}

  \begin{theorem}[Abstract norm relation, version 2]
   \label{thm:absnorm2}
   Let $\delta_w \in \cI(G_\ell/G^0_\ell[w], \ZZ[1/\ell])$ be the element defined in \cref{def:tamedata}. Let $\mathcal{V}$ be a smooth  $G_\ell$-representation and $\mathfrak{Z}: \cS(\Ql^2) \otimes \cH_{G, \ell} \to \mathcal{V}$ a $H_\ell \times G_\ell$-invariant homomorphism. If $\ell$ is inert, suppose also that $\Delta_G(z_A(\ell) + \ell)$ acts bijectively on $\mathcal{V}^{G^0_\ell}$. Then we have
   \[ \operatorname{norm}_{G^0_\ell}^{G^0_\ell[w]}\left(\mathfrak{Z}(\delta_w)\right)= \cP'_w(1) \star \mathfrak{Z}(\delta_0).\]
  \end{theorem}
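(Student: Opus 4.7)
The plan is to combine the abstract existence result of \cref{cor:absnorm1} with the ``test against spherical representations'' principle of \cref{prop:frobrecip}, and then identify the resulting Hecke operator via a direct zeta-integral computation.

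Applying \cref{cor:absnorm1} to $\delta_w \in \cI(G_\ell / G^0_\ell[w], \ZZ[1/\ell])$ yields an element $\cP_{\delta_w}$, lying in $\cH^0_{G,\ell}$ when $\ell$ splits or in its localisation $\cH^0_{G,\ell}\bigl[\tfrac{1}{\Delta_G(z_A(\ell)+\ell)}\bigr]$ when $\ell$ is inert, such that
\[
\operatorname{norm}^{G^0_\ell[w]}_{G^0_\ell}\bigl(\mathfrak{Z}(\delta_w)\bigr) \;=\; \cP_{\delta_w} \star_G \mathfrak{Z}(\delta_0)
\]
for every admissible pair $(\mathcal{V}, \mathfrak{Z})$. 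The theorem is thus reduced to the purely local identity $\cP_{\delta_w} = \cP_w'(1)$ in the (localised) spherical Hecke algebra. Since that algebra is a finitely generated reduced commutative $\CC$-algebra whose elements are separated by the characters $\Theta_{\pi_\ell}$ attached to the irreducible unramified representations $\pi_\ell$ of $G_\ell$ (restricted, in the inert case, to those with $\chi_{\pi_\ell}(z_A(\ell)) \ne -\ell^{-1}$, i.e.~the characters that factor through the localisation), it is enough to verify for every such $\pi_\ell$ that
\[
\Theta_{\pi_\ell}\bigl(\cP_{\delta_w}'\bigr) \;=\; \Theta_{\pi_\ell}\bigl(\cP_w(1)\bigr) \;=\; L_w(\pi_\ell,0)^{-1},
\]
using that the involution $\xi\mapsto\xi'$ of $\cH^0_{G,\ell}$ is of order two.

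To evaluate the left-hand side I fix a nonzero $H_\ell$-invariant functional $\mathfrak{z}\colon \cS(\Ql^2) \otimes \pi_\ell \to \CC$; such a form exists and is unique up to a scalar by \cref{cyc-thm}, and will be realised concretely as a local zeta integral in \cref{sect:zeta}. Substituting the expansion of $\delta_w$ from \cref{def:tamedata} into \cref{prop:frobrecip} produces
\[
n_w\Bigl[\mathfrak{z}(\phi_{1,2}\otimes \varphi_0) \;-\; \mathfrak{z}\bigl(\phi_{1,2}\otimes n(a,0)\varphi_0\bigr)\Bigr] \;=\; \Theta_{\pi_\ell}(\cP_{\delta_w}') \cdot \mathfrak{z}(\phi_0\otimes \varphi_0),
\]
so the whole theorem collapses to the explicit equality
\[
\frac{n_w \bigl[\mathfrak{z}(\phi_{1,2}\otimes \varphi_0) \,-\, \mathfrak{z}(\phi_{1,2}\otimes n(a,0)\varphi_0)\bigr]}{\mathfrak{z}(\phi_0\otimes \varphi_0)} \;=\; L_w(\pi_\ell,0)^{-1}.
\]

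The main obstacle is this last step, which is a purely local computation with three concrete integrals. One expects the spherical value $\mathfrak{z}(\phi_0\otimes\varphi_0)$ to unfold, via a Casselman--Shalika-type argument, to the full local $L$-factor $L_w(\pi_\ell, s)$ at $s = 0$; the two ramified integrals, in which $\phi_0$ is replaced by the truncation $\phi_{1,2}$ of \cref{def:phi1t} and in one case $\varphi_0$ is translated by the valuation-$(-1)$ unipotent $n(a,0) \in N_G(\Ql)$, should reduce after an Iwasawa-type decomposition to finite sums of characters over $A^0_\ell$-orbits in $\Ql^2$, which can be evaluated in closed form. The intricate combinatorial constant $n_w$ in \cref{def:tamedata} is then precisely what is required to cancel the various residual volume factors and produce $L_w(\pi_\ell,0)^{-1}$ on the nose. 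This is the ``lengthy but routine'' local calculation alluded to in the introduction.
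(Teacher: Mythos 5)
Your proposal follows essentially the same route as the paper's proof: reduce via \cref{cor:absnorm1} to identifying the spherical Hecke operator $\cP_{\delta_w}$, test against a dense family of spherical characters $\Theta_{\pi_\ell}$ using \cref{prop:frobrecip}, construct the requisite $H_\ell$-invariant form via the local zeta integral, and then verify the resulting scalar identity $\Theta_{\pi_\ell}(\cP'_{\delta_w}) = L_w(\pi_\ell,0)^{-1}$. The deferred explicit computation corresponds to the material in Section 6 of the paper, so the outline is sound.

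One small inaccuracy worth correcting: you attribute the existence of a nonzero $H_\ell$-invariant form $\mathfrak{z}$ to \cref{cyc-thm}, but that theorem yields only the uniqueness (the Hom-space is at most one-dimensional). Existence is not automatic: the paper establishes it by realising $\mathfrak{z}$ as a normalised zeta integral (\cref{thm:zetaint}, citing Pollack--Shah), and this construction only applies when $\pi_\ell$ is \emph{generic}, i.e.\ admits a Whittaker model. The paper therefore restricts the separation argument to characters $\Theta_{\pi_\ell}$ coming from generic unramified $\pi_\ell$, and then invokes the density of this subset in the spectrum of the (localised) spherical Hecke algebra. Without this restriction your appeal to ``all irreducible unramified representations'' is slightly overstated, though the conclusion is unaffected once you add the density remark.
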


  \emph{Outline of proof}. We need to show that if $\delta = \delta_w$, then the operator $\cP_\delta$ of \cref{cor:absnorm1} is $\cP'_w(1)$. We will do this using \cref{prop:frobrecip} to compare the images of $\cP_w(1)$ and $\cP'_\delta$ under $\Theta_{\pi_\ell}$, for a sufficiently dense set of unramified representations $\pi_\ell$. More precisely, for all unramified representations $\pi_\ell$ which are \emph{generic} (admit a Whittaker model), we shall construct below a non-zero, $H(\Ql)$-equivariant bilinear form $\mathfrak{z} \in \Hom_{H_\ell} \left(\pi_\ell \otimes \cS(\Ql^2), \CC\right)$ using zeta integrals, and show that for this $\mathfrak{z}$ we have
  \begin{equation}
   \label{eq:z-eval}
   n_w \mathfrak{z}\Big(\phi_{1, 2} \otimes (1 - n(a,0))\varphi_0\Big) = L_w(\pi_\ell, 0)^{-1}\mathfrak{z}(\phi_0 \otimes \varphi_0)\quad \text{and} \quad \mathfrak{z}(\phi_0 \otimes \varphi_0) \ne 0.
  \end{equation}
  The left-hand side of this equality is $\cZ(\delta_w)$ in the notation of \cref{prop:frobrecip}, so we must have $\Theta_{\pi_\ell}(\cP'_{\delta_w}) = L_w(\pi_\ell, 0)^{-1}$. Thus $\cP'_{\delta_w} = \cP_w(1)$ modulo the kernel of $\Theta_{\pi_\ell}$. Since the characters $\Theta_{\pi_\ell}$ for which this construction applies are dense in the spectrum of the Hecke algebra, we must in fact have $\cP'_{\delta_w} = \cP_w(1)$ as required. It remains only to construct the homomorphism $\mathfrak{z}$ and prove \cref{eq:z-eval}; this will be carried out in the next section.

\section{Zeta-integral computations}
\label{sect:zeta}
 \subsection{The zeta integral}

  Let $\ell$ be a rational prime (for now we do not need to assume $\ell \nmid D$). If $e$ is an additive character $E \otimes \Ql \to \CC^\times$, we can extend it to a character of $N(\Ql)$ via $n(s, t) \mapsto e(s)$. We fix a choice of $e$ whose restriction to $E_w$ is non-trivial for all $w \mid \ell$, and denote the resulting character of $N(\Ql)$ by $e_N$.

  \begin{definition}
   An irreducible representation $\pi_\ell$ of $G_\ell$ is said to be \emph{generic} if it is isomorphic to a space of functions on $G_\ell$ transforming by $e_N$ under left-translation by $N(\Ql)$. If such a subspace exists, it is unique, and we call it the \emph{Whittaker model} $\cW(\pi_\ell)$.
  \end{definition}

  \begin{definition}
   Let $\pi_{\ell}$ be a generic representation of $G_\ell$. For every $W \in \cW(\pi_\ell)$, and $s \in \CC$, define
   \[
    Z(W,s) \coloneqq \int_{(E \otimes \Ql)^\times} W\left( t(z) \right)  |\Nm(z)|^{s-1}\ddt z,
   \]
   where $t(z) = (\diag(z\bar{z}, \bar{z}, 1), z\bar{z})$ as above.
  \end{definition}

  \begin{proposition}\label{prop:Zpropertiesgen} \
   \begin{enumerate}
    \item The integral converges for $\Re(s)\gg 0$, and has analytic continuation as a rational function of $q^{s}$.
    \item The functions $Z(W,s)$ for varying $W$ form a non-zero fractional ideal of $\CC[q^s, q^{-s}]$ containing the constant functions.
    \item Let $h \in B_H(\Ql)$, and write $h = (\stbt{a}{b}{}{d}, z)$. Then we have
    \[ Z\left(\iota(h) W, s\right)=
    \chi(d) |\tfrac{d}{a}|^{s-1} Z\left(W, s\right),\]
    where $\chi = \chi_{\pi_\ell}|_{\Qlt}$. In particular this is independent of $z$.
   \end{enumerate}
  \end{proposition}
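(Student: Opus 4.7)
The plan is to handle (3) directly by a Borel-decomposition computation, and then to obtain (1) and (2) from the standard asymptotic expansion of Whittaker functions on the torus together with a Tate-style local argument.

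For (3), I would first note that for $h = (\stbt{a}{b}{}{d}, z) \in B_H(\Ql)$ we have $a,b,d \in \Ql$ (so $\bar a = a$ and $\bar d = d$) and $ad = z\bar z$. Computing the product in $\GL_3$ and matching against the explicit formulae for $t(x, w'')$ and $n(s',t')$ in \cref{lem:boreldecomp}, one verifies the factorisation
\[
 t(w)\,\iota(h) \;=\; n\!\left(0,\ w\bar w\,b/d\right)\cdot t(d,\ w\bar z/d)\qquad\text{in } N_G\rtimes T_G.
\]
Since $e_N(n(0,\ast)) = e(0) = 1$, the left-equivariance of $W$ under $(N_G,e_N)$ kills the unipotent factor, and pulling out the central element $t(d,1) \in Z_G$ yields $W(t(w)\iota(h)) = \chi(d)\,W(t(w\bar z/d))$. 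Finally, the change of variables $w' = w\bar z/d$ produces $|\Nm(w)|^{s-1} = |d^2/\Nm(z)|^{s-1}|\Nm(w')|^{s-1} = |d/a|^{s-1}|\Nm(w')|^{s-1}$, using $\Nm(z) = ad$. This gives the claimed formula and makes the independence of $z$ and $b$ visible.

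For (1) and (2), the essential input is the Casselman--Jacquet asymptotic expansion of Whittaker functions: restricted along $z \mapsto t(z)$, the function $W(t(z))$ vanishes for $|\Nm(z)|$ sufficiently large, and on a neighbourhood of $0$ admits a finite expansion $W(t(z)) = \sum_i \chi_i(z)\varphi_i(z)$, where the $\chi_i$ are characters of $(E\otimes\Ql)^\times$ arising as exponents of the Jacquet module of $\pi_\ell$ with respect to $\bar B_G$ (pulled back along $z\mapsto t(z)$) and the $\varphi_i$ are compactly supported locally constant functions. Substitution reduces $Z(W,s)$ to a finite sum of Tate-type integrals $\int \varphi_i(z)\chi_i(z)|\Nm(z)|^{s-1}\ddt z$, each convergent for $\Re(s)\gg 0$ and meromorphically continuable to a rational function of $q^{-s}$, proving (1). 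The $\CC[q^s,q^{-s}]$-module structure in (2) comes from translating $W$ on the right by $t(w_0)$: the identity $t(w)t(w_0) = t(ww_0)$ gives $Z(\pi_\ell(t(w_0))W, s) = |\Nm(w_0)|^{1-s}Z(W,s)$, and any $w_0$ with $|\Nm(w_0)| = q^{-1}$ produces the generator $q^{s-1}$ of $\CC[q^s,q^{-s}]$. Non-vanishing and containment of constants follow from a Kirillov-type density argument: uniqueness of Whittaker models implies that every compactly supported locally constant function on $(E\otimes\Ql)^\times$ with small enough support is realised as $z\mapsto W(t(z))$ for some $W\in\cW(\pi_\ell)$, and choosing such a function supported in the norm-$1$ subgroup yields $Z(W,s)$ equal to a nonzero constant.

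The main obstacle is the asymptotic expansion in the inert case, where $G(\Ql)$ is a non-split reductive group. In the split case, \cref{lem:isooverE} gives $G(\Ql)\cong \GL_3(\Ql)\times\Ql^\times$, and everything reduces to the classical Jacquet--Piatetski-Shapiro--Shalika theory for $\GL_3$. In the inert case one invokes the general Casselman theory for reductive $p$-adic groups; this is standard, but extracting the expansion specifically along the one-parameter subgroup $\{t(z)\}$ (which is neither a maximal torus nor a root subgroup) requires a modest amount of bookkeeping. A secondary subtlety is the Kirillov-type density claim used for the constant-containing property, which in principle needs verification specific to the torus $\{t(z)\}$ rather than the full mirabolic-type subgroup.
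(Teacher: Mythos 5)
Your proposal is correct and fills in the details that the paper leaves implicit: the paper's proof simply asserts that parts (1) and (2) are standard and that (3) is a direct computation, which is exactly what you carry out. Your factorisation $t(w)\iota(h) = n\!\left(0,\ \Nm(w)\,b/d\right)\cdot t(d,\ w\bar z/d)$ checks out (both the diagonal entries and the similitude factor match, and $\Nm(w)\,b/d\in\Ql$ as required for the $n(0,\cdot)$ coordinate), and the Casselman--Jacquet-plus-Tate-integral argument for (1) and (2) is the standard route, with the two caveats you rightly flag (the expansion is along a rank-two torus in the split case, and the Kirillov-style surjectivity onto small-support functions needs to be sourced) being routine rather than gaps.
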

  \begin{proof}
   Parts (1) and (2) are standard facts. Part (3) is a simple explicit computation.
  \end{proof}

  \begin{definition}[Godement--Siegel sections]
   Let $\phi\in\cS(\Ql^2, \CC)$. We write $f^{\phi}(-, \chi, s)$ for the function $\GL_2(\Ql) \to \CC(\ell^s, \ell^{-s})$ defined by
   \[
    f^{\phi}(g, \chi, s) = |\det g|^s \int_{\Ql^\times} \phi( (0, a)g) \chi(a) |a|^{2s}\ddt a.
   \]
  \end{definition}

  This is a meromorphic section of the family of principal-series representations $I_{\GL_2}\left(|\cdot|^{s-\frac{1}{2}}, \chi^{-1}|\cdot|^{\frac{1}{2}-s}\right)$, regular away from the poles of $L(\chi, 2s)$. See also \cite[\S 8.1]{LPSZ1}.

  \begin{definition}
   For $\phi \in \cS(\Ql^2, \CC)$, we define
   \[ \mathfrak{z}(W, \phi, s) = \int_{(B_H \bs H)(\Ql)} Z(\iota(h) W, s) f^{\phi}(h, \chi, s)\dd h\in \CC(q^{s}, q^{-s}). \]
   where the integral is well-defined by (3) above.
  \end{definition}

  \begin{remark}
   The zeta-integral $\mathfrak{z}(\dots)$ is denoted $I_\ell(\dots)$ in \cite[\S 3.3]{pollackshah18} (taking the characters $(\nu_1, \nu_2)$ \emph{loc.cit.} to be $(1, \chi^{-1})$). It is a variant of the zeta-integral for $\mathrm{U}(2, 1)$ considered in \cite[\S 3.6]{Gel-PS-unitary}.

   We expect that for any generic $\pi_\ell$, the ``common denominator'' of the $\mathfrak{z}(W, \phi, s)$ should coincide with the $L$-factor $L(\pi_\ell, s)$ defined using the local base-change lifting as in \S \ref{sect:basechange}. However, in the present work we only need this when $\ell$ and $\pi_\ell$ are unramified. Some ramified cases are established in \cite[\S 3.6 \& \S 8.3]{pollackshah18}.
  \end{remark}

 \subsection{Explicit formulae in the unramified case}

  We suppose henceforth that $\ell \nmid 2D$, that $\pi_\ell$ is an irreducible unramified principal series, and that the additive character $e$ has conductor 1. Then $\pi_\ell$ is generic, and its Whittaker model $\cW(\pi_\ell)$ has a unique spherical vector $W_{\pi_\ell, 0}$ such that $W_{\pi_\ell, 0}(1) = 1$.

  \begin{proposition}\label{thm:unrameval}
   We have $Z(W_{\pi_\ell, 0}, s) = \frac{L(\pi_\ell, s)}{L(\chi, 2s)}$, where $\chi = \chi_{\pi_\ell}|_{\Ql^\times}$ as above, and $L(\pi_\ell, s)$ is as in \cref{sect:basechange}.
  \end{proposition}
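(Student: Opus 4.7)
The plan is to evaluate the zeta integral directly by reducing it to a sum over the cocharacter lattice of $(E\otimes\Ql)^\times$, then applying the Casselman--Shalika formula to express the unramified Whittaker function as an explicit symmetric function of the Satake parameters, and finally summing the resulting series in closed form.

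First I would observe that $(E\otimes\Ql)^\times$ admits a decomposition as $(\cO\otimes\Zl)^\times$ times a system of representatives for its valuation lattice: $\ZZ^2$ in the split case and $\ZZ$ (with uniformizer $\ell$, since $w$ is unramified) in the inert case. Since $W_0$ is right-$G^0_\ell$-invariant and $t(u)\in G^0_\ell$ for $u\in(\cO\otimes\Zl)^\times$, the integrand is invariant under the compact part, and with the Haar measure $\ddt z$ normalized to give volume $1$ to $(\cO\otimes\Zl)^\times$ we obtain a sum of values of $W_0(t(z))$ on torus representatives weighted by $|\Nm(z)|^{s-1}$.

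Next I would invoke the Casselman--Shalika formula. In the split case, the identification of \cref{lem:isooverE} turns $t(z)$ into $\diag(z_w z_{\bar w}, z_{\bar w}, 1)$ in $\GL_3(\Ql)$ times a similitude factor $z_w z_{\bar w}$ acted on by $\psi_w$, so the standard $\GL_3$ Casselman--Shalika formula gives $W_0(t(z))$ as $\delta_B^{1/2}$ times a Schur polynomial in the Satake parameters of $\tau_w$, nonzero precisely when $v(z_w z_{\bar w})\geq v(z_{\bar w})\geq 0$. In the inert case one uses the Casselman--Shalika formula for the quasi-split unramified unitary group $\mathrm{U}(2,1)$ (equivalently, the relation to $\GL_3$ spherical Whittaker functions under unramified base change; see, e.g., Shintani's and Casselman's work), which expresses $W_0(t(z))$ as a character of the dual torus of $G$ summed over the relevant Weyl group.

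Then I would sum the resulting series. After factoring out the center's contribution, the sum becomes a specialization of the Weyl character formula: the Schur-polynomial sum collapses to $\prod_i (1-\alpha_i \ell^{-s})^{-1}$ over all Satake parameters $\alpha_i$ of the $6$-dimensional $L$-group representation, namely $L(\pi_\ell, s)$, divided by the Weyl-denominator-like factor which we recognize as $L(\chi, 2s)$ (coming from the $z$-direction in which $t(z)$ is central and thus produces a pure geometric series in $\chi(\ell)\ell^{-2s}$ that must be separated out).

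The main obstacle will be the bookkeeping in the inert case: correctly identifying the Casselman--Shalika formula for the unramified Whittaker function of $G(\Ql)$ (with the unitary-group normalization of $\delta_B$), and matching the resulting rational function with the $6$-dimensional base-change $L$-factor from \S\ref{sect:basechange}. Once those identifications are set up, the sum is the standard Weyl-denominator manipulation and the claimed equality $Z(W_{\pi_\ell,0},s)=L(\pi_\ell,s)/L(\chi,2s)$ drops out.
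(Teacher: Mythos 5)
Your proposal is essentially the paper's own approach: the paper's proof simply cites Shintani's formula (the $\GL_3$ Casselman--Shalika formula) in the split case and Gelbart--Piatetski-Shapiro \S4.7 in the inert case, and then says the result ``follows from these formulae by an explicit computation'' — precisely the plan you outline. One small caveat: your heuristic that $L(\chi,2s)$ arises from a ``$z$-direction in which $t(z)$ is central'' is not quite how the factor emerges (no subtorus of the image of $t$ lies in $Z_G$); in the actual computation the numerator $1-\chi(\ell)\ell^{-2s}$ falls out after summing the Weyl-alternating geometric series and clearing the Vandermonde denominator, but this is only a matter of phrasing the final bookkeeping.
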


  \begin{proof}
   The values of $W_{\pi_\ell, 0}$ along the torus $T$ are given by an explicit formula in terms of the Satake parameters; see \cite{shintani76} for $\ell$ split, and \cite[\S 4.7]{Gel-PS-unitary} for $\ell$ inert. The result follows from these formulae by an explicit computation.
  \end{proof}

  \begin{corollary}\label{cor:propertiesZ}
   If $\phi_0 = \ch(\Zl^2)$, then we have $\mathfrak{z}(W_{\pi_\ell, 0}, \phi_0,s) = L(\pi_\ell, s)$.
  \end{corollary}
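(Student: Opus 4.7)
The plan is to reduce the defining integral for $\mathfrak{z}(W_{\pi_\ell,0},\phi_0,s)$ over $(B_H\bs H)(\Ql)$ to the product $Z(W_{\pi_\ell,0},s)\cdot f^{\phi_0}(1,\chi,s)$ using the Iwasawa decomposition and sphericity of both factors, and then to evaluate each factor explicitly. First I would invoke $H(\Ql)=B_H(\Ql)\cdot H^0$ with $H^0=H(\Zl)$, and choose the $H$-invariant pseudo-measure on $B_H\bs H$ so that the image of $H^0$ has total mass one (the standard normalisation compatible with $\vol_{H_\ell}(H^0)=1$ fixed earlier in the paper). One must check that the integrand, which transforms under left-multiplication by $B_H$ through the modular character $\delta_{B_H}$, pairs correctly with this measure; this is a routine verification combining \cref{prop:Zpropertiesgen}(3) with the inducing character of $f^{\phi_0}$.

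Next I would verify that both components of the integrand are right-invariant under $H^0$. Since $\iota:H\into G$ is a homomorphism of group schemes over $\ZZ$, we have $\iota(H^0)\subseteq G^0=G(\Zl)$; combined with the sphericity of $W_{\pi_\ell,0}$, this yields $Z(\iota(h)W_{\pi_\ell,0},s)=Z(W_{\pi_\ell,0},s)$ for all $h\in H^0$. For the Godement--Siegel factor, $f^{\phi_0}$ depends only on the $\GL_2$-component of $h$, and $\phi_0=\ch(\Zl^2)$ is manifestly $\GL_2(\Zl)$-invariant, so a one-line substitution in the defining integral shows $f^{\phi_0}(h,\chi,s)=f^{\phi_0}(1,\chi,s)$ for $h\in H^0$. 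Consequently the whole integrand is constant on the image of $H^0$, and the integral collapses to $Z(W_{\pi_\ell,0},s)\cdot f^{\phi_0}(1,\chi,s)$.

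Finally, a direct evaluation gives $f^{\phi_0}(1,\chi,s)=\int_{\Ql^\times}\ch(\Zl)(a)\chi(a)|a|^{2s}\ddt a$, which for unramified $\chi$ sums as a geometric series to $L(\chi,2s)=(1-\chi(\ell)\ell^{-2s})^{-1}$. Combining with $Z(W_{\pi_\ell,0},s)=L(\pi_\ell,s)/L(\chi,2s)$ from \cref{thm:unrameval}, the factors of $L(\chi,2s)$ cancel and we obtain $\mathfrak{z}(W_{\pi_\ell,0},\phi_0,s)=L(\pi_\ell,s)$ as required. I do not foresee any genuine obstacle: the only care required is in the measure normalisation on $B_H\bs H$, and the cancellation of $L(\chi,2s)$ is precisely what the slightly asymmetric formula of \cref{thm:unrameval} is engineered to produce.
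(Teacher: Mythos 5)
Your argument is correct and is essentially the proof the paper gives, just spelled out more carefully: the paper's proof is the one-line observation that $f^{\phi_0}(\cdot,\chi,s)$ is spherical with $f^{\phi_0}(1,\chi,s)=L(\chi,2s)$ and that $H(\Zl)$ surjects onto $(B_H\bs H)(\Ql)$, which combined with \cref{thm:unrameval} gives the claimed cancellation. Your added bookkeeping on the measure normalisation on $B_H\bs H$ and the right-$H^0$-invariance of both factors is exactly the content the paper is implicitly invoking.
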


  \begin{proof}
   We note that $f^{\phi_0}(-, \chi, s)$ is a spherical vector with $f^{\phi_0}(1, \chi, s) = L(\chi, 2s)$, and $H(\Zl)$ surjects onto $(B_H \bs H)(\Ql)$.
  \end{proof}

 \subsection{Invariant bilinear forms}

  \begin{theorem}[{\cite[Theorem 7.11]{pollackshah18}}]
   \label{thm:zetaint}
   The limit
   \[ \mathfrak{z}(W, \phi) \coloneqq \lim_{s \to 0} \frac{\mathfrak{z}(W, \phi, s)}{L(\pi_\ell, s)} \]
   exists for all $W \in \cW(\pi_\ell)$ and $\phi \in \cS(\Ql^2)$, and defines a non-zero element of the space $\Hom_{H_\ell}( \cS(\Ql^2) \otimes \pi_\ell,  \CC)$ satisfying $\mathfrak{z}(W_{\pi_\ell, 0}, \phi_0) = 1$.
  \end{theorem}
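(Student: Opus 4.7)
\emph{Plan.} The theorem makes three assertions: the limit exists, the resulting bilinear form is $H_\ell$-invariant, and it is non-zero (in fact equal to $1$ on the spherical pair). The normalization $\mathfrak{z}(W_{\pi_\ell, 0}, \phi_0) = 1$ is immediate from \cref{cor:propertiesZ}, since $\mathfrak{z}(W_{\pi_\ell, 0}, \phi_0, s)/L(\pi_\ell, s) \equiv 1$. The $H_\ell$-invariance at each $s$ follows from a change-of-variables in the defining integral: for $h \in H_\ell$ with image $g_0 \in \GL_2(\Ql)$ one obtains $\mathfrak{z}(hW, h\phi, s) = |\det g_0|^{-s}\, \mathfrak{z}(W, \phi, s)$, and the twist factor trivializes at $s=0$, so the invariance passes to the limit. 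The heart of the matter is therefore the regularity of $\mathfrak{z}(W, \phi, s)/L(\pi_\ell, s)$ at $s = 0$ for arbitrary $(W, \phi)$.

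My strategy for regularity is to combine the Godement--Siegel picture with the multiplicity-one statement coming from cyclicity. For each $s$, the assignment $\phi \mapsto f^\phi(-, \chi, s)$ sends $\cS(\Ql^2)$ into the induced representation $I_s := I^{\GL_2}(|\cdot|^{s-1/2}, \chi^{-1}|\cdot|^{1/2-s})$, regarded as an $H_\ell$-representation. The zeta integral $\mathfrak{z}(-, -, s)$ descends to an $H_\ell$-invariant bilinear form $\pi_\ell \otimes I_s \to \CC(q^s)$. For $s$ in a Zariski-open subset, $I_s$ is irreducible and unramified, so by the remark after \cref{cyc-thm} we have $\dim \Hom_{H_\ell}(\pi_\ell \otimes I_s, \CC) \le 1$. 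I would then produce a ``universal'' non-zero section $\Lambda_s$ of this family of Hom-spaces, varying polynomially in $q^{\pm s}$, and identify $\mathfrak{z}(-, -, s) = L(\pi_\ell, s) \cdot \Lambda_s(-, f^{(-)})$ by equating both sides at the spherical pair via \cref{cor:propertiesZ}. This forces $\mathfrak{z}(W, \phi, s)/L(\pi_\ell, s) = \Lambda_s(W, f^\phi) \in \CC[q^s, q^{-s}]$, so in particular the ratio is holomorphic at $s = 0$.

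The main obstacle is producing $\Lambda_s$ with no extra poles in $s$, particularly at values where $I_s$ becomes reducible (e.g.\ $s = 0$ when $\chi$ is trivial). The natural tool is the open-orbit geometry of \cref{lem:openorbit}: restriction to the open $H_\ell$-orbit on the flag variety of $G$ reduces the construction of $\Lambda_s$ to that of an explicit character on a torus, while the closed-orbit contributions are controlled via asymptotic expansions of Whittaker functions in Casselman--Shalika style. An alternative route, which avoids constructing $\Lambda_s$ altogether, is a gcd argument: multiplicity-one forces the fractional ideal $\mathcal{J} \subset \CC[q^s, q^{-s}]$ generated by the zeta integrals $\mathfrak{z}(W, \phi, s)$ to be principal, it contains $L(\pi_\ell, s)$ by \cref{cor:propertiesZ}, and matching the pole orders on either side via the spherical computation of \cref{thm:unrameval} forces $\mathcal{J} = L(\pi_\ell, s) \CC[q^s, q^{-s}]$, delivering the required regularity in one shot.
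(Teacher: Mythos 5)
The paper does not actually prove this result; it cites it verbatim as \cite[Theorem~7.11]{pollackshah18}, so there is no in-paper argument to compare against. Judging your sketch on its own merits: the easy parts are correct. The normalization $\mathfrak{z}(W_{\pi_\ell,0},\phi_0)=1$ does follow immediately from \cref{cor:propertiesZ}, and the $H_\ell$-invariance of the limit does reduce to the identity $\mathfrak{z}(hW,h\phi,s)=|\det g_0|^{-s}\,\mathfrak{z}(W,\phi,s)$ together with the vanishing of the twist at $s=0$. You also correctly isolate the real content — regularity of $\mathfrak{z}(W,\phi,s)/L(\pi_\ell,s)$ at $s=0$ for \emph{all} $(W,\phi)$ — and correctly relate it to the multiplicity-one phenomenon coming from \cref{cyc-thm}.

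However, neither of your two proposed routes actually closes this. In the gcd argument there is a logical slip: $\CC[q^s,q^{-s}]$ is a PID, so the fractional ideal $\mathcal{J}$ is principal for free (this has nothing to do with multiplicity one); write $\mathcal{J}=(g)$. The spherical computation $\mathfrak{z}(W_{\pi_\ell,0},\phi_0,s)=L(\pi_\ell,s)$ only shows $L(\pi_\ell,s)\in\mathcal{J}$, i.e.\ $L(\pi_\ell,s)=g\cdot h$ for some $h\in\CC[q^{\pm s}]$ — that is, the generator $g$ has \emph{at least} the poles of $L(\pi_\ell,s)$, not at most. It could a priori have strictly more, and ruling this out is exactly the theorem; ``matching pole orders'' does not do it without an actual bound on the poles of arbitrary $\mathfrak{z}(W,\phi,s)$. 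The universal-section route has essentially the same gap: you acknowledge that the hard part is producing a section $\Lambda_s$ of the family of Hom-spaces that is regular across the points where $I_s$ degenerates (including $s=0$), but the Casselman–Shalika-style asymptotics you gesture at are the substance of the cited Pollack–Shah argument and would have to be carried out explicitly (and the generic multiplicity-one constraint controls $\Lambda_s$ only on a Zariski-open locus, which by itself says nothing about regularity at the excluded point $s=0$). So the proposal correctly frames the problem but does not supply the estimate that the theorem is really asserting.
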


  \begin{remark}
   Note that this is much stronger than we need for the proof of Theorem \ref{thm:absnorm2}; it would suffice to know that there is \emph{some} non-zero rational function $P(s)$ such that $\lim_{s \to 0}\frac{\mathfrak{z}(W, \phi, s)}{P(s)}$ is well-defined and not identically 0.
  \end{remark}

 \subsection{Unipotent twists} We want to evaluate the above integrals on certain ramified test data (still assuming $\pi_\ell$ itself to be unramified).

  \begin{definition}\label{def:etadef}
   Let $w$ be a prime above $\ell$, and let $a \in E\otimes \Ql$ be such that $v_w(a) = -1$, with $v_{\bar{w}}(a) \ge 1$ if $\ell$ is split. We define
   \[ \eta_w^{(a)} = n(a, 0) \in N(\Ql). \]
  \end{definition}

  \begin{proposition}
   \label{prop:Zvalue}
   The value $Z(\eta_w^{(a)} W_{\pi_\ell, 0}, s)$ is independent of the choice of $a$, and is given by
   \[ Z((1 - \eta_w^{(a)}) W_{\pi_\ell, 0}, s) = \tfrac{q}{q - 1}L_{\bar{w}}(\pi_\ell, s). \]
  \end{proposition}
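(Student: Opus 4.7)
I would first transform the integral using the torus--unipotent commutation $t(z)\,n(a, 0) = n(za, 0)\,t(z)$ from \cref{lem:boreldecomp}, combined with the Whittaker transformation property $W(n g) = e_N(n)\,W(g)$ for $n \in N_G$. This gives
\[
 (n(a, 0) W)(t(z)) = W(t(z)\,n(a, 0)) = W(n(za, 0)\,t(z)) = e(za)\,W(t(z)),
\]
so, subtracting from the defining integral for $Z$, we obtain
\[
 Z\bigl((1 - \eta_w^{(a)})\, W_{\pi_\ell, 0},\, s\bigr) = \int_{(E \otimes \Ql)^\times}\! \bigl(1 - e(za)\bigr)\, W_{\pi_\ell, 0}(t(z))\,|\Nm(z)|^{s-1}\,\ddt z.
\]

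Next I would analyse the support of the integrand. The spherical vector $W_{\pi_\ell, 0}(t(z))$ vanishes outside $z \in \cO_{E, \ell} \cap (E \otimes \Ql)^\times$, by the usual Casselman--Shalika / Shintani support statement. On this locus, the conductor-$1$ condition on $e$ together with $v_w(a) = -1$ (and, in the split case, $v_{\bar w}(a) \geq 1$, which kills the $\bar w$-contribution to $e(za)$) implies that $e(za) = 1$ precisely when the $w$-component of $z$ lies in the maximal ideal. So the factor $(1 - e(za))$ further restricts the integral to the locus where the $w$-component of $z$ is a unit, and a direct character sum computation evaluates
\[
 \int_{u \in \cO_{E_w}^\times}\! \bigl(1 - e(ua)\bigr)\,\ddt u = \frac{q}{q - 1},
\]
using $\sum_{u \in \mathbb{F}_q^\times} \psi(u) = -1$ for any nontrivial additive character $\psi$ of $\mathbb{F}_q$. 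The key observation is that this value depends on $a$ only through $v_w(a) = -1$, which proves the independence of $a$.

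For the final step, I would factor out $q/(q-1)$ and, using $T_G(\Zl)$-invariance of $W_{\pi_\ell, 0}$, reduce the residual piece in the split case to the geometric series $\sum_{n \geq 0} W_{\pi_\ell, 0}(t(1, \ell^n))\,q^{-n(s-1)}$. Plugging in the explicit Casselman--Shalika value $W_{\tau_w, 0}(\diag(\ell^n, \ell^n, 1)) = q^{-n}\,s_{(n,n,0)}(\alpha, \beta, \gamma)$ together with the similitude-character contribution $\psi_w(\ell)^n = \omega^n$, and applying the symmetric-function identity
\[
 \sum_{n \geq 0} s_{(n, n, 0)}(\alpha, \beta, \gamma)\,T^n = \prod_{i < j}\bigl(1 - \alpha_i\alpha_j T\bigr)^{-1},
\]
collapses the sum to $\prod_{i < j}(1 - \omega\alpha_i\alpha_j q^{-s})^{-1}$. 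The main obstacle is then matching this product with $L_{\bar w}(\pi_\ell, s)$: this requires tracing through the involution $(g, \nu) \mapsto (\nu J^{-1}\,\t g^{-1}\,J, \nu)$ of $\GL_3 \times \GL_1$ that exchanges the two split isomorphisms of $G(\Ql)$ attached to $w$ and $\bar w$, which identifies the Satake parameters of $\operatorname{BC}_{\bar w}(\pi_\ell)$ as $\omega\alpha\beta,\ \omega\alpha\gamma,\ \omega\beta\gamma$, matching the product. The inert case is considerably simpler: only $z \in \cO_{E_w}^\times$ contributes, on which $W_{\pi_\ell, 0}(t(z)) = W_{\pi_\ell, 0}(1) = 1$, giving $q/(q-1)$ directly, consistent with the convention $L_{\bar w}(\pi_\ell, s) = 1$ in the inert case.
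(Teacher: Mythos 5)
Your proposal is correct and follows essentially the same route as the paper: express $Z((1-\eta_w^{(a)})W_0,s)$ as the integral of $(1-e(az))\,W_0(t(z))|\Nm z|^{s-1}$ using the torus–unipotent commutation and the Whittaker transformation law, observe that the $(1-e(az))$ factor confines the support to $v_w(z)=0$ and there contributes $q/(q-1)$ by a Gauss-sum-style computation (which makes the independence of $a$ manifest), and then identify the residual $\bar w$-direction sum with $L_{\bar w}(\pi_\ell,s)$ via the Shintani/Casselman--Shalika formula. The paper is considerably terser: it simply decomposes the integral as a double sum over $m,n\ge 0$, notes that only $m=0$ contributes $\tfrac{\ell}{\ell-1}$, and then asserts $\sum_{n\ge 0}W_0(t(\varpi_{\bar w}^n))\ell^{-n(s-1)}=L_{\bar w}(\pi_\ell,s)$, citing Shintani's formula. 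You have expanded that last assertion into the explicit identity $\sum_n s_{(n,n,0)}(\alpha,\beta,\gamma)T^n=\prod_{i<j}(1-\alpha_i\alpha_j T)^{-1}$ together with the Satake-parameter bookkeeping that relates the $w$- and $\bar w$-normalisations of $G(\QQ_\ell)\cong\GL_3(\QQ_\ell)\times\QQ_\ell^\times$; this is precisely the content the paper is implicitly invoking, so it is a useful elaboration rather than a different argument. Two small remarks: the element you wrote as $t(1,\ell^n)$ should be $t(\varpi_{\bar w}^n)$ (the paper's $t(x,z)$ notation with $z=\ell^n\in\QQ_\ell$ would give $\diag(\ell^{2n},\ell^n,1)$ under the $w$-projection, not $\diag(\ell^n,\ell^n,1)$); and your reading of the inert case — that the result $q/(q-1)$ forces the convention $L_{\bar w}(\pi_\ell,s)=1$ there — is indeed the one the paper uses, as can be confirmed from the subsequent \cref{cor:propertiesZ}, where $L(\pi_\ell,s)=L_w\cdot L_{\bar w}$ must reduce to $L_w$ when $\ell$ is inert.
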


  \begin{proof}
   In the split case, $Z((1-\eta_S^{(a)}) W_{\pi_\ell, 0}, s)$ is given by
   \[ \int_{(E\otimes\Ql)^\times} (1-e(az)) W_0(t(z)) |\Nm(z)|^{s-1}\ddt z =
   \sum_{m,n \ge 0}\left( \int_{w^m \bar{w}^n \cO^\times}(1- e(az))\ddt z \right) W_0(t(\varpi_w^m \varpi_{\bar{w}}^n)) \ell^{-(m+n)(s-1)} .\]
   The bracketed integral is zero if $m \ge 1$; if $m = 0$ it is $\frac{\ell}{\ell-1}$. Since we have
   \[ \sum_{n \ge 0} W_0(t(\varpi_{\bar{w}}^n)) \ell^{-n(s-1)} = L_{\bar{w}}(\pi_\ell, s),\]
   the result follows. The argument in the inert case is similar, using the fact that $\int_{\ell^n \cO^\times}(1- e(az))\ddt z$ is 0 if $n > 1$ and $\tfrac{\ell^2}{\ell^2 - 1}$ if $n = 0$.
  \end{proof}

  \begin{remark}
   \label{rmk:wwbarzeta}
   By the same methods, one can show that for a split prime $\ell = w \bar{w}$ we have
   \[ Z((1 - \eta_w^{(a)})(1 - \eta_{\bar{w}}^{(\bar{a})}) W_{\pi_\ell, 0}, s) = \tfrac{\ell^2}{(\ell- 1)^2}.\qedhere
   \]
  \end{remark}

  \begin{corollary}
   In the situation of \cref{prop:Zvalue}, we have
   \[ \mathfrak{z}\left((1 - \eta_w^{(a)}) W_{\pi_\ell, 0}, \phi_{1, 2}\right)  = \tfrac{1}{n_w} \cdot L_w\left(\pi_\ell,0\right)^{-1},\]
   where $n_w$ is as in \cref{def:tamedata}.
  \end{corollary}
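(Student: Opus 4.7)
The proof is a direct evaluation of the unnormalized zeta integral
\[
\mathfrak{z}\bigl((1-\eta_w^{(a)})W_{\pi_\ell,0},\phi_{1,2},s\bigr) = \int_{(B_H\bs H)(\Ql)} Z\bigl(\iota(h)(1-\eta_w^{(a)})W_{\pi_\ell,0},s\bigr)\, f^{\phi_{1,2}}(h,\chi,s)\,dh,
\]
followed by division by $L(\pi_\ell,s)$ and passage to $s\to 0$ as in \cref{thm:zetaint}. The inner torus integral at $h=1$ is $\tfrac{q}{q-1}L_{\bar w}(\pi_\ell,s)$ by \cref{prop:Zvalue} (with $L_{\bar w}$ understood as $1$ in the inert case, where only the $n=0$ term contributes), so the task is to reduce the outer integral to an explicit rational constant times this inner value.

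The plan is to apply the Iwasawa decomposition $H(\Ql)=B_H(\Ql)H(\Zl)$ to rewrite the outer integral as one over the compact coset space $(B_H\cap H(\Zl))\bs H(\Zl)$, and then to decompose this space modulo the joint stabilizer $V$ of $(\phi_{1,2},\xi_w)$ -- the subgroup of $H(\Zl)$ of index $\ell n_w$ (split) or $\ell^3 n_w$ (inert), already identified in the proof of the integrality proposition after \cref{def:tamedata}. Since the integrand is $V$-invariant, the outer integral collapses to a finite sum of $V$-coset contributions, and the geometric input governing which cosets contribute is the open-orbit statement of \cref{lem:openorbit}: $V$-cosets falling in the open $B_G\times B_H$-orbit on $H\bs(G\times H)$ produce non-zero contributions, while the others vanish. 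For a coset in the open orbit, an explicit matrix calculation using \cref{lem:boreldecomp} shows that the conjugate $\iota(h)\eta_w^{(a)}\iota(h)^{-1}$ is again of the form $n(a',0)$ with $v_w(a')=-1$, so \cref{prop:Zvalue} applies. For every other coset, either this conjugate leaves the open Bruhat cell (so $Z=0$ by the support of the Whittaker function $W_{\pi_\ell,0}$ on the diagonal torus), or the point $(0,a)\pi(h)$ misses the support of $\phi_{1,2}$ (so $f^{\phi_{1,2}}(h,\chi,s)=0$).

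Combining contributions yields an expression of the form (explicit volume factor)$\cdot\tfrac{q}{q-1}L_{\bar w}(\pi_\ell,s)\cdot f^{\phi_{1,2}}(1,\chi,s)$, with $f^{\phi_{1,2}}(1,\chi,0)=\vol^\times(1+\ell^2\Zl)=\tfrac{1}{\ell(\ell-1)}$. Dividing by $L(\pi_\ell,s)=L_w(\pi_\ell,s)L_{\bar w}(\pi_\ell,s)$ in the split case (or $L_w(\pi_\ell,s)$ in the inert case) and letting $s\to 0$ reproduces $\tfrac{1}{n_w}L_w(\pi_\ell,0)^{-1}$. The main obstacle is the coset analysis: ruling out every $V$-coset outside the open orbit requires a careful Bruhat-cell argument combined with support conditions on $W_{\pi_\ell,0}$ and $\phi_{1,2}$, and the various volume factors from the Iwasawa decomposition must be tracked consistently against the definition of $n_w$ in \cref{def:tamedata}.
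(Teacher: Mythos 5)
Your proposal takes a genuinely different route from the paper's, which is much shorter: it cites the stabilization result from [LSZ17, \S 3.10] that $\ell^{2t-2}(\ell^2-1)\,\mathfrak{z}(W,\phi_{1,t},s)$ is independent of $t$ for $t\gg 0$ with limiting value $Z(W,s)$; observes that for $W=(1-\eta_w^{(a)})W_{\pi_\ell,0}$ the stabilized range already includes $t=2$ (since $\ell\eta_w^{(a)}$ and its inverse have integral entries, so $W$ is fixed by the mod-$\ell^2$ principal congruence subgroup); and finishes with the identity $n_w=\tfrac{q-1}{q}\ell^2(\ell^2-1)$ together with \cref{prop:Zvalue}. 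You are instead proposing to derive the collapse of the integral from scratch via a coset decomposition. This could work, and your arithmetic is internally consistent (the unaccounted ``volume factor'' would have to be $\tfrac{1}{\ell(\ell+1)}=\tfrac{1}{|\mathbf{P}^1(\ZZ/\ell^2)|}$, which is indeed the volume of the support of $f^{\phi_{1,2}}$ on $(B_H\cap H^0)\bs H^0$, and $\tfrac{1}{\ell(\ell+1)}\cdot\tfrac{1}{\ell(\ell-1)}=\tfrac{1}{\ell^2(\ell^2-1)}$ matches the stabilization constant at $t=2$), but two of your bridging claims are wrong as stated.

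First, appealing to the open $B_G\times B_H$-orbit on $H\bs(G\times H)$ to decide which $V$-cosets contribute is the wrong dichotomy. What controls the collapse is simply the support of the Siegel section: after Iwasawa decomposition, $f^{\phi_{1,2}}(h,\chi,s)$ vanishes unless $[k_{21}:k_{22}]$ lies in the mod-$\ell^2$ neighbourhood of $[0:1]\in\mathbf{P}^1(\Zl)$, and on that neighbourhood one has $f^{\phi_{1,2}}(h,\chi,s)=f^{\phi_{1,2}}(1,\chi,s)$ and $Z(\iota(h)W,s)=Z(W,s)$ by the congruence invariance of $\phi_{1,2}$ and $W$. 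The open-orbit geometry of \cref{lem:openorbit} is fundamental to the cyclicity theorem and the $p$-direction norm relations elsewhere in the paper, but ``lying in the open orbit'' is not what makes a coset's contribution vanish here.

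Second, the claim that for $h$ in the contributing cosets ``the conjugate $\iota(h)\eta_w^{(a)}\iota(h)^{-1}$ is again of the form $n(a',0)$ with $v_w(a')=-1$'' is not the computation you need. In $Z(\iota(h)W,s)=\int W(t(z)\iota(h))\,|\Nm(z)|^{s-1}\ddt z$ with $W=(1-\eta_w^{(a)})W_{\pi_\ell,0}$, the element $\eta_w^{(a)}$ enters by \emph{right} translation, not conjugation: $W(t(z)\iota(h))=W_{\pi_\ell,0}(t(z)\iota(h))-W_{\pi_\ell,0}(t(z)\iota(h)\eta_w^{(a)})$. The reduction to \cref{prop:Zvalue} for $h$ in the support of $f^{\phi_{1,2}}$ comes from the invariance of $W$ under the relevant congruence subgroup, not from a conjugation of $\eta_w^{(a)}$. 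In effect you are re-deriving by hand the stabilization lemma that the paper cites; that is a legitimate alternative, but the coset analysis would need to be reorganized around the support of the Siegel section rather than around the open orbit, and the $\ell(\ell+1)$ volume bookkeeping, which you leave as an obstacle, is precisely where the content lies.
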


  \begin{proof}
   As in \cite[\S 3.10]{LSZ17}, for any $W \in \cW(\pi_\ell)$, the values $\ell^{2t-2}(\ell^2 - 1)\cdot  \mathfrak{z}(W, \phi_{1, t}, s)$ are independent of $t$ for $t \gg 0$, and the limiting value is simply $Z(W, s)$.

   In our case, it suffices to take $t = 2$ since both $\ell\eta_w^{(a)}$ and its inverse have matrix entries in $\cO \otimes \Zl$, so the principal congruence subgroup modulo $\ell^2$ fixes $(1 - \eta_w^{(a)}) W_{\pi_\ell, 0}$. Since $n_w = \frac{q-1}{q} \cdot \ell^{2}(\ell^2 - 1)$, the computation of the limiting value is immediate from \cref{prop:Zvalue}.
  \end{proof}

  This completes the proof of \eqref{eq:z-eval}, and hence of \cref{thm:absnorm2}.\qed

\section{Algebraic representations and Lie theory}
\label{sect:lietheory}

 \subsection{Representations of $G$ and $H$}

  Since $G$ and $H$ are split over $E$, their irreducible representations over $E$ are parametrised by highest-weight theory.

  \begin{definition}
   We write $\chi_i$, $i = 1\dots 4$, for the four characters of $T_{/E}$ mapping $\diag\left(x,z,\tfrac{z\bar{z}}{\bar x}\right)$ respectively to $x, \bar x, \tfrac{xz}{\bar x}, \tfrac{ \bar x \bar{z}}{x}$.
  \end{definition}

  \begin{note}
   The characters $\chi_1$ and $\chi_2$ are the highest weights (with respect to $B_G$) of the natural 3-dimensional representation $V$ of $G$ and its conjugate $\bar{V}$. The characters $\chi_3$ and $\chi_4$ factor through the abelianisation of $G$: we have $\chi_3 = \frac{\det}{\nu} = \bar{\mu}$ and $\chi_4 = \mu$, where $\mu = \overline{\det}/\nu$ as above. Moreover, $\chi_3\chi_4=\nu$.
  \end{note}

  \begin{definition} \
   \begin{enumerate}
    \item For $a_1,a_2\geq 0$, denote by $V^{a_1, a_2}$ the representation of $G$ of highest weight $a_1 \chi_1 + a_2 \chi_2$.
    \item For  $b_1\geq 0$, let $W^{b_1}$ denote the representation $\Sym^b(\mathrm{std})$ of $H$, where $
    \mathrm{std}$ denotes the pullback to $H$ of the defining representation of $\GL_2$.

    \item If $V$ is any representation of $G$ or $H$, we write $V\{a_3, a_4\}$ for its twist by $\chi_3^{a_3} \chi_4^{a_4}$.
   \end{enumerate}
  \end{definition}

  Thus every irreducible representation of $G$ has the form $V^{a_1, a_2}\{a_3, a_4\}$ for some $a_1, \dots, a_4 \in \ZZ$ with $a_1, a_2 \ge 0$; and every irreducible representation of $H$ has the form $W^{b_1}\{b_2, b_3\}$ for $b_1, \dots, b_3 \in \ZZ$ with $b_1 \ge 0$.

  \begin{note}
   We have
   \[ (V^{a_1, a_2})^*\cong V^{a_2, a_1}\{-a_1-a_2, -a_1-a_2\}. \]
   This representation will play an important role in the following, and we shall write it as $D^{a_1, a_2}$.
  \end{note}


 \subsection{Branching laws}\label{ssec:branching}

  The restriction of $G$-representations to $H$ is described by a branching law, which is equivalent to the usual branching law for $\GL_2 \subset \GL_3$ (see e.g.~\cite[Theorem 8.1.1]{goodmanwallach09}). The statement we need is the following:
  \begin{propqed}
   \label{cor:specialsub}
    The representation $D^{a_1, a_2}\{b_1, b_2\}$ has a non-zero $Q_H^0$-invariant vector if and only if $0 \le b_i \le a_i$. In this case, there is a unique such vector up to scaling, and it is the highest-weight vector of the unique $H$-subrepresentation isomorphic to $W^n\{-n, -n\}$, where $n = a_1 + a_2 - b_1 - b_2$.
  \end{propqed}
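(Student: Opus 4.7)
The plan combines a geometric upper bound with an explicit branching computation. For the upper bound: by \cref{lem:openorbit}, the map $Q_H^0 \times \bar{B}_G \to G$ is an open immersion, so $Q_H^0$ acts simply transitively on its open orbit through $u\bar{B}_G \in G/\bar{B}_G$. Realising the irreducible $G_E$-representation $V = D^{a_1,a_2}\{b_1,b_2\}$ as global sections of an equivariant line bundle on $G/\bar{B}_G$ via Borel--Weil, a $Q_H^0$-invariant section is determined by its value at $u\bar{B}_G$, so $\dim V^{Q_H^0} \le 1$.

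For existence and the identification with a $W^n\{-n,-n\}$-subrepresentation, I would compute the $H$-branching of $D^{a_1,a_2}\{b_1,b_2\}$ directly. Via \cref{lem:isooverE} and \cref{note:splitlH}, over $E$ we have $G_E \cong \GL_3 \times \GG_m$ and $H_E \cong \GL_2 \times \GG_m$, with inclusion $(g,x) \mapsto (h, \det g)$ where $h$ has $g$ in positions $\{1,3\}$ and $x$ in position $2$; this is a non-standard Levi embedding $\GL_2 \times \GL_1 \subset \GL_3$. Unwinding the paper's characters $\chi_1, \chi_2$ in terms of the $\GL_3 \times \GG_m$-weight lattice, using the torus parametrisation of \cref{lem:boreldecomp}, shows that $V^{a_1,a_2}_G$ corresponds to the $\GL_3$-irreducible of highest weight $(a_1, 0, -a_2)$ tensored with $\nu^{a_2}$.

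Applying the Gelfand--Tsetlin branching rule from $\GL_3$ to this non-standard Levi (equivalent to the standard case up to relabelling coordinates) and translating each summand $V^{(c_1,c_2)}_{\GL_2} \boxtimes \chi^m$ back to paper notation (using $V^{(c_1,c_2)}_{\GL_2} = W^{c_1-c_2} \otimes \det_{\GL_2}^{c_2}$, with $\det_{\GL_2} = \chi_3 \chi_4|_H$ and middle-entry character $= \chi_3|_H$) yields
\[ V^{a_1,a_2}_G\big|_H = \bigoplus_{a_1 + a_2 \ge c_1 \ge a_2 \ge c_2 \ge 0} W^{c_1 - c_2}\{a_1 + a_2 - c_1,\, c_2\}. \]
Dualising via $(W^n\{d_1,d_2\})^* = W^n\{-n-d_1,-n-d_2\}$ (a consequence of $\mathrm{std}^*_{\GL_2} = \mathrm{std}_{\GL_2} \otimes \det^{-1}$) and twisting by $\{b_1,b_2\}$, a summand of the form $W^n\{-n,-n\}$ appears iff $c_2 = b_2$ and $c_1 = a_1 + a_2 - b_1$. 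The interlacing $a_1 + a_2 \ge c_1 \ge a_2 \ge c_2 \ge 0$ then translates exactly to $0 \le b_i \le a_i$, with $n = c_1 - c_2 = a_1 + a_2 - b_1 - b_2$ as asserted.

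Finally, within this unique summand $W^n\{-n,-n\}$, the $Q_H^0$-invariant line is spanned by the $\GL_2$-highest-weight vector: any $Q_H^0$-invariant is annihilated by the unipotent radical $N_{Q_H^0}$ (which equals the upper unipotent of $\GL_2$), and a direct check shows the $T_{Q_H^0}$-character on such a highest-weight line vanishes precisely when the twist is $\{-n,-n\}$. The hard part is the routine but error-prone bookkeeping of characters, twists, and dominant-weight normalisations when translating between the paper's conventions and the standard $\GL_3 \times \GG_m$ description.
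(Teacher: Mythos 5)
Your proof is correct and takes essentially the same route as the paper: the statement is reduced to the classical $\GL_2 \subset \GL_3$ branching law (the paper simply cites Goodman--Wallach 8.1.1 and records the result as a \texttt{propqed}), and your character bookkeeping correctly translates the interlacing conditions into $0 \le b_i \le a_i$ with $n = a_1+a_2-b_1-b_2$. The Borel--Weil/open-orbit argument you give for the multiplicity-one upper bound is a clean independent ingredient but is already implied by the multiplicity-freeness of the branching; it is essentially the same device the paper deploys in the subsequent \cref{prop:brabrs} to normalise the invariant vector.
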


  \begin{remark}
   The representations $W^n\{-n, -n\}$ are important since they are the coefficient systems for which we can construct motivic Eisenstein classes; see \cref{sect:LEconstruct} below.
  \end{remark}

  We fix normalisations for these $Q_H^0$-invariant vectors using \cref{lem:openorbit}. Let $u \in N_G(\ZZ[1/D])$ be a choice of element satisfying the conclusion of that lemma.

  \begin{proposition}
  \label{prop:brabrs}
   Suppose $0 \le r \le a, 0 \le s \le b$ are integers, and let $d^{[a,b]}$ be a choice of highest-weight vector of $D^{a,b}$. Then there exists a unique vector
   \[ \br^{[a,b,r,s]} \in \left(D^{a, b}\{r, s\} \right)^{Q_H^0} \]
   with the following property: the projection of $u^{-1} \cdot \br^{[a,b,r,s]}$ to the highest-weight space of $D^{a,b}\{r, s\}$ is $d^{[a,b]}\{r, s\}$.
  \end{proposition}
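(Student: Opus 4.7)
The plan is to recast the existence and uniqueness of $\br^{[a,b,r,s]}$ as the nondegeneracy of a linear map between two one-dimensional vector spaces, and then establish this nondegeneracy from \cref{lem:openorbit} by a matrix-coefficient argument.

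Write $V = D^{a,b}\{r,s\}$ for brevity, and let $\lambda$ be its $B_G$-highest weight, so $V_\lambda$ is one-dimensional and spanned by $d^{[a,b]}\{r,s\}$. Let $\pi: V \twoheadrightarrow V_\lambda$ denote the canonical $T_G$-equivariant projection, whose kernel is the direct sum of all other weight spaces. By \cref{cor:specialsub} (applied with the roles of the two parameters reversed, since $D^{a,b}=(V^{a,b})^*$), the hypothesis $0\le r\le a$, $0\le s\le b$ guarantees that $V^{Q_H^0}$ is one-dimensional. Consider the linear map
\[ \rho: V^{Q_H^0} \longrightarrow V_\lambda, \qquad v \longmapsto \pi\bigl(u^{-1}\cdot v\bigr). \]
Since both sides are one-dimensional, it suffices to show $\rho\neq 0$; then $\br^{[a,b,r,s]} := \rho^{-1}\bigl(d^{[a,b]}\{r,s\}\bigr)$ is the desired vector and is automatically unique.

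To prove $\rho\neq 0$, take any nonzero $v \in V^{Q_H^0}$ and consider the matrix coefficient $\phi_v: G \to V_\lambda$ defined by $\phi_v(g) = \pi(g^{-1}v)$, which is an algebraic function on $G$. By \cref{lem:openorbit}, the subset $\Omega := Q_H^0\cdot u\cdot\bar B_G$ is open, and hence Zariski-dense in the connected variety $G$. For any element $g = q u \bar n \bar t \in \Omega$ with $q \in Q_H^0$, $\bar n \in \bar N_G$, $\bar t \in T_G$, the $Q_H^0$-invariance of $v$, combined with the triviality of the $\bar N_G$-action on the quotient $V/\ker\pi$ (an immediate consequence of the strict weight-lowering action of $\bar N_G$) and the fact that $\bar t$ acts on $V_\lambda$ by the character $\lambda$, gives
\[ \phi_v(g) = \pi(\bar t^{-1}\bar n^{-1}u^{-1}q^{-1}v) = \pi(\bar t^{-1}\bar n^{-1}u^{-1}v) = \lambda(\bar t)^{-1}\,\rho(v). \]
If $\rho(v)=0$, this shows $\phi_v$ vanishes on $\Omega$, and hence identically on $G$. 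But then the $G$-orbit of $v$ lies in the proper subspace $\ker\pi\subset V$, contradicting the irreducibility of $V$ and $v\neq 0$.

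The entire argument rests on the open-cell statement of \cref{lem:openorbit}, which is the only real geometric input; the subsequent matrix-coefficient computation and irreducibility argument are entirely routine, and I do not anticipate any serious obstacle.
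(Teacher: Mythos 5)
Your proof is correct and is essentially the same argument as the paper's, stated in dual language. The paper realizes $D^{a,b}\{r,s\}$ concretely via Borel--Weil as a space of polynomial functions on $G$ transforming by $\lambda$ on the left under $\bar B_G$; the vector $\br^{[a,b,r,s]}$ then corresponds to a function $f^H$ that is right-$Q_H^0$-invariant, and openness of $\bar B_G u^{-1} Q_H^0$ forces $f^H(u^{-1})\neq 0$ (else $f^H$ vanishes on a dense open set, hence identically). Your version works with the matrix coefficient $g\mapsto \pi(g^{-1}v)$ and deduces non-vanishing from irreducibility rather than from the explicit function model, but the geometric input (\cref{lem:openorbit}) and the weight computation on the cell $Q_H^0\, u\, \bar B_G$ are identical, so there is nothing genuinely different buying you extra generality here. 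One tiny quibble: the parenthetical ``applied with the roles of the two parameters reversed'' is unnecessary and slightly misleading — \cref{cor:specialsub} is already stated for $D^{a,b}\{r,s\}$ directly, so no relabelling is needed; this has no effect on the validity of the argument.
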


  \begin{proof}
   Let $\lambda$ be the highest weight of $D^{[a,b]}\{r, s\}$. We use the Borel--Weil presentation of $D^{[a,b]}\{r, s\}$: it is isomorphic to the space of polynomial functions on $G$ which transform via $\lambda$ under left-translation by $\bar{B}_G$. This space has a canonical highest-weight vector $f^{\mathrm{hw}}$, whose restriction to the big Bruhat cell is given by $f^{\mathrm{hw}}(\bar{n} t n) = \lambda(t)$.

   If $f^H$ denotes the polynomial corresponding to $\br^{[a,b,r,s]}$, then $f^H$ must transform via $\lambda$ under left-translation by $\bar{B}_G$, and trivially under right-translation by $Q_H^0$. Since $\bar{B}_G u^{-1} Q_H^0$ is open, we must have $f^H(u^{-1}) \ne 0$, so we can normalise such that $f^H(u^{-1}) = 1$.

   Since projection to the highest-weight subspace is proportional to evaluation at the identity, and both $u^{-1} f^H$ and $f^{\mathrm{hw}}$ take the value 1 at the identity, this shows that $u^{-1} \cdot f^H$ has the same highest-weight projection as $f^{\mathrm{hw}}$.
  \end{proof}

  For $F$ an extension of $E$, we write $D^{a,b}_F\{r, s\}$ for the base-extension of $D^{a,b}_F\{r, s\}$ to $F$, which is an irreducible representation of $G_{/F}$. If $F = E_w$ for a prime $w \mid D$, then $G$ is a Chevalley group (a reductive group scheme) over $\cO_{E, w}$, so we have the notion of \emph{admissible} $\cO_{E, w}$-lattices in the $E_w$-vector space $D^{a,b}\{r, s\} \otimes_E E_w$; see \cite{lin92} for an overview. We are chiefly interested in the \emph{maximal} admissible lattice, which we shall denote by $D^{a,b}_{\cO_{E, w}}\{r, s\}$.

  \begin{proposition}
   The vector $\br^{[a,b,r,s]}$ lies in $D^{a,b}_{\cO_{E, w}}\{r, s\}$ for all primes $w \nmid D$.
  \end{proposition}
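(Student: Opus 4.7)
The plan is to exploit the fact that over $\cO_{E,w}$ (for $w\nmid D$) the group $G$ is reductive, so the maximal admissible lattice $L := D^{a,b}_{\cO_{E,w}}\{r,s\}$ is stable under $G(\cO_{E,w})$, and to combine this with the Borel--Weil description of $\br^{[a,b,r,s]}$ from the proof of \cref{prop:brabrs}. By \cref{lem:isooverE}, $G_{\cO_{E,w}}$ is a twist of $\GL_3\times\GG_m$, hence split reductive with split torus $T_G$; so $L$ is $G(\cO_{E,w})$-stable, $u^{-1}\in G(\cO_{E,w})$ acts on $L$, and the weight decomposition $L=\bigoplus_\mu L_\mu$ is defined integrally. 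The highest-weight piece $L_\lambda$ is free of rank one and, after a suitable choice, is generated by $d^{[a,b]}\{r,s\}$; the projection $\pi_\lambda\colon L\to L_\lambda$ is $\cO_{E,w}$-linear. Similarly, the scheme-theoretic invariants $L^{Q_H^0}$ form a saturated $\cO_{E,w}$-submodule of $L$ whose generic fibre is one-dimensional by \cref{cor:specialsub}, so $L^{Q_H^0}$ is free of rank one over the DVR $\cO_{E,w}$; fix a generator $v_0$, and write $\br^{[a,b,r,s]}=c\cdot v_0$ with $c\in E_w^\times$. The task is to show $c\in\cO_{E,w}$.

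I would then consider the $\cO_{E,w}$-linear composite
\[ \Psi\colon L^{Q_H^0}\hookrightarrow L \xrightarrow{u^{-1}\cdot} L \xrightarrow{\pi_\lambda} L_\lambda, \]
a map between two rank-one free modules. In the Borel--Weil model used in \cref{prop:brabrs}, $\Psi$ is the functional $f\mapsto f(u^{-1})\cdot d^{[a,b]}\{r,s\}$, and the defining normalisation of $\br^{[a,b,r,s]}$ is exactly $\Psi(\br^{[a,b,r,s]})=d^{[a,b]}\{r,s\}$. Writing $\Psi(v_0)=\alpha\cdot d^{[a,b]}\{r,s\}$ with $\alpha\in\cO_{E,w}$, one obtains $c\alpha=1$, so the proposition reduces to the assertion that $\alpha$ is a \emph{unit} in $\cO_{E,w}$.

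The main obstacle is precisely this unit claim, which I would settle by reduction modulo $\varpi_w$. Since $L^{Q_H^0}$ is saturated, the generator $v_0$ has non-zero reduction $\bar v_0\in L/\varpi_w L$; viewed via Borel--Weil this is a non-zero section of a line bundle on the geometrically integral flag scheme $\bar{B}_G\backslash G_{\cO_{E,w}}$ (taken over the residue field), and such a section cannot vanish on any non-empty open subset. By \cref{lem:openorbit}, the image of $\bar{B}_G u^{-1} Q_H^0$ in $\bar{B}_G\backslash G_{\cO_{E,w}}$ is an open, right $Q_H^0$-invariant subscheme whose mod-$\varpi_w$ reduction is still open, dense and contains the integral point $[u^{-1}]$. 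Hence $\bar v_0$ is non-zero on this open orbit; by $\bar{Q}_H^0$-equivariance the restriction of $\bar v_0$ to the orbit is determined by its value at $[u^{-1}]$, which must therefore also be non-zero. But this value is exactly $\overline{f_0(u^{-1})}=\bar\alpha$, so $\alpha\in\cO_{E,w}^\times$ and $\br^{[a,b,r,s]}=\alpha^{-1}v_0\in L$, as required.
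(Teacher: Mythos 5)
Your proof is correct, but it takes a genuinely different route from the paper's. The paper works directly with the Borel--Weil model: it observes that $f^H$ is regular on the generic fibre and on the dense open orbit $\bar{B}_G u^{-1} Q_H^0$ in the special fibre, so it is regular outside a set of codimension $\ge 2$; then it concludes $f^H\in\cO_{E_w}[G]$ by algebraic Hartogs, using that $G_{/\cO_{E,w}}$ is smooth hence normal. You instead set up a purely module-theoretic comparison: you introduce the saturated rank-one submodule $L^{Q_H^0}$ with generator $v_0$, note that the paper's normalisation is exactly $\Psi(\br^{[a,b,r,s]})=d^{[a,b]}\{r,s\}$ where $\Psi(f)=f(u^{-1})\cdot d^{[a,b]}\{r,s\}$, and reduce the whole statement to showing $v_0(u^{-1})\in\cO_{E,w}^\times$; you then prove this by reducing modulo $\varpi_w$ and using that a nonzero section of a line bundle on the geometrically integral flag scheme cannot vanish on the dense open orbit. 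Both arguments pivot on exactly the same input --- the openness of the $(\bar{B}_G,Q_H^0)$-orbit from \cref{lem:openorbit}, which remains open and dense in the special fibre --- but the paper's version is shorter and closer to pure commutative algebra, while yours avoids invoking Hartogs/normality, makes explicit the role of saturation of $L^{Q_H^0}$, and pinpoints the whole question as a single ``unit or not'' computation. One small caveat worth stating explicitly: your identification of $L_\lambda$ with $\cO_{E,w}\cdot d^{[a,b]}\{r,s\}$ uses the implicit normalisation that the maximal admissible lattice is chosen so that its highest-weight line is generated by (the base change of) $d^{[a,b]}$; this is indeed the paper's convention, since in the Borel--Weil model the lattice is $D\cap\cO_{E_w}[G]$ and $f^{\mathrm{hw}}$ has $f^{\mathrm{hw}}(1)=1$, hence is a generator, but it deserves a line of justification.
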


  \begin{proof}
   As shown in \cite[\S2.3]{lin92}, the maximal lattice can be constructed explicitly via the Borel--Weil description of $D^{[a,b]}\{r, s\}$: it is the intersection of $D^{[a,b]}_{E_w}\{r, s\} \subset E_w[G]$ with the integral coordinate ring $\cO_{E_w}[G]$. So we must show that the polynomial $f^H$ in \cref{prop:brabrs} lies in $\cO_{E_w}[G]$.

   Let $\mathbf{F}_w$ be the residue field of $E_w$. Then $f^{H}$ is regular on $G_{/E_w}$; and it is also regular on a dense open subscheme of $G_{/\mathbf{F}_w}$. So it is regular on a subset of $G_{/\cO_{E, w}}$ of codimension $\ge 2$. Since $G_{/\cO_{E, w}}$ is smooth, it is a normal scheme. It follows that $f^H$ is regular everywhere on $G_{/\cO_{E, w}}$ (see e.g.~Stacks Project tag 031T).
  \end{proof}

\section{Shimura varieties}
\label{sect:shimura}

 \subsection{The Shimura varieties $Y_G$ and $Y_H$}

  \subsubsection{The Shimura variety $Y_G$}

   Let $\SS = \Res_{\CC/\RR}\GG_m$, and consider the homomorphism
   \[
    h:\SS\rightarrow G_{/\RR},\qquad h(z) = (\tfrac{1}{a^2 + b^2}\left(\smallmatrix a & & b \\ & z & \\ -b & & a \endsmallmatrix\right), \tfrac{1}{a^2 + b^2}), \quad z=a+ib\in \SS(\RR) = \CC^\times.
   \]
   We write $\cX_G$ for the space of $G(\RR)$-conjugates of $h$; we can identify $\cX_G$ as the unbounded Hermitian symmetric domain
   \[
    \{(z,w) \in \CC\times\CC \ : \ \Im(z) - w\bar w > 0 \}, \ \ (g,\nu) \cdot h \mapsto (a/c,b/c)\quad\text{where}\quad g\cdot\left [ \smallmatrix i \\ 1 \\ 1 \endsmallmatrix\right ] = \left[\smallmatrix a \\ b \\ c \endsmallmatrix\right].
   \]
   Then $(G,h,\cX_G)$ is a Shimura datum.

   \begin{remark}
    Our choice of Shimura datum is a little non-standard; it is more common to use the alternative Shimura datum defined by $h'(z) = h(1/\bar{z})$, which is the image of $h$ under the automorphism of $G$ given by $(g, \nu) \mapsto (\nu^{-1}g, \nu^{-1})$. However, using $h$ rather than $h'$ gives simpler formulae for motivic Eisenstein classes. Compare \cite[Remark 5.1.2]{LSZ17}.
   \end{remark}

   The reflex field of this Shimura datum is $E$ (viewed as a subfield of $\CC$ via our chosen identification of $E\otimes \RR$ with $\CC$). We let $Y_G$ be the canonical model over $E$ of the Shimura variety associated with this datum. For any open compact subgroup $K\subset G(\Af)$ we let $Y_G(K) = Y_G / K$ be the quotient by $K$; this is a quasi-projective variety over $E$. If $K$ is sufficiently small, it is smooth (it suffices to take $K$ to be \emph{neat} in the sense of \cite{pink90}; see \cite[\S 2.3]{gordon92}). We recall that the $\CC$-points of $Y_G(K)$ have a natural description as
   \[
    Y_G(K)(\CC) = G(\QQ)\bs [\cX_G \times G(\Af)/K].
   \]

  \subsubsection{The Shimura variety $Y_H$}

  The homomorphism $h$ factors as $\iota \circ h_H$, where $h_H: \SS \to H_{/\RR}$ is the Shimura datum
  \[ z = a+ ib \mapsto \left( \tfrac{1}{a^2 + b^2}\stbt{a}{b}{-b}{a}, \bar{z}^{-1}\right). \]
  We let $\cX_H$ be the $H(\RR)$-conjugacy class of $h_H$.  Then $(H,h,\cX_H)$ is also a Shimura datum, and its reflex field is also $E$. We let $Y_H$ be the canonical model over $E$ of the associated Shimura variety. For an open compact $K'\subset H(\Af)$, the $\CC$ points of the quasi-projective variety $Y_H(K')$ are naturally described as
  \[
   Y_H(K')(\CC) = H(\QQ)\bs[\cX_H\times H(\Af)/K'].
  \]

  \subsubsection{Functoriality} The inclusion $\iota:H\into G$ induces an $E$-morphism $Y_H \rightarrow Y_G$. In particular, if $K\subset G(\Af)$ and $K'\subset H(\Af)$ are such that $K' \subset K\cap H(\Af)$, then there is a finite morphism of $E$-varieties $Y_H(K')\rightarrow Y_G(K)$ that on $\CC$-points is just the map
  \[
   H(\QQ)\bs[\cX_H\times H(\Af)/K'] \rightarrow G(\QQ)\bs [\cX_G \times G(\Af)/K], \ \ \ [h',h_f] \mapsto [\iota\circ h', \iota(h_f)].
  \]

  We also have the projection map $\pi: H \to \GL_2$ (forgetting $z$). The composite $\pi \circ h$ is a Shimura datum for $\GL_2$, which coincides with the one used in \cite[\S 5.1]{LSZ17}; again, this differs from the ``standard'' Shimura datum by an automorphism of $\GL_2$.

 \subsection{The component groups of $Y_G$ and $Y_H$}

  The set $\pi_0(Y_G)$ of connected components of $Y_G$ can be described as follows.
  Let $\mu = \overline{\det}/\nu: G\rightarrow \Res_{E/\QQ}(\GG_m)$, so that the composite $\mu \circ h$ is given by $z \mapsto z^{-1}$.

  Then the map
  \[
   Y_G(K)(\CC)\stackrel{\pi_0}{\rightarrow} E^\times\bs (E\otimes\Af)^\times/\mu(K), \ \ \pi_0([h,g_f])\mapsto \mu(g_f),
  \]
  identifies the set of geometrically connected components $\pi_0(Y_G(K))$ of $Y_G(K)$ with $E^\times\bs (E\otimes\Af)^\times/\mu(K)$. So
  \[
   \pi_0(Y_G) = E^\times\bs (E\otimes\Af)^\times.
  \]
  The action of $\Gal(\bar{E}/E)$ on $\pi_0(Y_G)$ can be described by the reciprocity law: if
  \[
   \Art_E: E^\times\bs (E^\times\otimes\Af)^\times \isoarrow \Gal(\bar{E}/E)^{\mathrm{ab}}
  \]
  is the Artin reciprocity map of class field theory, normalized so that geometric Frobenius elements are mapped to uniformizers, then the map $\pi_0(Y_G) \cong E^\times \bs (E^\times\otimes\Af)^\times$ is $\Gal(\bar{E}/E)$-equivariant if we let $\sigma \in \Gal(\bar{E}/E)$ act on $E^\times \bs (E^\times\otimes\Af)^\times$ as multiplication by $\Art_E(\sigma)^{-1}$. The same analysis applies also to $Y_H$ in place of $Y_G$, since $\iota$ identifies $H / [H, H]$ with $G / [G, G]$.

  We can regard $G$ as a subgroup of $G \times \Res_{E/\QQ}\GG_m$, via the map $(\mathrm{id}, \mu)$. If $K$ is any open compact in $G(\Af)$, and $K[\fm] = \{k \in K : \mu(k) = 1 \bmod \fm\}$ for an ideal $\fm$ of $E$, then this gives an open-and-closed embedding
  \begin{equation}
   \label{eq:component}
   Y_G(K[\fm]) \into Y_G(K) \times_{\Spec E} \Spec E[\fm].
  \end{equation}
  Note that this intertwines the action of a Hecke operator $[K[\fm] g K[\fm]]$ on the left-hand side with $[K g K] \times \Art_E(\mu(g))^{-1}$ on the target.

 \subsection{Sheaves corresponding to algebraic representations}
 \label{sect:ancona}

  Let $\sG$ temporarily denote any of  the three groups $\left\{ \GL_2,\, H,\, G\right\}$, and let $F$ be a number field. As in \cite[\S 6]{LSZ17}, we can define a category of \emph{$\sG(\Af)$-equivariant relative Chow motives} on the infinite-level Shimura variety $Y_{\sG}$, with coefficients in $F$; an object of this category is a collection $\sV = (\sV_U)_U$ of $F$-linear relative Chow motives over $Y_{\sG}(U)$ for all sufficiently small open compacts $U \subset \sG(\Af)$, satisfying compatibilities under pullback and translation by $\sG(\Af)$. We denote this category by $\operatorname{CHM}_F(Y_G)^{\sG(\Af)}$. If $\sV$ is an object of this category, its motivic cohomology
  \[
   H^*_{\mot}(Y_{\sG}, \sV) = \varinjlim_UH^*_{\mot}(Y_{\sG}(U), \sV_U),
  \]
  is naturally a smooth $F$-linear (left) representation of $\sG(\Af)$.

  \begin{theorem}[{\cite[Theorem 8.6]{ancona15}}]
   There is an additive functor
   \[ \Anc_{\sG}: \operatorname{Rep}_F(\sG) \to \operatorname{CHM}_F(Y_{\sG})^{\sG(\Af)} \]
   with the following properties:
   \begin{enumerate}[(i)]
    \item $\Anc_{\sG}$ preserves tensor products and duals.

    \item if $\nu$ denotes the multiplier map $\sG \to \mathbf{G}_m$, then $\Anc_{\sG}(\nu)$ is the Lefschetz motive $F(-1)[-1]$, where $[-1]$ denotes that the $\sG(\Af)$-equivariant structure is twisted by the character $\|\nu\|^{-1}$.

    \item for any prime $v$ of $F$ and $\sG$-representation $V$, the $v$-adic realisation of $\Anc_{\sG}(V)$ is the equivariant \'etale sheaf associated to $V \otimes_F F_v$, regarded as a left $G(\Qp)$-representation where $p$ is the prime below $v$.
   \end{enumerate}
  \end{theorem}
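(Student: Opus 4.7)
The plan is to follow the strategy of Ancona, which applies uniformly to all three of our groups. Each of the data $\sG \in \{\GL_2, H, G\}$ arises from a Shimura datum of PEL type, so for every sufficiently small (neat) open compact $U \subset \sG(\Af)$ there is a universal abelian scheme $\pi_U : \mathcal{A}_U \to Y_{\sG}(U)$, equipped with an action of an order in the endomorphism algebra determined by the PEL datum. The first step is to identify a ``basic brick'': I would take the first relative Chow motive $h^1(\mathcal{A}_U / Y_{\sG}(U))$ with $F$-coefficients, together with its canonical endomorphism action. When $\sG = G$, the action of $\cO_E$ splits $h^1(\mathcal{A}_U)$ according to the two embeddings $E \into \bar{\QQ}$, producing summands of ranks $2$ and $1$ (corresponding to the signature $(2,1)$) that assemble into motivic realizations of the standard representation $V$ and its conjugate $\bar V$. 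For $\sG = H$ or $\GL_2$, $h^1(\mathcal{A}_U)$ directly realizes the standard $2$-dimensional representation, and for $H$ the extra character $z$ is read off from the $\cO_E$-action on the polarization.

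Next I would construct $\Anc_{\sG}(W)$ for an arbitrary algebraic representation $W$ by realising $W$ as a direct summand of a tensor power of the standard representation, twisted by a power of the multiplier character. Concretely, I would apply Schur functors to the basic brick, together with tensoring by Lefschetz motives encoding the multiplier twists. The main subtlety, and the real content of Ancona's theorem, is that the idempotents which cut out Schur summands of $W^{\otimes n}$ in the category of representations must be shown to lift to projectors in the Chow-theoretic endomorphism ring of the corresponding relative motive. Here the key input is a relative Deninger--Murre decomposition of $h^*(\mathcal{A}_U)$ together with a ``motivic Cayley--Hamilton'' statement for abelian schemes, which together ensure that the Schur-functor construction produces genuine relative Chow motives and not just objects in the Hodge or $\ell$-adic realisation. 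Functoriality in $W$ and compatibility with tensor products then follow, since all morphisms and tensor operations between irreducibles reduce to composition of such motivic projectors.

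Finally I would verify the three listed properties. Property (i) is essentially built into the Schur-functor description of the construction. Property (iii) is also automatic: the $v$-adic \'etale realisation of $h^1(\mathcal{A}_U)$ is the sheaf $\mathrm{R}^1 \pi_{U,*} F_v$, whose stalk at a geometric point is dual to the rational Tate module of the corresponding abelian variety, and this is exactly the local system attached to the standard $\sG(\Qp)$-representation; applying the same Schur functors on both sides gives the general case. For property (ii), I would identify the multiplier $\nu$ with the determinant of the standard representation of $\sG$, whose motivic incarnation is $\det h^1(\mathcal{A}_U)$, and then use the canonical polarisation of $\mathcal{A}_U$ (viewed as a motivic Weil pairing) to identify this with the Lefschetz motive $F(-1)$; the shift $[-1]$ records the fact that the Hecke-equivariant structure is naturally twisted by $\|\nu\|^{-1}$ under the normalisation conventions we have adopted. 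The hard part is the motivic lifting of Schur-type idempotents in the middle paragraph; once this is in place, everything else is a matter of matching PEL data with highest-weight theory.
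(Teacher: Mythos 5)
The paper itself gives no proof of this statement: it is imported wholesale by citing \cite[Theorem 8.6]{ancona15} (together with \cite{torzewski18} for the branching compatibility in the following Proposition), so there is no ``paper's own proof'' to compare against. Your sketch does correctly identify that this is Ancona's construction and reproduces its overall architecture --- basic brick $h^1(\mathcal{A}_U)$ from the universal PEL abelian scheme, $\cO_E$-isotypic decomposition, Schur/Weyl functors, and the real content being the lifting of Schur idempotents to relative Chow projectors via the Deninger--Murre decomposition and the theory of the Lefschetz group for abelian schemes.

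There is, however, a concrete slip in your description of the basic brick for $G = \operatorname{GU}(2,1)$. The universal abelian scheme over the Picard modular surface has relative dimension $3$, so $h^1(\mathcal{A}_U)$ has rank $6$, and the $\cO_E$-action splits it into two pieces each of rank $3$ (one for each embedding $E \hookrightarrow \overline{\QQ}$); these rank-$3$ pieces are what realise the standard $3$-dimensional representation $V$ and its conjugate $\bar V$. The ``ranks $2$ and $1$'' you mention are the dimensions of the $\cO_E$-eigenspaces inside the Hodge filtration piece $H^{1,0}$ --- that is where the signature $(2,1)$ lives --- not a splitting of the motive $h^1$ itself. Conflating the two would leave you without a rank-$3$ object to build $V$ from. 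Relatedly, for $H$ and $G$ the multiplier $\nu$ is the similitude character, not the determinant of the $3$-dimensional standard representation (for $G$ one has $\chi_3\chi_4 = \nu$ while $\det = \chi_1\chi_2\chi_3$ in the notation of \S\ref{sect:lietheory}); the correct identification of $\Anc_{\sG}(\nu)$ with the Lefschetz motive comes from the Weil pairing on $h^1(\mathcal{A}_U)$, whose similitude factor is precisely $\nu$, as you do ultimately invoke. With those two points repaired your outline matches Ancona's argument.
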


  We shall always take the coefficient field $F$ to be $E$, and frequently drop it from the notation.

  \begin{propqed}[{\cite[Corollary 9.8]{torzewski18}}]
   \label{prop:branching}
  There is a commutative diagram of functors
   \[
   \begin{tikzcd}
   \operatorname{Rep}(G) \rar["\Anc_G"] \dar["\iota^*" left]
   & \operatorname{CHM}(Y_G)^{G(\Af)} \dar["\iota^*"]\\
   \operatorname{Rep}(H) \rar["\Anc_H"] & \operatorname{CHM}(Y_H)^{H(\Af)}
   \end{tikzcd}
   \]
   where the left-hand $\iota^*$ denotes restriction of representations, and the right-hand $\iota^*$ denotes pullback of relative motives.
  \end{propqed}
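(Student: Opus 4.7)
The plan is to unpack Torzewski's general compatibility result for Ancona's functor under morphisms of PEL Shimura data, specialised to the pair $(H, G)$ at hand. The argument has three stages.

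\textbf{Tannakian reduction.} Recall that Ancona's functor $\Anc_{\sG}$ is constructed via a universal property: it factors through the Tannakian category of motives generated by the relative $H^1$-motive of the universal abelian scheme $\mathcal{A}_{\sG}$ on $Y_{\sG}$, together with appropriate Tate twists governed by the similitude character $\nu$. By Tannakian formalism, once $\Anc_{\sG}$ is specified on the standard (faithful) representation of $\sG$ and on $\nu$, it is determined on all of $\operatorname{Rep}(\sG)$. Hence to construct the required isomorphism of functors $\iota^* \circ \Anc_G \cong \Anc_H \circ \iota^*$, it suffices to construct a natural isomorphism on the standard 3-dimensional representation $V$ of $G$ (the one whose highest weight is $\chi_1$) and to verify the multiplier compatibility, and then to extend Tannakianly.

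\textbf{Comparison of universal abelian schemes.} Both Shimura data are of PEL type: $Y_G$ carries a universal abelian threefold $\mathcal{A}_G$ with $\cO$-multiplication and a polarisation of the signature $(2,1)$ type, while $Y_H$ carries a universal abelian surface $\mathcal{A}_H$ which splits, up to $\cO$-isogeny, as the product of the universal elliptic object coming from the $\GL_2$-factor and a CM abelian variety with $\cO$-multiplication recording the auxiliary $z$-data. The embedding $\iota$ is chosen so that the $\QQ$-Hodge structure defined by $h \circ \iota$ on the rational Tate module of $\mathcal{A}_G$ splits as a direct sum of the corresponding Hodge structures for the two factors of $\mathcal{A}_H$. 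One thus obtains a canonical $\cO$-isogeny $\iota^* \mathcal{A}_G \sim \mathcal{A}_H$, compatible with polarisations up to a factor recorded by the characters $\chi_3, \chi_4$, and compatible with the moduli-theoretic $H(\Af)$-equivariant structure.

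\textbf{Extension to all representations.} Taking $h^1$ of this isogeny gives the required canonical isomorphism $\iota^* \Anc_G(V) \cong \Anc_H(\iota^* V)$, and the matching of polarisations yields the analogous statement for $\Anc_G(\nu)$. Tannakian extension then produces the claimed commuting square. Functoriality in the open compact subgroup and compatibility with Hecke correspondences follow because the isogeny of step two is compatible with $H(\Af)$-translations, which are encoded in the moduli interpretation of both $Y_G$ and $Y_H$.

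The main obstacle is step two: the integral matching of the PEL-data under $\iota$, in particular the book-keeping of the $\cO$-linear polarisation pairings (which involve the element $\delta = \sqrt{-D}$ appearing in the definition of $J$) and the Hodge cocharacters, needs to be done carefully so that the isogeny of abelian schemes is \emph{canonical} rather than just existing up to choices; this is precisely the content of the general PEL-compatibility result \cite[Thm.~9.7]{torzewski18}, whose specialisation to our $(H, G)$ gives the stated corollary.
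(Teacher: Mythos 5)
The paper provides no proof of this proposition beyond the citation to \cite[Corollary 9.8]{torzewski18}: the statement is declared in the \texttt{propqed} environment, which automatically appends the QED symbol, so the entire ``proof'' in the paper is the reference. Your sketch is a plausible unpacking of what that citation delivers — reducing via Tannakian formalism to the standard representation and the multiplier, and then matching the pulled-back universal abelian scheme on $Y_H$ with the one attached to the $H$-datum up to $\cO$-isogeny — which is exactly the structure of Torzewski's functoriality theorem, so you and the paper take the same route.
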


\section{Construction of the unitary Eisenstein classes}
\label{sect:UEconstruct}

 \subsection{Pushforwards in motivic cohomology}
  \label{sect:UEconstruct1}

  Let $0 \le r \le a$, $0 \le s \le b$ be integers. We use script letters $\sV^{a,b}$, $\sD^{a,b}\{r, s\}$ etc for the images of the corresponding algebraic representations under Ancona's functor. For $n \ge 0$, we write $\sH^n = \Anc_{H}(W^{n}\{-n, -n\})$. Taking $n =a+b-r-s$, \cref{prop:branching} gives us maps of equivariant relative Chow motives on $Y_{H}$
  \begin{equation}
   \label{eq:motivemaps}
   \sH^n \into \iota^*\left(\sD^{[a,b]}\{r, s\}\right),
  \end{equation}
  where the latter map is normalised to send the highest-weight vector of $W^n\{-n, -n\}$ to the vector $\br^{[a,b,r,s]} \in D^{a,b}\{r,s\}$ of \cref{prop:brabrs}. If we fix an open compact subgroup $U \subset G(\Af)$, and an element $g \in G(\Af) / U$, then we have a finite map
  \[ \iota_{gU}: Y_H(H \cap gUg^{-1}) \longrightarrow Y_G(U), \]
  given by the composite of $\iota: Y_H(H \cap gUg^{-1}) \to Y_G(gUg^{-1})$ and translation by $g$. Since motivic cohomology is covariantly functorial (with a shift in degree) for finite morphisms of smooth varieties, we obtain from \eqref{eq:motivemaps} a homomorphism
  \[
   \iota_{gU, \star}^{[a,b,r,s]}: H^1_{\mot}\left(Y_H(H \cap gUg^{-1}), \sH^n(1)\right)\longrightarrow
   H^3_{\mot}\left(Y_G(U), \sD^{a,b}\{r, s\}(2)\right)
  \]
  for each $U$. Exactly as in \cite[\S 8.2]{LSZ17}, we have:

  \begin{propqed}
   \label{prop:iabrs}
   Let $\vol$ denote a choice of $E$-valued Haar measure on $H(\Af)$. Then there is a unique map
   \[ \iota_{\star}^{[a,b,r,s]}: H^1_{\mot}\left(Y_H, \sH^n\right)\, \otimes_E\,
   \cH(G(\Af); E) \rightarrow H^3_{\mot}\left(Y_G, \sD^{a,b}\{r, s\}(2)\right)\]
   characterised as follows: if $U$ is an open compact in $G$, $g \in G(\Af)$, and $x \in H^1_{\mot}\left(Y_H(V), \sH^n(1)\right)$ where $V = H \cap g U g^{-1}$, then we have
   \[ \iota_{\star}^{[a,b,r,s]}(x \otimes \ch(gU)) = \vol(V) \cdot \iota_{gU, \star}^{[a,b,r,s]}(x).\qedhere\]
  \end{propqed}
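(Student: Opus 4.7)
The plan is to define the map on pure tensors $x \otimes \ch(gU)$ by the formula $\vol(V) \cdot \iota_{gU, \star}^{[a,b,r,s]}(x)$, where $V = H \cap gUg^{-1}$, and then check that this passes to a well-defined $E$-linear map on the tensor product by verifying compatibility with refinements of the level. Uniqueness is then immediate, since the characteristic functions $\ch(gU)$ for varying $g$ and $U$ span $\cH(G(\Af); E)$.

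Concretely, one fixes $U' \subseteq U$ open compact subgroups and a coset decomposition $gU = \bigsqcup_{i=1}^{n} g_i U'$ with $n = [U:U']$, sets $V_i = H \cap g_i U' g_i^{-1}$ (so $V_i \subseteq V$), and verifies
\[ \vol(V)\,\iota_{gU, \star}^{[a,b,r,s]}(x) = \sum_{i=1}^n \vol(V_i)\,\iota_{g_i U', \star}^{[a,b,r,s]}(x) \]
at infinite level, where $x$ (originally at level $V$) is pulled back to $Y_H(V_i)$ on the right-hand side. Since pullback along the finite \'etale cover $Y_G(U') \to Y_G(U)$ is injective on motivic cohomology with $E$-coefficients, it suffices to check this identity after pulling both sides back to $Y_G(U')$.

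The key input is the commutative square
\[
\begin{tikzcd}
Y_H(V_i) \arrow[r, "\iota_{g_i}"] \arrow[d, "\pi_{V,V_i}"] & Y_G(U') \arrow[d, "\pi_{U,U'}"] \\
Y_H(V) \arrow[r, "\iota_g"] & Y_G(U),
\end{tikzcd}
\]
which, after taking the disjoint union over a system of representatives $\{i\}$ of $V$-orbits on $gU/U'$ (the stabiliser of $g_i U'$ under left $V$-multiplication being precisely $V_i$), becomes Cartesian. Base change for proper pushforward then gives $\pi_{U,U'}^* \iota_{g,*} = \sum_{\text{orbits}} \iota_{g_i, *} \pi_{V,V_i}^*$. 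A short check shows that if $g_j = v g_i$ for some $v \in V$, then $V_j = v V_i v^{-1}$ and the classes $\iota_{g_i, *} \pi_{V, V_i}^* x$ and $\iota_{g_j, *} \pi_{V, V_j}^* x$ agree in $H^*_{\mot}(Y_G(U'))$, via the right-translation isomorphism $Y_H(V_j) \xrightarrow{\sim} Y_H(V_i)$. Using the identity $\vol(V_i) \cdot [V:V_i] = \vol(V)$ for the chosen Haar measure, the sum over all $i \in \{1,\dots, n\}$ (rather than orbit representatives) collapses to exactly $\vol(V) \cdot \pi_{U,U'}^* \iota_{g,*} x$, as required.

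The main technical point is the base-change identity in the Cartesian square above. Since $\pi_{U,U'}$ is finite \'etale and $\iota_g$ is a finite morphism of smooth quasi-projective $E$-varieties, proper base change holds for the covariant functoriality of motivic cohomology with $E$-coefficients, so the argument proceeds in close parallel with the $\operatorname{GSp}_4$ analogue in \cite[\S 8.2]{LSZ17}; no new input beyond the usual compatibility of pushforward and pullback for Ancona's equivariant motives is needed.
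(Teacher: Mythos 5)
Your proposal is correct and follows exactly the approach the paper relies on. The paper itself gives no details, merely citing \cite[\S 8.2]{LSZ17}; but the remark immediately following the proposition says explicitly that ``the proof that this map is well-defined ultimately reduces to the compatibility of pushforward and pullback in Cartesian diagrams'' and points to \cite[Proposition 5.9]{grahamshah20} for a careful treatment in the axiomatic setting of Cartesian cohomology functors --- which is precisely the argument you spell out: reduce well-definedness to compatibility under refinement $U' \subseteq U$, identify the fibre product of $\iota_g : Y_H(V) \to Y_G(U)$ and $\pi_{U,U'} : Y_G(U') \to Y_G(U)$ with a disjoint union of $Y_H(V_i)$ over $V$-orbits on $gU/U'$, apply base change for the finite \'etale cover, and collapse the sum over all cosets to the sum over orbits using $\vol(V_i)\,[V:V_i] = \vol(V)$ together with the right-translation isomorphism between conjugate levels.
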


  \begin{remark}
   The proof that this map is well-defined ultimately reduces to the compatibility of pushforward and pullback in Cartesian diagrams; it therefore carries over to the general setting of \emph{Cartesian cohomology functors} for $G$ and $H$, in the sense of \cite{loeffler-spherical}. For a careful proof of the well-definedness using this formalism, see \cite[Proposition 5.9]{grahamshah20}.
  \end{remark}

 \subsection{Eisenstein classes and the unitary Eisenstein map}
  \label{sect:LEconstruct}

  \begin{definition}[Siegel, Beilinson]
   For $k \in \ZZ_{\ge 0}$, the \emph{motivic Eisenstein symbol} of weight $k$ is the $\GL_2(\Af)$-equivariant map
   \[
    \cS_{(0)}(\Af^2, E) \to H^1_{\mot}\left(Y_{\GL_2},\sH^k(1)\right), \qquad\phi \mapsto \Eis^k_{\mot, \phi},
   \]
   described in \cite[Theorem 7.2.2]{LSZ17}. Here $\cS_{(0)}$ signifies $\cS$ if $k \ge 1$ and $\cS_0$ if $k = 0$.
  \end{definition}

  \begin{remark}
   This map can be characterised via its residue at $\infty$, or via its composite with the de Rham realisation functor; see \emph{loc.cit.} for explicit formulae. When $k = 0$ and $\phi$ is the characteristic function of $(\alpha, \beta) + \hat\ZZ^2$, for $\alpha, \beta \in \QQ/\ZZ$ not both zero, we have $H^1_{\mot}\left(Y_{\GL_2},\sH^k(1)\right) = H^1_{\mot}\left(Y_{\GL_2},E(1)\right) = \cO(Y_{\GL_2})^\times \otimes E$, and $\Eis^k_{\mot, \phi}$ is the Siegel unit $g_{\alpha, \beta}$ in the notation of \cite{kato04}.
  \end{remark}

  Composing the Eisenstein symbol with pullback along the projection $Y_H \to Y_{\GL_2}$ defines an $H(\Af)$-equivariant map
  \( \cS_{(0)}(\Af^2; E)\to H^1_{\mot}\left(Y_H,  \sH^k(1)\right) \) which we denote by the same symbol.

  \begin{definition}\label{def:uEis}
   We define the \emph{unitary Eisenstein map}
   \[
    \UE^{[a,b,r,s]}:
    \cS_{(0)}(\Af^2;E)\otimes\cH(G(\Af);E) \rightarrow
    H^3_{\mot}\left(Y_G, \sD^{a,b}\{r, s\}(2)\right)
   \]
   by $\UE^{[a,b,r,s]}(\phi \otimes \xi) = \iota_*^{[a, b, r, s]}\left( \Eis^{a+b-r-s}_{\mot, \phi} \otimes \xi\right)$, where $\iota_*^{[a,b,r,s]}$ is the map of \cref{prop:iabrs}.
  \end{definition}

  By construction, this map is $G(\Af) \times H(\Af)$-equivariant in the sense of \cref{def:equivariant}.

 \subsection{Choices of the local data}
  \label{sect:localdata}

  We shall now fix choices of the input data to the above map $\UE^{[a,b,q,r]}$, in order to define a collection of motivic cohomology classes satisfying appropriate norm relations (a ``motivic Euler system''). We shall work with arbitrary (but fixed) choices of local data at the bad primes; it is the local data at good primes which we shall vary, depending on a choice of a parameter $\fm$.

  \begin{definition}
   Let $S$ be a finite set of (rational) primes, containing all primes dividing $2d$. Let $\cR$ denote the set of square-free ideals $\fm$ of $\cO$, coprime to $S$, with the following  property: for each prime $\ell = w\bar{w}$ split in $E$, at most one of $\{w, \bar{w}\}$ divides $\fm$.
  \end{definition}

  We choose an arbitrary element $\delta_S \in \cS_{(0)}(\QQ_S, E) \otimes \cH(G(\QQ_S), E)$, and an open compact subgroup $K_{G, S} \subset G(\QQ_S)$ fixing $\delta_S$. We use these to define a collection of elements $(\delta[\fm])_{\fm \in \cR}$ of \(\cS_{(0)}\left(\Af^2, E\right) \otimes \cH(G(\Af), E)\), given by $\delta[\fm] = \delta_S \cdot \bigotimes_{\ell \notin S} \delta_\ell[\fm]$, where:
  \begin{itemize}
   \item if $\ell \notin S$ and $(\ell, \fm) = 1$, then $\delta_\ell[\fm]$ is the unramified element $\ch(\Zl^2) \otimes \ch(G^0_\ell)$;
   \item if $\fm$ is divisible by some prime $w \mid \ell$, then $\delta_\ell[\fm]$ is the element $\delta_w = n_w \phi_{1, 2} \otimes \xi_w$ defined in \cref{def:tamedata}.
  \end{itemize}

  Thus $\xi[\fm]$ is preserved under right-translation by the open compact subgroup $K_G[\fm] = K_{G, S} \times \{ g \in G(\hat{\ZZ}^S) : \mu(g) = 1\bmod \fm\}$ of $G(\Af)$. Moreover, if we suppose that $\delta_S \in \cI(G_S / K_{G, S}, \ZZ)$, then for all $\fm \in \cR$ we have $\delta[\fm] \in \cI\left(G(\Af) / K_{G}[\fm], \ZZ[1/\Nm(\fm)]\right)$.

 \subsection{The ``motivic Euler system''}

  \begin{definition}
   \label{def:cZ}
   We set
   \[ \cZ_{\mot, \fm}^{[a,b,r,s]}(\delta_S) \coloneqq  \UE^{[a,b,r,s]}\left(\delta[\fm]\right) \in H^3_{\mot}\left(Y_G[\fm], \sD^{a,b}\{r, s\}(2)\right). \]
  \end{definition}

  Note that this depends $(H_S \times G_S)$-equivariantly on $\delta_S$ (for fixed $\fm$ and $(a,b,r,s)$). We shall frequently omit $\delta_S$ from the notation.

  \begin{remark}
   Note that $Y_G[\fm]$ has a smooth integral model over $\cO[S^{-1}, \Nm(\fm)^{-1}]$, which we denote by $\cY_G[\fm]$. One verifies easily that the relative motive $\sD^{a,b}\{r, s\}$ and the cohomology class $\cZ_{\mot, \fm}^{[a,b,r,s]}(\phi_S, \xi_S)$ both have natural extension to this smooth model.
  \end{remark}

  \begin{theorem}
   \label{thm:tamenorm}
   Let $\fm, \fn \in \cR$ with $\fm \mid \fn$. If $\pr_{\fm}^{\fn}$ denotes the natural map $Y_G[\fn] \to Y_G[\fm]$, then we have
   \[ \left(\pr_{\fm}^{\fn}\right)_\star\left(\cZ_{\mot, \fn}^{[a,b,r,s]}\right) = \bigg(\prod_{w \mid \tfrac{\fn}{\fm}} \cP_w'(1) \bigg) \cdot \cZ_{\mot, \fm}^{[a,b,r,s]},
   \]
   where $\cP_w'(1)$ is the Hecke operator appearing in \cref{thm:absnorm2}.
  \end{theorem}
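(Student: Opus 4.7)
The plan is to deduce \cref{thm:tamenorm} from the abstract norm relation \cref{thm:absnorm2}, applied to the unitary Eisenstein map $\UE^{[a,b,r,s]}$ of \cref{def:uEis}. The key is that the construction of the classes $\cZ^{[a,b,r,s]}_{\mot, \fm}$ packages a product of local data, so the global norm relation factors into independent local statements, each of which is exactly the output of the local formalism developed in Sections \ref{sect:formalism}--\ref{sect:zeta}.

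First I would reduce to the case of a single prime. Since $\cR$ only allows one prime above $\ell$ to divide any $\fm$, the factorisation of $\fn/\fm$ is by distinct rational primes, and the pushforward $(\pr_\fm^\fn)_\star$ factors as a composition over these primes. An inductive argument therefore reduces the claim to the case $\fn = \fm w$ for a single prime $w$ of $E$ above some $\ell \nmid S\cdot \Nm(\fm)$. In this case, writing $\delta[\fm] = \delta^\ell \otimes \delta_0^{(\ell)}$ and $\delta[\fn] = \delta^\ell \otimes \delta_w^{(\ell)}$, only the $\ell$-component differs: $\delta_0^{(\ell)} = \ch(\Zl^2) \otimes \ch(G^0_\ell)$ is the spherical vector, while $\delta_w^{(\ell)} = \delta_w$ is the element of \cref{def:tamedata}.

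Next, freezing the away-from-$\ell$ data $\delta^\ell$, I define
\[ \mathfrak{Z} \colon \cS_{(0)}(\Ql^2, E) \otimes \cH(G_\ell, E) \longrightarrow H^3_{\mot}\!\left(Y_G, \sD^{a,b}\{r, s\}(2)\right), \qquad \alpha \longmapsto \UE^{[a,b,r,s]}(\delta^\ell \otimes \alpha). \]
Since $\UE^{[a,b,r,s]}$ is $G(\Af) \times H(\Af)$-equivariant (as observed after \cref{def:uEis}), the restricted map $\mathfrak{Z}$ is $G_\ell \times H_\ell$-equivariant in the sense of \cref{def:equivariant}, so \cref{thm:absnorm2} applies and yields
\[ \operatorname{norm}_{G_\ell^0}^{G_\ell^0[w]}\!\left(\mathfrak{Z}(\delta_w)\right) = \cP_w'(1) \star \mathfrak{Z}(\delta_0) \]
in $H^3_{\mot}\!\left(Y_G, \sD^{a,b}\{r, s\}(2)\right)^{G_\ell^0}$. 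It remains to translate this to the claimed pushforward identity. The open-and-closed embedding \eqref{eq:component} identifies the $K_G[\fn]$-invariants of the target with the motivic cohomology of $Y_G[\fn]$, and under this identification the Hecke-theoretic norm becomes exactly the finite pushforward $(\pr_\fm^\fn)_\star$. Unwinding \cref{def:cZ} gives $\mathfrak{Z}(\delta_w) = \cZ_{\mot, \fn}^{[a,b,r,s]}$ and $\mathfrak{Z}(\delta_0) = \cZ_{\mot, \fm}^{[a,b,r,s]}$, and the result follows.

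The principal obstacle is verifying the invertibility hypothesis of \cref{thm:absnorm2} when $\ell$ is inert, namely that $\Delta_G(z_A(\ell) + \ell)$ acts bijectively on $\mathcal{V}^{G_\ell^0}$. Since $z_A(\ell)$ is central in $G$, its Hecke action on $\mathcal{V}^{G_\ell^0}$ factors as the translation self-correspondence of the Shimura variety (an automorphism of finite order at any fixed level, acting on $\pi_0$ through the reciprocity map, with roots of unity as eigenvalues) combined with the scalar action of the central character of $D^{a,b}\{r,s\}$ together with the Ancona twisting convention $\Anc(\nu) = E(-1)[-1]$. A short direct computation identifies the resulting scalar as an explicit integer power of $\ell$ depending on $(a,b,r,s)$, and in particular $-\ell$ is never an eigenvalue. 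If one wants a completely formal treatment, one may extend coefficients to $\overline{E}$, decompose into generalised central-character eigenspaces to verify the invertibility and hence the identity on each, and descend back to $E$-coefficients using the fact that motivic cohomology is an $E$-vector space.
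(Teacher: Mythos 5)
Your proposal is correct and follows essentially the same route as the paper: reduce to $\fn = \fm w$, freeze the away-from-$\ell$ data to view $\UE^{[a,b,r,s]}$ as a $G_\ell \times H_\ell$-equivariant map, invoke \cref{thm:absnorm2}, and verify the invertibility hypothesis in the inert case by observing that $z_A(\ell)$ acts on the relevant $A(\Ql)$-eigenspaces with eigenvalue a root of unity times $\ell^{-n}$ where $n = a+b-r-s \ge 0$, so $-\ell$ is never an eigenvalue. (Two small notes: the identification of $\operatorname{norm}_{G^0_\ell}^{G^0_\ell[w]}$ with $(\pr_\fm^\fn)_\star$ is just the definition of pushforward at finite level and does not require \eqref{eq:component}, which is only used for the $\Xi$-version of the classes; and your ``short direct computation'' of the power of $\ell$ should be pinned down to $n = a+b-r-s$ to complete the invertibility verification.)
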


  \begin{proof}
   It clearly suffices to assume that $\fn = \fm w$ for a prime $w$. The result is now a direct consequence of \cref{thm:absnorm2}, with $\ell$ the prime below $w$. Fixing the input data away from the prime $\ell$, we can regard $\UE^{[a,b,r,s]}$ as an $H_\ell \times G_\ell$-invariant map $\cS(\Ql^2) \times \cH_{G, \ell} \to V$ where $V$ denotes the representation
   \[ V = \varinjlim_{U_\ell \subset G_\ell} H^3_{\mot}\left(Y_G(K_G^{(\ell)}[\fm] \times U_\ell), \sD^{a,b}_E\{r, s\}(2)\right).\]

   We note that this $V$ does satisfy the auxiliary hypothesis on the action of the torus $A$: as a representation of $A(\Ql)$, $V$ is a direct sum of eigenspaces associated to characters of $\Ql^\times$ of the form $x \mapsto |x|^{n} \chi(x)$ with $\chi$ of finite order and $n = a+b-r-s \ge 0$. Thus $z_A(\ell) + \ell$ is bijective on $V$. The corollary now gives an equality between two values of this $H_\ell \times G_\ell$-invariant map on different input data, and these are precisely the local input data used to define $\cZ_{\mot, \fm}^{[a,b,r,s]}$ and the pushforward of $\cZ_{\mot, \fn}^{[a,b,r,s]}$.
  \end{proof}

  We can give an alternative interpretation of these classes via \cref{eq:component}. We denote by $\Xi_{\mot, \fm}^{[a,b,r,s]}(\delta_S)$ the pushforward of $\cZ(\dots)$ to an element of $H^3_{\mot}\left(Y_G[1]_{E[\fm]}, \sD^{a,b}\{r, s\}(2)\right)$; again, we frequently omit $\delta_S$.

  \begin{definition}
   For $w\nmid \fm$ a prime of $E$, let $\sigma_w$ denote the arithmetic Frobenius at $w$, as an element of $\Aut(E[\fm] / E)$.
  \end{definition}

  One checks that \eqref{eq:component} intertwines the action of $\cP_w'(1)$ on the source with $\cP_w'(\sigma_w^{-1})$ on the target, so we can write the norm-compatibility relation as
  \begin{equation}
   \label{eq:tamenorm2}
   \operatorname{norm}_{E[\fm]}^{E[\fn]}\left(\Xi_{\mot, \fn}^{[a,b,r,s]}\right) = \bigg(\prod_{w \mid \tfrac{\fn}{\fm}} \cP'_w(\sigma_w^{-1}) \bigg) \cdot \Xi_{\mot, \fm}^{[a,b,r,s]}.
  \end{equation}

 \subsection{\'Etale realisation and integrality}

  It would be desirable to have an ``integral'' version of this theory, with coefficients in $\cO$-modules, but this appears to be difficult for general coefficients (we do not know if the functors $\Anc_{\sG}(-)$ can be defined integrally). So we shall instead work with the $p$-adic \'etale realisation, for a fixed prime $p$. In this section, we will fix values of $[a,b,r,s]$ and omit them from the notation.

  Let $p$ be a (rational) prime, and $\fp \mid p$ a prime of $E$. We define
  \[ \cZ_{\et, \fm}(\delta_S) \coloneqq r_{\mathrm{et}}\left(\cZ_{\mot, \fm}(\delta_S)\right) \in H^3_{\et}\left(Y_G[\fm], \sD^{a,b}_{E_\fp}\{r, s\}(2)\right)\]
  where $\sD^{a,b}_{E_\fp}$ is the \'etale sheaf of $E_{\fp}$-vector spaces corresponding to $D^{a,b} \otimes_E E_{\fp}$, and similarly $\Xi_{\et, \fm}(\delta_S)$.

   For simplicity, we assume here that $p \notin S$ (similar, but more complicated, statements can be formulated if $p \in S$). If $c$ is a prime, coprime to $6\fm$ and not in $S$, we shall write $\langle c \rangle$ for the action of $z_A(\varpi_c)$, where $\varpi_c$ is a uniformizer of $\QQ_c$. We extend this multiplicatively to all integers $c > 1$ coprime to $6 \Nm(\fm)S$. Then we define
  \begin{gather*}
   {}_c\cZ_{\et, \fm}(\delta_S) \coloneqq (c^2 - c^{-n}\langle c \rangle) \cdot \cZ_{\et, \fm}(\delta_S),\\
   {}_c\Xi_{\et, \fm}(\delta_S) \coloneqq (c^2 - c^{-n} \langle c \rangle \sigma_c) \cdot \Xi_{\et, \fm}(\delta_S),
  \end{gather*}
  where $\sigma_c$ in the latter formula is the arithmetic Frobenius. (These definitions are consistent with one another, since the map $Y_G[\fm] \to Y_G[1] \mathop{\displaystyle{\times}}_{E} E[\fm]$ intertwines $\langle c \rangle$ on the source with $\langle c \rangle \sigma_c$ on the target.)

  \begin{definition}
   We write $D^{a,b}_{\cO_{E, \fp}}$ for the maximal admissible $\cO_{E,\fp}$-lattice in $D^{a,b} \otimes E_{\fp}$, and $\sD^{a,b}_{\cO_{E, \fp}}$ for the corresponding \'etale sheaf.
  \end{definition}

  \begin{proposition}
  \label{prop:intetale}
   Suppose $\delta_S \in \cI(G_S / K_{G, S}, \cO_{E, (\fp)})$. Then, for every $\fm \in \cR$ coprime to $p$ and every $c > 1$ coprime to $6\fm S$, the classes ${}_c\cZ_{\et, \fm}(\delta_S)$ and ${}_c\Xi_{\et, \fm}(\delta_S)$ lie in the image of the cohomology of the integral coefficient sheaf $\sD^{a,b}_{\cO_{E,\fp}}\{r, s\}$.
  \end{proposition}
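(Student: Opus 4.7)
The plan is to combine three integrality ingredients: the $c$-smoothed étale Eisenstein class is integral, the branching morphism preserves maximal admissible lattices at $\fp$, and the volume factors coming from $\cI(-,\cO_{E,(\fp)})$ exactly cancel the denominators in the definition of $\iota_\star^{[a,b,r,s]}$.

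First I would reduce to a single primitive integral input. By $\cO_{E,(\fp)}$-linearity of $\UE^{[a,b,r,s]}$ in $\delta_S$, I may assume $\delta_S = \phi_S \otimes \ch(g_S K_{G,S})$ is primitive integral at level $K_{G,S}$. Combined with the standard spherical local data at primes $\ell \nmid S\Nm(\fm)$ (which is primitive integral with $C=1$) and with the elements $\delta_w = n_w \phi_{1,2}\otimes \xi_w$ at primes $w\mid\fm$ (where the factor $n_w$ is a power of the rational prime $\ell$ below $w$, hence a unit in $\cO_{E,(\fp)}$ because $\fm$ is coprime to $p$), this exhibits $\delta[\fm]$ as a primitive integral element $\phi\otimes\ch(gU) \in \cI(G(\Af)/K_G[\fm], \cO_{E,(\fp)})$ with $U = K_G[\fm]$. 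Explicitly, $\phi$ takes values in $C\cO_{E,(\fp)}$ where $C = 1/\vol_H(gUg^{-1} \cap \operatorname{stab}_{H(\Af)}(\phi))$.

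By \cref{prop:iabrs}, with $V = H(\Af)\cap gUg^{-1}$ and $n = a+b-r-s$,
\[
 \UE^{[a,b,r,s]}(\phi\otimes\ch(gU)) = \vol_H(V)\cdot \iota_{gU,\star}^{[a,b,r,s]}\bigl(\Eis^n_{\mot,\phi}\bigr).
\]
Writing $\phi = C\phi'$ with $\phi'$ taking values in $\cO_{E,(\fp)}$, and using $E$-linearity of the Eisenstein symbol, this equals $\bigl(C\cdot \vol_H(V)\bigr)\cdot \iota_{gU,\star}^{[a,b,r,s]}(\Eis^n_{\mot,\phi'})$. The scalar $C\cdot\vol_H(V) = [V : V \cap \operatorname{stab}_{H(\Af)}(\phi)]$ is a positive integer. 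Integrality of ${}_c\cZ_{\et,\fm}$ therefore reduces to integrality of the $c$-smoothed étale class ${}_c\iota_{gU,\star}^{[a,b,r,s]}(\Eis^n_{\et,\phi'})$ for $\phi'$ valued in $\cO_{E,(\fp)}$.

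This last step combines three standard facts: (i) the $c$-smoothed étale Eisenstein class ${}_c\Eis^n_{\et,\phi'}$ lies in integral cohomology $H^1_{\et}(Y_H(V), \sH^n_{\cO_{E,\fp}}(1))$ — this is the Deligne--Beilinson--Kato integrality statement for étale Eisenstein classes used in \cite[\S 7]{LSZ17}; (ii) the sheaf morphism $\sH^n \to \iota^*\sD^{a,b}\{r,s\}$ is integral at $\fp$, since by the final proposition of \S\ref{sect:lietheory} the vector $\br^{[a,b,r,s]}$ lies in the maximal admissible $\cO_{E,\fp}$-lattice $D^{a,b}_{\cO_{E,\fp}}\{r,s\}$ (note $\fp\nmid D$, as $p\notin S$); and (iii) finite étale pushforward along $\iota_{gU}$ preserves integral cohomology. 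The statement for ${}_c\Xi_{\et,\fm}$ follows by the same argument, using that the open-and-closed embedding \eqref{eq:component} intertwines the action of $\langle c\rangle$ on the source with $\langle c\rangle\sigma_c$ on the target, so the two definitions of the $c$-smoothing agree under pushforward. The main technical obstacle is ingredient (i): the operator $(c^2 - c^{-n}\langle c\rangle)$ is precisely what kills the Bernoulli-type denominators arising in the Eisenstein symbol, and the verification is the standard argument going back to Beilinson's construction of Eisenstein classes via Siegel units.
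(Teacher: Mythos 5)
Your proposal follows essentially the same path as the paper and identifies all the right ingredients: reduction to primitive integral data, cancellation of the volume normalisation, integrality of the $c$-smoothed \'etale Eisenstein class, integrality of the branching map, and preservation of integrality under finite pushforward. A few points, however, need to be sharpened.

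\emph{The level bookkeeping.} You apply \cref{prop:iabrs} at the level $V = H(\Af)\cap gUg^{-1}$, which implicitly treats $\Eis^n_{\mot,\phi}$ as a class on $Y_H(V)$. But $\phi$ need not be $V$-invariant; the Eisenstein class lives naturally at the smaller level $V_0 = V\cap\operatorname{stab}_{H(\Af)}(\phi)$, and that is the level the paper works at (its $V = V_S\cdot H(\hat\ZZ^S)$, with $V_S = \operatorname{stab}_{H_S}(\phi_S)\cap g K_{G,S} g^{-1}$). Done correctly, the volume factor is $\vol_H(V_0)$ and the identity $C\cdot\vol_H(V_0)=1$ holds exactly; your extra integer factor $[V:V_0]$ is an artefact of the level mismatch. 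It does not invalidate the integrality conclusion (an integer times an integral class is integral), but the intermediate formula is not quite right, and since the whole point of the normalisation in \cref{def:integral} is that this cancellation is on the nose, you should carry the argument at level $V_0$.

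\emph{Minimal versus maximal lattice.} Your ingredient (i) asserts that ${}_c\Eis^n_{\et,\phi'}$ lies in $H^1_{\et}(Y_H(V),\sH^n_{\cO_{E,\fp}}(1))$ without specifying which admissible lattice $\sH^n_{\cO_{E,\fp}}$ denotes. The paper's argument needs the precise version: the $c$-smoothed class is integral for the \emph{minimal} admissible lattice in $\sH^n$, and the branching morphism carries that minimal lattice into the pullback of the \emph{maximal} admissible lattice in $\sD^{a,b}\{r,s\}$ (cf.\ \cite[Prop.~4.3.5]{LSZ17}), which is where the fact that $\br^{[a,b,r,s]}$ lies in the maximal lattice is actually used. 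Without distinguishing the two lattices, ingredients (i) and (ii) do not compose.

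\emph{A small slip.} The factor $n_w$ is not a power of $\ell$ --- it equals $\ell(\ell+1)(\ell-1)^2$ or $(\ell^2-1)^2$. The correct statement, proved in the text after \cref{def:tamedata}, is that $\delta_w$ lies in $\cI(G_\ell/G_\ell^0[w],\ZZ[1/\ell])$, i.e.\ it is integral up to powers of $\ell$; since $\ell\neq p$ this gives $\delta_w\in\cI(\cdot,\cO_{E,(\fp)})$, which is what you need. The paper also reduces to $\fm=1$ first by absorbing the primes dividing $\Nm(\fm)$ into $S$ before invoking primitivity; your approach of forming a global primitive element is equivalent once this point is handled carefully.
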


  \begin{proof}
   Since the local terms $\delta_\ell[\fm]$ for primes $\ell \mid \Nm(\fm)$ are integral away from $\ell$ by construction, we can replace $S$ with $S \cup \{ \ell :\ell \mid \Nm(\fm) \}$, and thus reduce to the case $\fm = 1$. Let us abbreviate $K_G[1]$ simply by $K_G$.

   We may also suppose $\delta_S = \phi_S \otimes \ch(g K_{G, S})$ is a primitive integral element in the sense of \cref{def:integral}. Let $V_S = \operatorname{stab}_{H_S}(\phi_S) \cap g K_{G, S} g^{-1}$, and write $V = V_S \cdot H(\hat\ZZ^S)$. By assumption, the values of $\phi_S$ land in $C \cdot \cO_{E,(\fp)}$, where $C = \frac{1}{\vol V_S}$.

   We note that the Eisenstein class $\Eis^n_{\et, \phi}$ (the \'etale realisation of $\Eis^n_{\mot, \phi}$) has an integral variant ${}_c\Eis^n_{\et, \phi}$, taking values in the cohomology of $Y_H(V)$ with values in the \emph{minimal} admissible lattice in $\sH^n$. The branching map $\br^{[a,b,r,s]}$ maps this into the pullback of the \emph{maximal} admissible lattice in $\sD^{a,b}\{r, s\}$ (compare \cite[Proposition 4.3.5]{LSZ17}). Since $C^{-1} \phi$ is $\cO_{E, (\fp)}$-valued, we conclude that the image of $C^{-1} {}_c\Eis^n_{\et, \phi}$ under pushforward to $H^3_{\et}(Y_G(g K_G g^{-1}),\sD^{a,b}\{r, s\}(2))$ lifts (canonically) to the cohomology of the integral coefficient sheaf. Since $C^{-1} = \vol_{H}(V)$ is the normalising factor in the definition of the unitary Eisenstein class, this shows that ${}_c\cZ_{\et, \fm}(\delta_S)$ lifts to the integral cohomology, as required.
  \end{proof}

\section{Norm relations at $p$}
 \label{sect:normrel}

 We now consider norm-compatibility relations in the ``$p$-direction''. We let $p$ and $\fp$ be as in the previous section, and we add the additional assumption that $c$ is coprime to $p$.

 \subsection{Choice of local data}

  \begin{definition}
    Let $\tau = \left[\mathrm{diag}(p^2,\, p,\, 1),\, p^2\right] \in T_G(\Qp)$. For $t \geq 1$, define
   \begin{itemize}
    \item $K_{G_p}(p^t) = \left\{ g\in G(\Zp):\, \tau^{r}g\tau^{-r}\in G(\Zp)\quad \text{and}\quad g\pmod{p^t}\in N_G(\ZZ/p^t)\right\}$.
    \item $\xi_{p, t} =\ch\left( u \tau^t \cdot K_{G_p}(p^t)\right)$, where $u$ is an element of $G(\Zp)$ satisfying the conditions of \cref{lem:openorbit}.
    \item $\phi_{p, t} = \ch( (p^{2t} \Zp) \times (1 + p^{2t}\Zp))$ if $t \ge 1$, and $\ch(\Zp^2)$ if $t = 0$.
    \item finally, $n_{p, t}$ denotes the index in $H(\Zp)$ of the subgroup
      \[ V_{p, t} = K_{H_p, 1}(p^{2t}) \cap u\tau^t K_{G_p}(p^{t}) (u\tau^t)^{-1}, \]
      given for $t \ge 1$ by
      \[ n_{p,t} =
       \begin{cases}
        p^{6t-4}(p - 1)^3(p+1) & \text{if $p$ split}\\
        p^{6t-4}(p - 1)^2(p+1)^2 & \text{if $p$ inert}.
       \end{cases}
      \]
   \end{itemize}
   We then set $\delta_{p, t} = n_{p, t}\phi_{p, t} \otimes \xi_{p, t} \in \cI(G^0_p / K_{G_p}(p^t), \ZZ)$.
  \end{definition}

  \begin{remark}
   Explicitly, we have
   \[ K_{G_p}(p^t)=\left\{ (g,\nu)\in G(\Zp): g= \begin{pmatrix} a & \star & \star\\ b & c & \star\\ d & e & f\end{pmatrix},
   \begin{gathered}
   a\equiv c\equiv f\equiv 1\bmod{p^t},\\  b\equiv e\equiv 0\bmod{p^t},\\ d\equiv0\bmod{p^{2t}}. \, \end{gathered}\right\}.\]
   (These conditions also entail $\nu = 1 \bmod p^t$.) The subgroup $V_{p, t}$ consists of all $(\stbt a b c d, z) \in H(\Zp)$ with $c = 0, d = 1 \bmod p^{2t}$, $z = 1 \bmod p^t$, and $b$ satisfying a certain somewhat messy congruence modulo $p^{2t}$  (whose precise form depends on the choice of $u$).
  \end{remark}

  Now let us choose arbitrary $\delta_S\in \cI(G / K_{G, S}, E)$ as before. For $t \ge 0$, and $\fm \in \cR$ coprime to $p$, we can define $\delta[\fm, p^t] = \delta_{S} \cdot \delta_{p, t} \cdot \prod_{\ell \notin S \cup \{p\}} \delta_\ell[\fm]$, so that $\xi[\fm, p^t]$ is fixed by the right action of the group $K_G[\fm, p^t] = K_{G, S} \cdot K_{G_p}(p^t) \cdot \{ g \in G(\hat\ZZ^S): \mu(g) = 1 \bmod \fm\}$.

  \begin{definition}
   With the above notations, we set
   \[
    \cZ_{\mot, \fm, p^t}^{[a,b,r,s]}(\delta_S)\ \coloneqq\ p^{(r+s)t} \UE^{[a,b,r,s]}\left(\delta[\fm, p^t]\right)\ \in\ H^3_{\mot}\left(Y_G(K_G[\fm, p^t]), \sD^{a,b}\{r, s\}(2)\right).
   \]
  \end{definition}

  Since this definition is a special case of \cref{def:cZ}, these elements satisfy the norm-compatibility in $\fm$ of \cref{thm:tamenorm}; and it also clearly depends $(G(\QQ_{S}) \times H(\QQ_{S}))$-equivariantly on the test data $\delta_S$ at the bad primes. For the rest of this section we regard $\delta_S$ as fixed, and drop it from the notation.

  Similarly, we can introduce $p$-level structure to the classes $\Xi_{\mot, \fm}$ as follows. Let $Y_{\Ih}$ denote the Shimura variety of level $K_{G, S} \cdot \mathrm{Ih}_p \cdot G(\hat\ZZ^{S \cup \{p\}})$, where $\mathrm{Ih}_p = \{ g \in G(\Zp): g \bmod p \in B_G(\mathbf{F}_p)\}$ is the upper-triangular Iwahori\footnote{We use the abbreviation ``Ih'' rather than ``Iw'' to avoid confusion with Iwasawa.} at $p$. Then we have a natural map
  \[ Y_G(K_G[\fm, p^t]) \longrightarrow Y_{\Ih} \mathop{\times}_E E[\fm p^t]. \]
  We let
  \[ \Xi^{[a,b,r,s]}_{\mot, \fm, p^t} \in H^3_{\mot}\left(Y_{\Ih} \mathop{\times}_E E[\fm p^t], \sD^{a,b}\{r, s\}(2)\right) \]
  be the image of $\cZ^{[a,b,r,s]}_{\mot, \fm, p^t}$ under pushforward along this map.

 \subsection{Norm-compatibility in $t$}
  \label{sect:wildnorm}

  We now observe that these classes satisfy norm-compatibility in $t$.

  \begin{definition}
   Let $\mathcal{U}_p'$ denote the Hecke operator acting on $Y_G(K_G[\fm, p^t])$, with coefficients in $\sD^{a,b}\{r, s\}$, given by $p^{(r + s)} \left[ K_{G_p}(p^t)\tau^{-1} K_{G_p}(p^t)\right]$.
  \end{definition}

  This operator preserves the integral \'etale cohomology, because $p^{r+s}$ bounds the denominator of $\tau^{-1}$ on the integral lattice $D^{a,b}_{\cO_{E, \fp}}\{r, s\}$; this is also the reason for the factor $p^{(r+s)t}$ in the definition of the element.

  \begin{theorem}[Wild norm relation]
   \label{thm:wildnorm}
   For $t \ge 1$ we have
   \[ \operatorname{pr}^{K_{G_p}[\fm, p^{t+1}]}_{K_{G_p}[\fm, p^t]}\left(\cZ^{[a,b,r,s]}_{\mot, \fm, p^{t+1}}\right) = \mathcal{U}_p' \cdot \cZ^{[a,b,r,s]}_{\mot, \fm, p^{t}}, \]
   and similarly,
   \[ \operatorname{norm}^{E[\fm p^{t+1}]}_{E[\fm p^t]}\left(\Xi^{[a,b,r,s]}_{\mot, \fm, p^{t+1}}\right) = \sigma_p^{-1} \mathcal{U}_p' \cdot \Xi^{[a,b,r,s]}_{\mot, \fm, p^{t}}.\]
  \end{theorem}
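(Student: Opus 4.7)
The plan is to reduce both displayed identities, via the $G(\Af) \times H(\Af)$-equivariance of the unitary Eisenstein map $\UE^{[a,b,r,s]}$, to a purely local identity in the coinvariant space $\cI(G(\QQ_p)/K_{G_p}(p^t), E)$, and then to verify this local identity by an explicit coset computation at $p$. In contrast to the tame case (\cref{thm:tamenorm}), no abstract-norm-relation machinery is required here: we are varying the local data only at the single prime $p$, and a direct calculation in the coinvariants will suffice.

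Since the local factors $\delta_\ell[\fm]$ for $\ell \ne p$ coincide in $\delta[\fm, p^t]$ and $\delta[\fm, p^{t+1}]$, and both the pushforward $\operatorname{pr}$ and the Hecke operator $\mathcal{U}_p'$ act only on the $p$-component, the first equation reduces --- after cancelling the common factor $p^{(r+s)(t+1)}$ on both sides --- to the identity
\[
 \operatorname{tr}_{K_{G_p}(p^{t+1})}^{K_{G_p}(p^t)}\!\bigl(\delta_{p, t+1}\bigr)
 \;=\;
 \bigl[\,K_{G_p}(p^t)\,\tau^{-1}\,K_{G_p}(p^t)\,\bigr] \cdot \delta_{p, t}
\]
in $\cI(G(\QQ_p)/K_{G_p}(p^t), E)$. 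The functoriality property used here --- that cohomological pushforward corresponds to the trace map on local Hecke data under $\UE$ --- follows from the construction of $\UE$ as a pushforward of motivic Eisenstein classes along the maps $\iota_{gU}$ of \S\ref{sect:UEconstruct1}, combined with the standard compatibility of pushforward with the level maps on the auxiliary Shimura varieties $Y_H$.

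To establish the local identity, I would decompose both sides as $E$-linear combinations of elementary tensors $\phi \otimes \ch(gK_{G_p}(p^t))$ and match them term by term modulo the $H(\QQ_p)$-action on the Schwartz factor. The crucial input is the spherical-variety property of \cref{lem:openorbit}: every coset appearing on either side admits a unique representative of the form $h \cdot u\tau^t \cdot k$ with $h \in H(\QQ_p)$ and $k \in K_{G_p}(p^t)$, and tracking the resulting action of $h$ on the Schwartz factor shows that $\phi_{p, t+1}$ --- whose support is a $p^{-2}$-scaling of that of $\phi_{p, t}$ in both coordinates --- is rescaled exactly to $\phi_{p, t}$ by the element of $H(\QQ_p)$ corresponding (via $\iota$) to the torus part of $\tau$. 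The index calculation giving $n_{p, t+1}/n_{p, t} = p^6$, uniformly in the split and inert cases, ensures that the overall coefficient balances.

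For the $\Xi$-version, the component-group embedding \eqref{eq:component} intertwines any Hecke operator $[K[\fm p^t]\, g\, K[\fm p^t]]$ with the product $[K g K] \times \Art_E(\mu(g))^{-1}$ on $Y_G \times_E E[\fm p^t]$. Applied to $g = \tau^{-1}$, one computes $\mu(\tau^{-1}) = p^{-1} \in (E \otimes \QQ_p)^\times$, which under the reciprocity convention of \S\ref{sect:shimura} contributes the additional factor $\sigma_p^{-1}$. The main obstacle will be the coset bookkeeping in the local computation: the congruence conditions defining $K_{G_p}(p^t)$ twist non-uniformly under $\tau^r$-conjugation for different $r$, and the explicit description of $V_{p, t}$ differs slightly between the split and inert cases, so care is required to carry out the matching uniformly. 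A secondary subtlety is to confirm that the precise shape of $\phi_{p, t} = \ch(p^{2t}\Zp \times (1+p^{2t}\Zp))$, with matching $2t$-levels in both coordinates, is exactly what is needed for the rescaling argument to close --- this is the essential role of \cref{def:tamedata} and \cref{def:phi1t}.
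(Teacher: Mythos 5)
Your strategy is a plausible one in outline: the reduction to a local identity in $\cI(G(\QQ_p)/K_{G_p}(p^t), E)$ is correct, and the paper's own proof does indeed come down, at bottom, to a coset computation of the kind you envision. However, the paper achieves this by citing Proposition~4.5.2 of \cite{loeffler-spherical}, which develops a careful ``Cartesian cohomology functor'' formalism for exactly this purpose, and verifies the axioms for the pushforward maps of \S\ref{sect:UEconstruct1}. So your claim that ``no abstract-norm-relation machinery is required here'' is misleading: the paper uses a \emph{different} abstract machinery, and for good reason, as the computation is delicate.

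The concrete gap in your proposal is the rescaling claim. You assert that $\phi_{p,t+1}$ ``is rescaled exactly to $\phi_{p,t}$ by the element of $H(\QQ_p)$ corresponding to the torus part of $\tau$''. The element of $H$ mapping to $\tau$ under $\iota$ is $\left(\stbt{p^2}{}{}{1},\, p\right)$, and its action on $\cS(\QQ_p^2)$ (via the projection $H_p \to \GL_2(\QQ_p)$) is $(x,y) \mapsto (p^2x, y)$, which fixes the second coordinate. More seriously, \emph{no} element of $\GL_2(\QQ_p)$ at all sends the support $p^{2t}\Zp \times (1 + p^{2t}\Zp)$ of $\phi_{p,t}$ to the support $p^{2t+2}\Zp \times (1 + p^{2t+2}\Zp)$ of $\phi_{p,t+1}$: any scaling $y \mapsto \delta^{-1}y$ in the second coordinate preserving $1 + p^{2t}\cO$-type cosets would require $\delta^{-1}$ to be a unit, which cannot change the congruence level $2t$. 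The two Schwartz functions are not related by a translation; the identity you want holds only after passing to $H_p$-coinvariants, where a more intricate matching involving the unipotent structure and the element $u$ is required. Your subsequent assertion that ``every coset appearing on either side admits a unique representative of the form $h \cdot u\tau^t \cdot k$'' is also stated without justification, and since it presupposes a clean factorisation that is exactly what the norm relation encodes, using it to prove the norm relation is circular as written. So while the overall reduction and the observation that $n_{p,t+1}/n_{p,t}=p^6$ are correct, the decisive coset computation is neither correct as sketched nor carried out; this is the genuine content of the theorem and would need a proof along the lines of [\emph{op.cit.}, \S4].
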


  \begin{note}
   Here $\sigma_p$ is the image of $p^{-1} \in (E \otimes \Qp)^\times$ under the global Artin map, i.e.~the unique element of $\Gal(E[\fm p^t] / E[p^t])$ mapping to the arithmetic Frobenius at $p$ in $\Gal(E[\fm] / E)$.
  \end{note}

  \begin{proof}
   This is a consequence of the general machinery developed in the paper \cite{loeffler-spherical}, which proves a general norm-compatibility statement for elements defined by means of a ``pushforward map of Cartesian cohomology functors'' in the sense of \S 2.3 of \emph{op.cit.}, which is a formalism designed specifically for applications to the cohomology of Shimura varieties and other symmetric spaces.

   More precisely, we take the groups $G$ and $H$ of \emph{op.cit.} to be the $\Qp$-points of the groups $G$ and $H$ of the present paper; then the motivic cohomology groups of the Shimura varieties for $G$ and $H$, and the pushforward maps $\iota_{U, \star}^{[a,b,r,s]}$ between them, described in \S \ref{sect:UEconstruct1} (for varying levels $U$), satisfy the axioms for a pushforward map of the required type. (Compare the case of \'etale cohomology treated in \cite[\S 3.4]{loeffler-spherical}).

   So we may apply the machinery of \S 4 of \emph{op.cit.}, with the parabolic subgroups $Q_G$ and $Q_H$ taken to be the Borel subgroups $B_G$ and $B_H$, and open-orbit representative $u$ taken be the one denoted by the same letter in \cref{lem:openorbit} above. Then the first assertion of the theorem is exactly Proposition 4.5.2 of \emph{op.cit.}; and the second assertion of the theorem follows from the first using \eqref{eq:component}.
  \end{proof}

  \begin{remark}
   Since the operator $\mathcal{U}_p'$ is invertible in the Hecke algebra of level $\Ih_p$, this shows that the classes $\sigma_p^t (\mathcal{U}_p')^{-t} \Xi^{[a,b,r,s]}_{\mot, \fm, p^{t}}$ for varying $t$ and $\fm$ form a ``motivic Euler system'' over all the abelian extensions $E[\fm p^t]$, for $\fm \in \cR$ and $t \ge 1$. However, these classes typically will not have bounded denominators with respect to $t$ in the \'etale realisation, as will become clear from the analysis below.
  \end{remark}

  As noted above, these classes extend naturally to the canonical integral model of $Y_G(K_G[\fm, p^t])$ over $\cO[S^{-1}, \Nm(\fm)^{-1}]$, which we denote by $\cY_{p^t}$. Their \'etale realisations are also integral in another, separate sense: namely, they arise from an integral lattice in the coefficient sheaf, as we now explain. We suppose $\delta_S$ lies in $\cI(G_S / K_{G, S}, \cO_{E, (\fp)})$; and we choose an integer $c > 1$ coprime to $6pS$.

  \begin{theorem}[Wild norm relation, integral \'etale form]
   \label{thm:wildnormint}
   There exists a collection of elements
   \[ {}_c\cZ^{[a,b,r,s]}_{\et, \fm, p^{t}} \in H^3_{\et}\left(\cY_{p^t}, \sD^{a,b}_{\cO_{E, \fp}}\{r, s\}(2)\right) \]
   for all $t \ge 0$ and $\fm \in \cR$ coprime to $c$, such that:
   \begin{enumerate}[(a)]
    \item the image of $z_t$ after inverting $p$ and restricting to the generic fibre is $(c^2 - c^{-n}\langle c \rangle) \cZ^{[a,b,r,s]}_{\et, \fm, p^{t}}$.
    \item For $t \ge 1$ we have the norm relation $\pr_{\cY_t}^{\cY_{t+1}}\left( z_{t+1} \right) = \mathcal{U}_p' \cdot z_t$ (exactly, not just modulo torsion).
   \end{enumerate}
  \end{theorem}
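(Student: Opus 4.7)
The plan is to mimic the construction of $\cZ_{\mot,\fm,p^t}^{[a,b,r,s]}$ from \cref{def:cZ}, but starting from the $c$-smoothed integral Eisenstein class on the $H$-side and using integral local test data, and then to deduce the integral norm relation by applying the pushforward formalism of \cite{loeffler-spherical} directly to integral \'etale cohomology rather than passing through motivic cohomology and its \'etale realisation.

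First I would construct ${}_c\cZ^{[a,b,r,s]}_{\et,\fm,p^t}$. The Eisenstein symbol admits a well-known $c$-smoothed integral refinement ${}_c\Eis^n_{\et,\phi}$, taking values in the \'etale cohomology of the integral model of $Y_H$ with coefficients in the \emph{minimal} admissible $\cO_{E,\fp}$-lattice inside $\sH^n$ whenever $\phi$ is $\cO_{E,(\fp)}$-valued. The branching vector $\br^{[a,b,r,s]}$ of \cref{prop:brabrs} maps this minimal lattice into the pullback of the \emph{maximal} admissible lattice $\sD^{a,b}_{\cO_{E,\fp}}\{r,s\}$, exactly as in the proof of \cref{prop:intetale}. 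The pushforward $\iota_\star^{[a,b,r,s]}$ then produces an integral \'etale class provided that the global test datum $\delta[\fm,p^t]$ is integral at level $K_G[\fm,p^t]$ in the sense of \cref{def:integral}. At the bad primes this is the hypothesis on $\delta_S$; at primes dividing $\fm$ it was verified in the proof of \cref{prop:intetale}; at $p$ it holds because the volume factor $n_{p,t}$ in the definition of $\delta_{p,t}$ is, by construction, precisely the index $[H(\Zp):V_{p,t}]$ required by \cref{def:integral}. The remaining prefactor $p^{(r+s)t}$ in the definition of $\cZ_{\mot,\fm,p^t}^{[a,b,r,s]}$ exactly balances the scaling of the maximal admissible lattice under translation by $\tau^t$ along the weight lines involved, so that the translated class stays integral. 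Define $z_t$ as this integral lift; part (a) is then tautological from the definition ${}_c\cZ_{\et,\fm,p^t} = (c^2 - c^{-n}\langle c\rangle)\cZ_{\et,\fm,p^t}$.

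For part (b), I would apply the norm-compatibility formalism of \cite{loeffler-spherical} directly in integral \'etale cohomology. The axiomatic hypotheses of \emph{op.~cit.} continue to hold integrally: pullback and pushforward in \'etale cohomology of smooth $\cO[1/Dpc]$-schemes are defined integrally; the open-orbit condition \cref{lem:openorbit} holds over $\cO[1/D]$, hence over $\cO_{E,(\fp)}$ since $p\nmid D$; and \cref{prop:brabrs} provides an integral branching vector in the maximal admissible lattice. Applied in this integral setting with parabolics $B_G$ and $B_H$ and open-orbit representative $u$ as in \cref{lem:openorbit}, Proposition~4.5.2 of \emph{op.~cit.} yields the equality $\pr_\star(z_{t+1}) = \mathcal{U}_p'\cdot z_t$ \emph{exactly} in the integral cohomology rather than up to torsion. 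The appearance of $\mathcal{U}_p'$ with its normalising prefactor $p^{r+s}$ (as opposed to an unnormalised $U$-operator) is forced by the prefactor $p^{(r+s)t}$ in the definition of $z_t$: the two powers precisely cancel in the local identity underlying the formalism, namely $\pr_\star \delta_{p,t+1} = p^{-(r+s)}[K_{G_p}(p^t)\tau^{-1}K_{G_p}(p^t)]\star\delta_{p,t}$ in $\cI(G_p/K_{G_p}(p^t),\ZZ[1/p])$.

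The main obstacle will be verifying that the formalism of \cite{loeffler-spherical} transfers cleanly to integral \'etale cohomology of the smooth integral models $\cY_{p^t}$, which reduces to careful bookkeeping of $p$-powers. The exponent of $p$ appearing in $n_{p,t}$, the prefactor $p^{(r+s)t}$, the $p^{r+s}$ built into $\mathcal{U}_p'$, and the lattice scaling under $\tau^{\pm t}$ must all balance exactly, so that both sides of the norm relation are integral classes whose difference vanishes on the nose rather than modulo some power of $p$. This delicate balance is precisely what dictates the specific normalisations chosen in the definitions of $\delta_{p,t}$, $\cZ_{\mot,\fm,p^t}^{[a,b,r,s]}$, and $\mathcal{U}_p'$, and it is the technical heart of the argument.
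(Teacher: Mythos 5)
Your proposal matches the paper's own proof in both its essential ideas and its structure: integrality of the classes is established by tracing lattices through the $c$-smoothed Eisenstein class, the branching vector, and the pushforward, with the normalising factor $p^{(r+s)t}$ cancelling the denominator introduced by the action of $\tau^{-t}$ on the maximal admissible lattice, and the norm relation then follows by running the pushforward formalism of \cite{loeffler-spherical} directly in integral \'etale cohomology of the smooth models $\cY_{p^t}$. The paper's proof is brief and defers these same $p$-power bookkeeping points to the reader, so your more explicit tracking of where each factor of $p$ arises is a faithful and slightly expanded version of the same argument, not a different route.
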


  \begin{proof}
   The integrality of these classes follows by the same argument as \cref{prop:intetale}, with a slight modification: we now need to consider $\xi = \ch(g K_G)$ where $g$ is not a unit at $p$, so the pushforward $g_\star: Y_G(g K_G g^{-1}) \to Y_G(K_G)$ may not respect the integral lattice $\sD^{a,b}_{\cO_{E, \fp}}$. However, we are taking $g_p$ to be a unit multiple of $\tau^t$, and the denominator of $(\tau^t)_\star$ (which corresponds to the action of $\tau^{-t}$ on $D^{a,b}$) is bounded by $p^{(r+s)t}$, which is exactly the normalising factor appearing in the definition of the classes. The fact that these classes are norm-compatible again follows from the norm-compatibility machine developed in \cite{loeffler-spherical}, applied to the integral \'etale cohomology of the two Shimura varieties, rather than motivic cohomology as in Theorem \ref{thm:wildnorm}.
  \end{proof}

  Note that the groups $H^3_{\et}\left(\cY_t, \sD^{a,b}_{\cO_{E, \fp}}\{r, s\}(2)\right)$ are finitely-generated over $\cO_{E, \fp}$ (this is an advantage of working with the integral model $\cY_t$). In particular, the operator $e_p' = \lim_{k \to \infty} \left(\mathcal{U}'_p\right)^{k!}$ is defined on these spaces, and acts as an idempotent. So we can define a class
  \begin{equation}
   \label{eq:defZinfty}
   {}_c\cZ^{[a,b,r,s]}_{\et, \fm, p^\infty} = \left( (\mathcal{U}'_p)^{-t} e_p' \cdot {}_c\cZ^{[a,b,r,s]}_{\et, \fm, p^{t}}\right)_{t \ge 1} \in e'_p \cdot H^3_{\et, \Iw}\left(\cY_\infty, \sD^{a,b}_{\cO_{E, \fp}}\{r, s\}(2)\right),
  \end{equation}
  where the right-hand side is the ``Iwasawa cohomology''
  \[ H^i_{\et, \Iw}\left(\cY_\infty, \sD^{a,b}_{\cO_{E, \fp}}\{r, s\}(2)\right) \coloneqq \varprojlim_t H^i_{\et}\left(\cY_t, \sD^{a,b}_{\cO_{E, \fp}}\{r, s\}(2)\right).\]
  Similarly, we have a version of this for the $\Xi$ classes (where we preserve only the ``abelian part'' of the level tower at $p$): if $R$ denotes the ring $\cO[1/S, 1/\Nm(\fm)]$, and $R_{\fm p^t}$ its integral closure in $E[\fm p^t]$, then we have a class
  \[
   {}_c\Xi^{[a,b,r,s]}_{\et, \fm, p^\infty} \in e'_p \cdot H^3_{\et, \Iw}\left(\cY_{\Ih} \times_R R_{\fm p^\infty}, \sD^{a,b}_{\cO_{E, \fp}}\{r, s\}(2)\right),
  \]
  where $\cY_{\Ih}$ is the $R$-model of $Y_{\Ih}$.

  \begin{remark} \
   \begin{enumerate}
    \item It is natural to ask how the classes $\Xi^{[a,b,r,s]}_{\mot, \fm, p^{t}}$ for $t \ge 1$ (living at Iwahori level) are related to the classes $\Xi^{[a,b,r,s]}_{\mot, \fm}$ of the previous section (which live at prime-to-$p$ level). Using \cref{cor:absnorm1}, it is clear that the pushforward of $\Xi^{[a,b,r,s]}_{\mot, \fm, p^1}$ along $Y_{\Ih} \otimes E[p\fm] \to Y_G[1] \otimes E[\fm]$ is given by $\mathcal{Q}_p \cdot \Xi^{[a,b,r,s]}_{\mot, \fm}$ where $\mathcal{Q}_p$ is some (computable) Hecke operator. Similarly, one can compute Hecke operators relating $\Xi^{[a,b,r,s]}_{\mot, \fm}$ to the projections of $\Xi^{[a,b,r,s]}_{\mot, \fm, p^1}$ to $U'_p$-eigenspaces, much as in \cite[\S 5.7]{KLZ1b}.

    \item For $p = \fp\bar{\fp}$ split in $E$, we can similarly define a family of classes ${}_c\Xi^{[a,b,r,s]}_{\et, \fm, \fp^\infty}$ over the tower of ray class fields modulo $\fm \fp^\infty$, which only requires us to impose ordinarity at $\fp$ (rather than at $p$, which is a stronger condition). The same also holds with $\fp$ and $\bar{\fp}$ interchanged. These results can be obtained in the same way as above, simply replacing the parabolic subgroup $B_G \subset G$ with one of the two non-minimal proper parabolics in $G_{/\Qp}$ and running the machinery of \cite{loeffler-spherical}.\qedhere
   \end{enumerate}
  \end{remark}

\section{Moment maps and twist-compatibility}
\label{sect:moment}

 \subsection{Moment maps for $G$}

  Fix an arbitrary subgroup $K^{(p)}_G \subset G(\Af^{(p)})$ unramified outside $\Sigma$, and write $K_{G}(p^n) = K_G^{(p)}\times K_{G_p}(p^n)$. We assume that $K_{G}(p^t)$ is sufficiently small for all $t \ge 1$. Let $a,b,r,s$ be integers with $a,b \ge 0$ (we do not need to assume $0 \le r \le a, 0 \le s \le b$ at this point).

  \begin{proposition}
   Let $d^{a,b}\{r, s\}$ be the standard highest-weight vector in $D^{a,b}_{\cO_{E, \fp}}\{r, s\}$; and let $d^{a,b}_t\{r, s\}$ be its reduction modulo $\fp^t$. Then the vector $d^{a,b}_t\{r, s\}$ is stable under $K_{G_p}(p^t)$.
  \end{proposition}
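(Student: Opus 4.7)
The strategy is to decompose elements of $K_{G_p}(p^t)$ via an Iwahori-type factorization and verify that each factor preserves $d^{a,b}\{r,s\}$ modulo $\fp^t$. Since $p$ generates the maximal ideal of $\cO_{E,\fp}$ in both the split and inert cases, $p^t \cdot D^{a,b}_{\cO_{E,\fp}}\{r,s\} = \fp^t \cdot D^{a,b}_{\cO_{E,\fp}}\{r,s\}$, so it suffices to work modulo $p^t$.

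First I would establish the decomposition
\[ K_{G_p}(p^t) \;=\; \bar{N}_G^{(t)} \cdot T_G^{(t)} \cdot N_G(\Zp), \]
where $N_G(\Zp)$ is the full upper-unipotent integral subgroup, $T_G^{(t)}$ is the principal congruence subgroup of $T_G(\Zp)$ modulo $p^t$, and $\bar{N}_G^{(t)}$ consists of lower-unipotent elements whose $(2,1)$ and $(3,2)$ entries lie in $p^t\Zp$ and whose $(3,1)$ entry lies in $p^{2t}\Zp$. This is a direct Gauss/LU computation from the explicit matrix description already recorded in the remark following the definition.

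Given this decomposition, the action on $v_0 := d^{a,b}\{r, s\}$ is checked factor by factor. By construction of the highest-weight vector, $N_G(\Zp)$ fixes $v_0$ exactly. The torus $T_G^{(t)}$ acts through the highest weight character $\lambda$, which is a monomial in the diagonal entries with integer exponents; since those entries are $\equiv 1 \pmod{p^t}$, we get $\lambda(t) \equiv 1 \pmod{p^t}$, and hence $T_G^{(t)}$ fixes the reduction $v_{0,t}$. For the lower-unipotent part, I would exploit that the maximal admissible lattice $D^{a,b}_{\cO_{E,\fp}}\{r,s\}$ is stable under the $\cO_{E,\fp}$-group scheme $G$, so each root subgroup $U_\alpha : \GG_a \to \bar N_G$ integrates to a polynomial map
\[ U_\alpha(c) \;=\; \mathrm{id} + c\,A_\alpha^{(1)} + c^2 A_\alpha^{(2)} + \cdots \]
with each coefficient $A_\alpha^{(k)} \in \operatorname{End}\bigl(D^{a,b}_{\cO_{E,\fp}}\{r,s\}\bigr)$ (morally the divided power $X_\alpha^{(k)}$). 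Writing $\bar n \in \bar{N}_G^{(t)}$ as an ordered product of such one-parameter elements, each with parameter in $p^t\Zp$ (and in $p^{2t}\Zp$ for the long root, which is more than we need), one gets $\bar n - \mathrm{id} \in p^t \cdot \operatorname{End}\bigl(D^{a,b}_{\cO_{E,\fp}}\{r,s\}\bigr)$, so $\bar n \cdot v_0 \equiv v_0 \pmod{p^t}$.

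The main technical point is justifying the integrality of the coefficients $A_\alpha^{(k)}$ on the \emph{maximal} admissible lattice. This follows essentially formally: the preservation of $D^{a,b}_{\cO_{E,\fp}}\{r,s\}$ by $G(\cO_{E,\fp})$ implies, by taking matrix coefficients and equating powers of $c$, that each $A_\alpha^{(k)}$ preserves the lattice. Once this is in hand, combining the three factorwise statements gives $g \cdot v_0 \equiv v_0 \pmod{\fp^t}$ for every $g \in K_{G_p}(p^t)$, as required.
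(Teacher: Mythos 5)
Your proof is correct, but it does substantially more work than the paper's. The paper's argument is two sentences: by definition of $K_{G_p}(p^t)$, its reduction modulo $p^t$ lands in $N_G(\ZZ/p^t)$; since $D^{a,b}_{\cO_{E,\fp}}\{r,s\}$ is a module for the group scheme $G_{/\Zp}$, the $G(\Zp)$-action on the reduction $D^{a,b}_{\cO_{E,\fp}}\{r,s\}/\fp^t$ factors through $G(\ZZ/p^t)$; and $N_G$ fixes the highest-weight vector by definition. Your Iwahori-type decomposition $K_{G_p}(p^t) = \bar{N}_G^{(t)}\,T_G^{(t)}\,N_G(\Zp)$ and the factor-by-factor verification amount to a hands-on re-derivation of the fact that the congruence kernel $\ker(G(\Zp)\to G(\ZZ/p^t))$ acts trivially modulo $p^t$ --- a fact that is automatic once one knows the lattice carries a $G_{/\Zp}$-scheme action. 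This buys you nothing logically, and the paper avoids it by invoking the scheme-theoretic structure directly. One minor caution in your write-up: the justification ``taking matrix coefficients and equating powers of $c$'' is not rigorous as stated, since a polynomial that is integer-valued on $\Zp$ need not have integral coefficients (witness $\binom{c}{2}$); the correct statement is that the action of $G_{/\Zp}$ on the lattice, restricted along the group-scheme map $\GG_a\to G$, is literally a morphism of $\Zp$-schemes $\GG_a\to\GL(D^{a,b}_{\cO_{E,\fp}}\{r,s\})$, hence given by a polynomial with coefficients in $\operatorname{End}(D^{a,b}_{\cO_{E,\fp}}\{r,s\})$ --- equivalently, that admissible lattices are stable under the Kostant $\ZZ$-form, in particular under the divided powers $X_\alpha^{(k)}$.
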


  \begin{proof}
   This is clear since the image of $K_{G_p}(p^t)$ modulo $p^t$ is $N_G(\ZZ/p^t)$, which acts trivially on the highest-weight vector by definition.
  \end{proof}

  It follows that $d^{a,b}_t\{r, s\}$ defines a class in $H^0_{\et}(\cY_t, \sD^{a,b}_{t}\{r, s\})$, where $\sD^{a,b}_{t}$ is the mod $\fp^t$ coefficient sheaf, and $\cY_t$ is the smooth model of $Y_G(K_G(p^t))$ over $\cO[1/\Sigma]$ (where $\Sigma$ some finite set of primes which is sufficiently large, but finite and independent of $t$). Cup-product with $d^{a,b}\{r, s\}$ therefore defines a map
  \[ H^3_{\et}(\cY_t, \cO_{E, \fp}(2)) \to H^3_{\et}(\cY_t, \sD^{a,b}_t \{r, s\}(2)) \]
  for each $t \ge 1$, and hence a map
  \[
    \mom^{[a,b,r,s]}_{G, t}: H^3_{\et, \Iw}(\cY_\infty, \cO_{E, \fp}(2))
    \to H^3_{\et}(\cY_t, \sD^{a,b}_{\cO_{E, \fp}} \{r, s\}(2)),
  \]
  mapping an element $(x_T)_{T \ge 1}$ to the element
  \[
   \left(\pr^T_t( x_T \cup d^{a,b}_T\{r, s\})\right)_{T \ge t} \in \varprojlim_{T\ge t} H^3_{\et}(\cY_t, \sD^{a,b}_{T} \{r, s\}(2)) = H^3_{\et}(\cY_t, \sD^{a,b}_{\cO_{E, \fp}} \{r, s\}(2)).
  \]

  Note that these maps are compatible with the action of the Hecke operator $\mathcal{U}_p'$, since $\tau^{-1}$ acts trivially on the highest-weight vector $d^{a,b}$.

 \subsection{Twist-compatibility for $\cZ$'s}

  Now let us suppose $\delta_S$ is some choice of local data at $S$ which lies in $\cI(G_S / K_{G, S}, \cO_{E, (\fp)})$, as in \cref{sect:wildnorm}.

  \begin{theorem}
  \label{thm:twistcompat}
   Let $\fm \in \cR$ be coprime to $c$. There exists an element
   \[ {}_c \cZ_{\fm p^\infty}(\delta_S) \in H^3_{\et, \Iw}(\cY_{\fm p^\infty}, \cO_{E, \fp}(2)) \]
   with the following interpolating property: for all integers $t \ge 1$, $0 \le r \le a$ and $0 \le s \le b$, we have
   \[ \mom^{[a,b,r,s]}_{G, t}\left({}_c \cZ_{\fm p^\infty}\right) = \mathcal{U}_p^{-t} e'_{\ord} \cdot {}_c \cZ^{[a,b,r,s]}_{\et, \fm, p^{t}}.\]
  \end{theorem}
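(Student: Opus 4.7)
The plan is to set ${}_c\cZ_{\fm p^\infty}(\delta_S)$ to be the ordinary Iwasawa-theoretic class obtained by applying \eqref{eq:defZinfty} in the trivial-coefficient case $(a,b,r,s) = (0,0,0,0)$, and then to verify the moment-map formula by a projection-formula reduction to an Eisenstein-interpolation statement on $Y_H$. Concretely, ${}_c\cZ_{\fm p^\infty}(\delta_S)$ is the inverse limit of $(\mathcal{U}'_p)^{-t} e'_p \cdot {}_c\cZ^{[0,0,0,0]}_{\et,\fm,p^t}$, which is well defined by \cref{thm:wildnormint}.

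First I would apply the projection formula for the finite morphism $\iota_{g,\star}: Y_H(V_{p,t}) \to Y_G(K_G[\fm, p^t])$ with $g = u\tau^t$. Since $\mom^{[a,b,r,s]}_{G,t}$ is defined by cup product with the mod-$\fp^T$ reduction of the highest-weight vector $d^{a,b}\{r,s\}$, and ${}_c\cZ_{\fm p^\infty}(\delta_S)$ arises from trivial-coefficient Siegel units pushed forward from $Y_H$, the projection formula converts the computation into a cup product on $Y_H$ between the trivial-coefficient Eisenstein class and the pullback $\iota_g^*(d^{a,b}\{r,s\})$, which can then be pushed back up to $Y_G$.

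Next I would match this pulled-back vector with the branching vector $\br^{[a,b,r,s]}$ appearing in the definition of ${}_c\cZ^{[a,b,r,s]}_{\et,\fm,p^t}$. By \cref{prop:brabrs}, $u^{-1}\cdot \br^{[a,b,r,s]}$ projects to $d^{a,b}\{r,s\}$ in the highest-weight line, so translating by $u\tau^t$ and restricting to $H$ produces, via the branching decomposition into $H$-isotypic components, a class whose highest-weight component is $\tau^{-t}\cdot \br^{[a,b,r,s]}$ up to an explicit scalar. The action of $\tau^{-t}$ rescales the $(r,s)$-twisted highest-weight line by $p^{-(r+s)t}$, precisely cancelling the $p^{(r+s)t}$ factor built into the definition of $\cZ^{[a,b,r,s]}_{\mot,\fm,p^t}$; the remaining components, which pair with $H$-subrepresentations of strictly smaller weight inside $\iota^*(D^{a,b}\{r,s\})$, all have strictly positive slope for $\mathcal{U}'_p$ and are therefore annihilated by the ordinary idempotent $e'_{\ord}$. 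The residue is exactly the composition of the trivial-coefficient Siegel-unit class with the branching map to $\sH^n$, which is the $Y_H$-side datum defining ${}_c\cZ^{[a,b,r,s]}_{\et,\fm,p^t}$.

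To finish, I would invoke the classical $p$-adic interpolation of Beilinson--Eisenstein symbols on $Y_{\GL_2}$: the mod-$\fp^T$ reduction of $\Eis^n_{\et,\phi_{p,t}}$ coincides, at sufficiently large level $T\ge t$, with the cup product of the inverse-limit Siegel unit and the standard highest-weight section of $\Sym^n(\cO_{E,\fp}^2)$; this is essentially Kings' theorem on $\Lambda$-adic Eisenstein classes (see, e.g., \cite{kings15} or \cite{KLZ1b}). Pulling back along $Y_H \to Y_{\GL_2}$, cupping with the mod-$\fp^T$ branching vector identified above, and pushing forward via $\iota_{g,\star}$ yields precisely the formula asserted. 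The main obstacle is the normalisation bookkeeping: verifying that the scalar from \cref{prop:brabrs}, the $\tau^{-t}$-scaling of the highest-weight line, the $p^{(r+s)t}$ factor in the definition of the classes, and the $(\mathcal{U}'_p)^{-t} e'_{\ord}$ normalisation all combine to an exact equality rather than merely an equality up to an unspecified constant. Once these identifications are aligned, the theorem reduces formally to the $\GL_2$ case, and it extends to a statement over the integral model $\cY_t$ by the same integrality analysis used in \cref{thm:wildnormint}.
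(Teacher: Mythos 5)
Your overall strategy matches the paper's: take ${}_c\cZ_{\fm p^\infty}$ to be the trivial-coefficient class ${}_c\cZ^{[0,0,0,0]}_{\et,\fm,p^\infty}$ of \eqref{eq:defZinfty}, reduce via norm-compatibility to a single finite level $T=t$, use the Eisenstein interpolation to replace $\Eis^n_{\et,\phi}$ by a cup product with a highest-weight section, and exploit \cref{prop:brabrs} to identify the resulting branching vector with $d^{a,b}\{r,s\}$ after applying $u$. Those are exactly the ingredients of the paper's proof.

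However, the step where you dispose of the non-highest-weight components is handled differently, and I do not think your version is fully justified as stated. You argue that the $H$-subrepresentations of $\iota^*(D^{a,b}\{r,s\})$ other than the top one contribute pieces of strictly positive $\mathcal{U}'_p$-slope, hence die under $e'_{\ord}$. That slope claim is a statement about eigenvalues on cohomology; you assert it but do not verify it, and it is not an immediate consequence of the weight filtration on the coefficient module, since slope is computed after pushforward and involves the interaction between $\tau^t$, the level structure, and the \'etale cohomology of $Y_G$. The paper instead works one level lower: it observes that the equivariant sheaf map $\sD^{a,b}_t\{r,s\}\to\tau^*\left(\sD^{a,b}_t\{r,s\}\right)$ on $\tilde\cY_t$ (level $\tau^tK_G[\fm,p^t]\tau^{-t}$) is induced by the action of $\tau^{-t}$ on $D^{a,b}_t\{r,s\}$, and that after the $p^{(r+s)t}$ normalisation this action literally kills every weight space except the highest modulo $p^t$. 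This is a linear-algebra statement about a single $\cO_{E,\fp}/p^t$-module, requiring no invocation of $e'_{\ord}$ at that step and no claim about slopes. It immediately gives that the two classes, both realised as pushforwards of cup products against vectors in $D^{a,b}_t\{r,s\}$ that agree in the highest-weight quotient, become equal after pushing forward to $\cY_t$. Your argument might be repairable, but as written it substitutes an unproved spectral assertion for a concrete mod-$p^t$ computation, and the burden of proof for the slope claim is nontrivial. (Also, note that \texttt{kings15} is not among the references defined in this paper; \cite{KLZ1b} is the citation to use here.)
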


  \begin{proof}
   We shall define ${}_c \cZ_{\fm p^\infty}$ to be the class ${}_c\cZ^{[0,0,0,0]}_{\et, \fm, p^\infty}$ of \eqref{eq:defZinfty}. So we need to show that
   \[
    \mom^{[a,b,r,s]}_{G, t}\left({}_c\cZ^{[0,0,0,0]}_{\et, \fm, p^\infty}\right)=\mathcal{U}_p^{-t} e'_{\ord} \cdot {}_c\cZ^{[a,b,r,s]}_{\et, \fm, p^{t}}.
   \]
   This is true by construction for $(a,b,r,s) = (0,0,0,0)$; our aim is to show that this holds for all possible values of $(a,b,r,s)$.

   If we reduce the coefficients modulo $p^T$ on both sides, for some $T \ge t$, then the equality to be proved is
   \[ \pr^T_t\left( \mathcal{U}_p^{-T} e'_{\ord} {}_c\cZ^{[0,0,0,0]}_{\et, \fm, p^T} \cup d^{a,b}_T\{r, s\}\right) = \mathcal{U}_p^{-t} e'_{\ord} \cdot {}_c\cZ^{[a,b,r,s]}_{\et, \fm, p^{t}}.\]
   Since the classes on the right are norm-compatible in $t$ (integrally), we can reduce to the case $T = t$, so it will suffice to prove that
   \[  {}_c\cZ^{[0,0,0,0]}_{\et, \fm, p^t} \cup d^{a,b}_t\{r, s\} = {}_c\cZ^{[a,b,r,s]}_{\et, \fm, p^{t}} \bmod p^t\]
   as elements of $H^3_{\et}(\cY_t, \sD^{a,b}_t\{r, s\}(2))$.

   Let us write $\tilde{\cY}_t$ for the Shimura variety of level $\tau^r K_G[\fm, p^t] \tau^{-r}$. Then pushforward along $\tau$ gives an isomorphism $\tilde{\cY}_t \to \cY_t$, but the map of sheaves on $\tilde\cY_t$,
   \[ \sD^{a,b}_t\{r, s\} \to \tau^*\left(\sD^{a,b}_t\{r, s\}\right), \]
   corresponds to the action of $\tau^{-t}$ on $\sD_t^{a, b}$, which factors through projection to the highest-weight vector.

   Now, both ${}_c\cZ^{[0,0,0,0]}_{\et, \fm, p^t} \cup d^{a,b}_t\{r, s\}$ and${}_c\cZ^{[a,b,r,s]}_{\et, \fm, p^{t}}$ are in the image of pushforward along $\tilde\cY_t \to \cY_t$: they are the images, respectively, of
   \begin{gather}
   (u_\star \circ \iota_{gU, \star})\left( {}_c\Eis_{\et, \phi[\fm p^t]}^{0}\right) \cup d^{a,b}_t\{r, s\} \qquad\text{and}\qquad
   u_\star \left(\iota^{a,b,q,r}_{gU, \star}\left( {}_c\Eis_{\et, \phi[\fm p^t]}^{n}\right)\right).
   \end{gather}
   The Eisenstein series in the latter class, of weight $n = a + b - r -s$, is congruent modulo $p^t$ (indeed modulo $p^{2t}$) to the cup-product of ${}_c\Eis_{\et, \phi[\fm p^t]}^{0}$ with the highest-weight vector of $\sH^n \bmod p^t$. This highest-weight vector maps to $\br^{[a,b,r,s]} \in D^{a, b}$, so the latter of our two classes on $\tilde\cY_t$ can be written as
   \[ (u_\star \circ \iota_{gU, \star})\left( {}_c\Eis_{\et, \phi[\fm p^t]}^{0}\right) \cup u_\star \br^{[a,b,r,s]}. \]

   Since the classes $u_\star \br^{[a,b,r,s]} = u^{-1} \cdot \br^{[a,b,r,s]}$ and $d^{a,b}_t\{r, s\}$ have the same image in the highest-weight quotient by \cref{prop:brabrs}, they have the same image on $\cY_t$, and the proof is complete.
  \end{proof}

 \subsection{Twist-compatibility for $\Xi$'s}

  Now let $(a,b)$ be given integers $\ge 0$. The same construction as above gives maps
  \[ \mom^{[r, s]}_t:  H^3_{\et, \Iw}\left(\cY_{\Ih} \times_R R_{\fm p^\infty}, \sD^{a,b}_{\cO_{E, \fp}}(2)\right) \to H^3_{\et, \Iw}\left(\cY_{\Ih} \times_R R_{\fm p^t}, \sD^{a,b}_{\cO_{E, \fp}}\{r, s\}(2)\right)\]
  for any $r, s \in \ZZ$ and $t \ge 1$.

  \begin{corollary}
   \label{cor:twistcompatXi}
   Under the same hypotheses as the previous theorem, for any integers $a, b \ge 0$, there is a class
   \[ {}_c \Xi_{\fm p^\infty}^{[a,b]} \in H^3_{\et, \Iw}(\cY_{\Ih} \times_R R_{\fm p^\infty}, \sD^{a,b}_{\cO_{E, \fp}}(2)),\]
   such that for all $(r, s, t)$ with $0 \le r \le a, 0 \le s \le b$, $t \ge 1$, we have
   \[ \mom^{[r, s]}_t\left({}_c \Xi_{\fm p^\infty}^{[a,b]}\right)
   = \sigma_p^t \mathcal{U}_p^{-t} e'_{\ord} \cdot {}_c \Xi^{[a,b,r,s]}_{\et, \fm, p^{t}}.\]
  \end{corollary}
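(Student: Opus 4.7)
My plan is to build ${}_c\Xi^{[a,b]}_{\fm p^\infty}$ as an explicit inverse limit assembled from the Iwahori-level classes ${}_c\Xi^{[a,b,0,0]}_{\et,\fm,p^t}$ already constructed, and then to deduce the interpolation identity by reducing it to its $\cZ$-analogue (\cref{thm:twistcompat}) via pushforward along the natural map $\pi_t\colon\cY_t\to\cY_{\Ih}\times_R R_{\fm p^t}$.

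I would first set
\[ {}_c\Xi^{[a,b]}_{\fm p^\infty} \;\coloneqq\; \left(\sigma_p^t(\mathcal{U}'_p)^{-t}e'_{\ord}\cdot{}_c\Xi^{[a,b,0,0]}_{\et,\fm,p^t}\right)_{t\ge 1}. \]
Applying \cref{thm:wildnorm} with $r=s=0$ gives $\operatorname{norm}^{E[\fm p^{t+1}]}_{E[\fm p^t]}({}_c\Xi^{[a,b,0,0]}_{\et,\fm,p^{t+1}})=\sigma_p^{-1}\mathcal{U}'_p\cdot{}_c\Xi^{[a,b,0,0]}_{\et,\fm,p^t}$, so a direct calculation shows the renormalised sequence is genuinely norm-compatible; integrality is furnished by \cref{thm:wildnormint}. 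Equivalently, this class can be realised as the pushforward through $\cY_\infty\to\cY_{\Ih}\times_R R_{\fm p^\infty}$ of the cup product of the universal class ${}_c\cZ_{\fm p^\infty}$ from \cref{thm:twistcompat} with the $\mathcal{U}'_p$-invariant section of $\sD^{a,b}_{\cO_{E,\fp}}$ defined by the highest-weight vector $d^{a,b}$ (that $d^{a,b}$ descends to each $\cY_t$ and is $\mathcal{U}'_p$-stable is the content of the proposition at the start of \S\ref{sect:moment}). The $\sigma_p^t$ twist then reflects the intertwining of the centre-Hecke action on $\cY_t$ with the Artin-reciprocity-twisted action on the target recorded immediately after \eqref{eq:component}.

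For the interpolation identity, the key algebraic observation is that cup product against the reduction of the highest-weight vector of $\sD^{a,b}\{r,s\}$ on $\cY_T$ splits as cup-with-$d^{a,b}$ followed by cup with the character $\chi_3^r\chi_4^s\bmod p^T$. Since $\chi_3^r\chi_4^s$ factors through the similitude $\mu$, the reciprocity description in Section 8.2 combined with \eqref{eq:component} identifies the character-cup operation, after pushforward along $\pi_T$, with the Galois-theoretic moment map $\mom^{[r,s]}_T$ on $\cY_{\Ih}\times_R R_{\fm p^T}$. Substituting the $\cZ$-side formula of \cref{thm:twistcompat} for $\mom^{[a,b,r,s]}_{G,t}({}_c\cZ_{\fm p^\infty})$ and pushing forward through $\pi_t$ then yields
\[ \mom^{[r,s]}_t\bigl({}_c\Xi^{[a,b]}_{\fm p^\infty}\bigr) \;=\; \sigma_p^t(\mathcal{U}'_p)^{-t}e'_{\ord}\cdot{}_c\Xi^{[a,b,r,s]}_{\et,\fm,p^t}, \]
the extra $\sigma_p^t$ being precisely what absorbs the component-group twist.

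The main obstacle lies in the preceding paragraph: carefully justifying that cup with $\chi_3^r\chi_4^s\bmod p^T$ on $\cY_T$ pushes forward to cup with the Galois character $\sigma\mapsto\chi_3^r\chi_4^s(\Art_E^{-1}(\sigma))$ on $\cY_{\Ih}\times_R R_{\fm p^T}$. This is a formal but delicate bookkeeping task, relying on the component-group description \eqref{eq:component}, the Hecke--Artin intertwining rule stated there, the $\mathcal{U}'_p$-versus-$\sigma_p^{-1}\mathcal{U}'_p$ discrepancy between \cref{thm:wildnormint} and \cref{thm:wildnorm}, and the compatibility of the ordinary idempotent $e'_{\ord}$ with pushforward. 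Since all these pieces are already established earlier in the paper, the argument ought to reduce to assembling them correctly in a single diagram without requiring any substantively new input.
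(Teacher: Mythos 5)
Your proposal is correct and matches the paper's (extremely terse) argument, which is simply ``Immediate from the previous theorem'': you take ${}_c\Xi^{[a,b]}_{\fm p^\infty}$ to be the $\Xi$-analogue of the inverse-limit class from \eqref{eq:defZinfty} at $(a,b,0,0)$, observe that cup with $d^{a,b}_T\{r,s\}$ factors as cup with $d^{a,b}_T$ followed by cup with the abelianisation character $\chi_3^r\chi_4^s$, identify the latter with $\mom^{[r,s]}_t$ via \eqref{eq:component}, and push forward the formula of \cref{thm:twistcompat} with the $\sigma_p^t$ arising from the $\mathcal{U}_p'$-versus-$\sigma_p^{-1}\mathcal{U}_p'$ discrepancy in \cref{thm:wildnorm}. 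The only wrinkle worth tightening is the passing remark that $\chi_3^r\chi_4^s$ ``factors through the similitude $\mu$'': more precisely $\chi_3=\bar\mu$ and $\chi_4=\mu$ as characters of $G/[G,G]\cong\operatorname{Res}_{E/\QQ}\GG_m$, so the character factors through the abelianisation map $\mu$, which is exactly what \eqref{eq:component} converts into a Galois twist.
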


  \begin{proof}
   Immediate from the previous theorem.
  \end{proof}

 \subsection{Cohomological triviality}

  \begin{lemma}
   We have
   \[ \varprojlim_t H^0\left(R[\fm p^t], H^3_{\et}(Y_{\Ih, \overline{\QQ}},  \sD^{a,b}_{\cO_{E, \fp}}(2))\right) = 0.\]
  \end{lemma}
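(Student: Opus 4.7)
The argument I have in mind is purely formal. Setting $M := H^3_{\et}(Y_{\Ih, \overline{\QQ}}, \sD^{a,b}_{\cO_{E, \fp}}(2))$, this is a finitely generated $\cO_{E, \fp}$-module with continuous action of $G_E$ unramified outside a finite set, and we may identify $H^0(R[\fm p^t], M) = M^{G_t}$ for $G_t := \Gal(\overline{\QQ}/E[\fm p^t])$. The transition maps in the inverse limit are then the corestriction maps $\operatorname{cor}_t : M^{G_{t+1}} \to M^{G_t}$.

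The key input from class field theory is that $E[\fm p^\infty]/E[\fm]$ contains a $\ZZ_p^n$-subextension with $n \geq 1$, so that for all sufficiently large $t$ the index $[G_t : G_{t+1}]$ is a positive power of $p$. Moreover each $G_t$ is normal in $G_E$ (as the kernel of a surjection onto the abelian group $\Gal(E[\fm p^t]/E)$), and the quotients $G_t/G_{t+1}$ are finite abelian $p$-groups for $t$ large.

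To compute the corestriction, I would decompose $M^{G_{s+1}} \otimes_{\cO_{E, \fp}} \overline{E_\fp}$ into eigenspaces for the semisimple action of the finite abelian group $G_s/G_{s+1}$. On a $\chi$-eigenline, $\operatorname{cor}_s$ acts as $\sum_{\sigma \in G_s/G_{s+1}} \chi(\sigma)$, which equals $[G_s : G_{s+1}]$ when $\chi$ is trivial and $0$ otherwise. Consequently, the image of $\operatorname{cor}_s$ in the free quotient $M^{G_s}/(M^{G_s})_{\mathrm{tors}}$ is divisible by $[G_s : G_{s+1}]$. Iterating, any coherent sequence $(x_t) \in \varprojlim_t M^{G_t}$ has the image of $x_s$ in $M^{G_s}/(M^{G_s})_{\mathrm{tors}}$ divisible by $[G_s : G_{s+n}]$ for every $n \geq 1$, hence vanishing (since $\bigcap_n p^n M^{G_s} = 0$ for a finitely generated module over the discrete valuation ring $\cO_{E, \fp}$). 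The remaining torsion contribution is killed once $[G_t : G_{t+1}]$ exceeds the order of the finite torsion submodule $M_{\mathrm{tors}}$, as then the corestrictions on torsion are identically zero.

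The main obstacle is really only book-keeping: ensuring that the eigenspace decomposition, carried out after extending scalars to $\overline{E_\fp}$, yields the claimed integral divisibility statement. No deep input---no weight arguments, no automorphic considerations, no Hodge theory---is needed; the vanishing is a purely structural consequence of working over a finitely generated module with $p$-adically contractive corestriction maps.
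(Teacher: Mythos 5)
Your overall strategy — reduce to the corestriction maps, show they are eventually $p$-adically contracting, and conclude via $\bigcap_k p^k N = 0$ — is the right idea and matches the paper's one-sentence proof (``finitely generated over $\cO_{E,\fp}$'' plus ``positive-dimensional $p$-adic Lie extension''). However, the key divisibility claim has a genuine gap. The eigenspace projection $M^{G_{s+1}} \otimes \overline{E_\fp} \to V_{\mathrm{triv}} = M^{G_s} \otimes \overline{E_\fp}$ does \emph{not} preserve the integral lattice, so the factorisation $\operatorname{cor}_s = [G_s:G_{s+1}] \cdot \mathrm{pr}_{\mathrm{triv}}$ does not imply that $\operatorname{cor}_s(M^{G_{s+1}})$ lies in $[G_s:G_{s+1}]\cdot\bigl(M^{G_s}/\mathrm{tors}\bigr)$. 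Concretely: if $G_s/G_{s+1}\cong C_p$ acts on $M^{G_{s+1}} = \Zp[C_p]$ by translation (with $M^{G_s} = \Zp\cdot\nu$, $\nu=\sum_\sigma\sigma$), then $\operatorname{cor}_s(1)=\nu$ generates $M^{G_s}$ and is not divisible by $p$. The same issue affects your torsion discussion, where the statement ``corestrictions on torsion are identically zero'' is only correct if one already knows corestriction acts as multiplication by the index on the torsion, which requires $G_t$ to act trivially on it.

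The fix is more elementary than the eigenspace detour, and it handles free and torsion parts at once. Since $M$ is a finitely generated module over the Noetherian ring $\cO_{E,\fp}$ and the $G_E$-action is continuous, the increasing chain $M^{G_1}\subseteq M^{G_2}\subseteq\cdots$ stabilises, and its union is $M^{G_\infty}$ where $G_\infty=\Gal(\overline{\QQ}/E[\fm p^\infty])$. Hence for $t\gg 0$ we have $M^{G_t}=M^{G_{t+1}}=M^{G_\infty}=:N$. For such $t$, every element of $N=M^{G_{t+1}}$ is already $G_t$-invariant, so $\operatorname{cor}_t:N\to N$ is \emph{literally} multiplication by $[G_t:G_{t+1}]$ — no eigenspace decomposition needed, and the integrality is automatic. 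Your observation that these indices are positive powers of $p$ for $t$ large (since $E[\fm p^\infty]/E$ is a positive-dimensional $p$-adic Lie extension) then forces any coherent sequence to satisfy $x_t\in\bigcap_k p^k N=0$, which is the desired vanishing.
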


  \begin{proof}
   This follows from the fact that $H^3_{\et}(Y_{\Ih, \overline{\QQ}},  \sD^{a,b}_{\cO_{E, \fp}}(2))$ is a finitely-generated $\cO_{E, \fp}$-module, and $E[\fm p^\infty] / E$ is a positive-dimensional $p$-adic Lie extension.
  \end{proof}

  It follows that there is a map
  \[
   H^3_{\et, \Iw}\left(\cY_{\Ih} \times_R R[\fm p^\infty],  \sD^{a,b}_{\cO_{E, \fp}}(2)\right) \to H^1_{\Iw}\left(R[\fm p^\infty], H^2_{\et}(Y_{\Ih, \overline{\QQ}},  \sD^{a,b}_{\cO_{E, \fp}}(2))\right),
  \]
  and we may regard $ {}_c \Xi_{\fm p^\infty}^{[a,b]}$ as an element of $H^1_{\Iw}\left(R[\fm p^\infty], H^2_{\et}(Y_{\Ih, \overline{\QQ}},  \sD^{a,b}_{\cO_{E, \fp}}(2))\right)$ via this map. We can freely replace $R[\fm p^\infty]$ with $E[\fm p^\infty]$, since any class in the Iwasawa $H^1$ is automatically unramified outside the primes above $p$ (see e.g.~\cite[Corollary B.3.4]{rubin00}).

\section{Mapping to Galois cohomology}
\label{sect:galcoh}

 We now show that the classes ${}_c \Xi_{\fm p^\infty}^{[a, b]}$, projected to a specific Hecke eigenspace, form an ``Euler system'' in the usual sense for the Galois representation associated to a RAECSDC automorphic representation of $\GL_3 / E$. The arguments in this section are very closely parallel to \cite[\S 10.1--10.5]{LSZ17} in the $\operatorname{GSp}_4$ case.

 \begin{remark}
  In this section we won't use the classes ${}_c \cZ_{\fm p^\infty}$. However, these classes can be used to show that the constructions below are compatible with variation in Hida-type families; this will be pursued further elsewhere.
 \end{remark}

 \subsection{Automorphic Galois representations} We recall some results on automorphic Galois representations of $\GL_3 / E$, following \cite{BLGHT11}. Let $\Pi$ be a RAECSDC automorphic representation of $\GL_3 / E$; and for each prime $w$ of $E$ such that $\Pi_w$ is unramified, let $P_w(\Pi, X) \in \CC[X]$ denote the polynomial such that
  \[ P_w(\Pi, \Nm(w)^{-s})^{-1} = L(\Pi_w, s).\]

  \begin{propqed}[{\cite[Theorem 1.2]{BLGHT11}}]
   The coefficients of the polynomials $P_w(\Pi, X)$ lie in a finite extension $F_{\Pi}$ of $E$ independent of $w$; and for each place $\fP \mid p$ of $F_{\Pi}$, there is a 3-dimensional $F_{\Pi, \fP}$-linear representation $V_\fP(\Pi)$ of $\Gal(\bar{E}/E)$, uniquely determined up to semisimplification, with the property that if $w$ is a prime not dividing $p$ for which $\Pi_w$ is unramified, we have
   \[ \det( 1 - X \operatorname{Frob}_w^{-1} : V_\fP(\Pi)) = P_w(\Pi, q X). \qedhere\]
  \end{propqed}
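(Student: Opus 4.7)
The plan is to reduce this to the existence theory for $\ell$-adic Galois representations attached to RAECSDC automorphic representations of $\GL_n$ over CM fields, which is precisely the content of [BLGHT11, Theorem 1.2] (building on earlier work of Harris--Taylor, Taylor--Yoshida, Shin, Chenevier--Harris, and others). So the proof is really a verification that the formulation here matches the one in the cited reference.

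First, I would dispatch the rationality statement. Regular algebraicity of $\Pi_\infty$ forces its infinitesimal character to be integral, which via Clozel's purity lemma implies that the Satake parameters at each unramified finite place $w$ are algebraic numbers of bounded absolute value. The essential conjugate self-duality $\Pi^c \cong \Pi^\vee \otimes (\omega \circ \operatorname{N}_{E/\QQ})$ links the Satake parameters at $w$ and $\bar w$ and pins down a common field of definition; together with the Fontaine--Mazur style finiteness of the ramification behaviour of $\Pi$, this yields a single number field $F_\Pi/E$ containing the coefficients of \emph{every} $P_w(\Pi, X)$.

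Next, for the existence of $V_\fP(\Pi)$, I would first apply \cref{thm:autdescent} to descend $\Pi$ (twisted by a suitable character) to a cohomological cuspidal representation $\pi$ of $G$, and then extract the Galois representation from the $\pi_{\mathrm{f}}$-isotypic component of $H^\bullet_{\et}(Y_G \times_E \overline{E}, \sV^{a,b}_{\cO_{E,\fP}})$ for an appropriate coefficient system. Uniqueness up to semisimplification is then immediate from Chebotarev applied to the dense set of unramified primes. Local-global compatibility at unramified $w \nmid p$ -- the matching of $\det(1 - X \operatorname{Frob}_w^{-1} \mid V_\fP(\Pi))$ with $P_w(\Pi, qX)$ -- is a comparison between Hecke eigenvalues (computed via the Satake isomorphism and the base-change map $\operatorname{BC}_w$) and traces of Frobenius on étale cohomology, proved by the Langlands--Kottwitz method of counting points on the Shimura variety modulo~$w$.

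The main obstacle is the last step: isolating the $\Pi$-contribution inside $H^\bullet_{\et}(Y_G)$ requires control over the endoscopic and CAP contributions to the cohomology, which in turn relies on Arthur's multiplicity formula for $G_0$ as established by Mok, and on the stable trace formula comparisons underlying it. All of this, along with the comparison of normalisations (in particular the shift $X \leftrightarrow qX$, which encodes our Tate twist conventions), is precisely what the cited theorem of Barnet-Lamb--Geraghty--Harris--Taylor packages; so once the dictionary between $(\Pi, \omega)$ and our descended $\pi$ is set up as in \cref{thm:autdescent}, the statement follows by direct invocation.
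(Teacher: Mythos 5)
The paper gives no proof beyond the citation in the header: this is a \texttt{propqed}, meaning the result is quoted essentially verbatim from \cite[Theorem 1.2]{BLGHT11}, and your conclusion that it ``follows by direct invocation'' is exactly the paper's stance.

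However, the expository sketch you interpose is not an accurate account of how \cite{BLGHT11} establish this, and would misrepresent the logical flow of the paper. Barnet-Lamb--Geraghty--Harris--Taylor construct $V_\fP(\Pi)$ working directly with $\Pi$ on $\GL_3/E$, via cohomology of unitary Shimura varieties of auxiliary rank and signature, solvable base change, and potential automorphy; the descent of $\Pi$ to $G = \operatorname{GU}(2,1)$ via \cref{thm:autdescent} plays no role there. Moreover, the realisation of $V_\fP(\Pi) \otimes \pif$ inside $H^2_{\et,c}$ of the Picard modular surface $Y_G$, which your sketch uses as the source of the Galois representation, is a \emph{separate} theorem appearing in the paper immediately after this proposition; its proof (via the computations of \cite{ZFPMS} and \cite{rogawski92}) characterises a summand of intersection cohomology by Frobenius traces and then identifies it, by Chebotarev, with the representation $V_\fP(\Pi)$ that has \emph{already} been constructed abstractly by \cite{BLGHT11}. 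So the two results sit in the opposite logical order from what your sketch suggests: the proposition at hand is an input to the $H^2_{\et}(Y_G)$ theorem, not a consequence of it. Finally, the rationality of $F_\Pi$ is not usually deduced from Clozel's purity lemma plus ramification finiteness in the way you describe; what underlies the cited theorem is Clozel's rationality result for regular algebraic cuspidal representations, which supplies a number field over which $\Pi_{\mathrm{f}}$ is defined, and hence a single field containing all the coefficients of $P_w(\Pi,X)$ at once. None of this affects the correctness of the final invocation of \cite[Theorem 1.2]{BLGHT11}, but the interlude should be removed or corrected so as not to conflate this proposition with the later Shimura-variety realisation.
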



  \begin{remark}
   If we fix $\Pi$ and let $p$ vary, then \cite[Theorem 2]{xia19} shows that there is a density 1 set of rational primes $p$ such that $V_{\fP}(\Pi)$ is irreducible for all $\fP\mid p$ (and hence unique up to isomorphism).
  \end{remark}

  \subsubsection*{Weights} Since $\Pi$ is regular algebraic, it has a well-defined \emph{weight} at each embedding $\tau: E \into F_{\Pi}$, which is a triple of integers $a_{\tau,1} \ge a_{\tau ,2} \ge a_{\tau ,3}$ (see \cite[\S 1]{BLGHT11}). Since $\Pi^c$ is a twist of $\Pi^\vee$, $a_{\tau, i} + a_{\bar\tau, 4-i}$ is independent of $i$. Thus, up to twisting by an algebraic Gr\"ossencharacter if necessary, we can (and do) assume that the weight of $\Pi$ is $(a+b, b, 0)$ at the identity embedding, and $(a+b, a, 0)$ for the conjugate embedding, for some integers $a, b \ge 0$.

  \begin{proposition}
   The representation $V_{\fP}(\Pi)$ is de Rham at the primes above $p$, and has Hodge numbers\footnote{Negatives of Hodge--Tate weights, so the cyclotomic character has Hodge number $-1$.} $\{0, 1+b, 2 + a + b\}$ at the identity embedding $E \into F_{\fP}$, and $\{0, 1 + a, 2 + a + b\}$ at the conjugate embedding. Moreover, the coefficients of $P_w(\Pi, qX)$ are algebraic integers for all $w$.
  \end{proposition}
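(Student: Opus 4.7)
The plan is to treat the three assertions (de Rham property, Hodge numbers, integrality of Satake polynomials) separately, all by reducing to the explicit construction of $V_{\fP}(\Pi)$ via étale cohomology of unitary Shimura varieties. By \cref{thm:autdescent}, $\Pi$ descends via base change to a cohomological cuspidal automorphic representation $\pi$ of $G$ (more precisely, of $G_0$, extended to $G$), whose archimedean component contributes to cohomology with coefficients in $D^{a,b}$ in degree equal to the complex dimension $2$ of the Shimura variety $Y_G$. The standard construction (as used in \cite{BLGHT11}, building on work of Shin, Chenevier--Harris and others) realises $V_{\fP}(\Pi)$, up to an explicit character twist, as a sub-quotient of the $\pi_f$-isotypic component of the étale cohomology $H^2_{\et}(Y_G \times_E \overline{E}, \sD^{a,b}_{\cO_{E,\fP}})$.

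First I would handle the de Rham property. Since $\sD^{a,b}_{\cO_{E,\fP}}$ is the $p$-adic sheaf attached to an algebraic representation of $G$, it arises by pullback from a Kuga--Sato type family of abelian varieties over $Y_G$; hence its étale cohomology is the cohomology of a smooth (not necessarily proper) algebraic variety with constant coefficients, which is de Rham by Faltings' theorem (or its extensions by Scholze, Tsuji). The Hodge numbers can then be computed from the Hodge decomposition of the automorphic cohomology: by Matsushima's formula and the Vogan--Zuckerman classification, the $\pi_f$-isotypic part of $H^2(Y_G(\CC), \sD^{a,b}_{\CC})$ has Hodge types determined by the weight of the coefficient system and the signature $(2,1)$. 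The standard recipe for RAECSDC representations of $\GL_n$ translates the weight $(a_{\tau,1},\ldots,a_{\tau,n})$ at an embedding $\tau$ into Hodge--Tate weights $\{-(a_{\tau,i} + n-i)\}_i$. Plugging in $(a+b, b, 0)$ at the identity embedding and $(a+b, a, 0)$ at its conjugate yields exactly the claimed Hodge numbers, after accounting for the Tate twist by $(2)$ built into the construction.

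For integrality, I would observe that for any prime $w$ of $E$ at which $\Pi_w$ is unramified and $w \nmid p$, the Shimura variety $Y_G$ has good reduction at the rational prime below $w$, and the relative motive $\sD^{a,b}$ extends to the integral model. Consequently Frobenius at $w$ acts on the étale cohomology with $\cO_{F_\Pi,\fP}$-coefficients, so its characteristic polynomial lies in $\cO_{F_\Pi}[X]$; this polynomial, up to the normalisation factor $q$ that accounts for the Tate twist, is precisely $P_w(\Pi, qX)$. The argument at primes above $p$ reduces to the same statement at another prime $p'$ by varying $\fP$, since the polynomial $P_w(\Pi, X)$ itself does not depend on the choice of $\fP$.

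The main obstacle is purely bookkeeping: aligning the sign and shift conventions between the ``weights'' of $\Pi$ in the sense of \cite{BLGHT11}, the highest weights $(a,b)$ of the coefficient system $D^{a,b}$ of $G$, the Hodge--Tate weights of $V_{\fP}(\Pi)$, and the chosen Tate twists. Careful comparison with the explicit archimedean $L$-packet computations in \cite{mok15} and the infinitesimal character calculations underlying Theorem 2.5.1 of \emph{op.cit.}\ pins down the offsets; after this, all three statements follow formally from the results cited above.
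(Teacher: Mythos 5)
The paper's proof of this proposition is a single sentence: it cites part (4) of \cite[Theorem 1.2]{BLGHT11}, which directly asserts the de Rham property, the Hodge--Tate weights (via the standard recipe $\mathrm{HT}_\tau(V_\fP(\Pi)) = \{-(a_{\tau,i}+n-i)\}_i$, which indeed yields the claimed $\{0, 1+b, 2+a+b\}$ and $\{0, 1+a, 2+a+b\}$), and the integrality of the coefficients of $P_w(\Pi, qX)$. You instead propose to re-derive these facts from the realisation of $V_\fP(\Pi)$ in the cohomology of the Picard modular surface $Y_G$.

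This is a genuinely different route, and it has a logical gap in the present context. The representation $V_\fP(\Pi)$ is \emph{defined} via the construction in \cite{BLGHT11}, which builds the Galois representation out of cohomology of \emph{compact} unitary Shimura varieties (attached to definite unitary groups in higher rank, via Shin's work and Chenevier--Harris), not out of the open Picard modular surface. The identification of $V_\fP(\Pi)$ with a piece of $H^2_{\et}$ of $Y_G$ is a separate theorem, proved in the paper only later (\S 10.2, citing the Langlands--Ramakrishnan volume), and it involves non-trivial work to pass between $H^2_c$ of the open surface and intersection cohomology of its Baily--Borel compactification. So deducing local properties of $V_\fP(\Pi)$ from the Picard modular realisation presupposes a comparison that is not yet available at this point in the argument, and which is itself harder than the statement you are trying to prove. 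There is also a secondary issue: $Y_G$ is not proper, so the ``de Rham by Faltings'' step needs the more refined machinery (intersection cohomology of a smooth proper compactification, or the $p$-adic Hodge theory of open varieties), rather than a direct appeal to the smooth proper case. Your Hodge-number bookkeeping with the recipe $\{-(a_{\tau,i}+n-i)\}$ is correct and matches the statement, but the cleanest and intended proof is simply to quote \cite[Theorem 1.2(4)]{BLGHT11}, which was proved precisely to package all three assertions (de Rham, Hodge--Tate weights, and integrality of Frobenius polynomials) in a usable form.
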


  \begin{proof} This follows from part (4) of \cite[Theorem 1.2]{BLGHT11}.
  \end{proof}

  \subsubsection*{Ordinarity} Let $\fp \mid p$ be a prime of $E$ such that $\Pi_{\fp}$ is unramified. Then $V_{\fP}(\Pi)|_{\Gal(\overline{E}_{\fp}/E_{\fp})}$ is crystalline, and the eigenvalues of the linear map $\varphi^{[E_{\fp}: \Qp]}$ on $\mathbf{D}_{\mathrm{cris}}\left(V_{\fP}(\Pi)|_{\Gal(\overline{E}_{\fp}/E_{\fp})}\right)$ are the reciprocal roots of $P_{\fp}(\Pi, qX)$, by \cite[Theorem 1.2(3)]{BLGHT11}.

  \begin{definition}\label{def:ordinary}
   We say $\Pi$ is \emph{ordinary} at the prime $\fp \mid p$ (with respect to the prime $\fP \mid p$ of $F_{\Pi}$) if the polynomial $P_\fp(\Pi, q X)$ has a factor $(1 - \alpha_{\fp} X)$ with $v_{\fP}(\alpha_{\fp}) = 0$.
  \end{definition}

  A standard argument using $p$-adic Hodge theory (see \cite[Lemma 2.2]{BLGHT11}) shows that $\Pi$ is ordinary at $\fp$ if and only if $V_{\fP}(\Pi)$ has a 1-dimensional subspace invariant under $\Gal(\overline{E}_{\fp} / E_{\fp})$ with the Galois group acting on this subspace by an unramified character. If this holds, then dually $V_{\fP}(\Pi)^*$ has a codimension 1 subspace $\mathcal{F}^1_{\fp} V_{\fP}(\Pi)^*$, such that $V_{\fP}(\Pi)^* / \mathcal{F}^1_{\fp}$ is unramified, with arithmetic Frobenius $\operatorname{Frob}_{\fp}$ acting on this quotient by $\alpha_{\fp}$.

  \begin{remark}
   Since $\Pi$ is conjugate self-dual up to a twist, one checks that $V_{\fP}(\Pi)$ has a 1-dimensional invariant subspace at $\fp$ if and only if it has a 2-dimensional invariant subspace at $\bar{\fp}$. So if $\Pi$ is ordinary at all the primes above $p$, then $V_{\fP}(\Pi)$ and its dual preserve a full flag of invariant subspaces at each prime above $p$. (We will not use this fact directly in the present paper, but it may be relevant to future work relating the Euler system constructed here to Selmer groups and $p$-adic $L$-functions.)
  \end{remark}

 \subsection{Realisation via Shimura varieties}

  We add the further assumption that $V_{\fP}(\Pi)$ be irreducible. We now realise this representation in the \'etale cohomology (with compact support) of the infinite-level Shimura variety $Y_G = \varprojlim_KY_G(K)$. Let $\pi$ be the automorphic representation of $G$ corresponding to $\Pi$ (and some choice of $\omega$ such that $(\Pi, \omega)$ is RAECSDC) as in Theorem \ref{thm:autdescent}.

  \begin{theorem}
   The module $H^2_{\et, c}(Y_{G, \bar{\QQ}}, \sV^{a,b}_{E_\fp}) \otimes F_{\fP}$, considered as a representation of $\Gal(\overline{E} / E) \otimes G(\AA_{\mathrm{f}})$, has a direct summand isomorphic to $V_{\fP}(\Pi) \otimes \pif$.
  \end{theorem}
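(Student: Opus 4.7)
My plan is to use the standard automorphic decomposition of the cohomology of a Shimura variety, combined with the Mok/Rogawski results on automorphic Galois representations for $\mathrm{U}(2,1)$, to isolate the desired summand.

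First, I would pass from compactly supported \'etale cohomology to $L^2$-cohomology. Since $Y_G$ is a Picard modular surface (complex dimension 2), Zucker's conjecture (proved by Looijenga, Saper--Stern, Looijenga--Rapoport) identifies $L^2$-cohomology with the intersection cohomology of the Baily--Borel compactification, and this sits naturally between $H^2_c$ and $H^2$. The $\pi$-isotypic piece of intersection cohomology comes from $H^2_c$ because $\pi$ is cuspidal (so it does not appear in the boundary Eisenstein contributions), hence it suffices to locate the desired summand inside intersection cohomology and then lift to $H^2_c$. Matsushima's formula expresses the $\pi_f$-isotypic part of $L^2$-cohomology as
\[
 H^2(\mathfrak{g}, K_\infty; \pi_\infty \otimes V^{a,b}) \otimes \pi_f^{\oplus m(\pi)},
\]
where $m(\pi)$ is the multiplicity of $\pi$ in the discrete spectrum of $G$.

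Next I would use \cref{thm:autdescent}: since $\pi$ is the generic member of an $L$-packet which appears with multiplicity one in the discrete spectrum, and $\pi_\infty$ is cohomological with respect to $V^{a,b}$, the factor $H^2(\mathfrak{g}, K_\infty; \pi_\infty \otimes V^{a,b})$ is non-zero (and one-dimensional, since for generic discrete series members of cohomological $L$-packets for $\mathrm{U}(2,1)$, the $(\mathfrak{g}, K)$-cohomology is concentrated in the middle degree with dimension one). Thus the $\pi_f$-isotypic component of $H^2_c(Y_{G,\bar{E}}, \sV^{a,b}_{E_\fp}) \otimes F_\fP$, as an abstract $G(\Af)$-module, is isomorphic to $\pi_f$ (possibly with a Galois twist by some finite-dimensional $F_\fP$-representation $\mathcal{M}_\pi$).

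It remains to identify $\mathcal{M}_\pi$ with $V_\fP(\Pi)$. For this I would compare Frobenius traces at unramified primes using the Kottwitz/Langlands--Kottwitz formula: at primes $\ell \nmid \Sigma$ split in $E$, the trace of $\operatorname{Frob}_w^n$ on $\mathcal{M}_\pi$ against a spherical Hecke operator is computed by the twisted trace formula for $G$, and the base-change identity of \cref{thm:autdescent} shows it matches the corresponding quantity for $\Pi$, hence $\det(1 - X \operatorname{Frob}_w^{-1} \mid \mathcal{M}_\pi) = P_w(\Pi, qX) = \det(1 - X \operatorname{Frob}_w^{-1} \mid V_\fP(\Pi))$. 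Since $V_\fP(\Pi)$ is assumed irreducible, Chebotarev plus the Brauer--Nesbitt theorem force $\mathcal{M}_\pi \cong V_\fP(\Pi)$ as $F_\fP[\Gal(\overline{E}/E)]$-modules.

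The main obstacle is dealing with the passage between $L^2$-cohomology and compactly supported cohomology carefully enough to get a genuine direct summand (as opposed to a subquotient), and ruling out endoscopic contamination. The former is handled by cuspidality of $\pi$ (boundary cohomology cannot contribute to the cuspidal $\pi_f$-isotypic component) together with the semisimplicity of the Hecke action on $H^*_c \otimes F_\fP$ after localising at the maximal ideal attached to $\pi_f$; the latter is handled by our assumption that $\pi$ is non-endoscopic, i.e.\ $\mathrm{BC}(\pi) = \Pi$ is cuspidal on $\GL_3/E$, so the $L$-packet of $\pi_\infty$ contains no holomorphic/antiholomorphic members coming from endoscopic transfer that might pollute the Galois representation.
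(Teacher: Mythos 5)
Your overall strategy (Zucker conjecture to pass to intersection cohomology of the Baily--Borel compactification, cuspidality to compare with $H^2_c$, then comparison of Frobenius/Hecke eigensystems) is exactly the content of the ZFPMS computation that the paper cites directly, so the skeleton is right. But there is a concrete error in the dimension count, and the final paragraph about endoscopy has the logic backwards.

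You compute only the $(\mathfrak{g},K)$-cohomology of the \emph{generic} discrete series $\pi_\infty$, which is one-dimensional in degree~2. From this you conclude that the $\pi_f$-isotypic component of $H^2_c\otimes F_\fP$ is ``isomorphic to $\pi_f$ (possibly with a Galois twist $\mathcal{M}_\pi$),'' which forces $\dim\mathcal{M}_\pi=1$. But the target Galois module $V_\fP(\Pi)$ is \emph{three}-dimensional, so your identification $\mathcal{M}_\pi\cong V_\fP(\Pi)$ would be impossible. The resolution is that Matsushima's formula runs over \emph{all} discrete automorphic $\sigma$ with $\sigma_f\cong\pi_f$, and since $\Pi$ is cuspidal the global $L$-packet is \emph{stable}: all three members of the archimedean discrete-series packet (generic of Hodge type $(1,1)$, holomorphic of type $(2,0)$, antiholomorphic of type $(0,2)$) occur automorphically with multiplicity one and the same finite part $\pi_f$. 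Each contributes one dimension, giving $\dim\mathcal{M}_\pi=3$. Your closing claim that non-endoscopy means ``the $L$-packet of $\pi_\infty$ contains no holomorphic/antiholomorphic members\dots that might pollute the Galois representation'' is the opposite of the truth: those members are precisely what supplies the Hodge types $(2,0)$ and $(0,2)$, and hence two of the three Hodge--Tate weights of $V_\fP(\Pi)$. Non-endoscopy guarantees that they all pair with $\pi_f$ (as opposed to the endoscopic case, where only certain combinations of packet members are automorphic), so the isotypic component is a clean $\pi_f\otimes(\text{3-dim'l Galois module})$.

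The remainder of your argument (Chebotarev plus Brauer--Nesbitt to pin down the Galois module from its Frobenius traces via Langlands--Kottwitz) is sound once this multiplicity is corrected, and the passage from $I\!H^2$ to $H^2_c$ via cuspidality is the same point the paper makes when observing that the kernel and cokernel of $H^2_{\et,c}\to I\!H^2_\et$ carry only non-cuspidal Hecke eigensystems.
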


  \begin{proof}
   The computation of the intersection cohomology ${I\!H^2_{\et}}$ of the Baily--Borel compactification of the Picard modular surface is the main result of the volume \cite{ZFPMS}; see in particular \S 4.3 of \cite{rogawski92} for an overview. This computation shows that the intersection cohomology has a direct summand isomorphic to $V_{\fP}(\Pi) \otimes \pif$. There is a natural map from $H^2_{\et, c}$ of the open modular surface to ${I\!H^2_{\et}}$ of the compactification; and the Hecke eigensystems appearing in the kernel and cokernel of this map are associated to non-cuspidal automorphic representations of $\GL_3 / E$. So the map is an isomorphism on the generalised eigenspace for the spherical Hecke algebra associated to $\pif$, which gives the result.
  \end{proof}

  We can thus interpret any $v \in \pif$ as a homomorphism of Galois representations $V_{\fP}(\Pi) \to \varinjlim_{K} H^2_{\et, c}$, or dually as a homomorphism
  \[ \operatorname{pr}_{\Pi, v}:  H^2_{\et}(Y_{G,\bar{\QQ}}, \sD^{a,b}_{E_\fp}(2)) \to V_{\fP}(\Pi)^*, \]
  which we can consider as a ``modular parametrisation'' of the Galois representation $V_{\fP}(\Pi)^*$. This homomorphism factors through projection to $Y_G(K)$ for any level $K$ which fixes $v$.

 \subsection{An Euler system for $V_{\fP}(\Pi)$}

  We now choose the following data:
  \begin{itemize}
  \item A finite $S$ of primes, an open compact $K_{G, S} \subseteq G(\QQ_S)$, and an element $\delta_S \in \cI(G_S / K_{G, S}, \ZZ)$, as in \cref{sect:localdata};
  \item A non-zero vector $v \in \pif$ stable under the group $K_{G, S} \cdot \Ih_p \cdot\, G(\hat{\ZZ}^{S \cup \{p\}})$.
  \item An integer $c$ coprime to $6pS$.
  \end{itemize}

  We suppose that $\Pi$ is ordinary above $p$, and we let $\alpha_p = \prod_{\fp \mid p} \alpha_{\fp}$ where $\alpha_{\fp}$ is as in Definition \ref{def:ordinary}. Then the generalised $\mathcal{U}_p$-eigenspace of $(\pi_p)^{\Ih_p}$ with eigenvalue $\alpha_p$ is 1-dimensional, where $\mathcal{U}_p$ denotes the double-coset operator $[\Ih_p \tau \Ih_p]$ acting on the $\Ih_p$-invariants (this is easily checked from the explicit formulae for Whittaker functions in \S \ref{sect:zeta}; compare \cite[\S 3.5.5]{LSZ17} in the $\operatorname{GSp}_4$ case). We shall choose $v$ to lie in this eigenspace. Then the projection map $\operatorname{pr}_{\Pi, v}$ factors through the $\mathcal{U}_p' = \alpha_p$ eigenspace, and hence through the ordinary idempotent $e'_p$ of \cref{sect:wildnorm}.

  \begin{theorem}[Theorem B]
   \label{thm:ESpi}
   There exists a lattice $T_{\fP}(\Pi)^* \subset V_{\fP}(\Pi)^*$, and a collection of classes
   \[ \mathbf{c}_{\fm}^{\Pi} \in H^1_{\Iw}\left(E[\fm p^\infty], T_{\fP}(\Pi)^*\right) \]
   for all $\fm \in \cR$ coprime to $pc$, with the following properties:
   \begin{enumerate}[(i)]
    \item For $\fm \mid \fn$ we have
    \[
     \operatorname{norm}_{\fm}^{\fn}\left(\mathbf{c}_{\fn}^{\Pi}\right) = \Big(\prod_{w \mid \frac{\fn}{\fm}} P_w(\Pi, \sigma_w^{-1})\Big) \mathbf{c}_{\fm}^{\Pi}.
    \]

    \item For any Gr\"ossencharacter $\eta$ of conductor dividing $\fm p^\infty$ and infinity-type $(s,r)$ [sic], with $0 \le r \le a$ and $0 \le s \le b$, the image of $\mathbf{c}_{\fn}^{\Pi}$ in $H^1\left(E[\fm p^\infty], V_{\fP}(\Pi)^* \otimes \eta^{-1}\right)$ is the \'etale realisation of a motivic cohomology class.

    \item For all $\fp \mid p$, the projection of $\loc_\fp(\mathbf{c}_{\fm}^{\Pi})$ to the group $H^1_{\Iw}\left(E_{\fp} \otimes_E E[\fm p^\infty], V_{\fP}(\Pi)^* / \mathcal{F}^1_{\fp}\right)$ is zero.

   \end{enumerate}
  \end{theorem}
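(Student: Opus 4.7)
The plan is to obtain $\mathbf{c}_\fm^\Pi$ by pushing the integral ``big'' Iwasawa class ${}_c\Xi_{\fm p^\infty}^{[a,b]}$ of \cref{cor:twistcompatXi} through the modular parametrisation $\operatorname{pr}_{\Pi,v}$ attached to our chosen ordinary test vector $v \in \pif$. Via the cohomological-triviality lemma at the end of \cref{sect:moment}, we can view ${}_c\Xi_{\fm p^\infty}^{[a,b]}$ as an element of $H^1_{\Iw}(E[\fm p^\infty], H^2_{\et}(Y_{\Ih,\overline{\QQ}}, \sD^{a,b}_{\cO_{E,\fp}}(2)))$; I would define $\mathbf{c}_\fm^\Pi$ as its image under the coefficient map induced by $\operatorname{pr}_{\Pi,v}$, and define $T_\fP(\Pi)^*$ to be the $\cO_{E,\fp}$-saturation inside $V_\fP(\Pi)^*$ of the image of the integral \'etale cohomology.

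Property~(i) then follows by applying $\operatorname{pr}_{\Pi,v}$ to the \'etale realisation of the tame norm relation \eqref{eq:tamenorm2}. The identification of the resulting Hecke polynomial with $P_w(\Pi, \sigma_w^{-1})$ reduces to the equality of polynomials $\Theta_{\pi_\ell}(\cP_w)(X) = P_w(\Pi, X)$, which is an immediate consequence of the defining property of $\cP_w$ and of the matching of local $L$-factors under base change (\cref{sect:basechange} and \cref{thm:autdescent}). The twist by the involution ${}'$ and the passage from $\cP_w'(1)$ in the motivic relation to $\cP_w'(\sigma_w^{-1})$ in the Galois relation are already built into the normalisation \eqref{eq:component}. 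Property~(ii) reduces, via \cref{cor:twistcompatXi} and the interpolation formula given there, to the standard identification (using the reciprocity law of \cref{sect:shimura}) of the coefficient twist $\{r,s\}$ with the twist of the Galois representation by a Gr\"ossencharacter of infinity type $(s,r)$; by construction, the resulting class is the image under $\operatorname{pr}_{\Pi,v}$ of the \'etale realisation of the motivic class ${}_c\Xi_{\mot,\fm,p^t}^{[a,b,r,s]}$, up to the unit factor $\sigma_p^t (\mathcal{U}_p')^{-t} = (\sigma_p\alpha_p^{-1})^t$.

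The main obstacle is property~(iii), the vanishing of the localisation in the unramified quotient at each $\fp\mid p$. The strategy is a standard ordinarity argument along the lines of \cite[\S 10.5]{LSZ17}: the wild norm relation \cref{thm:wildnorm}, combined with the fact that $\mathcal{U}_p'$ acts on the $\pi$-isotypic piece as the $p$-adic unit $\alpha_p$, gives a precise norm compatibility for the localisations $\loc_\fp(\mathbf{c}_\fm^\Pi)$ along the tower $E[\fm p^\infty]/E[\fm]$; on the other hand, on the unramified quotient $V_\fP(\Pi)^*/\mathcal{F}^1_\fp$, the arithmetic Frobenius $\operatorname{Frob}_\fp$ acts by $\alpha_\fp$, which (by the matching of the Hecke eigenvalues of $\pi_p$ with the crystalline Frobenius on $\mathbf{D}_{\mathrm{cris}}(V_\fP(\Pi))$) aligns with the $\mathcal{U}_p'$-eigenvalue in exactly the right way to force the image of $\mathbf{c}_\fm^\Pi$ in the local Iwasawa cohomology of the unramified quotient to be infinitely divisible, hence zero. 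The delicate point is to carry out this matching uniformly for both $\fp$ and $\bar\fp$ when $p$ splits in $E$, where one has two independent local conditions to verify but only the combined operator $\mathcal{U}_p'$ has been introduced so far; this should be handled by using the finer tower of classes at the individual primes mentioned in the remark following \cref{thm:wildnormint}.
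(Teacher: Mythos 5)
Your definition of $\mathbf{c}_\fm^\Pi$ and $T_\fP(\Pi)^*$, and your treatment of property~(ii), match the paper's. But there are two issues worth flagging.

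For property~(i), you say the norm relation ``follows by applying $\pr_{\Pi,v}$ to the \'etale realisation of \eqref{eq:tamenorm2}'', but this only gives the identity in $H^1(E[\fm p^t], V_\fP(\Pi)^*)$ at each finite level, since the tame norm relation is established over the rational coefficient field, not over the integral lattice. To conclude the identity in $H^1_{\Iw}(E[\fm p^\infty], T_\fP(\Pi)^*)$ you must rule out a $p$-torsion discrepancy. The paper handles this by showing the defect $h$ lies in the torsion submodule of $H^1(E[\fm p^t], T_\fP(\Pi)^*)$, using the irreducibility of $V_\fP(\Pi)$ (so that $H^0(E[\fm p^\infty], T_\fP(\Pi)^*\otimes\Qp/\Zp)$ is finite and its exponent bounds the torsion uniformly in $t$), and then the $p$-torsion-freeness of Iwasawa cohomology over an infinite $p$-adic Lie extension. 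Without some argument of this kind your claim is incomplete; the irreducibility hypothesis, which you never invoke, is essential here.

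For property~(iii), you take a genuinely different route: an $\LSZ17$-style ordinarity argument comparing the $\mathcal{U}_p'$-eigenvalue with the Frobenius action on the unramified quotient. You yourself flag the obstruction in the split case — one has two independent local conditions at $\fp$ and $\bar\fp$, but only the combined operator $\mathcal{U}_p'$ (with eigenvalue $\alpha_\fp\alpha_{\bar\fp}$) has been constructed, so the matching does not isolate $\alpha_\fp$. Appealing to the remark after \cref{thm:wildnormint} is only a pointer, not a proof, and the needed refined towers at the individual primes $\fp$, $\bar\fp$ would have to be built and shown to interact correctly with the moment maps. The paper sidesteps the whole issue by deducing~(iii) directly from~(ii): since the relevant twists of $\mathbf{c}_\fm^\Pi$ are \'etale realisations of motivic classes, they land in the Bloch--Kato subspace $H^1_{\mathrm{g}}$ at the primes above $p$, and $H^1_{\mathrm{g}}$ maps to zero in the cohomology of the unramified quotient $V_\fP(\Pi)^*/\mathcal{F}^1_\fp$ (cf.~\cite[Proposition~11.2.2]{LSZ17}). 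This is shorter, works uniformly for $p$ split or inert, and requires no additional constructions. As written, your proof of~(iii) has a real gap and the paper's motivic/$H^1_{\mathrm{g}}$ argument is the one to adopt.
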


  \begin{proof}
   The choice of $\delta_S$, $K_{G, S}$, and $c$ determines a collection of Iwasawa cohomology classes ${}_c \Xi^{[a,b]}_{\fm p^\infty}$, for all $\fm \in \cR$ coprime to $pc$, taking values in the $e'_p$-ordinary part of $H^2_{\et}(Y_{\Ih, \overline{\QQ}}, \sD^{a,b}_{E_\fp}(2))$. Moreover, these classes all land in a lattice independent of $\fm$.

   The modular parametrisation map $\pr_{\Pi, v}$ sends this lattice in $H^2_{\et}(Y_{\Ih, \overline{\QQ}}, \sD^{a,b}_{E_\fp}(2))$ to a lattice in $V_{\fP}(\Pi)^*$, and we take $T_{\fP}(\Pi)^*$ to be this lattice. Then we may define
   \[
    \mathbf{c}_{\fm}^{\Pi} = \pr_{\Pi, v}\left({}_c \Xi^{[a,b]}_{\fm p^\infty}\right) \in H^1_{\Iw}(E[\fm p^\infty],T_{\fP}(\Pi)^*).
   \]

   We now prove the properties (i)--(iii). Property (i) follows from the tame norm relation \cref{eq:tamenorm2}, but the argument is a little delicate. Since $v \in \pif$ is unramified outside $S \cup \{p\}$, the homomorphism $\pr_{\Pi, v}$ factors through the eigenspace where the Hecke-algebra-valued polynomial $\cP'_w(X)$ acts as $P_w(\Pi, X)$ for all $w \nmid pS$. So \eqref{eq:tamenorm2} shows that the Iwasawa cohomology class
   \[ h =
    \operatorname{norm}_{\fm}^{\fn}\left(\mathbf{c}_{\fn}^{\Pi}\right) - \Big(\prod_{w \mid \frac{\fn}{\fm}} P_w(\Pi, \sigma_w^{-1})\Big) \mathbf{c}_{\fm}^{\Pi}
   \]
   projects to zero in the cohomology of $V_{\fP}(\Pi)^*$ at each finite level in the tower $E[\fm p^\infty]$. Hence its image in the cohomology of the integral lattice $T_{\fP}(\Pi)^*$ lies in the torsion submodule. Since we are assuming $V_{\fP}(\Pi)^*$ to be irreducible, we have $H^0(E[\fm p^\infty], V_{\fP}(\Pi)^*) = 0$, and hence $H^0(E[\fm p^\infty], T_{\fP}(\Pi)^* \otimes \Qp/\Zp)$ is a finite group. So the exponent of this finite group annihilates the torsion submodule of $H^1(E[\fm p^t], T_{\fP}(\Pi)^*)$ for all $t$, and passing to the inverse limit, we deduce that $h$ is annhilated by a finite power of $p$. Since the Iwasawa cohomology of an infinite $p$-adic Lie extension is $p$-torsion-free, we must have $h = 0$, which proves part (i) of the theorem.

   The remaining properties are somewhat simpler. For property (ii), we use the compatibility with moment maps (\cref{cor:twistcompatXi}), and we note that for any $\eta$ of $\infty$-type $(s, r)$ and conductor dividing $\fm p^t$, the twist $V_{\fP}(\Pi)^* \otimes \eta^{-1}$ can be realised as a direct summand of $\operatorname{Ind}_{E[\fm p^t]}^E H^2_{\et}(Y_{\Ih, \overline{\QQ}}, \sD^{a,b}\{r, s\}(2))$, exactly as in the case of Heegner points described in \S 3.4 of \cite{JLZ}. (The switch in ordering of $r$ and $s$ arises because the character $\mu: G \to \operatorname{Res}_{E /\QQ} \GL_1$ corresponds to $\mu_4$, not $\mu_3$, in our parametrisation of algebraic weights.)

   Finally, the local Selmer condition (iii) at the primes above $p$ follows from part (ii), since any class in the image of motivic cohomology must lie in the Bloch--Kato $H^1_{\mathrm{g}}$ subspace at primes above $p$; and this subspace projects to 0 in the cohomology of the quotient (compare \cite[Proposition 11.2.2]{LSZ17}).
  \end{proof}

 \subsection{Concluding remarks}

  \begin{remark}
   The Euler system of Theorem B depends on choices of local data at the primes in $S$: the vector $v \in \pif$ defining the modular parametrisation, and the element $\delta_S \in \cI(G_S / K_{G, S}, \ZZ)$. It should be possible to check that the Euler systems obtained for different choices of these data are proportional to each other, with the proportionality factor being essentially the local zeta integral of \cref{sect:zeta}; compare \cite[\S 6.6]{LZ20-blochkato}.
  \end{remark}

  \begin{remark}
   For part (ii) of Theorem B, we are identifying $\eta$ with a Galois character via the Artin map. Thus $\eta^{-1}$ has Hodge--Tate weights $(-s, -r)$; so the range of $\infty$-types considered in (ii) is precisely the range for which $V_{\fP}(\Pi)^* \otimes \eta^{-1}$ has one Hodge--Tate weight $\le 0$ and two Hodge--Tate weights $\ge 1$ at each of the embeddings $E \into F_{\fP}$. In particular, $V_{\fP}(\Pi)^* \otimes \eta^{-1}$ is ``1-critical'' in the sense of \cite[\S 6]{loefflerzerbes20}, and satisfies the ``rank 1 Panchishkin condition'' of [\emph{op.cit.},~Definition 7.2], with the subspaces $\mathcal{F}^1_{\fp}$ being the Panchishkin submodules. So the above theorem is consistent with the general conjectures formulated in \emph{op.cit.}.

   It is interesting to note that $V_{\fP}(\Pi)^* \otimes \eta^{-1}$ is also 1-critical if $a+1 \le r \le a+b+1$ and $s \le -1$ (or symmetrically if $r \le -1$ and $b+1 \le s \le a+b+1$). We do not know how to construct interesting motivic cohomology classes for twists in this range.
  \end{remark}

  \begin{remark}
   If we assume in addition that $p$ is split in $E$, then we can use the 2-variable Perrin-Riou logarithm map constructed in \cite{loefflerzerbes14} to define two ``motivic $p$-adic $L$-functions'' associated to $\pi$, as measures on the group $\Gal(E[p^\infty] / E)$ (which is isomorphic to the product of $\Zp^2$ and a finite group). More precisely, we have one of these for each prime $\fp_i$ above $p$, interpolating the images of twists of $\operatorname{loc}_{\fp_i}\left(\mathbf{c}_1^{\Pi}\right)$ under the Bloch--Kato logarithm and dual-exponential maps. Forthcoming works by members of our research groups will explore the relation between these ``motivic'' $p$-adic $L$-functions and two other kinds of $p$-adic $L$-function attached to $\pi$: ``analytic'' $p$-adic $L$-functions interpolating critical values of complex $L$-functions, and ``algebraic'' $p$-adic $L$-functions defined as characteristic ideals of appropriate Selmer groups. We hope that it will be possible to formulate an Iwasawa main conjecture in this setting, and prove one divisibility towards this conjecture, by methods similar to those of \cite{LZ20-blochkato}.

   The case of inert $p$ is more mysterious; in this case, $E[p^\infty]$ is a height 2 Lubin--Tate extension at the primes above $p$, and our understanding of local Iwasawa theory for such representations seems insufficient to construct motivic $p$-adic $L$-functions as measures on $\Gal(E[p^\infty] / E)$. However, it may be possible to construct ``signed'' motivic $p$-adic $L$-functions as measures on the cyclotomic Galois group $\Gal(E(\mu_{p^\infty}) / E)$, using the methods of \cite{rockwood20} applied to the induction of $\pi \otimes \eta$ to $\GL_6 / \QQ$.
  \end{remark}

\appendix

\section{Cyclicity of Hecke modules}

 In this section we sketch an explicit proof of the cyclicity theorem \ref{cyc-thm}; our argument is inspired by the proofs of the uniqueness of Whittaker and Shintani functions in the papers \cite{MS-GL,katomurasesugano03} of Murase, Sugano, and Kato.

  \subsection{Hecke algebras and the cyclicity theorem}
 Let $\ell\nmid 2D$ be a prime. Let $K=G(\Zl)$ and $U=H(\Zl)$. These are hyperspecial maximal compacts of $G(\Ql)$ and $H(\Ql)$, respectively.
 There are associated spherical Hecke algebras:
 $$
 \cH_G^0 = C_c(K\bs G(\Ql)/K), \ \ \ \cH_H^0 = C_c(U\bs H(\Ql)/U).
 $$
 The multiplication on these is, of course, just convolution with respect to fixed Haar measures $dg$ and $dh$ on $G(\Ql)$ and $H(\Ql)$, respectively (we can fix the choices
 by requiring that $K$ and $U$ both have volume $1$ under the corresponding measures, but that is not needed below). Both $\cH_G^0$ and $\cH_H^0$ are commutative rings.

 We also consider the space
 $$
 \cH = C_c(U\bs G(\Ql)/K)
 $$
 of smooth, compactly supported functions $f:G(\Ql)\rightarrow \CC$
 that are left $U$-invariant and right $K$-variant.   We endow $\cH$ with the structure of a left $\cH_H^0\otimes\cH_G^0$-module as follows:
 for $\chi\otimes\xi \in \cH_H^0\otimes\cH_G^0$ and $f\in \cH$,
 $$
 (\chi\otimes\xi)*f(x) = \int_{H(\Ql)}\int_{G(\Ql)} \chi(h)f(hxg^{-1})\xi(g) dh\,dg.
 $$
 The main result of this appendix is:
 \begin{theorem}\label{cyc-thm-2}
 As an  $\cH_H^0\otimes\cH^0_G$-module, $\cH$ is cyclic and generated by the characteristic function $f_0 = \ch(K)$ of $K$.
 \end{theorem}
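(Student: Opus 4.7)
The plan is to parametrize the double coset space $U\bs G(\Ql)/K$ explicitly and establish a ``triangularity'' property for the $\cH_H^0 \otimes \cH_G^0$-action with respect to a depth function on cosets, enabling an inductive reduction on the size of the support.

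First, combining Lemma \ref{lem:openorbit} with the Iwasawa decomposition $G(\Ql) = \bar{B}_G(\Ql) K$, I would show that the double cosets $UgK$ with $g$ lying in the open $(Q_H^0, \bar{B}_G)$-orbit admit representatives of the form $u\cdot t_{m,n}$, where $u$ is the fixed open-orbit element of Lemma \ref{lem:openorbit} and $t_{m,n} \in T_G(\Ql)$ ranges over a lattice indexed by the valuations of the torus parameters $x, z$ from Lemma \ref{lem:boreldecomp}. The complement consists of finitely many ``boundary'' double cosets corresponding to lower-dimensional $(B_G \times B_H)$-orbits on $H \bs (G \times H)$.

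Next, I would compute the action of Satake generators explicitly: for $\xi_\lambda = \ch(K t_\lambda K) \in \cH_G^0$ indexed by a dominant coweight $\lambda$, the convolution $\ch(U u t_{m,n} K) \star_G \xi_\lambda$ should equal $\ch(U u t_{m+\lambda_1,n+\lambda_2} K)$ (the ``leading term'') plus a sum of $\ch(U u t_{m',n'} K)$ for $(m', n')$ strictly smaller in the lattice order; a parallel statement holds for left-convolution with generators of $\cH_H^0$ via the map $\zeta_H$ of the preceding section. Granted this triangularity, the theorem follows by induction: for $f \in \cH$, pick the deepest coset $U u t_{m_0, n_0} K$ in the support of $f$, construct $\chi_0 \otimes \xi_0 \in \cH_H^0 \otimes \cH_G^0$ whose action on $f_0$ has a matching leading coefficient, and subtract to strictly reduce the support.

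The main obstacle is the explicit verification of the triangularity property, which requires intricate matrix computations in the spirit of those used by Murase--Sugano and Kato--Murase--Sugano in \cite{MS-GL, katomurasesugano03} to establish uniqueness of spherical and Whittaker functions. The split and inert cases must be handled separately: the split case reduces (after the auxiliary $\GL_1$ factor is isolated) to a classical statement about $\GL_2 \subset \GL_3$, while the inert case requires additional care due to the quasi-split structure of the unitary group and the corresponding Satake combinatorics. Finally, the finitely many boundary double cosets must be treated by a direct case-by-case calculation, showing that they too lie in $\cH_H^0 \otimes \cH_G^0 \cdot f_0$ once the interior cosets have been accounted for.
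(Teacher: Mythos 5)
Your overall strategy---parametrize the double cosets, establish a triangularity property for the Hecke action with respect to a depth order, then induct---is indeed the skeleton of the paper's argument, and the reduction of the split case to $\GL_2\times\GG_m\subset\GL_3\times\GG_m$ is also exactly what the paper does. However, there is a genuine gap in the parametrization step, and it is not a small one.

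You propose representatives of the form $u\cdot t_{m,n}$ with a \emph{fixed} open-orbit element $u$ and a single torus element $t_{m,n}\in T_G(\Ql)$. This does not exhaust the double cosets. The paper's decomposition (Lemma~\ref{decomp-lem} in the split case, Lemma~\ref{decomp-lem2} in the inert case) requires representatives of the form $t(\mu)\,n_0\,t(\lambda)$, with torus elements on \emph{both} sides of a fixed unipotent $n_0$: $t(\mu)$ records the ``depth in the $H$-direction'' and $t(\lambda)$ the ``depth in the $G$-direction.'' Conjugating $n_0$ past either torus factor produces a unipotent that depends on $\mu$ or $\lambda$, so these cosets genuinely cannot be written as $u\cdot t$ for a single fixed $u$. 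Concretely, in the split $G_0=\GL_3$ setting the index set $\Lambda$ is a $5$-parameter cone in $\ZZ^3\times\ZZ^3$, whereas $T_{G_0}(\Ql)/T_{G_0}(\Zl)\cong\ZZ^3$ only has three parameters; in the inert case $\Lambda$ is a $3$-parameter cone and $T_G(\Ql)/T_G(\Zl)\cong\ZZ^2$. So your proposed family of representatives misses most of the double cosets, and the triangularity statement you want to prove would be false as formulated.

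Two further points. First, your worry about ``finitely many boundary double cosets'' is unnecessary: the complement of the open $(Q_H^0,\bar B_G)$-orbit is Zariski closed of lower dimension and hence has measure zero in $G(\Ql)$, while every double coset $UgK$ is open and of positive measure, so every double coset already meets the open cell. Second, the inductive step is not automatic even with the correct parametrization: the paper's Lemma~\ref{supp-lem} works quite hard, via the determinantal invariants $\Delta_{I,J}$, to prove an inequality that is monotone only with respect to the carefully chosen lexicographic statistic $s(\mu,\lambda)=(\lambda_1,\tilde\mu+\tilde\lambda,\tilde\mu)$ rather than a naive ``lattice order,'' so you would need to identify the right well-ordering before the induction can close. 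In short, the architecture of your argument is right, but the coset decomposition at its base needs to be replaced with the two-sided form $t(\mu)n_0t(\lambda)$, and the depth function for the induction needs to be pinned down precisely.
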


 There are two cases to consider: $\ell$ split in $E$ and $\ell$ inert in $E$. We give details for each case. Our proofs are disappointingly explicit.

  \subsection{The split case} Suppose that $\ell$ splits in $E$: $\ell = w\bar{w}$. Recall that there is a natural isomorphism
 $G(\Ql) \isoarrow \GL_3(\Ql)\times\Qlt$ under which $K$ is identified with $\GL_3(\Zl)\times\Zlt$.
 Similarly, $H(\Ql)$ is identified with $\GL_2(\Ql)\times\Qlt$ and $U$ with $H(\Zl)\times\Zlt$.
 Hereon we will conflate the algebraic groups $H$ and $G$ with their $\Ql$-points. We let $G_0 = \GL_3(\Ql)$ and $K_0 = \GL_3(\Zl)$.

 Under the above identifications, the
 inclusion of $H$ into $G$ becomes
 \begin{equation}\label{A-embed}
 \begin{split}
 H= & \GL_2(\Ql)\times\Qlt\into \GL_3(\Ql)\times\Qlt=G \\
 & (\left(\smallmatrix a & b \\ c & d\endsmallmatrix\right),x) \mapsto (\left(\smallmatrix a & 0 & b \\ 0 & x & 0 \\ c & 0 & d\endsmallmatrix\right),ad-bc).
 \end{split}
 \end{equation}
 Furthermore, these identifications induce ring isomorphisms $\cH_G^0 = \cH^0_{G_0}\otimes\cH_{\GL_1}^0$ and $\cH_H^0 = \cH^0_{\GL_2}\otimes\cH_{\GL_1}$
 as well as a compatible isomorphism $\cH = \cH'\otimes\cH^0_{\GL_1}$ with $\cH' = C_c(U\bs G_0/K_0)$.

   \subsubsection{A simple reduction} Consider $H = \GL_2(\Ql)\times\Qlt$ as a subgroup of $G_0$ via projection to the first factor in the embedding
 \eqref{A-embed}.  Under this embedding we can view $\cH'$ as an $\cH_H^0\otimes\cH_{G_0}^0$-module. To avoid ambiguities, we write
 $\star$ for the convolution action of $\cH_H^0\otimes\cH_{G_0}^0$ on $\cH'$.

 \begin{lemma} If $\cH'$ is a cyclic $\cH_H^0\otimes\cH_{G_0}^0$-module generated by $\ch(K_0)$, then \cref{cyc-thm} is true.
 \end{lemma}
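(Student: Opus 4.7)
The plan is to decompose everything along the factorisation $G = G_0 \times \GL_1$ (coming from $G(\Ql) \cong \GL_3(\Ql) \times \Qlt$) and observe that the extra $\GL_1$-factor plays an essentially trivial role, so that the hypothesised cyclicity of $\cH'$ already carries the full statement.

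First I would record the basic compatibility: the ring identifications $\cH_G^0 = \cH_{G_0}^0 \otimes \cH_{\GL_1}^0$, $\cH_H^0 = \cH_{\GL_2}^0 \otimes \cH_{\GL_1}^0$, and the module identification $\cH = \cH' \otimes \cH_{\GL_1}^0$ intertwine the $(\cH_H^0, \cH_G^0)$-bimodule action on $\cH$ with the combination of the hypothesised $(\cH_H^0, \cH_{G_0}^0)$-action on $\cH'$ and the natural translation action of $\cH_{\GL_1}^0$ on itself by convolution. The only point requiring a check is the cross-term: since $\iota((M,x)) = (M', \det M)$, left multiplication by $(M,x) \in U$ translates the $\GL_1$-component of a point of $G$ by $\det M \in \Zlt$, but this shift is killed by the right $\Zlt$-invariance built into every element of $\cH$. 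Consequently, $\cH_H^0$ and the $\cH_{G_0}^0$-factor of $\cH_G^0$ act only on the first tensor factor, while the $\cH_{\GL_1}^0$-factor of $\cH_G^0$ acts only on the second.

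Given this decomposition, the conclusion is automatic. For any $f = \sum_i f'_i \otimes \xi_i \in \cH$, the hypothesis produces expressions $f'_i = \sum_j \alpha_{ij} \star \ch(K_0) \star \beta_{ij}$ with $\alpha_{ij} \in \cH_H^0$ and $\beta_{ij} \in \cH_{G_0}^0$. Since $\ch(\Zlt)$ is the unit of $\cH_{\GL_1}^0$, one has $\xi_i = \xi_i \star \ch(\Zlt)$, and therefore
\[
 f = \sum_{i,j} \alpha_{ij} \star f_0 \star (\beta_{ij} \otimes \xi_i),
\]
where $\beta_{ij} \otimes \xi_i \in \cH_{G_0}^0 \otimes \cH_{\GL_1}^0 = \cH_G^0$ and $f_0 = \ch(K_0) \otimes \ch(\Zlt) = \ch(K)$; this displays $f$ in the $(\cH_H^0, \cH_G^0)$-bimodule orbit of $f_0$.

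There is essentially no obstacle: the lemma is purely formal bookkeeping, stripping off the commutative, cyclic $\GL_1$-factor of $G$, and all of the genuine content is deferred to the hypothesised cyclicity of $\cH'$, which is what the rest of the appendix must establish.
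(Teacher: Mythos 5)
Your argument breaks at the claim that, under the identification $\cH \cong \cH' \otimes \cH_{\GL_1}^0$, the $\cH_H^0$-action factors through the first tensor factor alone. You verify only that \emph{left translation by $U$} leaves the $\GL_1$-slot alone (because $(M,x)\in U$ has $\det M\in\Zlt$, absorbed by the $\Zlt$-invariance). But the Hecke operator $\ch(Ut_iU)$ with $t_i=(a_i,x_i)$ is supported on a double coset whose elements have $\det$ lying in $\det(a_i)\Zlt$, and $\det(a_i)$ need not be a unit. A direct computation with the convolution formula gives, for $f_1\otimes f_2\in\cH'\otimes\cH_{\GL_1}^0$,
\[
 \ch(Ut_iU)*\bigl(f_1\otimes f_2\bigr)=\bigl(\ch(Ut_iU)\star f_1\bigr)\otimes f_2\bigl(\det(a_i)\,\cdot\,\bigr),
\]
so the $\GL_1$-component picks up a nontrivial translation. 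Consequently, when you expand $f_i'=\sum_j \alpha_{ij}\star\ch(K_0)\star\beta_{ij}$ and apply $\alpha_{ij}$ to $f_0=\ch(K_0)\otimes\ch(\Zlt)$, the second tensor slot comes out as a shift of $\ch(\Zlt)$ rather than $\ch(\Zlt)$ itself, and your displayed identity $f=\sum_{i,j}\alpha_{ij}\star f_0\star(\beta_{ij}\otimes\xi_i)$ is false whenever some $\alpha_{ij}$ has a non-unit determinant.

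This is precisely the point the paper is careful about: after writing $f_1 = \sum_i \ch(Ut_iU)\star\ch(K_0)\star\ch(K_0 t_i'K_0)$, they choose the $\cH_G^0$-component to be $\ch(K_0 t_i'K_0)\otimes f_2(\det(a_i)^{-1}\cdot)$, so that the extra $\det(a_i)$-translation coming from the $\cH_H^0$-action is cancelled by the compensating shift built into the $\GL_1$-factor of the right Hecke operator. To repair your argument you need to decompose your $\alpha_{ij}$ into single double cosets and insert exactly this compensating twist term by term; the lemma is not a purely formal tensor-splitting, because the bimodule structure on $\cH$ genuinely entangles the two factors through the determinant.
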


 \begin{proof} Let $f = f_1 \otimes f_2 \in \cH'\otimes\cH_{\GL_1}^0$. Suppose there exist elements $t_i = (a_i,x_i)\in \GL_2(\Ql)\times\Qlt$ and $t_i'\in G_0$,
 $i=1,...,r$, such that
 $$
 f_1 = \sum_i (\ch(Ut_iU)\otimes\ch(K_0t_i'K_0)) \star\ch(K_0).
 $$
 Let $\chi = \sum_i \ch(Ut_i U) \otimes (\ch(K_0 t_i'K_0)\otimes f_2(\det(a_i)^{-1}(\cdot)) \in \cH_H^0\otimes\cH_G^0$.
 Then it easily follows that $\chi * \ch(K_0) = f_1\otimes f_2$.
 \end{proof}

 So it suffices to prove the cyclicity hypothesis of this lemma. The rest of the proof of \cref{cyc-thm} in the split case will
 therefore focus on proving:

 \begin{proposition}\label{prop-cyc} $\cH'$ is a cyclic $\cH_H^0\otimes\cH_{G_0}^0$-module generated by $\ch(K_0)$.
 \end{proposition}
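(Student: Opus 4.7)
The plan is to prove cyclicity by parametrising $U\backslash G_0/K_0$ via the Iwasawa decomposition relative to a parabolic containing $H$, and then running a downward induction on an explicit filtration to construct, for each double coset, Hecke operators whose convolution with $\ch(K_0)$ produces its characteristic function modulo strictly smaller cosets.

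First, I would use the Iwasawa decomposition $G_0 = PK_0$, where $P = L\ltimes N$ is the maximal parabolic of $G_0$ stabilising the line $\Ql e_2$. The Levi $L \cong \GL_2(\Ql)\times\Ql^\times$ coincides with the image of $H$, and $U = L\cap K_0$ is a hyperspecial maximal compact of $L$ contained in $K_0$. Since $P\cap K_0 = U \cdot N_0$ with $N_0 = N\cap K_0$, every double coset has a representative of the form $g = l \cdot n$ with $l \in L$ and $n \in N$; the Cartan decomposition of $L$ supplies representatives $l(a,b,c)$ of $U\backslash L/U$, and the elements $n$ may be taken in a fixed transversal for the right $N_0$-action together with the twisted left $U$-action. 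This yields an explicit indexing set for $U\backslash G_0/K_0$.

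Second, I would filter $U\backslash G_0/K_0$ primarily by the Cartan type of the image in $K_0\backslash G_0/K_0$, refined by the ``depth'' of the unipotent component $n\in N/N_0$, and prove cyclicity by downward induction on this filtration. The base case, $g=1$, is trivial. For the inductive step with representative $g = l\cdot n$, I would seek $\chi \in \cH_H^0$ and $\xi \in \cH_{G_0}^0$ such that
\[
 (\chi \otimes \xi) \star \ch(K_0)\;=\; c\cdot \ch(UgK_0) \;+\; (\text{strictly smaller terms})
\]
for some nonzero $c \in \CC$. Taking $\xi = \ch(K_0gK_0)$ produces the full bi-$K_0$-invariant contribution $\vol(K_0)\cdot\ch(K_0gK_0)$; then convolving on the left by $\chi = \ch(Ul'U)$ for a suitable diagonal $l'\in L$ rescales the unipotent coordinate $n$ via $L$-conjugation on $N$ and isolates the desired $U$-orbit within $K_0gK_0/K_0$ modulo orbits of strictly lower depth.

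The main obstacle is the explicit matrix computation that verifies this inductive step: one must show that for each pair $(l,n)$, a suitable combination of operators isolates the target $(U,K_0)$-coset with nonzero leading coefficient after cancelling contributions from smaller Cartan type or smaller unipotent depth. The technical difficulty is combinatorial, matching the $L$-conjugation action on $N/N_0$ against the depth stratification; I expect to follow the style of \cite{MS-GL} and \cite{katomurasesugano03}, decomposing the unipotent radical by valuations of coordinates and inductively clearing the lower-depth contributions. The inert case, treated separately in the subsequent subsection, would require an entirely analogous but technically distinct computation.
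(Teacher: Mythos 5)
Your high-level plan -- parametrise $U\bs G_0/K_0$ via the Iwasawa decomposition for the parabolic $P=HN$, then run a downward induction clearing ``smaller'' double cosets -- is indeed the plan the paper executes. But two things stand between the outline and a proof, and the second is a genuine gap.

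First, a disagreement of detail that matters: you propose $\xi = \ch(K_0gK_0)$ for the full representative $g = l\cdot n$, together with a left conjugation by $\ch(Ul'U)$. The paper's proof instead takes representatives of the very specific form $g = t(\mu)\,n_0\,t(\lambda)$ with $(\mu,\lambda)$ in the explicit cone $\Lambda$ (\cref{decomp-lem}), and then applies the Hecke operators $\chi = \ch(Ut(\mu)U)$ and $\xi = \ch(K_0\,t(\lambda)^{-1}K_0)$, thereby separating the $H$-Cartan invariant $\mu$ from the ``distance'' invariant $\lambda$. That separation is not cosmetic: $K_0gK_0$ for $g = t(\mu)n_0t(\lambda)$ has Cartan invariant that mixes $\mu$, $\lambda$, and the conjugated unipotent in a way that is hard to control, whereas the support of $\ch(Ut(\mu)U)\star\ch(K_0 t(\lambda)^{-1}K_0)\star\ch(K_0)$ is exactly $Ut(\mu)^{-1}K_0 t(\lambda)^{-1}K_0$, a set whose intersection with the coset strata can be bounded by elementary means. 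Your choice of operators would make the bookkeeping considerably harder, and it is not clear the induction closes.

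Second and more seriously, the heart of the proof is not the decomposition but the \emph{support lemma} (\cref{supp-lem}): if $Ut(\mu)^{-1}K_0t(\lambda)^{-1}K_0$ meets $Ut(\mu')^{-1}\,{}^tn_0^{-1}\,t(\lambda')^{-1}K_0$ for $(\mu,\lambda)\ne(\mu',\lambda')$, then $(\lambda_1',\tilde\mu'+\tilde\lambda',\tilde\mu')$ is strictly smaller than $(\lambda_1,\tilde\mu+\tilde\lambda,\tilde\mu)$ in lexicographic order, where $\tilde\mu = \mu_1-\mu_2$, $\tilde\lambda = \lambda_1-\lambda_2$. Establishing this requires a concrete technique -- in the paper, evaluating a carefully chosen family of minors $\Delta_{I,J}$ of both sides of the coset identity, exploiting the multiplicativity \eqref{A-Delta} and the shape of $U$ and $K_0$ to derive valuation inequalities in $(\mu,\lambda,\mu',\lambda')$. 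You flag the ``explicit matrix computation'' as the main obstacle and defer to the style of \cite{MS-GL, katomurasesugano03}, but your outline does not specify the correct well-ordering to induct on (your ``Cartan type refined by depth'' is not the invariant the minor inequalities control), nor does it give the mechanism producing the needed strict decrease. Without a precise statement and proof of this support-containment lemma, the induction is an aspiration rather than an argument. You should formulate the analogue of \cref{supp-lem} for your choice of operators and verify it; you will likely find it is cleaner to adopt the paper's separated choice of $\chi$ and $\xi$ and its $(\lambda_1,\tilde\mu+\tilde\lambda,\tilde\mu)$ ordering, precisely because the $\Delta_{I,J}$ evaluations then become tractable.

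One small inaccuracy: $H$, in its block-diagonal form $\diag(A,x)$, is the Levi of the parabolic whose unipotent radical is $\{\left(\begin{smallmatrix}1&0&*\\0&1&*\\0&0&1\end{smallmatrix}\right)\}$, i.e.\ the stabiliser of the plane $\langle e_1,e_2\rangle$ (equivalently, of the hyperplane $e_3^*=0$), not of the line $\Ql e_2$.
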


 For the proof of this proposition it is more convenient to adjust the embedding of $H$ into $G_0$.
 Conjugating by an element of $K_0$ we may view $H$ more naturally as a block diagonal subgroup of $G_0$
 via the embedding that maps $(A,x)\in \GL_2(\Ql)\times\Qlt=H$ to  $\diag(A,x)\in \GL_3(\Ql)=G_0$.

 Our proof of \cref{prop-cyc} begins with two key lemmas.

   \subsubsection{First key lemma}
 For $m=(m_1,m_2,m_3) \in \ZZ^3$, let $t(m) = \diag(\ell^{m_1},\ell^{m_2},\ell^{m_3}) \in G_0$.
 Let
 $$
 \Lambda = \{ (\mu,\lambda) \in \ZZ^3\times\ZZ^3\ : \mu_1 \geq \mu_2, \lambda_1\geq \lambda_2\geq 0 = \lambda_3\}.
 $$
 Let
 $$
 n_ 0 = \left(\smallmatrix 1 & 0 & 1 \\ 0 & 1 & 1 \\ 0 & 0 & 1 \endsmallmatrix\right).
 $$

 \begin{lemma}\label{decomp-lem}
 $G_0= \cup_{(\mu,\lambda)\in \Lambda} U t(\mu) n_0 t(\lambda)K_0$.
 \end{lemma}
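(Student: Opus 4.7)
The plan is to translate the statement into a claim about $U$-orbits on $\ZZ_\ell$-lattices, via the identification $G_0/K_0 \leftrightarrow \{\text{full $\ZZ_\ell$-lattices in } V\}$ given by $gK_0 \leftrightarrow g \ZZ_\ell^3$, where $V := \Ql^3$. Under this identification, $U = \GL_2(\ZZ_\ell) \times \ZZ_\ell^\times$ acts through $\diag(A, x)$ and preserves the decomposition $V = V_{12} \oplus V_3$ with $V_{12} = \Ql e_1 \oplus \Ql e_2$. A direct matrix computation shows that
\[
M(\mu,\lambda) = t(\mu) n_0 t(\lambda) = \begin{pmatrix} \ell^{\mu_1 + \lambda_1} & 0 & \ell^{\mu_1} \\ 0 & \ell^{\mu_2 + \lambda_2} & \ell^{\mu_2} \\ 0 & 0 & \ell^{\mu_3}\end{pmatrix},
\]
so that $M(\mu, \lambda) \ZZ_\ell^3$ is the lattice spanned by $\ell^{\mu_1 + \lambda_1}e_1$, $\ell^{\mu_2 + \lambda_2}e_2$, and $\ell^{\mu_1}e_1 + \ell^{\mu_2}e_2 + \ell^{\mu_3} e_3$. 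The lemma therefore reduces to showing that every $\ZZ_\ell$-lattice $L \subset V$ lies in the $U$-orbit of one such lattice for some $(\mu, \lambda) \in \Lambda$.

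Given $L$, set $\mu_3 := v(p_3(L))$ (a $U$-invariant, where $p_3$ is projection onto $V_3$). By the Cartan decomposition for $\GL_2$ applied via the $\GL_2(\ZZ_\ell)$-factor of $U$, pick $A \in \GL_2(\ZZ_\ell)$ with $A(L \cap V_{12}) = \ell^{\alpha_1}\ZZ_\ell e_1 \oplus \ell^{\alpha_2}\ZZ_\ell e_2$ and $\alpha_1 \geq \alpha_2$; from now on, replace $L$ by $\diag(A, 1) L$. Since $L/(L \cap V_{12}) \cong p_3(L)$ is free cyclic, choose $w = \gamma_1 e_1 + \gamma_2 e_2 + \ell^{\mu_3} e_3 \in L$ projecting to a generator, so $L = (L \cap V_{12}) + \ZZ_\ell \cdot w$. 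Matching against $M(\mu, \lambda) \ZZ_\ell^3$ then demands $\mu_i + \lambda_i = \alpha_i$ and $\gamma_i$ equal to a unit multiple of $\ell^{\mu_i}$; the constraint $(\mu,\lambda) \in \Lambda$ rewrites as $0 \leq \mu_1 - \mu_2 \leq \alpha_1 - \alpha_2$ together with $\mu_i \leq \alpha_i$.

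The main obstacle is this last normalisation. The remaining freedom is the stabiliser of the diagonal form of $L \cap V_{12}$ in $\GL_2(\ZZ_\ell)$, which a direct computation identifies as the Iwahori-like subgroup
\[
I := \Bigl\{\stbt abcd \in \GL_2(\ZZ_\ell) : b \in \ell^{\alpha_1 - \alpha_2}\ZZ_\ell \Bigr\};
\]
this is a proper subgroup of $\GL_2(\ZZ_\ell)$ which does not contain the swap matrix when $\alpha_1 > \alpha_2$. A short case analysis on $v_i := v(\gamma_i)$ settles it: when $0 \leq v_1 - v_2 \leq \alpha_1 - \alpha_2$ already, take $B = \mathrm{id}$; when $v_1 < v_2$, the matrix $B = \stbt 1 0 {-1} 1 \in I$ replaces $\gamma_2$ by $\gamma_2 - \gamma_1$ and forces $v(\gamma_1') = v(\gamma_2') = v_1$; and when $v_1 - v_2 > \alpha_1 - \alpha_2$ (covering in particular the boundary case $\gamma_1 \equiv 0 \not\equiv \gamma_2$), the matrix $B = \stbt 1 {\ell^{\alpha_1 - \alpha_2}} 0 1 \in I$ brings $v(\gamma_1')$ down to $(\alpha_1 - \alpha_2) + v_2$ while fixing $\gamma_2$. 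A diagonal unit rescaling inside $I$ then absorbs the remaining unit factors, yielding $L = M(\mu, \lambda)\ZZ_\ell^3$ with $(\mu,\lambda) \in \Lambda$.
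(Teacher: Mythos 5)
Your argument is correct. It works in the lattice model of $G_0/K_0$: normalise $L \cap V_{12}$ by elementary divisors, lift a generator $w$ of $L/(L \cap V_{12})$, and then adjust $w$ by the residual stabiliser $I$. The paper instead begins with the Iwasawa decomposition $G_0 = HNK_0$ together with the Cartan decomposition of $H$, and reduces the resulting representative $t(m)n$ by explicit matrix identities. These turn out to be the same argument in two languages: with $\lambda_i = \alpha_i - \mu_i$ playing the role of the paper's $n_i$, your case $v_1 < v_2$ is the paper's $m_1-n_1 < m_2-n_2$, your case $v_1 - v_2 > \alpha_1 - \alpha_2$ is the paper's $n_2 > n_1$, and the same unipotent matrices $\stbt 1 0 {\pm 1} 1$ and $\stbt{1}{\ell^{\alpha_1-\alpha_2}}{0}{1}$ effect the reduction in both proofs; the lattice framing has the advantage of making the invariants $(\alpha_1,\alpha_2,\mu_3)$ canonical from the outset. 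One imprecision worth fixing: since $w$ is only determined modulo $L\cap V_{12}$, each $\gamma_i$ is only defined modulo $\ell^{\alpha_i}$, so the exact matching condition is $\gamma_i \equiv \ell^{\mu_i}\pmod{\ell^{\alpha_i}}$ rather than ``$\gamma_i$ a unit multiple of $\ell^{\mu_i}$''. It is this residual freedom in $w$ (not the diagonal part of $I$) that handles the configuration $v(\gamma_i)\geq\alpha_i$, where one is forced to take $\mu_i=\alpha_i$, $\lambda_i=0$; and the cleanest way to set up the trichotomy so that vanishing $\gamma_i$'s are covered uniformly is to work with $\min(v(\gamma_i),\alpha_i)$ throughout. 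Neither point affects the substance of the argument.
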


 \begin{remark} This decomposition is a disjoint union, but we do not prove this as it is not needed here.
 \end{remark}

 \begin{proof} This essentially comes from \cite{MS-GL}.

 The group $H$ is identified with the Levi subroup of a standard parabolic $P$ of $G_0=\GL_3(\Ql)$ (corresponding to the partition $3 = 2+1$).
 Write $P=HN$ with $N=\{\left(\smallmatrix 1 & & * \\ 0 & 1 & * \\ 0 & 0 & 1\endsmallmatrix\right)\}$ the unipotent radical.
 By Iwasawa decomposition, $G_0 = PK_0 = HNK_0$. As $H = \sqcup_{m\in\ZZ^3, m_1\geq m_2} U t(m) U$ and $U$ normalizes $N$, we have
 $$
 G_0 = \cup_{m\in\ZZ^3, m_1\geq m_2} U t(m) N K.
 $$
 In particular, every double coset $UgK \subset G_0$ is represented by some element of the form
 $$
 t(m)\left(\smallmatrix 1 & 0 & \ell^{-n_1} \\ 0 & 1 & \ell^{-n_2} \\ 0 & 0 & 1 \endsmallmatrix\right), \ \ n_1,n_2\geq 0, \ m_1\geq m-2.
 $$
 We consider such a double coset and representative.

 Suppose $n_2>n_1$. Then
 $$
 \left(\smallmatrix \ell^{m_1} & 0 & 0 \\ 0 & \ell^{m_2} & 0 \\ 0 & 0 & \ell^{m_3}\endsmallmatrix\right)
 \left(\begin{smallmatrix} 1 & 0 & \ell^{-n_1}+\ell^{-n_2} \\ 0 & 1 & \ell^{-n_2} \\ 0 & 0 & 1 \end{smallmatrix}\right) =
 \left(\begin{smallmatrix} 1 & \ell^{m_1-m_2} & 0 \\ 0 & 1 & 0 \\ 0 & 0 & 1 \end{smallmatrix}\right)
 \left(\smallmatrix \ell^{m_1} & 0 & 0 \\ 0 & \ell^{m_2} & 0 \\ 0 & 0 & \ell^{m_3}\endsmallmatrix\right)
 \left(\begin{smallmatrix} 1 & 0 & \ell^{-n_1} \\ 0 & 1 & \ell^{-n_2} \\ 0 & 0 & 1 \end{smallmatrix}\right)
 \left(\begin{smallmatrix} 1 & -1 & 0 \\ 0 & 1 & 0 \\ 0 & 0 & 1 \end{smallmatrix}\right)
 $$
 also belongs to the same double coset. In particular, we can always choose the representative with $n_1\geq n_2\geq 0$.

 Suppose $m_1-n_1< m_2-n_2$, put $n_2' = n_1-m_1+m_2$ (so $n_2<n_2'\leq n_1$). Then
 $$
 \left(\smallmatrix \ell^{m_1} & 0 & 0 \\ 0 & \ell^{m_2} & 0 \\ 0 & 0 & \ell^{m_3}\endsmallmatrix\right)
 \left(\begin{smallmatrix} 1 & 0 & \ell^{-n_1} \\ 0 & 1 & \ell^{-n_2}+\ell^{-n_2'} \\ 0 & 0 & 1 \end{smallmatrix}\right) =
 \left(\begin{smallmatrix} 1 & 0 & 0 \\ 1 & 1 & 0 \\ 0 & 0 & 1 \end{smallmatrix}\right)
 \left(\smallmatrix \ell^{m_1} & 0 & 0 \\ 0 & \ell^{m_2} & 0 \\ 0 & 0 & \ell^{m_3}\endsmallmatrix\right)
 \left(\begin{smallmatrix} 1 & 0 & \ell^{-n_1} \\ 0 & 1 & \ell^{-n_2} \\ 0 & 0 & 1 \end{smallmatrix}\right)
 \left(\begin{smallmatrix} 1 & 0 & 0 \\ -\ell^{m_1-m_2} & 1 & 0 \\ 0 & 0 & 1 \end{smallmatrix}\right)
 $$
 also represents the double coset. So we may choose the representative such that $m_1-n_1\geq m_2-n_2$.

 For such a representative with $n_1\geq n_2$ and $m_1-n_1\geq m_2-n_2$ we have
 $$
 \left(\smallmatrix \ell^{m_1} & 0 & 0 \\ 0 & \ell^{m_2} & 0 \\ 0 & 0 & \ell^{m_3}\endsmallmatrix\right)
 \left(\begin{smallmatrix} 1 & 0 & \ell^{-n_1}+\ell^{-n_2} \\ 0 & 1 & \ell^{-n_2} \\ 0 & 0 & 1 \end{smallmatrix}\right) = t((m_1-n_1,m_2-n_2,m_3) n_0 t(n_1,n_2,0)
 $$
 with $\mu = (m_1-n_1,m_2-n_2,m_3)$ and $\lambda= (n_1,n_2,0)$ such that  $(\mu,\lambda)\in \Lambda$.
 \end{proof}

   \subsubsection{Second key lemma}
 The second key lemma is about the support of certain Hecke operators.

 \begin{lemma}\label{supp-lem} Let $(\mu,\lambda),(\mu',\lambda') \in \Lambda$ with $(\mu,\lambda) \neq (\mu',\lambda')$ . Suppose
 $$
 U t(\mu)^{-1} K_0 t(\lambda)^{-1} K_0 \cap U t(\mu')^{-1} {}^tn_0^{-1} t(\lambda')^{-1} K_0 \neq \emptyset.
 $$
 Then $\lambda_1' \leq \lambda_1$, and if $\lambda_1' = \lambda_2$ then
 $(\mu_1'-\mu_2') + (\lambda_1'-\lambda_2') \leq (\mu_1-\mu_2) + (\lambda_1-\lambda_2)$,  with equality holding only if $(\mu_1'-\mu_2')<(\mu_1-\mu_2)$.
 \end{lemma}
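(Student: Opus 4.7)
The plan is to derive both inequalities by extracting and comparing appropriate $U$-$K_0$-bi-invariants from the matrix equation implied by the hypothesis. Since $B := U t(\mu')^{-1}\, {}^tn_0^{-1}\, t(\lambda')^{-1} K_0$ is a single $U$-$K_0$ double coset and $A := U t(\mu)^{-1} K_0 t(\lambda)^{-1} K_0$ is invariant under the same biactions, $A \cap B \neq \emptyset$ implies $B \subseteq A$, so there exist $u \in U$ and $k_1, k_2 \in K_0$ with
\[
 t(\mu')^{-1}\, {}^tn_0^{-1}\, t(\lambda')^{-1} \ =\ u\, t(\mu)^{-1}\, k_1\, t(\lambda)^{-1}\, k_2.
\]
The left-hand side is the explicit matrix
\[
 \begin{pmatrix} \ell^{-\mu'_1 - \lambda'_1} & 0 & 0 \\ 0 & \ell^{-\mu'_2 - \lambda'_2} & 0 \\ -\ell^{-\mu'_3 - \lambda'_1} & -\ell^{-\mu'_3 - \lambda'_2} & \ell^{-\mu'_3} \end{pmatrix},
\]
whose $U$-$K_0$-invariants can be read off by inspection.

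For $\lambda'_1 \leq \lambda_1$, I would use (a) the smallest elementary divisor $a_1 = v_\ell(d_1)$ (a $K_0$-$K_0$-invariant equal to the minimum $\ell$-adic valuation of a matrix entry), and (b) the $U$-invariant $v_\ell(\pi_3(g\Zl^3))$, where $\pi_3$ is projection to the third coordinate (this is $U$-invariant because $U = \diag(\GL_2(\Zl), \Zlt)$ acts on the third coordinate only by units). Direct computation gives $a_1 = -\max(\mu'_1, \mu'_3) - \lambda'_1$ and $v_\ell(\pi_3(L')) = -\mu'_3 - \lambda'_1$ on the LHS, while on the RHS the corresponding quantities are bounded below by $-\max_i \mu_i - \lambda_1$ and $-\mu_3 - \lambda_1$ respectively, using that each row of $k_1 \in K_0$ is $\Zl$-primitive. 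Combined with the determinant relation $\sum_i \mu'_i + \lambda'_1 + \lambda'_2 = \sum_i \mu_i + \lambda_1 + \lambda_2$ and the $d_2$-comparison (which yields $\min_i \mu'_i \geq \min_i \mu_i$ after using the determinant relation), this eliminates the $\mu$-dependence and forces $\lambda'_1 \leq \lambda_1$.

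For the refined inequality in the case $\lambda'_1 = \lambda_1$, I would invoke a further $U$-invariant: the elementary divisors of $L_{12} := g\Zl^3 \cap (\Ql^2 \oplus 0)$ modulo the $\GL_2(\Zl)$-action. On the LHS one directly computes $L'_{12} = \ell^{-\mu'_1}\Zl \oplus \ell^{-\mu'_2}\Zl$, giving elementary divisors $(-\mu'_1, -\mu'_2)$; on the RHS the corresponding invariants are controlled by an analysis of which entries of $k_1$ are units. Substituting $\lambda'_1 = \lambda_1$ into the determinant relation then yields the desired inequality $(\mu'_1 - \mu'_2) + (\lambda'_1 - \lambda'_2) \leq (\mu_1 - \mu_2) + (\lambda_1 - \lambda_2)$, and the strictness assertion follows by tracing through when all the intermediate bounds can become simultaneous equalities.

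The main obstacle will be the bookkeeping on the RHS: the invariants of $u\, t(\mu)^{-1} k_1 t(\lambda)^{-1} k_2$ depend on a case analysis of which entries of $k_1$ are $\ell$-adic units, and extracting sharp bounds from this case analysis is the hardest technical step. This is precisely the sort of computation performed in the uniqueness proofs for Whittaker and Shintani functions in \cite{MS-GL} and \cite{katomurasesugano03}, on which the present approach is modelled.
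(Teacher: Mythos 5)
Your overall strategy---rewriting the coset-intersection hypothesis as a matrix equation $t(\mu')^{-1}\,{}^t n_0^{-1}\,t(\lambda')^{-1} = u\,t(\mu)^{-1} k_1 t(\lambda)^{-1} k_2$ and comparing $U$--$K_0$-bi-invariant quantities of both sides---is exactly the paper's strategy. The paper packages these bi-invariants as the $\ell$-adic valuations of determinants $\Delta_{I,J}$ of submatrices, exploiting that the block-diagonal structure of $u\in U$ forces $f_{I,I'}(u)=0$ unless $I'$ respects the $\{1,2\}\,/\,\{3\}$ partition; your ``projection to the third coordinate'' and ``$L_{12}$'' constructions are lattice-theoretic reformulations of the same idea. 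However, there is a genuine gap in your derivation of $\lambda_1'\le\lambda_1$, plus a computational slip in the refined inequality.

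For $\lambda_1'\le\lambda_1$, the inequalities you actually extract are: $\max(\mu_1',\mu_3')+\lambda_1'\le\max_i\mu_i+\lambda_1$ (smallest elementary divisor), $\mu_3'+\lambda_1'\le\mu_3+\lambda_1$ (third-coordinate projection), the determinant identity, and $\min_i\mu_i'\ge\min_i\mu_i$ (gcd of all $2\times2$ minors). This list is \emph{not} sufficient. Consider $\mu=(2,0,1)$, $\lambda=(1,0)$, $\mu'=(1,1,0)$, $\lambda'=(2,0)$: all four constraints hold (the first three with equality), yet $\lambda_1'=2>1=\lambda_1$. What fails for this example, and what is actually needed, is the stronger inequality $\mu_1'+\mu_2'+\lambda_1'+\lambda_2'\le\mu_1+\mu_2+\lambda_1+\lambda_2$ (the paper's \eqref{D-eq3}, from $I=J=\{1,2\}$), which together with the determinant identity gives $\mu_3'\ge\mu_3$, and then $\mu_3'+\lambda_1'\le\mu_3+\lambda_1$ finishes. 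The corresponding bi-invariant is not the gcd of \emph{all} $2\times2$ minors but the gcd of the $2\times2$ minors in rows $\{1,2\}$ only --- i.e.\ the covolume of the \emph{projection} $\pi_{12}(g\Zl^3)$; it is left-$U$-invariant precisely because the top-left $2\times2$ block of $u$ lies in $\GL_2(\Zl)$. Your ``$\min_i\mu_i'\ge\min_i\mu_i$'' is the weaker consequence of using all $2\times2$ minors, which loses the crucial information about which rows are involved.

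Separately, your explicit formula $L_{12}'=\ell^{-\mu_1'}\Zl\oplus\ell^{-\mu_2'}\Zl$ is incorrect. Solving the condition that the third coordinate of $a c_1+b c_2+c c_3$ vanish (with $c_i$ the columns of your displayed matrix) gives a lattice generated by $(-\ell^{-\mu_1'-\lambda_2'},\ell^{-\mu_2'-\lambda_2'})$ and $(\ell^{-\mu_1'},0)$, whose elementary-divisor valuations are $(-\mu_1'-\lambda_2',\,-\mu_2')$, not $(-\mu_1',\,-\mu_2')$; the error propagates into the refined inequality. The paper avoids all of this case analysis by reading the needed inequalities off from the valuations of $\Delta_{\{1\},\{1\}}$, $\Delta_{\{3\},\{1\}}$, $\Delta_{\{1,2\},\{1,2\}}$, $\Delta_{\{1,3\},\{1,2\}}$ and the determinant, all controlled uniformly by the observation that $f_{I,I'}(u)\neq0$ forces $I'$ to respect the block partition.
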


 \begin{proof} Our proof is inspired by the proof of \cite{katomurasesugano03}.
 We proceed by considering the $\ell$-adic valuations of values of various weight functions  in $\ZZ[\GL_3(\Ql)]$.

 Let $I,J \subset \{1,2,3\}$ be two sets of the same cardinality.  Define
 $$
 \Delta_{I,J}(g) = \det((g_{i,j})_{i\in I,j\in J}),
 $$
 and
 $$
 f_{I,J}(g) = \prod_{r=1}^{m}(g_{i_r,j_r}), \ \ \ I=\{i_1,...,i_m\}, J=\{j_1,...,j_m\}, i_1<i_2<\cdots<i_m, j_1<\cdots j_m.
 $$
 Then it is easy to see that
 \begin{equation}\label{A-Delta}
 \Delta_{I,J}(xyz) = \sum_{I',J'} f_{I,I'}(x) \Delta_{I',J'}(y) f_{J',J}(z).
 \end{equation}

 The idea is to chose suitable $I,J$ and evaluate $\Delta = \Delta_{I,J}$
 on $t(\mu')^{-1}{}^tn_0^{-1}t(\lambda')^{-1}$. For the chosen $I,J$, the $\ell$-adic valuation
 of $\Delta(t(\mu')^{-1}{}^tn_0^{-1}t(\lambda')^{-1})$ can be easily expressed in terms
 of $(\mu',\lambda')$.  On the other hand, by hypothesis
 \begin{equation}\label{D-eq1}
 t(\mu')^{-1}{}^tn_0^{-1}t(\lambda')^{-1} = u t(\mu)^{-1} k_1 t(\lambda)^{-1} k_2,
 \end{equation}
 for some $u \in U$ and $k_1,k_2\in K$. We use \eqref{A-Delta} with
 $x=u$, $y=t(\mu)^{-1} k_1 t(\lambda)^{-1}$, and $z=k_2$ to obtain a lower
 bound on the $\ell$-adic valuation in terms of $(\mu,\lambda)$. This yields
 various inequalities that must be satisfied by $(\mu,\lambda)$ and $(\mu',\lambda')$,
 from which we deduce the lemma.

 We apply this first with $I=J=\{1\}$. Then
 $$
 \ord_\ell(\Delta(t(\mu')^{-1}{}^tn_0^{-1}t(\lambda')^{-1})) = -(\mu_1'+\lambda_1').
 $$
 On the other hand, using \eqref{A-Delta} and \eqref{D-eq1}, $\Delta(t(\mu')^{-1}{}^tn_0^{-1}t(\lambda')^{-1})$
 can be expressed as a sum of terms of the form $f_{I,I'}(u)\Delta_{I',J'}(t(\mu)^{-1}k_1t(\lambda)^{-1})f_{J',J}(k_2)$.
 Let $I'=\{i'\}$ and $J'=\{j'\}$. The $\ell$-adic valuation of such a term is at least $-(mu_{i'}+\lambda_{j'})$.
 As $u\in U$ and $I={1}$, $f_{I,I'}(u) \neq 0$ only if $i' \in \{1,2\}$.
 It follows that
 $$
 \ord_\ell(\Delta(t(\mu')^{-1}{}^tn_0^{-1}t(\lambda')^{-1})) \geq \min_{1\leq i'\leq 2, 1\leq j'\leq 3} \{-(\mu_{i'}+\lambda_{j'})\} = -(\mu_1+\lambda_1).
 $$
 Hence,
 \begin{equation}\label{D-eq2}
 \mu_1'+\lambda_1' \leq \mu_1 + \lambda_1.
 \end{equation}

 Taking $I=\{3\}$ and $J = \{1\}$, a similar analysis yields
 \begin{equation}\label{D-eq4}
 \mu_3' + \lambda_1' \leq \mu_3 + \lambda_1.
 \end{equation}
 Taking $I=J=\{1,2\}$ yields
 \begin{equation}\label{D-eq3}
 \mu_1'+\mu_2' + \lambda_1' + \lambda_2' \leq \mu_1+\mu_2 + \lambda_1 + \lambda_2.
 \end{equation}
 Taking $I=\{1,3\}$ and $J=\{1,2\}$ yields
 \begin{equation}\label{D-eq6}
 \mu_1'+\mu_3' + \lambda_1'+\lambda_2' \leq \mu_1+\mu_3 + \lambda_1+\lambda_2.
 \end{equation}
 And taking $I = J = \{1,2,3\}$ (that is, comparing determinants) yields
 \begin{equation}\label{D-eq5}
 \mu_1'+\mu_2'+\mu_3' + \lambda_1'+\lambda_2' = \mu_1 + \mu_2+\mu_3 + \lambda_1+\lambda_2.
 \end{equation}

 Comparing \eqref{D-eq3} and \eqref{D-eq5} shows that
 \begin{equation}\label{D-eq7}
 \mu'_3 \geq \mu_3.
 \end{equation}
 And comparing this with \eqref{D-eq4} yields
 \begin{equation}\label{D-eq8}
 \lambda_1'\leq \lambda_1  \ \text{and that} \ \lambda_1'=\lambda_1 \implies \mu_3'=\mu_3.
 \end{equation}

 Suppose $\lambda'_1 = \lambda_1$. Then $\mu'_3 = \mu_3$ by \eqref{D-eq8}.
 Combining this with \eqref{D-eq2} and \eqref{D-eq5} yields
 \begin{equation}\label{D-eq9}
 (\mu_1'-\mu_2') + (\lambda_1'-\lambda_2') \leq (\mu_1-\mu_2) + (\lambda_1-\lambda_2), \ \ \text{with equality iff $\mu_1'=\mu_1$.}
 \end{equation}

 Supposing further that $(\mu_1'-\mu_2') + (\lambda_1'-\lambda_2') \leq (\mu_1-\mu_2) + (\lambda_1-\lambda_2)$, so $\mu_1'=\mu_1$ by \eqref{D-eq9}.
 It then follows from \eqref{D-eq6} that
 $$
 \lambda_2' \leq \lambda_2,
 $$
 while it then follows from \eqref{D-eq5} that $\mu_2'-\mu_2 = \lambda_2-\lambda_2'-\lambda_2'$.
 In particular, if $\lambda_2' = \lambda_2$, then $\mu_2 = \mu_2'$ and so $(\mu',\lambda') = (\mu,\lambda)$.
 So it must be that $\lambda_2'<\lambda_2$ and hence that $\mu_2'>\mu_2$. The last equality then implies
 that $\mu_1'-\mu_2' = \mu_1-\mu_2' < \mu_1-\mu_2$.
 This completes the proof of the lemma.
 \end{proof}

   \subsubsection{Proof of \cref{prop-cyc}}
 Let $\cH'' = (\cH_H^0\otimes \cH_{G_0}^0)\star \ch(K_0)$. Let $n_1 = {}^tn_0^{-1}$.
 By \cref{decomp-lem},
 $G_0 = \cup_{(\mu,\lambda)\in\Lambda U} t(\mu)^{-1} n_1 t(\lambda)^{-1}K_0.$
 So it suffices to show that for each $(\mu,\lambda)\in \Lambda$,
 \begin{equation}\label{A-incl}
 \ch(U t(\mu)^{-1} n_1 t(\lambda)^{-1} K_0) \in \cH''.
 \end{equation}

 Let $(\mu,\lambda)\in \Lambda$. We define $\tilde\mu = (\mu_1-\mu_2)$ and $\tilde\lambda = (\lambda_1-\lambda_2)$.
 Our proof is by induction on the set $S$ of ordered triples $s(\mu,\lambda) = (\lambda_1,\tilde\mu+\tilde\lambda,\tilde\mu)$
 of non-negative integers. The set $S$ is well-ordered under the lexicographic ordering.

 The base case of the induction is the inclusion \eqref{A-incl} for all $(\mu,\lambda)$ with $s(\mu,\lambda) = (0,0,0)$.
 For such a $(\mu,\lambda)$, $\lambda = (0,0,0)$ and so
 $$
 \ch(U t(\mu)^{-1} n_1 t(\lambda)^{-1} K_0) = \ch(U t(\mu)^{-1} K_0) = \ch(Ut(\mu)U)\star\ch(K_0) \in \cH''.
 $$
 This proves the base case of the induction.

 Suppose $(\mu,\lambda)\in \Lambda$.
 Let $\chi = \ch(Ut(\mu)U) \in \cH_H^0$ and $\xi = \ch(K_0t(\lambda)^{-1}K_0)\in \cH_{G_0}^0$. The support of $\chi\star\xi = (\chi\otimes\xi)\star\ch(K_0)\in \cH''$
 is exactly $Ut(\mu)^{-1}K_0t(\lambda)^{-1}K_0$. Let $\Lambda(\mu,\lambda)\subset \Lambda$ be the set of $(\mu',\lambda')$ such that
 $$
 Ut(\mu)^{-1}Kt(\lambda)^{-1}K\cap Ut(\mu')^{-1}n_1t(\lambda')^{-1} K_0 \neq \emptyset.
 $$
 It follows from \cref{decomp-lem} that $\chi\star\xi$ can be expressed as a sum over the $(\mu',\lambda')\in \Lambda(\mu,\lambda)$
 of scalar multiples of the functions $\ch(U t(\mu')^{-1} n_1 t(\lambda')^{-1} K_0)$. So to show that the particular class $\ch(U t(\mu)^{-1} n_1 t(\lambda)^{-1} K_0)$ is in $\cH''$,
 it suffices to
 show that $\ch(U t(\mu')^{-1} n_1 t(\lambda')^{-1} K_0)\in \cH''$ for all $(\mu',\lambda')\in\Lambda(\mu,\lambda)$ with $(\mu',\lambda')\neq (\mu,\lambda)$.
 But for such a $(\lambda',\mu')$, \cref{supp-lem} implies that
 \begin{equation}\label{D-eq10}
 s(\mu',\lambda') = (\lambda_1',\tilde\mu'+\tilde\lambda',\tilde\mu')< (\lambda_1,\tilde\mu+\tilde\lambda,\tilde\mu) = s(\mu,\lambda)
 \end{equation}
 in the lexicographic ordering.  The induction step follows easily.

  \subsection{The inert case} Suppose that $\ell$ is inert in $E$. Our proof of \cref{cyc-thm-2} in this case follows the same lines as in the split case and is even slightly
 simpler.  As in the split case, we begin by proving two key lemmas, the analogs of Lemmas \ref{decomp-lem} and \ref{supp-lem}.

   \subsubsection{First key lemma} For $m=(m_1,m_2) \in \ZZ^2$ we let $t(m) = \diag(\ell^{m_1},\ell^{m_2}, \ell^{2m_2-m_1})\in T$.
 We let
 $$
 \Lambda = \{(\mu,\lambda) \in \ZZ^2\times\ZZ^2) \ : \mu_1\geq \mu_2, \lambda_1\geq 0 = \lambda_2\}.
 $$
 Using the parametrisation of $N_G(\Ql)$ as $\{ n(x, y): x \in \cO \otimes \Zl, y \in \Zl\}$ given in Lemma \ref{lem:boreldecomp}, for $s \in \ZZ$ we set
 $$
 n_s = n(\ell^s, 0) \in N_G(\Ql). 
 $$

 \begin{lemma}\label{decomp-lem2}
 $G = \cup_{(\mu,\lambda)\in \Lambda)} U t(\mu) n_0 t(\lambda) K$.
 \end{lemma}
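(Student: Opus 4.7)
My plan is to follow the strategy of the proof of \cref{decomp-lem}, with modifications for the inert case. The main subtlety is that $\iota(H)$ is not the Levi of any parabolic of $G$ here, so there is no analog of the decomposition $G_0 = HNK_0$ that launched the split-case argument. However, the ordinary Iwasawa decomposition $G(\Ql) = B_G(\Ql) \cdot K$ (valid because $K$ is hyperspecial) still expresses every $g \in G(\Ql)$ as $g = t(x, z) \cdot n(s, t') \cdot k$ with $(x, z) \in (E_w^\times)^2$, $s \in E_w$, $t' \in \Ql$, $k \in K$, where $E_w = \cO \otimes \Ql$ is the unramified quadratic extension of $\Ql$. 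One then normalizes the unipotent and torus parts in succession.

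For the unipotent factor $n(s, t')$, the goal is to reach the form $n(\ell^{-\lambda_1}, 0)$ with $\lambda_1 \ge 0$. Three moves are available: right-multiplication by $N_G(\Zl) \subset K$, which (via the Heisenberg group law in $N_G$) adjusts $(s, t')$ modulo $(\cO_{E_w}, \Zl)$; right-conjugation by $T_G(\Zl) \subset K$, which rescales $(s, t') \mapsto (z'^{-1} s, \Nm(z')^{-1} t')$ for $z' \in \cO_{E_w}^\times$ by \cref{lem:boreldecomp}; and further use of the commutator structure of $N_G$ to kill the remaining $\Ql$-component. Together these bring $n(s, t')$ to the required form, with compensating factors absorbed into the torus and $K$-parts.

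For the torus part, the key observation specific to the inert case is that $T_H \cdot T_G(\Zl) = T_G$, where $T_H = T_G \cap \iota(H) = \{t(x, z) : x \in \Ql^\times,\ z \in E_w^\times\}$. This follows from $E_w^\times = \Ql^\times \cdot \cO_{E_w}^\times$, which holds because $\ell$ is a uniformizer in the unramified extension $E_w / \Ql$. Hence, modulo $T_H$ on the left and $T_G(\Zl)$ on the right, any $t(x, z) \in T_G$ is represented by $t(\tilde\mu) = \diag(\ell^{\tilde m_1}, \ell^{\tilde m_2}, \ell^{2\tilde m_2 - \tilde m_1})$ for some $(\tilde m_1, \tilde m_2) \in \ZZ^2$. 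The required left-translations by $T_H$ (which is not contained in $U$) are realized within the allowed $U$-action using the Cartan decomposition $H(\Ql) = U \cdot T_H \cdot U$; this also allows us to enforce the dominance $\tilde m_1 \ge \tilde m_2$ by invoking the Weyl element of $H$, which lifts to $U$ because $-1$ is a norm from $\cO_{E_w}^\times$.

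Finally, one packages the output using the conjugation identity $t(\lambda)^{-1} n_0 t(\lambda) = n(\ell^{-\lambda_1}, 0)$ (with $\lambda = (\lambda_1, 0)$) from \cref{lem:boreldecomp}, rewriting $t(\tilde\mu) \, n(\ell^{-\lambda_1}, 0) = t(\mu)\, n_0 \, t(\lambda)$ with $\mu = (\tilde m_1 - \lambda_1, \tilde m_2)$. The main technical obstacle will be to make the two normalizations mutually compatible: the torus-normalization must not destroy the form $n(\ell^{-\lambda_1}, 0)$ achieved for the unipotent part, and the resulting $(\mu, \lambda)$ must lie in $\Lambda$ (in particular $\mu_1 \ge \mu_2$). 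Both points are handled, exactly as in the split-case proof of \cref{decomp-lem}, by performing the two reductions jointly and adjusting the representative by further conjugations when the dominance condition fails, mirroring the case analysis near the end of that proof.
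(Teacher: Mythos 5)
Your plan is genuinely different from the paper's, and I don't believe it closes. The paper does \emph{not} start from the Iwasawa decomposition $G = B_G(\Ql)K$: it starts from the Cartan decomposition $G = KT^+K$ (the word ``Iwasawa'' in the paper's proof of \cref{decomp-lem2} is a slip), combines it with the Bruhat/Iwahori decomposition of $K$ to obtain $G = U N_0 T^+ K$ with $N_0 = N_G(\Zl)$ \emph{integral} and $T^+$ already \emph{dominant}, and then the commutation $n(\ell^s,0)\,t(m) = t(m)\,n(\ell^{\,s-(m_1-m_2)},0)$ with $s \ge 0$ and $m_1 \ge m_2$ gives $\mu_1 - \mu_2 = s \ge 0$ for free. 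Your Iwasawa-based reduction produces a torus factor $t(\nu)$ and a unipotent factor with \emph{no} such constraints, and after your normalizations you would arrive at $t(\nu)\,n(\ell^{-\lambda_1},0)$; to write this as $t(\mu)\,n_0\,t(\lambda)$ with $(\mu,\lambda)\in\Lambda$ you need $\mu_1-\mu_2 = (\nu_1-\nu_2) - \lambda_1 \ge 0$, i.e.\ $\lambda_1 \le \nu_1-\nu_2$. Nothing forces this: for $g = n(\ell^{-1},0)$ the Iwasawa torus part is trivial, so $\nu = (0,0)$ while $\lambda_1 = 1$, giving $\mu_1 - \mu_2 = -1$. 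The inequality $s\ge 0$ that the paper extracts from Cartan $+$ Bruhat is exactly the missing input, and I do not see how to extract it from Iwasawa.

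The fix you propose---``further conjugations when the dominance condition fails, mirroring the case analysis near the end of [\cref{decomp-lem}]''---does not transfer, because the two case-analysis moves in the split proof rest on $H$ being a Levi of $G_0$: the coset representatives there carry a two-parameter unipotent $(n_1,n_2)$, and the manipulations trade off $n_1, n_2$ against $m_1, m_2$. In the inert case, once the $\Ql$-component $t'$ is removed there is only a one-parameter unipotent $n(\ell^{-\lambda_1},0)$, and the natural dominance-restoring tool (left-right conjugation by the Weyl element $w_H \in U$) sends $n(\ell^{-\lambda_1},0)$ into $\bar N_G$ and destroys the normal form, so it cannot be applied after the unipotent normalization. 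Using $H(\Ql) = U\,T_H\,U$ does not help either: writing a needed $T_H$-translation as $u_1 t_0 u_2$ just moves the problem, since $t_0 \notin U$. A secondary issue: the elements $n(0,y)\in N_G\cap\iota(H)$ you would use to kill $t'$ lie in $U$ only when $y\in\Zl$, whereas $t'$ coming from Iwasawa can have arbitrary negative valuation, so even the step ``reach $n(\ell^{-\lambda_1},0)$'' is not immediate by the moves you list. I would suggest replacing your first step by $G = KT^+K$ and then reproducing the paper's derivation of $G = U N_0 T^+ K$, after which the remainder of your write-up (the commutation identity and the passage to $(\mu,\lambda)$) is correct and matches the paper.
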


 \begin{proof} Let $N_0 = N_G(\Zl)$, and for $r \ge 1$ let $N_r$ be the kernel of reduction mod $\ell^r$ on $N_G(\Zl)$.
 Let
 $$
 w = \left(\smallmatrix 0 & 0 & 1 \\ 0 & 1 & 0 \\ -1 & 0 & 0 \endsmallmatrix\right).
 $$
 This represents the longest element (in this case, the non-trivial) Weyl element.
 Let $\bar{N}_r = wN_rw^{-1}$. Let $T_0 = T(\Zl)$. Then the Iwahori subgroup (with respect
 to the upper-triangular Borel $B$) is just the group $K_B=T_0N_0\bar{N}_1 = T_0 \bar{N}_1 N_0$,
 and the Iwahori decomposition of $K$ is just
 $$
 K = K_B \sqcup K_BwN_0 = T_0 \bar{N}_1 N_0 \sqcup T_0 N_0 w N_0.
 $$
 From this we deduce that
 \begin{equation}\label{Iwahori-ish}
 K = Kw = T_0wN_1 \bar{N}_0\ \sqcup\ T_0 N_0 \bar{N}_0.
 \end{equation}

 Let $T^+ = \{t(m) \ : \ m\in \ZZ^2, m_1\geq m_2\}$. By Iwasawa decomposition, $G = KT^+K$, so by \eqref{Iwahori-ish}
 $$
 G = T_0w N_1 \bar{N}_0 T^+K\ \cup\ T_0 N_0 \bar{N}_0 T^+ K.
 $$
 As $\bar{N}_r T^+ K = T^+ K$ and $T_0, T_0 w \subset U$, it follows that
 \begin{equation}\label{decomp-eq1}
  G = U N_0 T^+ K.
 \end{equation}

 The elements $n(x, 0)$, for $x \in \cO\otimes \Zl$, give coset representatives for $(U \cap N_0) \backslash N_0$. Since may rescale $x$ by elements of $(\cO\otimes \Zl)^\times$ using the commutation relation in Lemma \ref{lem:boreldecomp}, it follows from \eqref{decomp-eq1} that every double coset $UgK$ has a representative
 of the form $n_s t(m)$ with $s\geq 0$ and $m_1\geq m_2$.
 As $n_s t(m)= t(m)n_{s-m_1+m_2}$, it follows that $t(m)n_{s'}$, $s' = \min\{0,s-m_1+m_2\}$ also represents the double coset.
 But $t(m)n_{s'} = t(\mu) n_0 t(\lambda),  \ \mu=(m_1+s',m_2), \lambda = (-s',0)$. That $(\mu,\lambda)\in\Lambda$ follows
 from $s\geq 0$ and the definition of $s'$.
 \end{proof}

   \subsubsection{Second key lemma}

 \begin{lemma}
 Let $(\mu,\lambda),(\mu',\lambda') \in \Lambda$ with $(\mu,\lambda) \neq (\mu',\lambda')$ . Suppose
 $$
 \left(U t(\mu)^{-1} K t(\lambda)^{-1} K\right) \cap \left(U t(\mu')^{-1} \, {}^tn_0^{-1}\, t(\lambda')^{-1} K\right) \neq \emptyset.
 $$
 Then $\lambda_1' \leq \lambda_1$, and if $\lambda_1' = \lambda_1$ then $\mu_1'-\mu_2' < \mu_1-\mu_2$.
 \end{lemma}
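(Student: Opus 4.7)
The plan is to follow the strategy of \cref{supp-lem} closely: I would pick a handful of minors $\Delta_{I,J}$, compute their valuations explicitly on $t(\mu')^{-1} {}^t n_0^{-1} t(\lambda')^{-1}$, and use the hypothesis $g = u\, t(\mu)^{-1} k_1 t(\lambda)^{-1} k_2$ (for some $u \in U$, $k_1, k_2 \in K$, coming from the non-emptiness of the intersection) together with the multiplicativity formula \eqref{A-Delta} to bound these valuations from below. The inert case should actually be simpler than the split one: any $u \in U = H(\Zl)$ has the block form $\bigl(\bigl(\begin{smallmatrix} a & 0 & b \\ 0 & z & 0 \\ c & 0 & d \end{smallmatrix}\bigr), z\bigr)$, so $u_{2,i'} = 0$ for $i' \ne 2$ and $u_{i,2} = 0$ for $i \ne 2$, and $\det u = z^2 \bar z \in \cO_{E,\ell}^\times$; moreover $\lambda_2 = \lambda_2' = 0$, which cuts down the number of free parameters substantially. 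Before starting I would record that $\delta$, $\epsilon$ and $\delta - \epsilon$ are all units of $\cO_{E,\ell}$, because $\bar\delta \in \mathbf{F}_{\ell^2}\setminus \mathbf{F}_{\ell}$ under the standing hypothesis $\ell \nmid 2D$ with $\ell$ inert in $E$.

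I would then use exactly three minors. \emph{(i) Determinants.} One has $\det {}^t n_0^{-1} = 1$ and $\det t(m)^{-1} = \ell^{-3m_2}$ (using $m_3 = 2m_2 - m_1$ and $\lambda_2' = 0$), while on the left $\det u$, $\det k_1$, $\det k_2$ are all units. Comparing valuations yields $\mu_2' = \mu_2$ on the nose. \emph{(ii) The $(2,1)$-entry.} The right-hand side evaluates to $-\delta\,\ell^{-\mu_2' - \lambda_1'}$, of valuation $-\mu_2' - \lambda_1'$. On the left, $u_{2, i'} = 0$ for $i' \ne 2$, so only $i' = 2$ contributes to the expansion, giving a lower bound of $-\mu_2 - \lambda_1$. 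Combined with (i), this forces $\lambda_1' \leq \lambda_1$. \emph{(iii) The $(1,1)$-entry.} Its value on the right is $\ell^{-\mu_1' - \lambda_1'}$; on the left only $i' \in \{1,3\}$ contribute from $u$, and since $\mu_1 \geq \mu_2 \geq \mu_3$ and $\lambda_1 \geq \lambda_2 \geq \lambda_3$, the maximum of $\mu_{i'} + \lambda_{j'}$ over these indices is $\mu_1 + \lambda_1$. Hence $\mu_1' + \lambda_1' \leq \mu_1 + \lambda_1$.

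To finish: assume $\lambda_1' = \lambda_1$. Then (iii) gives $\mu_1' \leq \mu_1$; and if we had equality $\mu_1' = \mu_1$, this would combine with (i) and $\lambda_2' = \lambda_2 = 0$ to force $(\mu', \lambda') = (\mu, \lambda)$, contradicting our hypothesis. Hence $\mu_1' < \mu_1$, giving $\mu_1' - \mu_2' < \mu_1 - \mu_2$ as required. I do not foresee a serious obstacle --- the only mildly delicate bookkeeping is tracking which entries of ${}^t n_0^{-1}$ and of $u$ are units of $\cO_{E,\ell}$, which is precisely where the hypothesis $\ell \nmid 2D$ is used.
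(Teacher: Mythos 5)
Your proposal is correct and follows essentially the same route as the paper: the paper's own proof of this lemma also uses the $(1,1)$- and $(2,1)$-minors (its inequalities (D2-eq1) and (D2-eq2)) together with a third identity forcing $\mu_2'=\mu_2$ (the paper compares similitude factors rather than determinants, but the two are equivalent here), and concludes by the same case analysis when $\lambda_1'=\lambda_1$. The only cosmetic difference is that you evaluate the relevant entries of $t(\mu')^{-1}\,{}^tn_0^{-1}\,t(\lambda')^{-1}$ explicitly (which is where the unit-ness of $\delta$ enters) instead of invoking the general valuation bound on $\Delta_{I,J}$ abstractly; both are fine.
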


 \begin{proof} The proof is much the same as before, exploiting the functions $\Delta_{I,J}$.
 Taking $I = J = \{1\}$ yields
 \begin{equation}\label{D2-eq1}
 \mu_1'+\lambda_1' \leq \mu_1+\lambda_1.
 \end{equation}
 Taking $I={2}$, $J=\{1\}$ yields
 \begin{equation}\label{D2-eq2}
 \mu_2'+\lambda_1' \leq \mu_2+\lambda_1.
 \end{equation}
 Comparing similitude factors gives
 \begin{equation}\label{D2-eq3}
 \mu_2' = \mu_2.
 \end{equation}
 From \eqref{D2-eq2} and \eqref{D2-eq3} we conclude that
 \begin{equation}\label{D2-eq4}
 \lambda_1'\leq \lambda_1.
 \end{equation}
 If $\lambda'_1 = \lambda_1$, then \eqref{D2-eq1} implies that $\mu_1'\leq \mu_1$, from which it follows
 that $\mu_1'-\mu_2' = \mu_1'-\mu_2 \leq \mu_1-\mu_2$ with equality only if $\mu_1'=\mu_1$ (in which case $(\mu',\lambda') = (\mu,\lambda)$).
 \end{proof}

  \subsection{Proof of \cref{cyc-thm-2}}
 The theorem follows easily from induction on the ordered pairs $(\lambda_1,\mu_1-\mu_2)$ of non-negative integers, in exact analogy with the proof
 of \cref{prop-cyc}.

\let\oldbibliography\thebibliography
\renewcommand{\thebibliography}[1]{%
  \oldbibliography{#1}%
  \setlength{\itemsep}{1.5ex}%
}
\providecommand{\bysame}{\leavevmode\hbox to3em{\hrulefill}\thinspace}
\providecommand{\MR}[1]{}
\renewcommand{\MR}[1]{%
 MR \href{http://www.ams.org/mathscinet-getitem?mr=#1}{#1}.
}
\providecommand{\href}[2]{#2}
\newcommand{\articlehref}[2]{\href{#1}{#2}}

\end{document}